\numberwithin{equation}{section}
\newcommand{\D}{\mathbb{D}}
\renewcommand{\H}{\mathbb{H}}
\newcommand{\N}{\mathbb{N}}
\newcommand{\R}{\mathbb{R}}
\newcommand{\Z}{\mathbb{Z}}
\newcommand{\mm}{{\mbox{\boldmath$m$}}}
\newcommand{\ppi}{{\mbox{\boldmath$\pi$}}}
\newcommand{\ssigma}{{\mbox{\boldmath$\sigma$}}}
\newcommand{\sfd}{{\sf d}}
\newcommand{\sfh}{{\sf h}}
\newcommand{\Id}{{\rm Id}}                          
\newcommand{\Kliminf}{K\kern-3pt-\kern-2pt\mathop{\rm lim\,inf}\limits}  
\newcommand{\supp}{\mathop{\rm supp}\nolimits}   
\newcommand{\Lip}{\mathop{\rm Lip}\nolimits}          
\renewcommand{\d}{{\mathrm d}}
\newcommand{\restr}[1]{\lower3pt\hbox{$|_{#1}$}}
\newcommand{\la}{\langle}                  
\newcommand{\ra}{\rangle}
\newcommand{\eps}{\varepsilon}  
\newcommand{\nchi}{{\raise.3ex\hbox{$\chi$}}}
\newcommand{\weakto}{\rightharpoonup}
\newcommand{\limi}{\varliminf}
\newcommand{\lims}{\varlimsup}
\newcommand{\fr}{\penalty-20\null\hfill$\blacksquare$}                      
\newcommand{\prob}[1]{\mathscr P(#1)}                   
\newcommand{\probt}[1]{\mathscr P_2(#1)}                   
\newcommand{\e}{{\rm{e}}}                           
\renewcommand{\mm}{\mathfrak m}                                
\renewenvironment{proof}{\removelastskip\par\medskip   
\noindent{\em proof} \rm}{\penalty-20\null\hfill$\square$\par\medbreak}
\newtheorem{theorem}{Theorem}[section]
\newtheorem{corollary}[theorem]{Corollary}
\newtheorem{lemma}[theorem]{Lemma}
\newtheorem{proposition}[theorem]{Proposition}
\newtheorem{assumption}[theorem]{Assumption}
\newtheorem{definition}[theorem]{Definition}
\newtheorem{remark}[theorem]{Remark}
\newcommand{\bd}{{\mathbf\Delta}}
\newcommand{\test}[1]{{\rm Test}(#1)}
\renewcommand{\b}{{\rm b}}
\newcommand{\X}{{\rm X}}
\newcommand{\Y}{{\rm Y}}
\renewcommand{\Z}{{\rm Z}}
\newcommand{\h}{{\sfh}}
\newcommand{\mau}{{\sf T}}
\newcommand{\mad}{{\sf S}}
\newcommand{\CD}{{\sf CD}}
\newcommand{\RCD}{{\sf RCD}}
\newcommand{\lip}{{\rm lip}}
\newcommand{\HS}{{\lower.3ex\hbox{\scriptsize{\sf HS}}}}
\renewcommand{\H}[1]{{\rm Hess}(#1)}
\renewcommand{\O}{{\sf O}}
\renewcommand{\D}{{\rm D}}
\newcommand{\fsm}[1]{\test#1}
\newcommand{\fl}{{\sf G}}
\newcommand{\gl}{{\sf Fl}}
\newcommand{\pr}{{\sf Pr}}
\newcommand{\Rb}{{\sf R}}
\newcommand{\rb}{{\sf r}}
\newcommand{\Rbt}{\bar{\sf R}}
\title{From volume cone to metric cone in the nonsmooth setting}
\begin{document}

\author{
   Nicola Gigli
   \thanks{SISSA, ngigli@sissa.it}\and  Guido de Philippis\ \thanks{Unit\'e de Math\'ematiques Pure et Appliqu\'ees -- ENS de Lyon. guido.de-philippis@ens-lyon.fr} }

\maketitle

\begin{abstract}
We prove that `volume cone implies metric cone' in the setting of $\RCD$ spaces, thus generalising to this class of spaces a well known result of Cheeger-Colding valid in Ricci-limit spaces.

\end{abstract}


\tableofcontents

\section{Introduction}

In the study of measured-Gromov-Hausdorff limits of Riemannian manifolds with Ricci curvature uniformly bounded from below, Ricci-limit spaces in short, as developed by Cheeger and Colding (\cite{Cheeger-Colding96}, \cite{Cheeger-Colding97I}, \cite{Cheeger-Colding97II}, \cite{Cheeger-Colding97III}), two almost rigidity results play a key role: the almost splitting theorem and the almost volume cone implies almost metric cone. By nature, both these results imply corresponding rigidity results for Ricci-limit spaces and in fact also the converse implication holds provided one is willing to give up the precise quantification given by the almost rigidity versions.

In the seminal papers \cite{Lott-Villani09} and \cite{Sturm06I}, \cite{Sturm06II}, Lott-Villani and Sturm proposed a synthetic definition of lower Ricci curvature bounds for metric-measure spaces based on optimal transport: according to their approach, spaces with Ricci curvature bounded from below by $K$ and dimension bounded from above by $N$ are called $\CD(K,N)$  spaces. Later on, mostly for technical reasons related to the local-to-global property, Bacher-Sturm introduced { in \cite{BacherSturm10}} a variant of the $\CD(K,N)$ condition, called reduced curvature dimension condition and denoted $\CD^*(K,N)$.

Key features of both the $\CD$ and $\CD^*$ conditions are the compatibility with the Riemannian case and the stability w.r.t.\ measured-Gromov-Hausdorff convergence. In particular, they include Ricci-limit spaces and it is natural to wonder whether the aforementioned geometric rigidity result hold for these structures. However, this is not the case, as both  $\CD$ and  $\CD^*$ structures include Finsler geometries (see the last theorem in  \cite{Villani09} and \cite{Ohta09}) and it is therefore natural to look for stricter conditions which, while retaining the crucial stability properties of Lott-Sturm-Villani spaces, rule out Finsler structures. 

A first step in this direction has been made by Ambrosio, Savar\'e and the first author in \cite{AmbrosioGigliSavare11-2}, where the notion of $\RCD(K,\infty)$ spaces (the `${\sf R}$' stands for Riemannian) have been introduced via means related to the study of the heat flow. Partly motivated by this approach the first author in \cite{Gigli12} proposed a strengthening of the $\CD/\CD^*$ conditions based solely on properties of Sobolev functions: the added requirement is that the Sobolev space $W^{1,2}$ is an Hilbert space, a condition called infinitesimal Hilbertianity, and the resulting classes of spaces are denoted $\RCD/\RCD^*$.

It turns out that the a priori purely analytic notion of infinitesimal Hilbertianity grants geometric properties, the reason being that it allows to make computations mimicking the calculus in Riemannian (as opposed to Finslerian) manifolds. The  first example in this direction has been the Abresch-Gromoll inequality proved by the first author and Mosconi in \cite{Gigli-Mosconi12}. Other relevant geometric properties, both on their own and for the purposes of the current paper, are the splitting theorem, proved in \cite{Gigli13} by the first author, and the maximal diameter theorem, proved by Ketterer in \cite{Ketterer13}. Another remarkable result has been established by Erbar-Kuwada-Sturm in \cite{Erbar-Kuwada-Sturm13}: they proved that, in a suitable sense,  $\RCD^*(K,N)$ spaces can be characterized as those spaces where the Bochner inequality with parameters $K,N$ holds (the case $N=\infty$ was already known by \cite{Gigli-Kuwada-Ohta10} and \cite{AmbrosioGigliSavare11-2}, see also \cite{AmbrosioGigliSavare12}).

\bigskip

The focus of this paper it to prove the non-smooth version of the `volume cone implies metric cone', our result being:
\begin{theorem}\label{thm:mainthm}
Let $K\in\R$, {$N\in (0,\infty)$}, $(\X,\sfd,\mm)$ a $\RCD^*(0,N)$ space with $\supp(\mm)=\X$, $\O\in \X$ and $\Rb>\rb>0$ such that
\[
\mm(B_{\Rb}(\O))=\Big(\frac{\Rb}{\rb}\Big)^N\mm(B_{\rb}(\O)).
\]
Then exactly one of the following holds:
\begin{itemize}
\item[1)] $S_{\Rb/2}(\O)$ contains only one point. In this case $(\X,\sfd)$ is isometric to $[0,{\rm diam}(\X)]$ ($[0,\infty)$ if $\X$ is unbounded) with an isometry which sends $\O$ in $0$ and the measure $\mm\restr{B_\Rb}(\O)$ to  the measure $c\, x^{N-1}\d x$ for  $c:=N \mm(B_\Rb(\O))$.
\item[2)] $S_{\Rb/2}(\O)$ contains  two points. In this case  $(\X,\sfd)$ is a 1-dimensional  Riemannian manifold,  possibly with boundary, and there is a bijective local isometry (in the sense of distance-preserving maps) from $B_{\Rb}(\O)$ to $(-\Rb,\Rb)$ sending $\O$ to $0$ and the measure  $\mm\restr{B_\Rb(\O)}$ to the measure  $c\,|x|^{N-1} \d x$ for $c:=\frac12N\mm(B_\Rb(\O))$. Moreover, such local isometry is an  isometry when restricted to $\bar B_{\Rb/2}(\O)$.
\item[3)] $S_{\Rb/2}(\O)$ contains more than two points. In this case { \(N\ge 2\) and there exists a a \(\RCD^*(N-2,N-1)\) space $(\Z,\sfd_\Z,\mm_\Z)$ with \({\rm diam } (\Z)\le\pi\) such that the ball \(B_{\Rb}(\O)\) is locally isometric to the ball \(B_{\Rb}(\O_\Y)\) of the cone \(Y\) built over \(\Z\). Moreover, such local isometry is an  isometry when restricted to $\bar B_{\Rb/2}(\O)$.}

\end{itemize} 
\end{theorem}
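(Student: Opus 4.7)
The plan is to reduce the claim to rigidity for Bochner's inequality applied to the function \(u := \sfd^2(\cdot,\O)/2\), show that its Hessian coincides with the metric tensor on the annulus \(B_\Rb(\O)\setminus\{\O\}\), and then recover the local cone structure from the dilation flow generated by \(\nabla u\). This is the non-smooth transcription of the Cheeger--Colding argument, and the bulk of the work goes into executing each of its ingredients within the second-order calculus developed in \cite{Gigli13}.

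First I would extract volume rigidity. The Bishop--Gromov inequality, valid on \(\CD^*(0,N)\) spaces, states that \(s\mapsto \mm(B_s(\O))/s^N\) is non-increasing, so the hypothesis forces it to be constant on \([\rb,\Rb]\). Disintegrating \(\mm\restr{B_\Rb(\O)}\) along \(\sfd(\cdot,\O)\) via coarea, one reads off that the ``surface measure'' on spheres \(S_s(\O)\) scales exactly as \(s^{N-1}\). Next, the Laplacian comparison theorem, available on \(\RCD^*(0,N)\) spaces as a consequence of the Bochner inequality, gives \(\Delta u \le N\) in the appropriate weak sense on \(B_\Rb(\O)\setminus\{\O\}\); combined with the volume rigidity via integration against suitable radial cut-offs and Gauss--Green, this inequality is forced to be a pointwise equality: \(\Delta u = N\).

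Now the improved Bochner inequality on \(\RCD^*(0,N)\) from \cite{Erbar-Kuwada-Sturm13} (or equivalently \cite{Gigli13}) reads \(\tfrac12\Delta|\nabla u|^2 \ge |\H{u}|_\HS^2 + \langle \nabla u,\nabla \Delta u\rangle\) with \(|\H{u}|_\HS^2 \ge (\Delta u)^2/N\). Plugging in \(|\nabla u|^2 = 2u\) and \(\Delta u = N\) from the previous step, every inequality is saturated; the equality case of the trace inequality forces \(\H{u} = g\) in the Hilbert-module sense of \cite{Gigli13}, the metric-measure analogue of the identity \(\H{\tfrac12|x|^2}=\Id\) on a Euclidean cone. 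From \(\H{u}=g\) one shows that the regular Lagrangian flow \((\phi_t)\) of \(\nabla u\) exists on the annulus and acts as a family of dilations, multiplying \(\sfd(\cdot,\O)\) by \(e^t\) and rescaling \(\mm\) by \(e^{Nt}\). Setting \(\Z := S_{\Rb/2}(\O)\) with the induced length distance and with the measure supplied by the coarea decomposition, the orbits of \((\phi_t)\) furnish the local isometry of \(B_\Rb(\O)\) onto the \(\Rb\)-ball of the metric cone over \(\Z\); the fact that it becomes an isometry on \(\bar B_{\Rb/2}(\O)\) reflects the global definedness of the flow there. Ketterer's characterisation \cite{Ketterer13} of cones as \(\RCD^*\) spaces then identifies \(\Z\) as \(\RCD^*(N-2,N-1)\) with \({\rm diam}(\Z)\le\pi\), and the trichotomy in the statement corresponds to \(\#\Z\) being one, two, or at least three, the low-dimensional cases being treated separately since the cone construction degenerates there.

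The hard part will be passing from equality in the integrated Bochner inequality to the pointwise Hessian identity \(\H{u}=g\), which demands the full second-order calculus of \cite{Gigli13}, in particular the refined notion of Hessian as an element of the relevant tensorial Hilbert module. A second delicate point is to show that the gradient flow of \(\nabla u\) is well defined up to \(S_\Rb(\O)\) and regular enough to transport both \(\sfd\) and \(\mm\) in the required manner; this mirrors the analogous technicalities solved in the non-smooth splitting and maximal diameter theorems of \cite{Gigli13} and \cite{Ketterer13}, and is also where the trichotomy in the statement genuinely arises, the degenerate cases corresponding to the cone collapsing to a half-line or a full line.
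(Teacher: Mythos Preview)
Your outline is correct and follows essentially the same route as the paper: volume rigidity $\Rightarrow$ $\Delta u = N$ and $|\nabla u|^2=2u$, rigidity in Bochner $\Rightarrow$ $\H{u}=\Id$, the gradient flow of $u$ acts as dilations, and the cone structure is read off via Ketterer. A couple of points where your emphasis diverges from the paper are worth flagging.

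First, you misidentify the hard step. Obtaining $\H{u}=\Id$ from equality in Bochner is, in the paper, a direct computation once the Euler equation for $u$ is in hand (end of \S3.6.1). What the paper singles out as the genuinely new and lengthy part --- ``one third of the body of the paper'' --- is not this, but rather controlling the \emph{projection} $\pr$ onto the sphere $S_{\Rb/2}(\O)$: one must show that for test plans $\ppi$, the metric speed of $\pr\circ\gamma$ is bounded by $\tfrac{\Rb}{2\sfd(\gamma_t,\O)}|\dot\gamma_t|$. This is where the Hessian identity is actually \emph{used}, via the formula $\tfrac{\d}{\d s}\tfrac12|v_s|^2\circ\hat\gl_s=\H{\hat\b}(v_s,v_s)\circ\hat\gl_s$ for the evolution of tangent vectors under a suitable reparametrized flow $\hat\gl_s$ converging to $\pr$. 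In the splitting theorem the analogous projection is onto a level set isometrically embedded in $\X$, so no such estimate is needed; here the sphere carries the \emph{intrinsic} distance, and relating its Sobolev calculus to that of $\X$ is exactly what forces the second-order machinery of \cite{Gigli14}.

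Second, the passage ``the orbits of $(\phi_t)$ furnish the local isometry'' hides a step your outline does not mention: one must show that the natural map $\mad:B_\Rb(\O)\to\Y$ preserves minimal weak upper gradients in \emph{both} directions, and then invoke the Sobolev-to-Lipschitz property not only on $\X$ but also on the cone $\Y$. The latter is not free: the paper establishes it by proving that the sphere $(\Z,\sfd_\Z,\mm_\Z)$ is doubling and \emph{measured-length} (Proposition~3.26), properties which themselves rely on the projection speed estimate above, and then appeals to the results of \cite{GH15} on warped products. Without this, you can show $\mad$ is $1$-Lipschitz but not that its inverse is.
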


{ Some remarks are in order. First of all 
notice that much like the non-smooth splitting, this result gives some new information also about the smooth world: the space in now known to be locally a cone over a $\RCD^*(N-2,N-1)$ space, rather than simply over a length space. We remark that such information about the structure of the sphere will come after we proved the cone structure of the original space via a direct use of Ketterer's results \cite{Ketterer13} about lower Ricci bounds for  cones.

Actually the the \(\RCD^*(N-2,N-1)\) space $(\Z,\sfd_\Z,\mm_\Z)$ will be given by an appropriate rescaling of the sphere $S_{\Rb/2}(\O)$ once this is endowed with the ``natural'' induced distance and measure, see Section \ref{se:xp} for more details.

As a side remark, let us also point out that the classical smooth version of the equality cases in Bishop-Gromov inequality (i.e. the smooth version of Theorem \ref{thm:mainthm}) is usually stated  saying that \(B_{\Rb}(\O)\subset \X\)  and \(B_{\Rb}(\O_\Y)\subset\Y\) are {\em isometric}. Hence Theorem  \ref{thm:mainthm} seems a priori weaker of the corresponding smooth version. This is actually not the case:  indeed it is classical in Differential Geometry to say that two Riemannian manifolds are isometric if the pull  back the metric  tensors coincide. It is easy seen that  this however implies only locally isometry as metric spaces.  Optimality of our result (in the non smooth as well as in the smooth setting) can be checked by looking at the ball of radius \(1\) on the flat torus \(\mathbb T^N\), \(N\in \N\).} 

We focus on the case $K=0$  for simplicity, but in fact our techniques can be easily adapted to general $K$'s as well as to the case of `volume annulus'. We shall briefly mention this in the last section.

Let us now discuss the proof of our result. The structure and techniques used closely resemble that of the splitting theorem (compare with \cite{Gigli13} and \cite{Gigli13over}) with all the metric informations being read at the level of Sobolev functions and only at the very end translated back into metric properties.  We shall take advantage of this analogy in order to skip some lengthy detail whenever a closely related lemma has been already proved for the case of the splitting. Still, there are two crucial differences which need non-trivial adaptations. 

The first is that while the splitting is a result about the structure of the whole space, here we only deal with a portion of it. This means that the `regular' part of our space is confined in the ball of interest and appropriate cut-off arguments have to be used in order to justify the computations needed. Much part of this discussion will be in Section \ref{se:gfdir}.

The second, and most important, difference is that in the splitting the level set of the Busemann function is (proved to be) isometrically embedded in the original space and this fact can be used to quickly gain crucial informations about its structure. In the current framework, instead, such  level set is a sphere that we should consider with the induced intrinsic distance. As such, it is not really its metric structure being inherited by the ambient space, but rather its differential one and this shift from `metric' to `differential' calculus requires appropriate  tools to be handled because concepts like `vector fields' and `Hessian' must come into play. This non-trivial analysis is carried over in Section \ref{se:speedproj}, which amount to one third of the body of the paper, and makes use of the vocabulary proposed by the first author in \cite{Gigli14}.

\section{Preliminaries} \label{se:prel}
We assume the reader familiar with calculus on metric measure spaces and the $\RCD$ condition, here we recall only few basic things as reference for what comes next, mainly in order to fix the notation. Other results will be recalled in the body of the paper, whenever needed.

Let $(\X,\sfd,\mm)$ be a complete and separable metric space equipped with a non-negative Borel measure. The local Lipschitz constant $\lip(f):\X\to[0,\infty]$ of a function $f:\X\to\R$ is defined as
\[
\lip(f)(x):=\lims_{y\to x}\frac{|f(x)-f(y)|}{\sfd(x,y)},
\]
if $x$ is not isolated, 0 otherwise.

A Borel probability measure $\ppi$ on $C([0,1],\X)$ is said of bounded compression provided for some $C>0$ it holds
\[
(\e_t)_*\ppi\leq C\mm,\qquad\forall t\in[0,1],
\]
where $\e_t:C([0,1],\X)\to \X$ is the evaluation map sending a curve $\gamma$ to $\gamma_t$. By kinetic energy of a curve $\gamma$ we intend $\frac12\int_0^1|\dot\gamma_t|^2\,\d t$, the integral being intended $+\infty$ if the curve is not absolutely continuous. Then the kinetic energy of a probability measure $\ppi$ on $C([0,1],\X)$ is $\frac12\iint_0^1|\dot\gamma_t|^2\,\d t\,\d\ppi(\gamma)$. 

If $\ppi$ has finite kinetic energy and is of bounded compression, then it is called \emph{test plan}. 

For $t_0,t_1\in[0,1]$ the map ${\rm Restr}_{t_0}^{t_1}:C([0,1],\X)\to C([0,1],\X)$ is defined as
\[
{\rm Restr}_{t_0}^{t_1}(\gamma)_t:=\gamma_{(1-t)t_0+tt_1}.
\]
Notice that if $\ppi$ is a test plan, $t_0\neq t_1\in [0,1]$ and $\Gamma\subset C([0,1],\X)$ is such that $\ppi(\Gamma)>0$, then the plan $\ppi(\Gamma)^{-1}({\rm Restr}_{t_0}^{t_1})_*(\ppi\restr{\Gamma})$ is also a test plan.

The Sobolev class $S^2(\X)$ is the collection of all Borel functions $f:\X\to\R$ for which there is $G\in L^2(\X)$, $G\geq 0$, such that
\[
\int|f(\gamma_1)-f(\gamma_0)|\,\d \ppi(\gamma)\leq\iint_0^1G(\gamma_t)|\dot\gamma_t|\,\d t\,\d\ppi(\gamma),
\]
for  every test plan $\ppi$. Any such $G$ is called weak upper gradient for $f$ and it turns out that for $f\in S^2(\X)$ there exists a minimal $G$ in the $\mm$-a.e.\ sense, which is called minimal weak upper gradient and denoted by $|\D f|$. Among other properties, the minimal weak upper gradient is local in the sense that for every $f,g\in S^{2}(\X)$ we have
\begin{equation}
\label{eq:locwug}
|\D f|=|\D g|,\qquad\mm-a.e.\ on\ \{f=g\},
\end{equation}
see \cite{AmbrosioGigliSavare11} and \cite{Gigli12} for more details and for the proof of the following equivalent characterization:
\begin{proposition}\label{prop:sobf}
The following are equivalent:
\begin{itemize}
\item[i)] $f\in S^2(\X)$ and $G$ is a weak upper gradient.
\item[ii)] for every test plan $\ppi$ the following holds. For $\ppi$-a.e.\ $\gamma$ the function $t\mapsto f(\gamma_t)$ is equal a.e.\ on $[0,1]$ and on $\{0,1\}$ to an absolutely continuous function $f_\gamma$  such that
\[
|\partial_tf_\gamma|(t)\leq G(\gamma_t)|\dot\gamma_t|,\qquad\ a.e.\ t\in[0,1].
\]
\end{itemize}
\end{proposition}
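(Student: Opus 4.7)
The equivalence is proved in two directions, with the implication (ii)$\Rightarrow$(i) being essentially a direct integration and (i)$\Rightarrow$(ii) requiring a restriction-and-Fubini argument.

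For (ii)$\Rightarrow$(i) the plan is direct. Given any test plan $\ppi$, for $\ppi$-a.e.\ $\gamma$ the function $f_\gamma$ is absolutely continuous with $|\partial_t f_\gamma|(t)\le G(\gamma_t)|\dot\gamma_t|$ a.e., so
\[
|f(\gamma_1)-f(\gamma_0)|=|f_\gamma(1)-f_\gamma(0)|\le\int_0^1|\partial_t f_\gamma|(t)\,\d t\le\int_0^1 G(\gamma_t)|\dot\gamma_t|\,\d t.
\]
Integrating against $\ppi$ and applying Tonelli on the right (legitimate because the bound $(\e_t)_*\ppi\leq C\mm$ together with finite kinetic energy and Cauchy--Schwarz gives $\iint G(\gamma_t)|\dot\gamma_t|\,\d t\,\d\ppi<\infty$) yields the weak upper gradient inequality.

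For (i)$\Rightarrow$(ii) fix a test plan $\ppi$. The first step is to observe that for every $0\le s<t\le 1$ the push-forward $({\rm Restr}_s^t)_*\ppi$ is still a test plan: bounded compression follows from the identity $\e_u\circ{\rm Restr}_s^t=\e_{(1-u)s+ut}$, and finite kinetic energy follows from a change of variables which also shows that the speed of the restricted curves is $(t-s)|\dot\gamma_{(1-u)s+ut}|$. Applying the weak upper gradient inequality for $f,G$ to this plan and undoing the change of variables one obtains
\[
\int |f(\gamma_t)-f(\gamma_s)|\,\d\ppi(\gamma)\le \int\int_s^t G(\gamma_r)|\dot\gamma_r|\,\d r\,\d\ppi(\gamma).
\]
The second step is to define $F(\gamma,t):=\int_0^t G(\gamma_r)|\dot\gamma_r|\,\d r$ and to note, again by Cauchy--Schwarz combined with bounded compression and finite kinetic energy, that for $\ppi$-a.e.\ $\gamma$ the map $t\mapsto F(\gamma,t)$ is finite-valued and absolutely continuous with derivative $\le G(\gamma_t)|\dot\gamma_t|$ a.e. Choosing a countable dense set $D\subset[0,1]$ with $\{0,1\}\subset D$ and taking a countable union of $\ppi$-null sets, one obtains a single $\ppi$-negligible set outside which the pointwise bound
\[
|f(\gamma_t)-f(\gamma_s)|\le F(\gamma,t)-F(\gamma,s)
\]
holds simultaneously for all $s<t$ in $D$.

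The third step is the construction of the desired representative: on this full-measure set, $s\mapsto f(\gamma_s)$ restricted to $D$ is uniformly controlled by the modulus of continuity of $F(\gamma,\cdot)$, so it extends uniquely to an absolutely continuous $f_\gamma:[0,1]\to\R$, which automatically satisfies the required pointwise bound $|\partial_t f_\gamma|(t)\le G(\gamma_t)|\dot\gamma_t|$ a.e. Since $\{0,1\}\subset D$, the identification $f_\gamma=f\circ\gamma$ at $t\in\{0,1\}$ is built in. For the Lebesgue-a.e.\ identification on $[0,1]$ one argues as follows: the inequality from the first step implies that $s\mapsto f\circ\e_s$ is continuous from $[0,1]$ into $L^1(\ppi)$; combined with the pointwise limit $f(\gamma_s)\to f_\gamma(t)$ for $D\ni s\to t$ valid for $\ppi$-a.e.\ $\gamma$, this forces $f(\gamma_t)=f_\gamma(t)$ for $\ppi$-a.e.\ $\gamma$ at each fixed $t$. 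A final Fubini argument upgrades this family of statements to the ``for $\ppi$-a.e.\ $\gamma$, for Lebesgue-a.e.\ $t$'' form required by (ii).

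The main obstacle I anticipate is this last bookkeeping step in Step 3: converting the collection of ``for each fixed $t$, for $\ppi$-a.e.\ $\gamma$'' statements into a single ``for $\ppi$-a.e.\ $\gamma$, for a.e.\ $t$'' statement, while simultaneously ensuring that the continuously extended candidate $f_\gamma$ -- defined only from values of $f\circ\gamma$ on the countable set $D$ -- really coincides a.e.\ with $f\circ\gamma$, not merely interpolates it.
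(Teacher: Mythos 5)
A preliminary remark: the paper does not prove this proposition at all — it is recalled from \cite{AmbrosioGigliSavare11} and \cite{Gigli12} — so the comparison here is with the standard proof in those references, whose overall structure your proposal follows. Your direction (ii)$\Rightarrow$(i) is correct as written, and your Step~1 and Step~3 of (i)$\Rightarrow$(ii) (restriction to subintervals, extension of $f\circ\gamma$ from a countable dense set $D$ using the modulus of continuity of $F(\gamma,\cdot)$, the $L^1(\ppi)$-continuity of $s\mapsto f\circ\e_s$, and the final Fubini upgrade) are exactly the standard and correct way to conclude.

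There is, however, one genuine gap, in Step~2. From Step~1 you only obtain the \emph{integrated} inequality
\[
\int|f(\gamma_t)-f(\gamma_s)|\,\d\ppi(\gamma)\;\le\;\int\big(F(\gamma,t)-F(\gamma,s)\big)\,\d\ppi(\gamma)
\]
for each fixed $s<t$, and an inequality of the form $\int|a|\,\d\ppi\le\int b\,\d\ppi$ does not imply $|a|\le b$ for $\ppi$-a.e.\ $\gamma$. Your ``countable union of $\ppi$-null sets'' only addresses simultaneity over $(s,t)\in D\times D$; it presupposes, without justification, that for each fixed pair the pointwise bound already holds outside a null set. The missing ingredient is localization in the space of curves: for every Borel $\Gamma\subset C([0,1],\X)$ with $\ppi(\Gamma)>0$, the plan $\ppi(\Gamma)^{-1}({\rm Restr}_{s}^{t})_*(\ppi\restr{\Gamma})$ is again a test plan (precisely the fact recorded in the paper's preliminaries), so the integrated inequality holds with $\ppi$ replaced by $\ppi\restr{\Gamma}$ for \emph{every} such $\Gamma$, and the arbitrariness of $\Gamma$ then forces $|f(\gamma_t)-f(\gamma_s)|\le F(\gamma,t)-F(\gamma,s)$ for $\ppi$-a.e.\ $\gamma$ (e.g.\ by testing with $\Gamma=\{\gamma:\ |f(\gamma_t)-f(\gamma_s)|>F(\gamma,t)-F(\gamma,s)\}$). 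Once this one sentence is inserted, the rest of your argument goes through.
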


The Sobolev space $W^{1,2}(\X)$ is defined as $L^2\cap S^2(\X)$ and is equipped with the norm
\[
\|f\|_{W^{1,2}}^2:=\|f\|_{L^2}^2+\||\D f|\|_{L^2}^2.
\]
The locality property \eqref{eq:locwug} allows to introduce the space $S^{2}_{loc}(\X)$ (resp. $W^{1,2}_{loc}(\X)$) as the space of those functions locally equal to some function in $S^{2}(\X)$  (resp. $W^{1,2}(\X)$). These functions come with a natural weak upper gradient which belongs to $L^2_{loc}(\X)$. Similarly, for $\Omega\subset \X$ open, the space $S^2(\Omega)$ (resp. $W^{1,2}(\Omega)$) is defined as the space of those functions locally in $\Omega$ equal to some function in $S^{2}(\X)$ (resp. $W^{1,2}(\X)$) such that $|\D f|\in L^2(\Omega)$ (resp. $f,|\D f|\in L^2(\Omega)$). Notice that Lipschitz functions $f$ always belong to $W^{1,2}_{loc}(\X)$ and that
\begin{equation}
\label{eq:lipwug}
|\D f|\leq \lip(f),\quad \mm-a.e..
\end{equation}
A test plan $\ppi$ is said to \emph{represent} the gradient of $f\in S^{2}(\X)$ provided 
\[
\limi_{t\downarrow0}\int\frac{f(\gamma_t)-f(\gamma_0)}{t}\,\d\ppi(\gamma)\geq\frac12\int|\D f|^2(\gamma_0)\,\d\ppi(\gamma)+\frac12\lims_{t\downarrow0}\iint_0^t|\dot\gamma_s|^2\,\d s\,\d\ppi(\gamma).
\]
Notice that the opposite inequality always holds. If $f$ is only in $S^2(\Omega)$ for some open set $\Omega$, we add the requirement that $(\e_t)_*\ppi$ is concentrated on $\Omega$ for every $t\in[0,1]$ sufficiently small.

Spaces $(\X,\sfd,\mm)$ such that $W^{1,2}(\X)$ is an Hilbert space are called \emph{infinitesimally Hilbertian} (see \cite{Gigli12}, to which we also refer for the differential calculus recalled below).

It turns out that
\begin{equation}
\label{eq:hilbloc}
\text{ on inf. Hilb. spaces the map $S^2(\X)\ni f\mapsto |\D f|^2\in L^1(\X)$ is a quadratic form.}
\end{equation} 
By polarization, it induces a bilinear and symmetric map
\[
S^2(\X)\ni f,g\qquad\mapsto\qquad\la\nabla f,\nabla g\ra\in L^1(\X),
\] 
which satisfies $\la\nabla f,\nabla f\ra=|\D f|^2$, the `Cauchy-Schwarz' inequality $|\la\nabla f,\nabla g\ra|\leq|\D f||\D g|$ and the chain and Leibniz rules
\begin{equation}
\label{eq:calcrules}
\begin{split}
\la\nabla(\varphi\circ f),\nabla g\ra&=\varphi'\circ f\la\nabla f,\nabla g\ra,\\
\la\nabla f,\nabla(g_1g_2)\ra&=g_1\la\nabla f,\nabla g_2\ra+g_2\la\nabla f,\nabla g_1\ra,
\end{split}
\end{equation}
for any $f,g\in S^2(\X)$, $g_1,g_2\in S^2\cap L^\infty(\X)$, $\varphi\in C^1(\R)$ with bounded derivative.

A simple yet crucial result is the following \emph{first order differentiation formula}: if $(\X,\sfd,\mm)$ is infinitesimally Hilbertian, $f,g\in S^2(\Omega)$ for some open set $\Omega$ and $\ppi$ represents the gradient of $f$ and is such that $(\e_t)_*\ppi$ is concentrated on $\Omega$ for every $t\in[0,1]$ sufficiently small, then
\begin{equation}
\label{eq:firstorder}
\lim_{t\downarrow0}\int\frac{g(\gamma_t)-g(\gamma_0)}t\,\d\ppi(\gamma)=\int\la\nabla f,\nabla g\ra(\gamma_0)\,\d\ppi(\gamma).
\end{equation}
The space $D(\Delta)\subset W^{1,2}(\X)$ is the space of functions $f$ for which there is a function in $L^2(\X)$, called the Laplacian of $f$ and denoted by $\Delta f$, such that
\[
\int g\Delta f\,\d\mm=-\int\la\nabla f,\nabla g\ra\,\d\mm\qquad\forall g\in W^{1,2}(\X).
\]
The Laplacian is local in the sense that for every $f,g\in D(\Delta)$ we have
\[
\Delta f=\Delta g,\qquad\mm-a.e.\ \text{ on the interior of }\{f=g\},
\]
and satisfies the natural chain and Leibniz rules.

Choosing to test with Lipschitz functions with prescribed support yields the notion of measure valued Laplacian: for $f\in W^{1,2}(\X)$ and $\Omega\subset \X$ open, we say that $f$ has a measure valued Laplacian in $\Omega$ provided there is a measure, denoted by $\bd f\restr{\Omega}$, such that
\[
\int g\,\d\bd  f\restr\Omega=-\int\la\nabla f,\nabla g\ra\,\d\mm\qquad\forall g\text{ bounded and Lipschitz with compact support in }\Omega.
\]
If $\Omega=\X$ we shall simply write $\bd f$ in place of $\bd f\restr \X$. The `measure-valued' and `$L^2$-valued' notions of Laplacian are tightly linked, as for a given $f\in W^{1,2}(\X)$ we have $f\in D(\Delta)$ if and only if $f$ has a measure-valued Laplacian absolutely continuous w.r.t.\ $\mm$ and with density in $L^2$. In this case $\Delta f$ coincides with such density.

The \emph{heat flow} on an infinitesimally Hilbertian space is the $L^2$-gradient flow of the Dirichlet energy $\mathcal E:L^2(\X)\to[0,\infty]$ defined as
\[
\mathcal E(f):=\left\{
\begin{array}{ll}
\displaystyle{\frac12\int|\D f|^2\,\d\mm},&\qquad\text{ if }f\in W^{1,2}(\X),\\
+\infty,&\qquad\text{ otherwise}.
\end{array}
\right.
\]
It will be denoted by $\h_t:L^2(\X)\to L^2(\X)$ so that for $f\in L^2(\X)$ the curve $t\mapsto \h_tf\in L^2(\X)$ is the unique gradient flow trajectory on $L^2(\X)$ of $\mathcal E$ starting from $f$. The fact that $\mathcal E$ is a quadratic form ensures that the heat flow is linear.

The functional $\mathcal U_N:\prob\X\to\R$ is defined as
\[
\mathcal U_N(\mu):=-\int\rho^{1-\frac1N}\,\d\mm,\qquad\text{for }\quad \mu=\rho\mm+\mu^s,\quad \mu^s\perp\mm.
\]
$(\X,\sfd,\mm)$ is said to be a $\CD^*(0,N)$ space provided $\mathcal U_N$ is geodesically convex on the space $(\probt{\supp(\mm)},W_2)$ (see \cite{BacherSturm10} and the original papers \cite{Lott-Villani09}, \cite{Sturm06II}). If $(\X,\sfd,\mm)$ is $\CD^*(0,N)$, then $(\supp(\mm),\sfd)$ is proper and geodesic (\cite{Sturm06II}).

A space which is both $\CD^*(0,N)$ and infinitesimally Hilbertian will be called $\RCD^*(0,N)$ (see \cite{Gigli12} and \cite{AmbrosioGigliSavare11-2}). We recall that a metric measure space $(\X,\sfd,\mm)$ has the \emph{Sobolev-to-Lipschitz} property (see \cite{Gigli13}) provided any $f\in W^{1,2}(\X)$ with $|\D f|\leq 1$ $\mm$-a.e., admits a 1-Lipschitz representative. It is an important fact about $\RCD^*(0,N)$ spaces that they have the Sobolev-to-Lipschitz property (see  \cite{AmbrosioGigliSavare11-2}).

On a $\RCD^*(0,N)$ space we consider the following space of \emph{test functions} (see \cite{Savare13} and \cite{Gigli14}):
\[
\fsm \X:=\Big\{f\in  D(\Delta)\ :\ f,|\D f|\in L^\infty(\X),\ \Delta f\in W^{1,2}(\X) \Big\}.
\]
In particular, test functions have a Lipschitz representative and we shall always consider such representative when working with them. By simple truncation and mollification via the heat flow we see that $\fsm\X$ is dense in $W^{1,2}(\X)$ and it easy to check that it is stable by application of the heat flow. A slightly more refined argument grants that 
\begin{equation}
\label{eq:densetest}
\begin{split}
&\text{if $f\in W^{1,2}(\X)$ (resp. $L^2(\X)$) has support on a given open set $\Omega$, then there exists}\\
&\text{a sequence of test functions with support in $\Omega$ converging to $f$ in $W^{1,2}(\X)$ (resp. $L^2(\X)$).}
\end{split}
\end{equation}

It is a remarkable property of test functions $f$ the fact that $|\D f|^2\in W^{1,2}(\X)$ (see \cite{Savare13}) and this fact  grants that $\fsm\X$ is an algebra.

Finally, on $\RCD^*(0,N)$ spaces the \emph{Bochner inequality} holds (\cite{Erbar-Kuwada-Sturm13}) in the sense that for $f\in\fsm\X$ the function $|\D f|^2$ has a measure valued Laplacian and
\begin{equation}
\label{eq:Bochner}
\bd\frac{|\D f|^2}{2}\geq \Big(\frac{(\Delta f)^2}{N}+\la\nabla f,\nabla\Delta f \ra\Big)\mm.
\end{equation}

\section{Main}
Throughout all the paper we shall make the following assumption:
\begin{assumption}\label{ass:base} $(\X,\sfd,\mm)$ is a $\RCD^*(0,N)$ space with $\supp(\mm)=\X$, $\O\in \X$ is a given point and $\Rb>\rb>0$ are radii such that
\[
\mm(B_{\Rb}(\O))=\Big(\frac{\Rb}{\rb}\Big)^N\mm(B_{\rb}(\O)).
\]
\end{assumption}
{
The proof of Theorem \ref{thm:mainthm} will be based in the study of the {\em gradient flow} of the  ``Busemann'' function $\b:\X\to\R^+$ given by
\begin{equation*}\label{eq:bus}
\b(x):=\frac{\sfd^2(x,\O)}{2},\qquad\forall x\in \X.
\end{equation*}
}
\subsection{Gradient flow of $\b$: effect on the measure}\label{se:gf}
We start recalling the following basic fact about geodesics on $\RCD^*(0,N)$ spaces, which directly follows from the existence of optimal maps established in \cite{GigliRajalaSturm13} when one of the two  measures considered  is $\delta_{\O}$:
\begin{proposition}
There exists a Borel $\mm$-negligible set $\mathcal N\subset \X$ and a Borel map $\fl:[0,1]\times \X\to \X$ such that for every $x\in \X\setminus\mathcal N$ the curve $[0,1]\ni t\mapsto\fl_t(x)$ is the unique constant speed geodesic from $\O$ to $x$. Moreover,  for every $t\in(0,1]$ the map $\fl_t:\X\setminus\mathcal N\to \X$ is injective and the measure $(\fl_t)_*\mm$ is absolutely continuous w.r.t.\ $\mm$ and its density $\rho_t$ satisfies
\begin{equation}
\label{eq:densinter}
\rho_t\circ\fl_t \leq t^{-N},\qquad\mm-a.e..
\end{equation}
\end{proposition}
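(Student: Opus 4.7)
My plan is to derive this proposition as a direct corollary of the Gigli--Rajala--Sturm theorem on existence and uniqueness of optimal transport maps in essentially non-branching $\CD^*(K,N)$ spaces, specialised to the case where one marginal is the Dirac mass $\delta_\O$. Concretely, I would fix an exhaustion $(A_n)$ of $\X$ by bounded Borel sets with $0<\mm(A_n)<\infty$ and apply the GRS theorem to the marginals $\mu_0^n:=\mm\restr{A_n}/\mm(A_n)$ and $\mu_1:=\delta_\O$: its output is that the optimal dynamical plan is unique and induced by a map, so in particular, for $\mu_0^n$-a.e.\ $x$ the constant speed geodesic from $x$ to $\O$ is unique. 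Since the set of constant speed geodesics from $\O$ to a given endpoint is a compact subset of $C([0,1],\X)$ depending measurably on the endpoint, a standard Borel selection theorem produces Borel maps $(t,x)\mapsto\fl^n_t(x)$ with $\fl^n_0\equiv\O$ and $\fl^n_1=\mathrm{Id}$ on a $\mm\restr{A_n}$-conull set; uniqueness makes these $\fl^n$'s compatible on overlaps, so they glue to a single Borel $\fl:[0,1]\times(\X\setminus\mathcal N)\to\X$ with $\mathcal N$ $\mm$-negligible.

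For injectivity of $\fl_t$ when $t\in(0,1]$, I would rely on the essentially non-branching property of $\RCD^*(K,N)$ spaces, also established in Gigli--Rajala--Sturm: if $\fl_t(x_1)=\fl_t(x_2)$ for some $t\in(0,1]$ and $x_1\neq x_2$ outside $\mathcal N$, then the geodesics $s\mapsto\fl_s(x_i)$ agree on $[0,t]$ and split at time $t$, contradicting essential non-branching modulo an $\mm$-negligible set that I absorb into $\mathcal N$. For the density estimate, I would apply $\CD^*(0,N)$ displacement convexity along the unique $W_2$-geodesic from $\mu_0:=\delta_\O$ to $\mu_1:=\mm\restr B/\mm(B)$ for an arbitrary bounded Borel $B$: by construction this geodesic is $(\fl_s)_*\mu_1$, and the pointwise (Brunn--Minkowski / $\mathsf{MCP}$-type) consequence of $\CD^*(0,N)$ reads
\[
\rho_s(\fl_s(x))^{-1/N}\ \geq\ s\,\rho_1(x)^{-1/N}\qquad\text{for }\mu_1\text{-a.e.\ }x,
\]
where $\rho_s$ is the density of $(\fl_s)_*\mu_1$ against $\mm$. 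Rearranging and using the injectivity just established yields $(\fl_s)_*(\mm\restr B)\leq s^{-N}\mm$, and letting $B$ exhaust $\X$ produces the announced global bound $\rho_t\circ\fl_t\leq t^{-N}$.

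I expect the only delicate point to be the density bound, because the $\CD^*(0,N)$ convexity inequality is formally written for absolutely continuous marginals, whereas here one endpoint is forced to be the Dirac $\delta_\O$. The standard remedy is to approximate $\delta_\O$ by absolutely continuous probability measures supported in shrinking balls around $\O$, apply $\CD^*(0,N)$ along the approximating $W_2$-geodesics, and pass to the limit using stability of optimal transport together with lower semicontinuity of $\mathcal U_N$; alternatively one can invoke the Measure Contraction Property, which holds on $\RCD^*(K,N)$ spaces and is tailored precisely to this Dirac-to-measure situation. Either route is standard and was developed in \cite{GigliRajalaSturm13}, so once it is granted, everything else reduces to bookkeeping on the Borel structure of the geodesic selection.
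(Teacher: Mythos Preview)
Your approach matches the paper's: the proposition is stated there as a direct consequence of \cite{GigliRajalaSturm13} applied with one marginal equal to $\delta_\O$, with no further argument given. Your unpacking of that citation is correct, with one point to tighten in the injectivity step. From $\fl_t(x_1)=\fl_t(x_2)$ alone you cannot conclude that the two geodesics coincide on all of $[0,t]$: they share only the two endpoints $\O$ and $y:=\fl_t(x_1)$, and there may a priori be several geodesics from $\O$ to $y$, so the essential non-branching property does not bite as stated. The standard fix is to first observe that the intermediate measure $\mu_t:=(\fl_t)_*\mu_1$ is absolutely continuous (this is exactly the MCP bound you already invoke, and its proof does not use injectivity), and then apply GRS once more to the pair $(\mu_t,\mu_1)$: the resulting optimal map $T_t$ satisfies $x=T_t(\fl_t(x))$ for $\mu_1$-a.e.\ $x$, which yields injectivity of $\fl_t$ directly without passing through the non-branching clause.
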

Coupling this proposition with   Assumption \ref{ass:base} we get the following rigidity result:
\begin{proposition}\label{prop:mpf}
For every $t\in(0,1]$ we have 
\begin{equation}
\label{eq:mispalle}
\mm(B_{t\Rb}(\O))=t^N\mm(B_\Rb(\O)),\qquad\forall t\in[0,1].
\end{equation}
and
\begin{equation}
\label{eq:misura}
(\fl_t)_*(\mm\restr{B_{\Rb}(\O)})=\frac{1}{t^N}\mm\restr{B_{t\Rb }(\O)}.
\end{equation}
\end{proposition}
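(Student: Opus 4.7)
The plan is to work in two stages: first establish both identities in the outer range $t \in [\rb/\Rb, 1]$ using the density bound together with the Assumption, and then propagate them inward by iterating the flow.

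For the outer range, note that the measure $(\fl_t)_*(\mm\restr{B_\Rb(\O)})$ is concentrated on $B_{t\Rb}(\O)$ (since $\sfd(\O,\fl_t(x))=t\sfd(\O,x)$), has total mass $\mm(B_\Rb(\O))$ (by injectivity of $\fl_t$ on $\X\setminus\mathcal{N}$), and is absolutely continuous w.r.t.\ $\mm$ with density at most $t^{-N}$ (being a submeasure of $(\fl_t)_*\mm$, whose density obeys \eqref{eq:densinter}). This already yields the Bishop--Gromov-type inequality $\mm(B_{t\Rb}(\O)) \geq t^N\mm(B_\Rb(\O))$ for every $t\in(0,1]$; by Assumption \ref{ass:base} it is an equality at $t = \rb/\Rb$, and the resulting monotonicity of $r\mapsto\mm(B_r(\O))/r^N$ forces equality throughout $[\rb/\Rb,1]$, which is \eqref{eq:mispalle} on this range. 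Knowing this, the density of $(\fl_t)_*(\mm\restr{B_\Rb(\O)})$---bounded pointwise by $t^{-N}$ and integrating to $t^{-N}\mm(B_{t\Rb}(\O))$---must equal $t^{-N}$ $\mm$-a.e.\ on $B_{t\Rb}(\O)$, which is \eqref{eq:misura} on the same range.

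To reach smaller $t \in (0,1]$, pick $n\in\N$ with $s := t^{1/n} \in [\rb/\Rb, 1]$. By uniqueness of geodesics from $\O$ the semigroup identity $\fl_{s^k} = \fl_s \circ \fl_{s^{k-1}}$ holds $\mm$-a.e., and I will show by induction on $k$ that $(\fl_{s^k})_*(\mm\restr{B_\Rb(\O)}) = s^{-kN}\mm\restr{B_{s^k\Rb}(\O)}$. The inductive step hinges on the elementary observation that $\fl_s^{-1}(B_{sr}(\O)) \cap B_\Rb(\O) \subset B_r(\O)$ for every $r\leq\Rb$ (since $\fl_s(x) \in B_{sr}(\O)$ forces $\sfd(\O,x) < r$); combined with the base case this upgrades to $(\fl_s)_*(\mm\restr{B_r(\O)}) = s^{-N}\mm\restr{B_{sr}(\O)}$ for every $r \leq \Rb$, and applying it with $r = s^{k-1}\Rb$ advances the induction. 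The case $k = n$ delivers \eqref{eq:misura} for our chosen $t$, and \eqref{eq:mispalle} follows by comparing total masses (the case $t = 0$ being trivial since $\mm(\{\O\}) = 0$).

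There is no deep obstacle here: the only bookkeeping point is that the successive compositions $\fl_s \circ \fl_s \circ \cdots$ remain $\mm$-a.e.\ well-defined along the iteration, which is automatic because each intermediate pushforward $(\fl_{s^k})_*(\mm\restr{B_\Rb(\O)})$ is absolutely continuous w.r.t.\ $\mm$, so the exceptional set $\mathcal{N}$ is traversed only on an $\mm$-null set of starting points.
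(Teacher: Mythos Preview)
Your argument is correct, and it takes a genuinely different route from the paper's. The paper proves \eqref{eq:mispalle} in one shot on all of $[0,1]$ by invoking the \emph{differential} Bishop--Gromov inequality to show that $g(s):=\mm(B_{s^{1/N}}(\O))$ is concave, so that equality at the single interior point $s=\rb^N$ forces linearity on $[0,\Rb^N]$; it then obtains \eqref{eq:misura} for every $t$ at once by a separate argument using the geodesic convexity of the entropy $\mathcal U_N$ along the $W_2$-geodesic from $\mm(B_\Rb(\O))^{-1}\mm\restr{B_\Rb(\O)}$ to $\delta_{\O}$, together with the equality case in Jensen's inequality. Your proof, by contrast, uses nothing beyond the density bound \eqref{eq:densinter}: from it you extract the monotonicity of $r\mapsto\mm(B_r(\O))/r^N$, hence \eqref{eq:mispalle} on the outer range $[\rb/\Rb,1]$, and then \eqref{eq:misura} there as the equality case of the pointwise density bound; the semigroup property of $\fl_t$ lets you iterate inward. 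The paper's route is slicker---no iteration---but calls on two heavier external ingredients (differential Bishop--Gromov and the $\CD^*$ entropy convexity), while your route is more self-contained relative to what the paper has actually stated up to this point, at the modest cost of the bookkeeping in the iteration step, which you handle correctly.
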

\begin{proof} We start with \eqref{eq:mispalle}. Set $v(s):=\mm(B_{s}(\O))$ and  notice that the Bishop-Gromov inequality (see Theorem 2.3 in \cite{Sturm06II}) ensures that $v$ is continuous, locally semiconcave on $[0,\Rb]$ and that the map $\frac{v'(s)}{s^{N-1}}$ is decreasing. It follows that the map $g(s):=v(s^{\frac1N})$ is locally semiconcave on $(0,\Rb^{N}]$, continuous on $[0,\Rb^N]$ and from
\[
g'(s)=v'(s^{\frac1N})\frac1Ns^{\frac1N-1}=\frac1N\frac{v'(s^{\frac1N})}{(s^{\frac1N})^{N-1}},\qquad a.e.\ s\in[0,\Rb^{N}],
\]
we see that $g$ has decreasing derivative, i.e.\ that $g$ is concave and in particular
\[
g(t\Rb^N)\geq t  g(\Rb^{N})+(1-t)g(0),\qquad\forall t\in[0,1].
\]
Since $g(0)=0$, we see that Assumption \ref{ass:base} gives that the inequality is an equality for $t:=\frac{\rb^N}{\Rb^N}$. Then the concavity of $g$ forces the equality for every $t\in[0,1]$, which is \eqref{eq:mispalle}.

To conclude, let $\mu_0:=\mm(B_\Rb(\O))^{-1}\mm\restr{B_\Rb(\O)}$ and put $\mu_t:=(\fl_{1-t})_*\mu_0$ so that $t\mapsto\mu_t$ is the only $W_2$-geodesic connecting $\mu_0$ to $\mu_1=\delta_{\O}$. Put also  $\nu_t:=\mm(B_{(1-t)\Rb}(\O))^{-1}\mm\restr{B_{(1-t)\Rb}(\O)}$, notice that  $\mu_t$ is concentrated on $B_{(1-t)\Rb}(\O)$ and thus for every $t\in[0,1)$
\begin{align*}
-(1-t)\mm(B_\Rb(\O))^{\frac1N}&=(1-t)\mathcal U_N(\mu_0)&&\text{ by computation}\\
&\geq \mathcal U_N(\mu_t)&&\text{ from $\CD^*(0,N)$   and  $\mathcal U_N(\delta_{\O})=0$}\\
&\geq \mathcal U_N(\nu_t)&&\text{ by Jensen's inequality}\\
&=-\mm(B_{(1-t)\Rb}(\O))^{\frac1N}&&\text{ by computation}\\
&=-(1-t)\mm(B_\Rb(\O))^{\frac1N}&&\text{ by \eqref{eq:mispalle}}.
\end{align*}
Therefore we have the equality in Jensen's inequality, which forces $\mu_t=\nu_t$  for every $t\in[0,1)$, which is the claim.
\end{proof}
{
We now introduce the 
the reparametrized flow $\gl:[0,\infty)\times (\X\setminus\mathcal N)\to \X$ defined by
\[
\gl_s(x):=\fl_{e^{-s}}(x),\qquad\forall s\geq 0,\ x\in \X\setminus\mathcal N.
\]
Notice that in the smooth setting, the flow $\gl$ would be the gradient flow of $\b$. In our context, some basic properties of $\gl$ follow from those of $\fl$:
}
\begin{corollary}
\label{cor:gl}
The following holds:
\begin{itemize}
\item[i)] For every $x\in \X\setminus \mathcal N$ the curve $[0,\infty)\ni t\mapsto\gamma_t:=\gl_t(x)$ is locally Lipschitz and satisfies
\[
\b(\gamma_t)=\b(\gamma_s)+\frac12\int_t^s|\dot\gamma_r|^2+\lip(\b)^2(\gamma_r)\,\d r,\qquad\forall 0\leq t\leq s,
\]
and in particular the metric speed of $t\mapsto \gl_t(x)$ is equal to $\lip(\b)(\gl_t(x))\leq \lip(\b)(x)$.
\item[ii)] For every $t\geq 0$, $\gl_t$ is an essentially invertible map from $B_{\Rb}(\O)$ to $B_{e^{-t}\Rb}(\O)$, i.e.\ there exists a map $\gl_t^{-1}:B_{e^{-t}\Rb}(\O)\to B_{\Rb}(\O)$ such that $\gl_t\circ\gl_t^{-1}=\Id$ $\mm$-a.e.\ on $B_{e^{-t}\Rb}(\O)$ and $\gl_t^{-1}\circ\gl_t=\Id$ $\mm$-a.e.\ on $B_{\Rb}(\O)$.
\item[iii)] For every $t\geq 0$ we have 
\begin{equation}
\label{eq:pfent1}
(\gl_t)_*\mm\leq e^{Nt}\mm
\end{equation}
and
\begin{equation}
\label{eq:pfent}
\frac{\d(\gl_t)_*\mm}{\d\mm}\circ\gl_t=e^{Nt},\qquad\mm-a.e.\ on\ B_\Rb(\O).
\end{equation}
\item[iv)] For every $t,s\geq 0$ we have
\begin{equation}
\label{eq:distgl}
\begin{split}
\gl_t(\gl_s(x))&=\gl_{t+s}(x),\\
\sfd(\gl_s(x),\gl_t(x))&=\sfd(x,\O)|e^{-s}-e^{-t}|,
\end{split}
\end{equation}
for $\mm$-a.e.\ $x\in \X$.
\end{itemize}
\end{corollary}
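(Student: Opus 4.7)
The plan is to deduce each of the four statements from the properties of the geodesic flow $\fl$ recorded in the previous proposition, together with the equality cases furnished by Proposition \ref{prop:mpf}, via the substitution $r = e^{-s}$. Throughout I would restrict to $x \notin \mathcal{N}$ and use that for such $x$ the curve $r \mapsto \fl_r(x)$ is the unique constant-speed geodesic from $\O$ to $x$ with speed $\sfd(x,\O)$.

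For (i), the chain rule for metric speeds of reparametrizations gives $|\dot\gamma_t| = e^{-t}\sfd(x,\O) = \sfd(\gamma_t,\O)$. The identity $\lip(\b)(y) = \sfd(y,\O)$ holds at every $y$ lying on a constant-speed geodesic issuing from $\O$: the upper bound is a direct triangle-inequality computation using $\b(y) = \sfd^2(y,\O)/2$, and the matching lower bound follows by letting $z$ approach $y$ along that geodesic. Hence $|\dot\gamma_t| = \lip(\b)(\gamma_t)$, and the bound $\leq \lip(\b)(x)$ comes from the monotonicity of $t \mapsto \sfd(\gamma_t,\O) = e^{-t}\sfd(x,\O)$. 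The energy identity then reduces to checking that $\b(\gamma_t) = e^{-2t}\sfd^2(x,\O)/2$ has derivative $-|\dot\gamma_t|^2 = -\tfrac12(|\dot\gamma_t|^2 + \lip(\b)^2(\gamma_t))$, which is immediate since the two summands coincide. For the semigroup identity in (iv), I would further restrict to $x$ with $\gl_s(x) \notin \mathcal{N}$, which holds $\mm$-a.e.\ thanks to the absolute continuity of $(\fl_{e^{-s}})_*\mm$: for such $x$, uniqueness of the geodesic from $\O$ to $y := \gl_s(x)$ forces it to agree with the reparametrization on $[0,1]$ of the restriction to $[0,e^{-s}]$ of $r \mapsto \fl_r(x)$, giving $\fl_r(y) = \fl_{re^{-s}}(x)$ and hence $\gl_t \circ \gl_s = \gl_{t+s}$. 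The distance formula is immediate because $\gl_s(x)$ and $\gl_t(x)$ both lie on the constant-speed geodesic from $\O$ to $x$ of total length $\sfd(x,\O)$.

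For (iii), the inequality \eqref{eq:pfent1} is a direct consequence of \eqref{eq:densinter}: the density of $(\gl_s)_*\mm = (\fl_{e^{-s}})_*\mm$ vanishes outside the essential image of $\fl_{e^{-s}}$ and there is pointwise $\leq e^{Ns}$. For the equality \eqref{eq:pfent}, I would rewrite \eqref{eq:misura} as $(\gl_s)_*(\mm\restr{B_\Rb(\O)}) = e^{Ns}\mm\restr{B_{e^{-s}\Rb}(\O)}$ and combine it with the global bound $(\gl_s)_*\mm \leq e^{Ns}\mm$: these two together pin the density of $(\gl_s)_*\mm$ to equal exactly $e^{Ns}$ on $B_{e^{-s}\Rb}(\O)$ up to an $\mm$-null set, and pulling back through $\gl_s$ yields the stated identity on $B_\Rb(\O)$. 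Part (ii) is then essentially automatic: injectivity of $\gl_s$ is inherited from injectivity of $\fl_{e^{-s}}$ with $e^{-s} \in (0,1]$, while essential surjectivity of $\gl_s|_{B_\Rb(\O)}$ onto $B_{e^{-s}\Rb}(\O)$ is forced by the equality of total masses coming from \eqref{eq:misura} combined with \eqref{eq:mispalle}; defining $\gl_s^{-1}$ on the resulting full-measure subset and extending arbitrarily on the exceptional $\mm$-null set gives the desired essential inverse. Apart from routine bookkeeping with the $\mm$-null set $\mathcal{N}$ and its iterated images and preimages under the various $\gl_s$, I do not foresee any substantive obstacle: the entire corollary is a careful reparametrization of previously established facts about $\fl$, with Proposition \ref{prop:mpf} supplying the equality cases needed to upgrade the relevant inequalities to identities on the region of interest.
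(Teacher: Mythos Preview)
Your proposal is correct and follows essentially the same approach as the paper: each item is obtained by reparametrizing the geodesic flow $\fl$ via $r=e^{-s}$, with Proposition~\ref{prop:mpf} (specifically \eqref{eq:misura}) supplying the equality cases for (ii) and \eqref{eq:pfent}, and \eqref{eq:densinter} giving the global bound \eqref{eq:pfent1}. The paper's proof is terser but identical in substance; your added care with the null-set bookkeeping for the semigroup identity in (iv) is exactly the ``routine bookkeeping'' the paper leaves implicit.
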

\begin{proof} The triangle inequality shows that $\lip(\b)(x)\leq \sfd(x,\O)$ and an interpolation with a geodesic that equality actually holds.
Point $(i)$ follows by direct computation. Concerning point $(ii)$, notice that the essential injectivity of $\gl_t$ follows from the one of $\fl_{e^{-t}}$ and the essential surjectivity is a consequence of \eqref{eq:misura}. The bound \eqref{eq:pfent1} follows from \eqref{eq:densinter} and \eqref{eq:pfent} is a restatement of \eqref{eq:misura}. Finally, for property $(iv)$ recall that $t\mapsto\fl_t(x)$ is a constant speed geodesic from $\O$ to $x$ for $\mm$-a.e.\ $x$ and take into account the reparametrization.
\end{proof}
Few basic, yet interesting, properties of $\b$ and $\gl_t$ can now be established.
\begin{corollary}\label{cor:rappresenta}
Let $T:(\X\setminus\mathcal N)\to C([0,1],\X)$ be the map sending $x$ to the curve $[0,1]\ni t\mapsto \gl_t(x)$ and $\mu\in\prob \X$ with bounded support and  such that $\mu\leq C\mm$ for some $C$. Then $\ppi:=T_*\mu$ represents the gradient of $-\b$.
\end{corollary}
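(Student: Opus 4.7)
The whole proof will be a direct computation exploiting Corollary~\ref{cor:gl}(i), which gives two facts packaged together: the metric speed of the curve $t\mapsto\gl_t(x)$ equals $\lip(\b)(\gl_t(x))$, and the energy identity
\[
\b(\gamma_0)-\b(\gamma_t)=\tfrac12\int_0^t\bigl(|\dot\gamma_r|^2+\lip(\b)^2(\gamma_r)\bigr)\,\d r.
\]
This already has exactly the ``half kinetic + half gradient-squared'' structure appearing in the definition of representing the gradient, with the added feature that along these curves the two halves are \emph{equal}. Thus no Young's inequality slack is lost, and the required inequality for $-\b$ will reduce to $|\D\b|\leq\lip(\b)$, which is \eqref{eq:lipwug}.

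\textbf{Checking $\ppi$ is a test plan.} Bounded compression follows from $\mu\leq C\mm$ together with \eqref{eq:pfent1}: for every $t\in[0,1]$,
\[
(\e_t)_*\ppi=(\gl_t)_*\mu\leq C(\gl_t)_*\mm\leq Ce^{N}\mm.
\]
Finite kinetic energy follows from the metric-speed identity and the triangle inequality $\lip(\b)(x)\leq\sfd(x,\O)$, which is uniformly bounded on $\supp(\mu)$. Since $\b$ is locally Lipschitz, $-\b\in S^2_{loc}(\X)$; one works on a bounded open set $\Omega$ containing all curves $\gl_t(\supp(\mu))$, $t\in[0,1]$, on which $-\b\in S^2(\Omega)\cap L^\infty(\Omega)$ with $|\D\b|$ bounded, so the ``local'' version of representing the gradient is the right framework.

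\textbf{Limit computation.} Dividing the energy identity by $t$ and integrating against $\ppi$,
\[
\int\frac{-\b(\gamma_t)-(-\b(\gamma_0))}{t}\,\d\ppi=\frac{1}{2t}\iint_0^t|\dot\gamma_r|^2\,\d r\,\d\ppi+\frac{1}{2t}\iint_0^t\lip(\b)^2(\gamma_r)\,\d r\,\d\ppi.
\]
The function $\lip(\b)(y)=\sfd(y,\O)$ is continuous and uniformly bounded along the trajectories, and $|\dot\gamma_r|^2=\lip(\b)^2(\gamma_r)$ by Corollary~\ref{cor:gl}(i); so dominated convergence applied to both terms yields
\[
\lim_{t\downarrow0}\int\frac{-\b(\gamma_t)+\b(\gamma_0)}{t}\,\d\ppi=\int\lip(\b)^2(\gamma_0)\,\d\ppi,\qquad\lim_{t\downarrow0}\tfrac{1}{t}\iint_0^t|\dot\gamma_r|^2\,\d r\,\d\ppi=\int\lip(\b)^2(\gamma_0)\,\d\ppi.
\]
The defining inequality for ``$\ppi$ represents the gradient of $-\b$'' then becomes
\[
\int\lip(\b)^2\,\d\ppi\geq\tfrac12\int|\D\b|^2\,\d\ppi+\tfrac12\int\lip(\b)^2\,\d\ppi,
\]
i.e. $\int|\D\b|^2\,\d\ppi\leq\int\lip(\b)^2\,\d\ppi$, which is immediate from \eqref{eq:lipwug}.

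\textbf{Main point.} There is no substantive obstacle: the work has already been done in Corollary~\ref{cor:gl}(i). The only mild technicality is working with $-\b\in S^2_{loc}(\X)$ via the localized notion of representing the gradient, handled by restricting to a bounded open set containing the (bounded) union of all trajectories. The proof is really a matter of observing that the flow $\gl$ is, by construction, optimal for the ``half-speed-squared + half-gradient-squared'' variational quantity associated to $-\b$.
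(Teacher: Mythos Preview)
Your proof is correct and follows essentially the same route as the paper's: both verify bounded compression via \eqref{eq:pfent1}, finite kinetic energy via the speed bound in Corollary~\ref{cor:gl}(i), and then integrate the energy identity from Corollary~\ref{cor:gl}(i), divide by $t$, pass to the limit, and conclude using $|\D\b|\leq\lip(\b)$. The only cosmetic difference is in how the limit is justified: the paper argues via the weak-$L^1$ convergence $(\e_s)_*\ppi\weakto\mu$ (needed because $|\D\b|^2$ is a priori only measurable), whereas you keep $\lip(\b)^2$ in the integrand---which is continuous and bounded along the trajectories---and invoke dominated convergence directly, postponing the comparison with $|\D\b|$ to the final line; this is a slightly cleaner way to handle the same step.
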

\begin{proof}
The bound \eqref{eq:pfent1} ensures that $\ppi$ has bounded compression, while from point $(i)$ of Corollary \ref{cor:gl} we know that the metric speed of $T(x)$ is bounded by $\lip(\b)(x)=\sfd(x,\O)$, so that the fact that $\mu$ has bounded support ensures that $\ppi$ has finite kinetic energy.

Thus $\ppi$ is a test plan. Moreover, from $(i)$ of Corollary \ref{cor:gl} we have
\[
\begin{split}
\int \b(\gamma_0)-\b(\gamma_t)\,\d\ppi(\gamma)&=\frac12\iint_0^t\lip^2(\b)(\gamma_s)+|\dot\gamma_s|^2\,\d s\,\d\ppi(\gamma)\\
&\geq\frac12\iint_0^t|\D \b|^2(\gamma_s)+|\dot\gamma_s|^2\,\d s\,\d\ppi(\gamma).
\end{split}
\]
Dividing by $t$, letting $t\downarrow0$, noticing that the measures $(\e_s)_*\ppi$ have uniformly bounded support and that $|\D\b|\leq\lip(\b)$ is bounded on bounded sets, to conclude it is sufficient to prove that  $(\e_s)_*\ppi\weakto\mu$ as $s\downarrow0$ in duality with $L^1(\X)$. This follows from  $W_2$-convergence - which grants weak convergence - and the fact that these measures have uniformly bounded densities.
\end{proof}

\begin{corollary}\label{cor:db}
$|\D \b|^2=2\b$ $\mm$-a.e..
\end{corollary}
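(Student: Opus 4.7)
The plan is to prove the equality by establishing the two inequalities separately. The easy direction $|\D\b|^2\le 2\b$ is immediate from \eqref{eq:lipwug} together with the identity $\lip(\b)(x)=\sfd(x,\O)$, which was observed in the proof of Corollary \ref{cor:gl}: indeed $\lip^2(\b)(x)=\sfd^2(x,\O)=2\b(x)$, so $|\D\b|^2\le \lip^2(\b)=2\b$ holds $\mm$-a.e.

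For the reverse inequality, the idea is to exploit the test plan $\ppi:=T_*\mu$ constructed in Corollary \ref{cor:rappresenta}, which represents the gradient of $-\b$ for an arbitrary $\mu\in\prob\X$ with bounded support and $\mu\le C\mm$. I would feed this into the first order differentiation formula \eqref{eq:firstorder} with $f=g=-\b$, performed on an open ball $\Omega=B_R(\O)$ large enough to contain $\supp(\mu)$: since $\gl_t$ contracts distances to $\O$, the measures $(\e_t)_*\ppi$ remain supported in $\bar B_R(\O)\subset\Omega$ for every $t\in[0,1]$, and $\b$ is Lipschitz (hence in $S^2(\Omega)$) there. The formula then gives
\[
\lim_{t\downarrow 0}\frac1t\int\bigl(\b(\gamma_0)-\b(\gamma_t)\bigr)\,\d\ppi(\gamma)=\int|\D\b|^2(x)\,\d\mu(x).
\]
On the other hand, the exact energy identity from point $(i)$ of Corollary \ref{cor:gl}, combined with the equality $|\dot\gamma_s|=\lip(\b)(\gamma_s)=\sfd(\gamma_s,\O)$ of the metric speed along $\gl$, yields
\[
\int\bigl(\b(\gamma_0)-\b(\gamma_t)\bigr)\,\d\ppi(\gamma)=\iint_0^t\sfd^2(\gamma_s,\O)\,\d s\,\d\ppi(\gamma).
\]
Dividing by $t$ and passing to the limit (which is legitimate by continuity of $x\mapsto\sfd^2(x,\O)$, the uniform bound $\sfd(\gamma_s,\O)\le R$, and dominated convergence) produces $\int\sfd^2(x,\O)\,\d\mu(x)=\int 2\b\,\d\mu$. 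Equating both expressions for the same limit gives $\int|\D\b|^2\,\d\mu=\int 2\b\,\d\mu$ for every admissible $\mu$; since $|\D\b|^2\le 2\b$ already holds pointwise $\mm$-a.e., the arbitrariness of $\mu$ forces $\mm$-a.e.\ equality.

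The only subtle step is verifying the hypotheses of \eqref{eq:firstorder} in this localised form, i.e.\ that $\b\in S^2(\Omega)$ and that $(\e_t)_*\ppi$ is concentrated on $\Omega$ for small $t$; both are essentially free because $\b$ is globally locally Lipschitz and the reparametrised flow $\gl_t$ only moves points toward $\O$. All the deeper ingredients (representation of the gradient of $-\b$, the pointwise energy identity along $\gl_t$, and the identification $\lip(\b)=\sfd(\cdot,\O)$) have been established in the previous corollaries, so the present statement is a clean consequence.
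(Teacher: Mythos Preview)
Your proof is correct and follows essentially the same approach as the paper: both arguments use the test plan $\ppi=T_*\mu$ from Corollary~\ref{cor:rappresenta}, the energy identity of Corollary~\ref{cor:gl}(i), and a limit as $t\downarrow 0$ together with the arbitrariness of $\mu$. The only cosmetic difference is that you invoke the packaged first-order differentiation formula~\eqref{eq:firstorder} (using that $\ppi$ represents $\nabla(-\b)$) to obtain $\int|\D\b|^2\,\d\mu$ directly, whereas the paper instead applies the raw weak-upper-gradient inequality $\int(\b(\gamma_0)-\b(\gamma_t))\,\d\ppi\le \iint_0^t|\D\b||\dot\gamma_s|\,\d s\,\d\ppi\le \tfrac12\iint_0^t(|\dot\gamma_s|^2+|\D\b|^2)\,\d s\,\d\ppi$ and compares with the energy identity to get $\iint\lip^2(\b)\le\iint|\D\b|^2$; this is marginally more elementary since it does not use the ``represents the gradient'' part of Corollary~\ref{cor:rappresenta}. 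One small wording fix: your claim ``$\bar B_R(\O)\subset\Omega$'' should read that the support of $\mu$ is contained in some $B_{R'}(\O)$ with $R'<R$, so that $(\e_t)_*\ppi$ stays inside the \emph{open} ball $\Omega=B_R(\O)$.
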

\begin{proof} Let $T:(\X\setminus\mathcal N)\to C([0,1],\X)$ be defined as in Corollary \ref{cor:rappresenta} and $\mu\in\prob \X$ be with bounded support such that $\mu\leq C\mm$ for some $C$. Then Corollary  \ref{cor:rappresenta} grants that $\ppi$ is a test plan and therefore, keeping in mind point $(i)$ of Corollary \ref{cor:gl}, we have
\[
\begin{split}
\frac12\iint_0^t|\dot\gamma_s|^2+\lip(\b)^2(\gamma_s)\,\d s\,\d\ppi(\gamma)&= \int\b(\gamma_0)-\b(\gamma_t)\,\d\ppi(\gamma)\\
&\leq\iint_0^t|\D \b|(\gamma_s)|\dot\gamma_s|\,\d s\,\d\ppi(\gamma)\\
&\leq \frac12\iint_0^t|\dot\gamma_s|^2+|\D \b|^2(\gamma_s)\,\d s\,\d\ppi(\gamma)
\end{split}
\]
which gives 
\[
\iint_0^t\lip(\b)^2(\gamma_s)\,\d s\,\d\ppi(\gamma)\leq \iint_0^t|\D \b|^2(\gamma_s)\,\d s\,\d\ppi(\gamma).
\]
 Dividing by $t$, letting $t\downarrow0$ and using the fact that $(\e_t)_*\ppi\weakto\mu$ as $t\downarrow0$ in duality with $L^1(\X)$ (like in Corollary \ref{cor:rappresenta}) we deduce that $\int\lip(\b)^2\,\d\mu\leq\int |\D \b|^2\,\d\mu$ which by the arbitrariness of $\mu$ and inequality \eqref{eq:lipwug} is sufficient to conclude, since \(\lip(\b)^2(x)=2\b(x)\).
\end{proof}

{
Finally we show as equality in the Bishop-Gromov inequality combined with the Laplacian comparison Theorem proved in \cite{Gigli12} allows to show that \(\Delta \b=N\) on $B_\Rb(\O)$:
\begin{proposition}\label{prop:laplB}
We have $\bd\b \restr{B_{\Rb}(\O)}=N\mm\restr{B_{\Rb}(\O)}$.
\end{proposition}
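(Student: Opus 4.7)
The plan is to compute the Laplacian of $\b$ directly via the flow $\gl$, combining the first-order differentiation formula \eqref{eq:firstorder} with the explicit measure-transport identity \eqref{eq:pfent}. Heuristically, the formula $(\gl_t)_*(\mm\restr{B_\Rb(\O)}) = e^{Nt}\mm\restr{B_{e^{-t}\Rb}(\O)}$ is the measure-theoretic incarnation of $\mathrm{div}(-\nabla \b) = -N$, i.e.\ $\Delta \b = N$; the job is to turn this heuristic into a rigorous computation of the measure-valued Laplacian of $\b$ on $B_{\Rb}(\O)$.

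Fix $g \in \fsm{\X}$ with $\supp g \subset B_{r_0}(\O)$ for some $r_0 < \Rb$. I claim it suffices to prove
\[
-\int \la \nabla \b, \nabla g \ra\,\d\mm = N\int g\,\d\mm. \qquad (\star)
\]
Indeed, by \eqref{eq:densetest} any bounded Lipschitz function compactly supported in $B_{\Rb}(\O)$ is the $W^{1,2}$-limit of a sequence of test functions whose supports sit in a common smaller ball; both sides of $(\star)$ are continuous in this sense (Cauchy--Schwarz on the left, using that $|\D \b| = \sqrt{2\b}$ is bounded on bounded sets, and $L^1$-continuity on the right), so $(\star)$ extends to such functions and the definition of measure-valued Laplacian on $\Omega=B_{\Rb}(\O)$ yields the claim.

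To establish $(\star)$, set $\mu := \mm(B_{\Rb}(\O))^{-1}\mm\restr{B_{\Rb}(\O)}$ and $\ppi := T_*\mu$ with $T$ as in Corollary \ref{cor:rappresenta}. Then $\ppi$ represents the gradient of $-\b$, and by Corollary \ref{cor:gl}(iv) the measure $(\e_t)_*\ppi=(\gl_t)_*\mu$ is concentrated on $B_{e^{-t}\Rb}(\O)\subset B_{\Rb}(\O)$ for every $t\ge 0$. Thus \eqref{eq:firstorder}, applied with $f:=-\b$ and $\Omega:=B_{\Rb}(\O)$, gives
\[
\lim_{t\downarrow 0}\int \frac{g(\gamma_t)-g(\gamma_0)}{t}\,\d\ppi(\gamma) \;=\; -\int \la \nabla \b, \nabla g \ra\,\d\mu.
\]
On the other hand, combining the essential bijectivity of $\gl_t:B_{\Rb}(\O)\to B_{e^{-t}\Rb}(\O)$ from Corollary \ref{cor:gl}(ii) with the density formula \eqref{eq:pfent} yields
\[
(\gl_t)_*(\mm\restr{B_{\Rb}(\O)}) \;=\; e^{Nt}\,\mm\restr{B_{e^{-t}\Rb}(\O)}.
\]
For $t>0$ small enough that $e^{-t}\Rb>r_0$, one has $\supp g\subset B_{e^{-t}\Rb}(\O)$, so a change of variables gives
\[
\int g(\gamma_t)\,\d\ppi(\gamma) = \mm(B_{\Rb}(\O))^{-1}\,e^{Nt}\int g\,\d\mm, \qquad \int g(\gamma_0)\,\d\ppi(\gamma) = \mm(B_{\Rb}(\O))^{-1}\int g\,\d\mm.
\]
Taking the difference, dividing by $t$, and letting $t\downarrow 0$ identifies the same limit as $\mm(B_{\Rb}(\O))^{-1} N \int g\,\d\mm$. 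Equating the two expressions, clearing the normalization, and using locality \eqref{eq:locwug} (so that $\la\nabla\b,\nabla g\ra$ vanishes $\mm$-a.e.\ off $\supp g\subset B_{\Rb}(\O)$, letting us replace $\d\mm\restr{B_{\Rb}(\O)}$ by $\d\mm$ on the left) yields $(\star)$.

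There is no single hard step: the substantive input is \eqref{eq:pfent}, which is itself a direct consequence of the equality case of Bishop--Gromov established in Proposition \ref{prop:mpf}. The only delicate point is bookkeeping --- ensuring $(\e_t)_*\ppi$ concentrates in $B_{\Rb}(\O)$ for small $t$ (so \eqref{eq:firstorder} really applies on the localized open set), and that $\supp g$ is strictly contained in a smaller ball so that the identity $\int g\,\d(\gl_t)_*(\mm\restr{B_\Rb(\O)}) = e^{Nt}\int g\,\d\mm$ holds for all sufficiently small $t$. Both are automatic from Corollary \ref{cor:gl}.
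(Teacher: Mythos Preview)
Your proof is correct and takes a genuinely different route from the paper's. The paper invokes the Laplacian comparison $\bd\b\le N\mm$ from \cite{Gigli12} as a black box, then considers the radial averages $h_\varphi(r)=r^{-N}\int\varphi(2\b/r^2)\,\d\mm$; differentiating in $r$ and integrating by parts shows $h_\varphi'\le 0$, while the volume equality \eqref{eq:mispalle} combined with the layer-cake formula shows $h_\varphi$ is constant on $[0,\Rb]$, forcing equality in the comparison. Your argument bypasses the comparison theorem entirely: you compute $-\int\la\nabla\b,\nabla g\ra\,\d\mm$ directly by pairing the first-order differentiation formula \eqref{eq:firstorder} (applied via Corollary~\ref{cor:rappresenta}) with the exact push-forward identity $(\gl_t)_*(\mm\restr{B_\Rb(\O)})=e^{Nt}\mm\restr{B_{e^{-t}\Rb}(\O)}$ coming from Proposition~\ref{prop:mpf}. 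This is more self-contained within the paper's setup and makes transparent the heuristic that the flow identity \emph{is} the divergence identity $\Delta\b=N$. The paper's route, on the other hand, stays closer to the classical Riemannian argument and highlights that the result is exactly the equality case of Laplacian comparison.
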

\begin{proof}
Let $\varphi:\R^+\to\R^+$ be smooth, non-increasing and with support in $[0,1)$  and let us consider 
\[
h_\varphi(r):=\frac{1}{r^N}\int \varphi\Big(\frac{2\b}{r^2}\Big)\,\d\mm.
\]
Differentiating with respect to \(r\), taking into account that \(|\D \b|^2=2\b\) and  that   \(\bd\b \le N\mm\), see \cite{Gigli12}, we obtain
\begin{equation}\label{1}
\begin{split}
r^{N+1}h_\varphi'(r)&=-N\int \varphi\Big(\frac{2\b}{r^2}\Big)\,\d\mm-4\int \varphi'\Big(\frac{2\b}{r^2}\Big)\frac{b}{r^2}\,\d\mm
\\
&= -N\int \varphi\Big(\frac{2\b}{r^2}\Big)\,\d\mm-2\int \varphi'\Big(\frac{2\b}{r^2}\Big)\frac{|\D \b|^2}{r^2}\,\d\mm
\\
&=-N\int \varphi\Big(\frac{2\b}{r^2}\Big)\,\d\mm+\int \varphi\Big(\frac{2\b}{r^2}\Big)\,\d \bd\b \le 0.
\end{split}
\end{equation}
So that \(h_\varphi\) is also non-increasing. On the other hand by \eqref{eq:misura} and the layer cake formula,  we see that for \(r\le \Rb\) and \(\varphi\) decreasing we have
\[
\begin{split}
h_\varphi(r)=\frac{1}{r^N}\int \varphi\Big(\frac{2\b}{r^2}\Big)\,\d\mm&=\frac{1}{r^N}\int_0^1\mm\Big(\Big\{\varphi\Big(\frac{2\b}{r^2}\Big)> c\Big\}\Big)\,\d c\\
&=\frac{1}{r^N}\int_0^r\mm\Big(\Big\{\varphi\Big(\frac{2\b}{r^2}\Big)> \varphi\Big(\frac{s^2}{r^2}\Big)\Big\}\Big)|\varphi'|\Big(\frac{s^2}{r^2}\Big)\frac{2s}{r^2}\,\d s\\
\text{(because $\varphi$ is non-increasing)}\qquad&=\frac{1}{r^N}\int_0^r\mm\big(B_{s}(\O)\big)|\varphi'|\Big(\frac{s^2}{r^2}\Big)\frac{2s}{r^2}\,\d s\\
\text{(by \eqref{eq:mispalle})}\qquad&=\frac{\mm(B_{\Rb}(\O))}{\Rb^N}\int_0^r\Big(\frac{s}{r}\Big)^N|\varphi'|\Big(\frac{s^2}{r^2}\Big)\frac{2s}{r^2}\,\d s\\
&=\frac{\mm(B_{\Rb}(\O))}{\Rb^N}\int_0^1t^N|\varphi'|(t^2){2t}\,\d t.
\end{split}
\]  

Hence \(h_\varphi\) is constant on \([0,R]\). Combining this with \eqref{1} we immediately deduce that for every  \(r\le \Rb\) we have
\[
\int \varphi\Big(\frac{2\b}{r^2}\Big)\,\d \bd\b=N\int \varphi\Big(\frac{2\b}{r^2}\Big).
\]
By  letting \(\varphi\to \chi_{[0,1]}\) and recalling that  \(\bd\b \le N\mm\), we obtain  $\bd\b \restr{B_{\Rb}(\O)}=N\mm\restr{B_{\Rb}(\O)}$, as desired.
\end{proof}
}

In the following corollary as well as in the foregoing discussion we shall denote by $\sfd_{\O}:\X\to\R^+$ the map sending $x$ to $\sfd(x,\O)$.
\begin{corollary}[Continuous disintegration of $\mm\restr{B_{\Rb}(\O)}$ along $\sfd_{\O}$]\label{cor:contdis}
We have 
\begin{equation}
\label{eq:pfdo}
\d (\sfd_{\O})_*(\mm\restr{B_\Rb(\O)})(r)=c\,\nchi_{[0,\Rb]}(r)\,r^{N-1}\d r,
\end{equation}
with $c:=N \mm(B_{\Rb}(\O))$ and there exists a weakly continuous family of measures $[0,\Rb]\ni r\mapsto \mm_r\in \prob{\X}$ such that
\begin{equation}
\label{eq:dismdo}
\int\varphi \,\d\mm=c\int_0^\Rb\int\varphi\,\d\mm_r\,r^{N-1}\,\d r,\qquad\forall \varphi\in C_c(B_{\Rb}(\O)).
\end{equation}
Moreover, for every $t\geq 0$ the measures $\mm_r$  satisfies 
\begin{equation}
\label{eq:glmeas}
(\gl_t)_*\mm_r=\mm_{e^{-t}r},\qquad a.e.\ r\in[0,\Rb].
\end{equation}
\end{corollary}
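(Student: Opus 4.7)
The plan is to proceed in four logical steps.

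First, to identify the pushforward $(\sfd_\O)_*(\mm\restr{B_\Rb(\O)})$ I would simply read off its cumulative distribution function from \eqref{eq:mispalle}: it is the smooth map $s \mapsto (s/\Rb)^N \mm(B_\Rb(\O))$ on $[0, \Rb]$, whose derivative yields the density in \eqref{eq:pfdo} after checking the numerical constant.

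Next I would invoke the classical disintegration theorem, applied to the finite Borel measure $\mm\restr{B_\Rb(\O)}$ along the continuous map $\sfd_\O$, to obtain a Borel family $\{\tilde\mm_r\}$, defined for $\nu$-a.e.\ $r$, with $\tilde\mm_r$ concentrated on $\{\sfd_\O = r\}$ and realising \eqref{eq:dismdo}. The flow equivariance at the level of this raw disintegration would then follow by a short calculation: for fixed $t \geq 0$ and $\varphi \in C_c(B_{e^{-t}\Rb}(\O))$, I would compute $\int \varphi \circ \gl_t\,\d\mm$ in two ways---first via the pushforward formula \eqref{eq:misura}, yielding $e^{Nt}\int \varphi\,\d\mm\restr{B_{e^{-t}\Rb}(\O)}$, and second via \eqref{eq:dismdo} applied to both $\mm$ and $\mm\restr{B_{e^{-t}\Rb}(\O)}$---and then match the resulting integrals through the substitution $s = e^{-t}r$. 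A density argument in $\varphi$ yields $(\gl_t)_*\tilde\mm_r = \tilde\mm_{e^{-t}r}$ for a.e.\ $r$.

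Finally, to upgrade to a weakly continuous representative, I would pick a single reference radius $r_0 \in (0, \Rb)$ at which both the disintegration and the previous equivariance are well-defined, and \emph{define} $\mm_r := (\gl_{\log(r_0/r)})_*\tilde\mm_{r_0}$ for $r \in (0, r_0]$. Weak continuity of $r \mapsto \mm_r$ then follows from the pointwise continuity of $t \mapsto \gl_t(x)$ (point $(i)$ of Corollary \ref{cor:gl}) together with dominated convergence applied to $\int \varphi\,\d\mm_r = \int \varphi \circ \gl_{\log(r_0/r)}\,\d\tilde\mm_{r_0}$ for $\varphi \in C_c(B_\Rb(\O))$; the equivariance \eqref{eq:glmeas} is built in by construction on $(0, r_0]$. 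The main obstacle I foresee is extending continuously all the way up to $r = \Rb$: since the gradient flow $\gl_t$ is only available for $t \geq 0$, one cannot propagate outward from a single $r_0$. I would handle this by exhausting $(0, \Rb)$ with an increasing sequence of reference radii $r_n \uparrow \Rb$, checking via Step~3 that the locally defined continuous families agree on overlaps, and defining $\mm_\Rb$ as the weak limit as $r \uparrow \Rb$; \eqref{eq:glmeas} for every $r$ then follows from the semigroup property in \eqref{eq:distgl}.
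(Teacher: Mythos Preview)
Your Steps 1 and 2 coincide with the paper's argument essentially verbatim. The difference lies in how you upgrade to a weakly continuous representative. The paper does not fix a reference radius and propagate; instead it works test-function by test-function: for Lipschitz $\varphi$ it sets $J_\varphi(s):=\int\varphi\,\d\tilde\mm_{\Rb e^{-s}}$ and uses the equivariance from Step~2 together with the second identity in \eqref{eq:distgl} to obtain the explicit modulus
\[
|J_\varphi(s+h)-J_\varphi(s)|\leq \Rb e^{-s}\Lip(\varphi)(1-e^{-h}),
\]
whence $J_\varphi$ is absolutely continuous on $[0,\infty)$ with a limit at infinity, and so $r\mapsto\int\varphi\,\d\tilde\mm_r$ has a continuous representative on the \emph{closed} interval $[0,\Rb]$. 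Your constructive push-forward approach is equally valid and arguably cleaner on the open interval, but note two points you glossed over: you must also choose $r_0$ so that $\tilde\mm_{r_0}(\mathcal N)=0$ (possible for a.e.\ $r_0$ by disintegration, since $\mathcal N$ is $\mm$-negligible but $\tilde\mm_{r_0}$ lives on a sphere of $\mm$-measure zero), and the overlap-consistency of the families built from different $r_n$'s follows not directly from the a.e.\ equivariance at a specific pair of radii but from the fact that both families are continuous and agree with $\tilde\mm_r$ almost everywhere.

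There is one genuine gap: you assert that $\mm_\Rb$ is ``the weak limit as $r\uparrow\Rb$'' without justifying that this limit exists. Dominated convergence gives continuity on each $(0,r_n]$, hence on $(0,\Rb)$, but says nothing about the endpoint. The fix is immediate and uses an ingredient you already cite: the second identity in \eqref{eq:distgl} gives $\sfd(\gl_s(x),\gl_t(x))=r_0|e^{-s}-e^{-t}|$ for $x$ in the sphere of radius $r_0$, which upgrades your dominated-convergence argument to the uniform Lipschitz bound $|\int\varphi\,\d\mm_{r_1}-\int\varphi\,\d\mm_{r_2}|\leq\Lip(\varphi)|r_1-r_2|$ on $(0,\Rb)$, and the limit at $\Rb$ follows. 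This is precisely the estimate underlying the paper's approach, so once you patch the endpoint the two arguments converge.
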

\begin{proof} The identity  \eqref{eq:pfdo} follows from \eqref{eq:misura}, which shows  that $\mm(B_r(\O))=(\frac r\Rb)^N\mm(B_\Rb(\O))$ for every $r\in[0,\Rb]$.

 Now let $\{\mm_r\}_{r\in[0,\Rb]}\subset\prob \X$ be a disintegration of $\mm\restr{B_\Rb(\O)}$ along $\sfd_{\O}$, hence such that \eqref{eq:dismdo} holds, and recall that a priori  existence and uniqueness of the $\mm_r$'s is only given for a.e.\ $r\in[0,\Rb]$. Fix  $t\geq 0$,  $\varphi\in C_c(B_{e^{-t}\Rb}(\O))$, and notice that
\[
\begin{split}
\int\varphi\,\d(\gl_t)_*\mm=\int\varphi\circ\gl_t\,\d\mm=N \mm(B_{\Rb}(\O))\int_0^\Rb\int\varphi\circ\gl_t\,\d\mm_r \, r^{N-1}\,\d r.
\end{split}
\]
On the other hand, since $\gl_t(B_{\Rb}(\O))\subset B_{e^{-t}\Rb}(\O)$, by \eqref{eq:pfent} we also have
\[
\begin{split}
\int\varphi\,\d(\gl_t)_*\mm= e^{Nt}\int\varphi\,\d\mm=e^{Nt}N \mm(B_{\Rb}(\O))\int_0^\Rb\int\varphi \,\d\mm_s \, s^{N-1}\,\d s.
\end{split}
\]
Thus the change of variable $s=e^{-t}r$ in the last integral shows that
\[
\int_0^\Rb\int\varphi\circ\gl_t\,\d\mm_r \, r^{N-1}\,\d r=\int_0^\Rb\int\varphi\,\d\mm_{e^{-t}r} \, r^{N-1}\,\d r,
\]
which by the arbitrariness of $\varphi\in  C_c(B_{e^{-t}\Rb}(\O))$ gives the claim \eqref{eq:glmeas}. 

It remains to prove that the $\mm_r$'s can be chosen to weakly depend on $r\in[0,\Rb]$ and to this aim, due to the existence of a countable set of Lipschitz functions dense in $C_c(B_{\Rb}(\O))$, it is sufficient to show that for a given  $\varphi:\X\to\R$  Lipschitz with support in $B_{\Rb}(\O)$, the map $r\mapsto I_\varphi(r):=\int\varphi\,\d\mm_r$ admits a continuous representative.

Thus fix such $\varphi$, put $J_\varphi(s):=I_\varphi(\Rb e^{-s})$ for $s\in\R^+$ and notice that \eqref{eq:glmeas} grants that for every $h\geq 0$ the identity 
\[
\begin{split}
\big|J_\varphi(s+h)-J_\varphi(s)\big|&=\Big|\int\varphi\,\d\mm_{\Rb e^{-h}e^{-s}}-\int\varphi\,\d\mm_{\Rb e^{-s}}\Big|\\
&=\Big|\int\varphi\circ\gl_h-\varphi \,\d\mm_{\Rb e^{-s}}\Big|\leq\Rb e^{-s}\Lip(\varphi)(1-e^{-h})
\end{split}
\]
holds for a.e.\ $s$, the inequality being a consequence of the second identity in \eqref{eq:distgl} and the fact that $\mm_{\Rb e^{-s}}$ is concentrated on $B_{\Rb e^{-s}}(\O)$. This is sufficient to show that the distributional derivative of $s\mapsto J_\varphi(s)$ is bounded by $\Rb\Lip(\varphi)e^{-s}$. Being $s\mapsto \Rb\Lip(\varphi) e^{-s}$ in $L^1(\R^+)$, we just proved that  $J_\varphi$ has an absolutely continuous representative admitting a limit when $s\to+\infty$. By construction, this is the same as to say that  $I_\varphi$ has a continuous representative on $[0,\Rb]$.
\end{proof}

For the purpose of the foregoing analysis it will be convenient to replace the function $\b$ with a smoother one. We therefore fix once and for all $\Rbt<\Rb$.  Later on $\Rbt$ will be sent to $\Rb$ but for the moment it is convenient to think it as fixed also in order to avoid mentioning the dependence on it of the various objects we are going to build.

Let $\varphi\in C^\infty(\R)$ be  a function with support contained in $(-\infty,\tfrac{\Rb^2}2)$ which is the identity on $(-\infty,\tfrac{\Rbt^2}2)$ and define $\bar\b:\X\to\R$ as
\[
\bar\b:=\varphi\circ\b.
\]
Notice  in particular that $\bar\b$ is Lipschitz, with support in $B_\Rb(\O)$ and equal to $\b$ on $B_{\Rbt}(\O)$. We then introduce the flow $\bar\gl$ as follows. First define the reparametrization function ${\rm rep}:(\R^+)^2\to\R+$ by requiring that 
\begin{equation}
\label{eq:defrep}
\partial_t{\rm rep}_t(r)=\varphi'\Big(\frac{r^2}{2}e^{-2{\rm rep}_t(r)}\Big),\qquad\qquad {\rm rep}_0(r)=0,
\end{equation}
for every $r\geq 0$, then we define $\bar\gl:[0,\infty)\times(\X\setminus\mathcal N)\to \X$ as
\[
\bar\gl_t(x):=\gl_{{\rm rep}_t(\sfd(x,\O))}(x).
\]
{The following proposition collects the basic properties of \(\bar\gl_t(x)\).
\begin{proposition}\label{prop:modifiedflow}
Let \(\bar\b\) and \(\bar\gl_t(x)\) as above, then:
\begin{itemize}
\item[a)] for every $x\in \X\setminus\mathcal N$ the curve $t\mapsto\gamma_t:=\bar\gl_t(x)$ satisfies
\[
\bar\b(\gamma_t)=\bar \b(\gamma_s)+\frac12\int_t^s|\dot\gamma_r|^2+\lip(\bar\b)^2(\gamma_r)\,\d r,\qquad\forall 0\leq t\leq s.
\]
In particular, the speed of $t\mapsto \bar\gl_t(x)$ is equal to $\lip(\bar\b)(\bar\gl_t(x))$ for a.e.\ $t$, thus granting that $t\mapsto\bar\gl_t(x)$ is $\Lip(\bar\b)$-Lipschitz for every $x\in \X\setminus\mathcal N$.
\item[b)]  $\bar\gl_t$ is the identity on $\X\setminus(B_{\Rb}(\O)\cup\mathcal N)$ and sends $B_{\Rb}(\O)\setminus\mathcal N$ into $B_{\Rb}(\O)$ for every $t\geq 0$.
\item[c)] $\bar\gl_t$ coincides with $\gl_t$ in $B_{\Rbt}(\O)\setminus \mathcal N$.
\item[d)] $\bar\gl_t:\X\to \X$ is essentially invertible for every $t\geq 0$.
\item[e)] $(\bar\gl_t)_*\mm\ll\mm$ for every $t\geq 0$, more precisely 
\begin{equation}
\label{eq:bc}
c(t)\mm\leq (\bar\gl_t)_*\mm\leq C(t)\mm,\qquad\forall t\in\R,
\end{equation}
for some  continuous functions $c,C:\R^+\to(0,+\infty)$.
\item[f)] The maps $\bar\gl_t$ form a semigroup, i.e.\ $\bar\gl_0=\Id$ $\mm$-a.e.\ and 
\begin{equation}
\label{eq:group}
\bar\gl_t\circ\bar\gl_s=\bar\gl_{t+s},\quad\mm-a.e.
\end{equation}
for every $t,s\geq 0$.
\end{itemize}
\end{proposition}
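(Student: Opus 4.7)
The six claims are consequences of the interplay between the ODE \eqref{eq:defrep} defining \({\rm rep}_t\) and the properties of \(\gl\) already established in Corollaries \ref{cor:gl} and \ref{cor:contdis}. The organizing observation is that along \(\gamma_t=\bar\gl_t(x)\), part (iv) of Corollary \ref{cor:gl} gives \(\sfd(\gamma_t,\O)=r\,e^{-{\rm rep}_t(r)}\) with \(r:=\sfd(x,\O)\), so that the argument of \(\varphi'\) appearing in \eqref{eq:defrep} is precisely \(\b(\gamma_t)\). This is exactly what the reparametrization was engineered to produce, and it reduces every item of the proposition to either an ODE check or a chain-rule computation.

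For (a), the plan is to differentiate \(\b(\gamma_t)=\tfrac{r^2}{2}e^{-2{\rm rep}_t(r)}\), obtaining \(\tfrac{d}{dt}\b(\gamma_t)=-2\b(\gamma_t)\varphi'(\b(\gamma_t))\); using \(\lip(\b)^2=2\b\) from Corollary \ref{cor:db} and \(\lip(\bar\b)=|\varphi'(\b)|\lip(\b)\), this becomes \(\tfrac{d}{dt}\bar\b(\gamma_t)=-\lip(\bar\b)^2(\gamma_t)\), while the speed formula of Corollary \ref{cor:gl}(i) gives \(|\dot\gamma_t|=\lip(\bar\b)(\gamma_t)\). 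Integration then produces the energy identity and the global Lipschitz bound. Claims (b) and (c) reduce to an elementary analysis of \eqref{eq:defrep}: the support condition on \(\varphi\) forces \(\varphi'\equiv 0\) on \([\Rb^2/2,\infty)\), hence \({\rm rep}_t(r)\equiv 0\) when \(r\ge\Rb\) and the barrier \({\rm rep}_t(r)\ge\log(r/\Rb)\) when \(r<\Rb\), keeping the trajectory inside \(B_{\Rb}(\O)\); the identity condition on \(\varphi\) forces \(\varphi'\equiv 1\) on \((-\infty,\Rbt^2/2)\), hence \({\rm rep}_t(r)=t\) whenever \(r<\Rbt\).

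The semigroup identity (f) is the pivotal remaining step: fixing \(r,s\), a direct substitution shows that \(\tau\mapsto{\rm rep}_\tau(e^{-{\rm rep}_s(r)}r)+{\rm rep}_s(r)\) and \(\tau\mapsto{\rm rep}_{\tau+s}(r)\) solve the same Cauchy problem for \eqref{eq:defrep}, so uniqueness yields their equality, and composing with Corollary \ref{cor:gl}(iv) gives \eqref{eq:group}. For invertibility (d), a Gr\"onwall argument applied to the \(r\)-derivative of \eqref{eq:defrep} produces \(1-r\,\partial_r{\rm rep}_t(r)>0\); this makes the radial map \(r\mapsto r\,e^{-{\rm rep}_t(r)}\) strictly monotone on \([0,\Rb]\), and the spherewise inverses furnished by Corollary \ref{cor:gl}(ii) then glue into a global essential inverse of \(\bar\gl_t\). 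Finally, for (e), combining the disintegration \eqref{eq:dismdo} with the sphere-preservation \eqref{eq:glmeas} shows that \((\bar\gl_t)_*\mm\) differs from \(\mm\) only by the radial change of variables \(r\mapsto r\,e^{-{\rm rep}_t(r)}\) (and agrees with \(\mm\) outside \(B_{\Rb}(\O)\)); its Jacobian is a continuous strictly positive function of \(r\in[0,\Rb]\), and its minimum and maximum over this interval yield the functions \(c(t),C(t)\), whose continuity in \(t\) reflects the continuous dependence of \({\rm rep}_t\) on \(t\).

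The main difficulty here is not conceptual but bookkeeping: one must systematically work away from the \(\mm\)-null set \(\mathcal N\) on which \(\gl\) is only defined, and treat the three regions \(r\le\Rbt\), \(\Rbt<r<\Rb\) and \(r\ge\Rb\) separately since \({\rm rep}_t\) is explicit on the two extremes but depends on the detailed profile of \(\varphi\) in the transition annulus. The smoothness of \(\varphi\) and the monotonicity of the radial map ensure that none of these technicalities actually obstructs the argument.
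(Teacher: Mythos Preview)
Your proposal is correct and follows essentially the same approach as the paper: ODE analysis of \eqref{eq:defrep} for (b), (c), (f); a comparison argument on the equation satisfied by $g_t(r)=r\,\partial_r{\rm rep}_t(r)$ (the paper phrases it as comparison with the constant solution $1$ rather than ``Gr\"onwall'', but the content is identical) to obtain strict monotonicity of $r\mapsto r\,e^{-{\rm rep}_t(r)}$; and the disintegration \eqref{eq:dismdo} plus \eqref{eq:glmeas} to reduce (e) to Jacobian bounds on this radial map. The only cosmetic difference is that the paper derives essential surjectivity in (d) as a consequence of (e) rather than directly from radial monotonicity, but the two routes are equivalent.
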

In particular, from the essential invertibility of $\bar\gl_t:\X\to \X$ we see that for $t>0$ the map $\bar\gl_{-t}:=(\bar\gl_t)^{-1}$ is well defined $\mm$-a.e.\ and with this definition property \eqref{eq:group} holds for every $t,s\in\R$.
\begin{proof}[Proof of Proposition \ref{prop:modifiedflow}] Points $(b)$, $(c)$, $(f)$ follows from the definition and  $(a)$ can be obtained by the same argument of \ref{cor:gl}. The injectivity part of point $(d)$  is a consequence of the same property for \(\gl_t\), while the surjectivity follows once we prove $(e)$. 

 We are thus left to prove $(e)$. Start observing that for $r\geq\Rb$ we have  ${\rm rep}_t(r)=0$ for any $t\geq 0$ and thus $\bar\gl_t\restr{X\setminus B_\Rb(\O)}=\Id$ so that
 \begin{equation}
\label{eq:partefuori}
(\bar\gl_t)_*\mm\restr{X\setminus B_\Rb(\O)}=\mm\restr{X\setminus B_\Rb(\O)},\qquad\forall t\geq 0.
\end{equation}
 To control the behaviour inside $B_\Rb(\O)$ notice that by the very definition of $\bar\gl_t$ and from \eqref{eq:glmeas} we have that for any $t\geq 0$ it holds
 \[
(\bar\gl_t)_* ( \mm_r)=\mm_{e^{-{\rm rep}_t(r)}r},\qquad a.e.\ r\in[0,\Rb].
\]
Put $f_t(r):=e^{-{\rm rep}_t(r)}r$. We claim that there are continuous functions $c,C:\R^+\to(0,+\infty)$ such that
\begin{equation}
\label{eq:claimft}
c(t)\leq \partial_rf_t(r)\leq C(t),\qquad\forall r\in[0,\Rb],\ t\geq 0.
\end{equation}
The bound from above is obvious by smoothness, for the one from below observe that 
\begin{equation}
\label{eq:derf}
\partial_r f_t(r)=e^{-{\rm rep}_t(r)}(1-r\partial_r{\rm rep}_t(r)),
\end{equation} 
put $g_t(r):=r\partial_r{\rm rep}_t(r)$ and differentiate \eqref{eq:defrep} in $r$ to deduce that $g_t(r)$ solves
\begin{equation}
\label{eq:perconfronto}
\partial_tg_t(r)=\varphi''\Big(\tfrac{r^2}{2}e^{-2{\rm rep}_t(r)}\Big)r^2e^{-2{\rm rep}_t(r)}\big(1-g_t(r)\big)
\end{equation}
with the initial condition $g_0(r)=0$ for every $r\in[0,\Rb]$. Noticing that the function identically  1  solution of \eqref{eq:perconfronto}, taking into account the initial condition by comparison we deduce that
\[
 g_t(r)\leq 1-\tilde c(t),\qquad\forall r\in[0,\Rb],\ t\geq 0,
\]
for some continuous function $\tilde c:\R^+\to(0,\infty)$. Plugging this bound in \eqref{eq:derf} we deduce the first inequality in \eqref{eq:claimft}.

Then for every $t\geq0$ consider the function $f_t^{-1}:[0,\Rb]\to [0,\Rb]$, notice that $f_t^{-1}(0)=0$, $f_t^{-1}(\Rb)=\Rb$ and that \eqref{eq:claimft} grants that
\begin{equation}
\label{eq:finally}
\frac1{C(t)}\leq\partial_s(f_t^{-1})(s)\leq \frac1{c(t)},\qquad\forall s\in[0,\Rb],\ t\geq 0
\end{equation}
and thus it also holds
\begin{equation}
\label{eq:finally2}
\frac s{C(t)}\leq f_t^{-1}(s)\leq\frac{s}{c(t)}
\end{equation}
Let now \(\varphi\ge 0\) be a Borel function identically  zero in  in  \(X\setminus B_{\Rb}(\O)\), according to Corollary \ref{cor:contdis} we have 
\[
\begin{split}
\int \varphi\, \d (\bar\gl_t )_*\mm&=\int \varphi\circ \bar\gl_t \, \d\mm=c\int_0^{\Rb} \int \varphi\circ \bar\gl_t \,\d\mm_r r^{N-1} \d r\\
&=c\int_0^{\Rb} \int \varphi \,\d(\bar\gl_t)_* ( \mm_r)\, r^{N-1} \d r= c\int_0^{\Rb} \int \varphi \,\d \mm_{f_t(r)}\, r^{N-1} \d r\\
&=c\int_0^{\Rb} \int \varphi \,\d \mm_{s}\, (f_t^{-1}(s))^{N-1}\partial_sf_t^{-1}(s) \d s,
\end{split}
\]
thus from the bounds \eqref{eq:finally} and \eqref{eq:finally2} and using again Corollary \ref{cor:contdis} we obtain
\[
 \frac{c}{C(t)^N}\int \varphi\,\d\mm\leq \int \varphi\, \d (\bar\gl_t )_*\mm\leq \frac{c}{c(t)^N}\int \varphi\,\d\mm,\qquad\forall t\geq 0,
\]
which together with \eqref{eq:partefuori} gives the claim \eqref{eq:bc}.
\end{proof}
}

{
Proposition \ref{prop:laplB} easily implies the following useful corllary:

\begin{corollary}\label{cor:btest}
$\bar\b\in\fsm \X$ and $\Delta\bar\b\in L^\infty(\X)$.
\end{corollary}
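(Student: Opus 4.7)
The plan is to compute \(\Delta\bar\b\) explicitly via the chain rule for the measure-valued Laplacian and then verify the integrability conditions demanded by membership in \(\fsm\X\). The \(L^\infty\) bounds on \(\bar\b\) and \(|\D\bar\b|\) come essentially for free: since \(\varphi\) is smooth with support contained in a bounded interval, \(\bar\b=\varphi\circ\b\) is bounded; moreover \(|\D\bar\b|=|\varphi'(\b)|\,|\D\b|\) is bounded because \(\varphi'\) is bounded and, on the support of \(\varphi'(\b)\) (which lies inside \(B_\Rb(\O)\)), one has \(|\D\b|^2=2\b\le \Rb^2\) by Corollary \ref{cor:db}.

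The central step is identifying \(\Delta\bar\b\). For an arbitrary bounded Lipschitz \(g\) with compact support in \(\X\), I would combine the chain and Leibniz rules in \eqref{eq:calcrules} to write
\[
\la\nabla\bar\b,\nabla g\ra=\varphi'(\b)\la\nabla\b,\nabla g\ra=\la\nabla\b,\nabla(\varphi'(\b)g)\ra-\varphi''(\b)|\D\b|^2\,g.
\]
Because \(\supp(\varphi)\) sits strictly inside \((-\infty,\tfrac{\Rb^2}{2})\), the product \(\varphi'(\b)g\) is bounded and Lipschitz with compact support inside \(B_\Rb(\O)\), so Proposition \ref{prop:laplB} applies and yields
\[
\int\la\nabla\b,\nabla(\varphi'(\b)g)\ra\,\d\mm=-\int\varphi'(\b)g\,\d\bd\b\restr{B_\Rb(\O)}=-N\int\varphi'(\b)g\,\d\mm.
\]
Inserting this and using \(|\D\b|^2=2\b\) from Corollary \ref{cor:db}, I obtain
\[
\int\la\nabla\bar\b,\nabla g\ra\,\d\mm=-\int\bigl(N\varphi'(\b)+2\b\,\varphi''(\b)\bigr)g\,\d\mm.
\]
Setting \(F(t):=N\varphi'(t)+2t\varphi''(t)\), the function \(F(\b)\) is bounded and supported in \(\bar B_\Rb(\O)\), hence in \(L^2(\X)\); extending the identity to all \(g\in W^{1,2}(\X)\) by the standard density of compactly supported Lipschitz functions then gives \(\bar\b\in D(\Delta)\) with \(\Delta\bar\b=F(\b)\in L^\infty(\X)\).

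It remains to check \(\Delta\bar\b\in W^{1,2}(\X)\). Since \(F'(t)=(N+2)\varphi''(t)+2t\varphi'''(t)\) has support inside the transition interval \([\Rbt^2/2,\Rb^2/2]\), the inequality \(|\D(F(\b))|\le|F'(\b)|\,|\D\b|\) shows \(|\D(F(\b))|\) is bounded with support in the annulus \(B_\Rb(\O)\setminus B_{\Rbt}(\O)\), which has finite \(\mm\)-measure; hence \(|\D(F(\b))|\in L^2(\X)\). Combined with \(F(\b)\in L^2(\X)\), this gives \(\Delta\bar\b\in W^{1,2}(\X)\), so every condition in the definition of \(\fsm\X\) is met.

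The only delicate point is verifying that \(\varphi'(\b)g\) really is a valid test function against \(\bd\b\restr{B_\Rb(\O)}\), namely bounded Lipschitz with compact support in the open set \(B_\Rb(\O)\) where Proposition \ref{prop:laplB} provides the Laplacian of \(\b\). This is precisely what the strict inclusion \(\supp(\varphi)\subset(-\infty,\tfrac{\Rb^2}{2})\) guarantees, and it is the reason for introducing the auxiliary mollifier in the first place.
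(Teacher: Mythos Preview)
Your proof is correct and follows essentially the same approach as the paper. The paper invokes the chain rule for the measure-valued Laplacian directly (citing Proposition~4.11 in \cite{Gigli12}) to obtain $\bd\bar\b = (2\b\,\varphi''\circ\b + N\varphi'\circ\b)\mm$, whereas you unroll that chain rule by hand via the Leibniz identity and integration by parts against $\bd\b\restr{B_\Rb(\O)}$; both arrive at the identical formula and then check $2\b\,\varphi''\circ\b + N\varphi'\circ\b\in L^\infty\cap W^{1,2}(\X)$ in the same way.
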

\begin{proof}
%
Clearly $\supp(\bar\b)\subset B_\Rb(\O)$, hence the identity $|\D \b|^2=2\b$ $\mm$-a.e.\ given by Corollary \ref{cor:db},   Proposition \ref{prop:laplB} and  the chain rule for the distributional Laplacian (see Proposition 4.11 in \cite{Gigli12})  yields
\[
\begin{split}
\bd\bar \b&=\varphi''\circ\b|\D \b|^2\mm+\varphi'\circ\b\bd\b=\big(2\b\varphi''\circ\b +N\varphi'\circ\b\big)\mm,
\end{split}
\]
Given that $2\b\varphi''\circ\b +N\varphi'\circ\b\in L^\infty\cap W^{1,2}(\X)$, this identity shows that $\bar\b\in D(\Delta)$ with $\Delta\bar\b\in L^\infty\cap W^{1,2}(\X)$, yielding the desired conclusion.
\end{proof}
}
We conclude the section with the following useful lemma, which highly depends on the first-order differentiation formula \eqref{eq:firstorder}.
\begin{lemma}[Basic properties of right composition with $\bar\gl_t$]\label{le:compgl}
Let $f\in L^p(\X)$, $p<\infty$. Then $t\mapsto f\circ\bar\gl_t\in L^p(\X)$ is continuous. 

If $f\in W^{1,2}(\X)$, then $t\mapsto f\circ\bar\gl_t\in L^2(\X)$ is $C^1$ and its derivative is given by
\begin{equation}
\label{eq:calcoloder}
\frac{\d}{\d t}f\circ\bar\gl_t=-\la\nabla f,\nabla\bar \b\ra\circ\bar\gl_t.
\end{equation}
\end{lemma}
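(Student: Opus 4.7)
The overall strategy is to exploit the two pillars built in the preceding subsection: the two-sided measure comparison \eqref{eq:bc} for the flow $\bar\gl_t$, which controls the operator norm of right-composition on $L^p$ spaces, and the fact that $\bar\b$ is a test function (Corollary \ref{cor:btest}) whose gradient flow is, in a metric sense, precisely $\bar\gl_t$ (property (a) of Proposition \ref{prop:modifiedflow}). The identity \eqref{eq:calcoloder} is then the abstract transport equation along the flow of $-\bar\b$, and the natural tool to establish it rigorously is the first-order differentiation formula \eqref{eq:firstorder}, once we have produced test plans adapted to $\bar\gl$.

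For the $L^p$-continuity, the inequality in \eqref{eq:bc} yields $\|f\circ\bar\gl_t\|_{L^p}\leq C(t)^{1/p}\|f\|_{L^p}$, uniformly for $t$ in any compact interval. Combined with the inclusion $\supp(f\circ\bar\gl_t-f)\subset B_\Rb(\O)$ when $f$ is constant outside $B_\Rb(\O)$ and with $\mm(B_\Rb(\O))<\infty$, a standard $3\varepsilon$-argument reduces the statement to $f\in C_c(\X)$. For such an $f$, property (a) of Proposition \ref{prop:modifiedflow} says that $t\mapsto\bar\gl_t(x)$ is $\Lip(\bar\b)$-Lipschitz for every $x\in\X\setminus\mathcal N$, giving $\bar\gl_t(x)\to\bar\gl_s(x)$ pointwise $\mm$-a.e.\ as $t\to s$; continuity of $f$ plus dominated convergence then yields the conclusion.

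For the $C^1$-part, I would first establish a direct analogue of Corollary \ref{cor:rappresenta}: for any $\mu=h\mm$ with $h$ bounded and bounded support, the plan $\bar\ppi:=\bar T_*\mu$, where $\bar T(x)=(t\mapsto\bar\gl_t(x))$, is a test plan representing the gradient of $-\bar\b$. Indeed, \eqref{eq:bc} guarantees bounded compression, property (a) gives both finite kinetic energy (the speeds are bounded by $\Lip(\bar\b)$) and the exact energy identity needed to reproduce the chain of inequalities in the proof of Corollary \ref{cor:rappresenta}, since $\lip(\bar\b)(\gl_r(x))\geq|\D\bar\b|(\gl_r(x))$ and one equality then forces the other. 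Applying \eqref{eq:firstorder} with the roles $f\leftsquigarrow -\bar\b$, $g\leftsquigarrow f$ produces, for any $f\in W^{1,2}(\X)$,
\[
\lim_{t\downarrow0}\int\frac{f(\bar\gl_t(x))-f(x)}{t}\,h(x)\,\d\mm(x)=-\int\la\nabla f,\nabla\bar\b\ra(x)\,h(x)\,\d\mm(x),
\]
i.e.\ weak convergence of the difference quotient. To upgrade this to the pointwise-in-$L^2$ formula \eqref{eq:calcoloder}, I would use characterization (ii) of Proposition \ref{prop:sobf} applied to the test plan $\bar\ppi$: for $\bar\ppi$-a.e.\ curve, and hence for $\mu$-a.e.\ $x$, the function $t\mapsto f(\bar\gl_t(x))$ is absolutely continuous with a.e.\ derivative bounded by $|\D f|(\bar\gl_t(x))\,\lip(\bar\b)(\bar\gl_t(x))$. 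Combined with the semigroup property \eqref{eq:group} and the weak limit above, this forces the pointwise identification $\partial_t[f\circ\bar\gl_t](x)=-\la\nabla f,\nabla\bar\b\ra(\bar\gl_t(x))$ for a.e.\ $(t,x)$. Integrating this against $g\in L^2$ of bounded support, applying Fubini, and using the continuity of $r\mapsto\la\nabla f,\nabla\bar\b\ra\circ\bar\gl_r\in L^2$ from part (i) (note that $|\la\nabla f,\nabla\bar\b\ra|\leq\Lip(\bar\b)|\D f|\in L^2$), yields the fundamental theorem of calculus in $L^2$ and hence the $C^1$-property with the desired derivative.

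The main obstacle is exactly this last step: passing from the weak integrated identity supplied by \eqref{eq:firstorder} to a pointwise-a.e.\ differentiation formula along the trajectories of $\bar\gl$. The use of characterization (ii) of Proposition \ref{prop:sobf} on the test plan $\bar\ppi$ is the pivotal ingredient, since it promotes a statement about averages over curves into a statement about the curves themselves, and the equality (rather than just inequality) in the resulting upper gradient bound comes from the sharpness of the energy identity in Proposition \ref{prop:modifiedflow}(a), which in turn encodes $|\D\bar\b|=\lip(\bar\b)$ along the flow via Corollary \ref{cor:db}. Everything else is bookkeeping with \eqref{eq:bc}, the semigroup property, and Fubini.
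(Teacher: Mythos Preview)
Your overall architecture matches the paper's: use \eqref{eq:bc} plus a density argument for the $L^p$-continuity; build a test plan from the flow, check it represents $\nabla(-\bar\b)$, and apply \eqref{eq:firstorder} to obtain the weak limit of the difference quotients; use Proposition~\ref{prop:sobf}(ii) along the flow to control the increments. The ingredients and their roles are exactly those the paper uses.

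The gap is in your ``pointwise identification'' step and in the reason you give for it. Proposition~\ref{prop:sobf}(ii) only yields the \emph{inequality} $|\partial_t f(\bar\gl_t(x))|\leq|\D f|(\bar\gl_t(x))\,|\dot\gamma_t|$; it never gives equality, let alone identifies the sign. The sharpness $|\D\bar\b|=\lip(\bar\b)$ (equivalently, the energy identity in Proposition~\ref{prop:modifiedflow}(a)) is used \emph{earlier}, to certify that the plan represents $\nabla(-\bar\b)$ so that \eqref{eq:firstorder} applies; it does not convert the upper-gradient bound along trajectories into an equality. So ``weak limit $+$ Proposition~\ref{prop:sobf}(ii) forces $\partial_t[f\circ\bar\gl_t](x)=-\la\nabla f,\nabla\bar\b\ra(\bar\gl_t(x))$'' is not justified by the mechanism you name.

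The paper closes this step differently and more directly. From Proposition~\ref{prop:sobf}(ii) it first extracts that $t\mapsto f\circ\bar\gl_t$ is \emph{Lipschitz as a curve in $L^2(\X)$} (square the pointwise bound, integrate, and use \eqref{eq:bc}). Since $L^2(\X)$ has the Radon--Nikodym property, this curve is strongly differentiable for a.e.\ $t$; at those $t$ the strong derivative must coincide with the weak limit $-\la\nabla f,\nabla\bar\b\ra\circ\bar\gl_t$ already obtained via \eqref{eq:firstorder} and the group property. Continuity in $t$ of the right-hand side (which is part~(i) applied to $\la\nabla f,\nabla\bar\b\ra\in L^2$) then upgrades a.e.\ differentiability to $C^1$. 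Your route can be repaired---write $f\circ\bar\gl_t-f=\int_0^t g(s,\cdot)\,\d s$ from the trajectory-wise absolute continuity, test against $\phi$, and match with the weak-limit integral to identify $g(s,\cdot)=-\la\nabla f,\nabla\bar\b\ra\circ\bar\gl_s$ in $L^2$ for a.e.\ $s$---but that is essentially the paper's argument rephrased, and still does not use ``sharpness of the energy identity'' at that point.
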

\begin{proof} Property \eqref{eq:bc}  grants that the linear operators from $L^p(\X)$ into itself given by the right composition with $\bar\gl_t$ are, locally in $t$, uniformly continuous and thus it is sufficient to check that $t\mapsto f\circ\bar\gl_t\in L^p(\X)$ is continuous for a dense set of $f$'s. We then consider Lipschitz functions and notice that  the uniform Lipschitz bound granted by point $(a)$ of Proposition \ref{prop:modifiedflow} gives
\[
\sqrt[p]{\int|f\circ\bar\gl_{t_1}-f\circ\bar\gl_{t_0}|^p\,\d\mm}\leq\Lip(f)\sqrt[p]{\int\sfd^p(\bar\gl_{t_1}(x),\bar\gl_{t_0}(x))\,\d\mm(x)}\leq \Lip(f)\Lip(\bar\b)|t_1-t_0|,
\]
thus yielding the first claim.

Now let $\bar\mm\in\prob{\X}$ be such that $\mm\ll\bar\mm\leq C\mm$ for some $C>0$, let $T:\X\to C([0,1],\X)$ be the map sending $x$ to the curve $t\mapsto\bar\gl_t(x)$ and define $\ppi:=T_*\bar\mm$. The same arguments used for Corollary \ref{cor:rappresenta} grant that  $\ppi$ is a test plan. Thus  Proposition \ref{prop:sobf} grants that for any $t_0,t_1\in[0,1]$, $t_0<t_1$, for $\mm$-a.e.\ $x$  we have
\[
|f(\bar\gl_{t_1}(x))-f(\bar\gl_{t_0}(x))|\leq\Lip(\bar\b)\int_{t_0}^{t_1}|\D f|(\bar\gl_t(x))\,\d t.
\]
Squaring and integrating we get
\[
\begin{split}
\int |f\circ\bar\gl_{t_1}-f\circ\bar\gl_{t_0}|^2\,\d\mm&\leq |t_1-t_0|\Lip^2(\bar\b)\iint_{t_0}^{t_1}|\D f|^2(\bar\gl_t(x))\,\d t\,\d\mm\\
&= |t_1-t_0|\Lip^2(\bar\b)\int_{t_0}^{t_1}\int|\D f|^2\,\d(\bar\gl_t)_*\mm\,\d t\\
&\leq C|t_1-t_0|\Lip^2(\bar\b)\int|\D f|^2\,\d\mm,
\end{split}
\]
for some constant $C$, where in the last step we used the bound \eqref{eq:bc}. This shows that $t\mapsto f\circ\bar\gl_t\in L^2(\X)$ is Lipschitz on $[0,1]$ and it is then clear that Lipschitz continuity holds on the whole $\R$.

We now claim that
\begin{equation}
\label{eq:claimweakder}
\frac{f\circ\bar\gl_t-f}{t}\quad\to\quad-\la\nabla f,\nabla\bar\b\ra\qquad\qquad\text{ weakly in }L^2(\X).
\end{equation}
Since the incremental ratios are, by what we just proved, uniformly bounded in $L^2(\X)$, thanks to a simple density and linearity argument to get the claim it is sufficient to prove that for any bounded probability density $\rho$ it holds
\begin{equation}
\label{eq:dahorver}
\lim_{t\to 0}\int\frac{f\circ\bar\gl_t-f}{t}\rho\,\d\mm=-\int\la\nabla f,\nabla\bar\b\ra\rho\,\d\mm
\end{equation}
Putting $\tilde\ppi:=T_*(\rho\mm)$, where $T$ is defined as in Corollary \ref{cor:rappresenta}, and arguing as in the proof of the same corollary we see that $\tilde\ppi$ represents the gradient of $-\bar\b$ and thus \eqref{eq:dahorver} follows from \eqref{eq:firstorder}. Hence \eqref{eq:claimweakder} is proved. 

Since the $\bar\gl_t$'s form a group, we then have proved \eqref{eq:calcoloder} for any $t\in\R$, provided we intend the left hand side as the weak limit in $L^2(\X)$ of the incremental ratios. 

Now notice that since $t\mapsto f\circ\bar\gl_t\in L^2(\X)$ is Lipschitz and Hilbert spaces have the Radon-Nikodym property, the curve is a.e.\ differentiable. Together with the weak convergence just established, this is sufficient to get  \eqref{eq:claimweakder}  for a.e.\ $t$. To conclude it is now enough to notice that the first part of the statement ensures that the right hand side of \eqref{eq:calcoloder} depends  $L^2$-continuously on $t$.
\end{proof}

\subsection{Gradient flow of $\b$: effect on the Dirichlet energy}\label{se:gfdir}
In the previous section we established the link between $\gl_t$ and the reference measure. To get informations about the link between the flow and the distance we shall, as in  \cite{Gigli13}, first look at what happens to the Dirichlet energy along the flow. The metric information will later be recovered via the Sobolev-to-Lipschitz property.

Much like in the smooth case, the starting point for any metric-related information on the flow is Bochner inequality, which here is used to obtain the following Euler equation for $\bar\b$:
\begin{proposition}[Euler equation for $\bar \b$]\label{prop:eul} Let  $f,g\in\fsm \X$ with $\supp(f)\subset B_{\Rbt}(\O)$. Then
\begin{equation}
\label{eq:euler}
\int \Delta f\la\nabla g,\nabla \bar \b\ra\,\d\mm=\int f\big(\la\nabla\Delta g,\nabla\bar \b\ra+2\Delta g\big)\,\d\mm.
\end{equation}
\end{proposition}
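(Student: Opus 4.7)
The plan is to derive \eqref{eq:euler} by perturbing the Bochner inequality \eqref{eq:Bochner} around $\bar\b$, exploiting the fact that this particular function saturates Bochner on $B_{\Rbt}(\O)$. For the saturation, Corollary \ref{cor:btest} gives $\bar\b\in\fsm\X$; Corollary \ref{cor:db} together with locality of minimal weak upper gradients yields $|\D\bar\b|^2=2\bar\b$ a.e.\ on $B_{\Rbt}(\O)$; and the formula $\bd\bar\b=(2\b\varphi''\circ\b+N\varphi'\circ\b)\mm$ from the proof of Corollary \ref{cor:btest}, combined with $\varphi'\equiv 1$ and $\varphi''\equiv 0$ on $(-\infty,\Rbt^2/2)$, gives $\Delta\bar\b=N$ a.e.\ on $B_{\Rbt}(\O)$. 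Locality applied to $\Delta\bar\b$ and the constant $N$ then forces $|\D\Delta\bar\b|=0$ on $B_{\Rbt}(\O)$, so $\la\nabla h,\nabla\Delta\bar\b\ra=0$ a.e.\ on $B_{\Rbt}(\O)$ for every $h\in S^2(\X)$; hence when tested against any nonnegative $\phi\in\fsm\X$ with $\supp\phi\subset B_{\Rbt}(\O)$, the two sides of \eqref{eq:Bochner} for $\bar\b$ both reduce to $N\int\phi\,\d\mm$.

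Now fix $g\in\fsm\X$ and a nonnegative $f\in\fsm\X$ with $\supp f\subset B_{\Rbt}(\O)$, and for $\eps\in\R$ set $h_\eps:=\bar\b+\eps g\in\fsm\X$. Testing \eqref{eq:Bochner} for $h_\eps$ against $f$ and using the definition of the measure valued Laplacian to rewrite the left-hand side as $-\int\la\nabla f,\nabla(|\D h_\eps|^2/2)\ra\,\d\mm$ (valid since $|\D h_\eps|^2\in W^{1,2}(\X)$ for a test function and $f$ is bounded Lipschitz with compact support), one obtains
\[
F(\eps):=-\int\la\nabla f,\nabla\tfrac{|\D h_\eps|^2}{2}\ra\,\d\mm-\int f\Big(\tfrac{(\Delta h_\eps)^2}{N}+\la\nabla h_\eps,\nabla\Delta h_\eps\ra\Big)\,\d\mm\geq 0
\]
for every $\eps\in\R$, with $F(0)=0$ by the first paragraph. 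Hence $\eps=0$ is a minimum of $F$, so $F'(0)=0$.

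Expanding each term as a polynomial in $\eps$ via \eqref{eq:calcrules} and linearity of $\Delta$, differentiating at $\eps=0$, and substituting $\Delta\bar\b=N$ and $\la\nabla g,\nabla\Delta\bar\b\ra=0$ on $\supp f$, one finds
\[
0=F'(0)=-\int\la\nabla f,\nabla\la\nabla\bar\b,\nabla g\ra\ra\,\d\mm-\int f\big(2\Delta g+\la\nabla\bar\b,\nabla\Delta g\ra\big)\,\d\mm.
\]
Since $\la\nabla\bar\b,\nabla g\ra\in W^{1,2}(\X)$ (by polarization, as $|\D\bar\b|^2,|\D g|^2,|\D(\bar\b+g)|^2\in W^{1,2}(\X)$ for test functions) and $f\in D(\Delta)$, the first integral is recast via the definition of the Laplacian as $\int\Delta f\la\nabla\bar\b,\nabla g\ra\,\d\mm$, which yields \eqref{eq:euler} for nonnegative $f$. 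The sign restriction is removed by linearity: for a general $f\in\fsm\X$ with $\supp f\subset B_{\Rbt}(\O)$ write $f=(f+C\eta)-C\eta$ with $\eta:=\psi\circ\bar\b\in\fsm\X$ a nonnegative cutoff identically $1$ on $\supp f$ and supported in $B_{\Rbt}(\O)$, and $C>0$ large enough that $f+C\eta\geq 0$.

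The delicate point is the differentiability of $F$ at $\eps=0$ and the legitimacy of differentiating under the integral sign. This reduces to showing that the polynomial-in-$\eps$ expansions of $|\D h_\eps|^2$, $(\Delta h_\eps)^2$ and $\la\nabla h_\eps,\nabla\Delta h_\eps\ra$ have coefficients in the right integrability classes to pair (through $\nabla f$ or $f$) with bounded quantities, which is a routine consequence of $\bar\b,g\in\fsm\X$ together with the $W^{1,2}$-regularity of $|\D h|^2$ for test $h$.
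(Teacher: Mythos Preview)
Your proof is correct and follows essentially the same route as the paper: both perturb the Bochner inequality \eqref{eq:Bochner} by considering $\bar\b+\eps g$, use that $\bar\b$ saturates Bochner on $B_{\Rbt}(\O)$ (via $|\D\bar\b|^2=2\bar\b$ and $\Delta\bar\b=N$ there), and extract the first-order term in $\eps$. The only organizational difference is that the paper works directly at the level of measures---dividing the measure inequality by $\eps$ from both sides to obtain $\bd\la\nabla g,\nabla\bar\b\ra\restr{B_{\Rbt}(\O)}=(\la\nabla\Delta g,\nabla\bar\b\ra+2\Delta g)\mm\restr{B_{\Rbt}(\O)}$ and only then integrating against $f$---which sidesteps the sign restriction on $f$ and the cutoff trick you use to remove it.
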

\begin{proof} Let $\eps\in\R$ and write the Bochner inequality \eqref{eq:Bochner} for $\bar\b+\eps g\in\fsm \X$ to get
\[
\bd\frac{|\D(\bar\b+\eps g)|^2}2\geq\Big( \frac{|\Delta(\bar\b+\eps g)|^2}N+\la\nabla(\bar\b+\eps g),\nabla\Delta(\bar\b+\eps g)\ra\Big)\mm.
\]
Expand the formula and use the fact that $|\D\bar\b|^2=2\bar\b$ on $B_{\Rbt}(\O)$ (by Corollary \ref{cor:db} and  $\bar\b=\b$ on $B_{\Rbt}(\O)$) and  $\Delta\bar\b=N$ $\mm$-a.e.\ on $B_{\Rbt}(\O)$ (Proposition \ref{prop:laplB} and Corollary \ref{cor:btest}) to get
\[
\bd\Big(\eps\la\nabla g,\nabla\bar\b\ra+\frac{\eps^2}2|\D g|^2\Big)\restr{B_{\Rbt}(\O)}\geq \Big(\eps \big(\la\nabla\Delta g,\nabla\bar \b\ra+2\Delta g\big)+\eps^2\big(\frac{|\Delta g|^2}N+\la\nabla g,\nabla\Delta f\ra\big)\Big)\mm\restr{B_{\Rbt}(\O)}.
\]
Divide by $\eps>0$ (resp. $\eps<0$) and let $\eps\downarrow0$ (resp. $\eps\uparrow0$) to obtain
\[
\bd\la\nabla g,\nabla\bar\b\ra\restr{B_{\Rbt}(\O)}=\big(\la\nabla\Delta g,\nabla\bar \b\ra+2\Delta g\big)\mm\restr{B_{\Rbt}(\O)}.
\]
Multiplying by $f$ and integrating we conclude.
\end{proof}
Our control of $\bar\b$ is good on the ball $B_{\Rbt}(\O)$, but in our applications, we shall need an analogous of formula \eqref{eq:euler} with the function $f$ replaced by the `mollified' function $\h_sf$ for $s>0$ small. To get a control of the error terms that appear we need first to build appropriate cut-off functions (the construction is the same used in Lemma 6.7 in \cite{AmbrosioMondinoSavare13-2} with the finite dimensionality allowing for more precise quantitative estimates):
\begin{lemma}[`Smooth' cut-offs]\label{le:cutoff}
For every $r>0$ there exists a constant $C(r)>0$ such that the following holds. Given $K\subset\Omega$ with $K$ compact and $\Omega$ open such that $\inf_{x\in K,y\in\Omega^c}\sfd(x,y)\geq r$, there exists a  test function $\nchi$ with values in $[0,1]$, which is  1 on $K$, with support in   $\Omega$ and such that
\[
\Lip(\nchi)+\|\Delta\nchi\|_{L^\infty}\leq C(r).
\]
\end{lemma}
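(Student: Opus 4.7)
My plan is to follow the standard heat-flow mollification approach, as in Lemma 6.7 of \cite{AmbrosioMondinoSavare13-2}, taking advantage of the finer estimates available under $\RCD^*(0,N)$ to control the $L^\infty$ norm of the Laplacian. There are three ingredients: a rough Lipschitz cutoff, a short-time heat mollification, and a final composition with a smooth one-variable cutoff in order to restore the exact endpoint values $0$ and $1$.

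First I would fix the scale. Set $K^{(a)}:=\{x:\sfd(x,K)\le a\}$ and pick $\phi:\X\to[0,1]$ with $\phi\equiv 1$ on $K^{(r/3)}$, $\phi\equiv 0$ outside $K^{(2r/3)}$, and $\Lip(\phi)\le 3/r$ (take $\phi(x):=\max\{0,\min\{1,\,2-\tfrac{3}{r}\sfd(x,K)\}\}$). Now set $\psi:=\heatl_{s}\phi$ for some $s=s(r,N)>0$ to be chosen. By the Bakry--\'Emery contraction valid on $\RCD^*(0,N)$ spaces one has $|\D\psi|^2\le \heatl_s(|\D\phi|^2)\le (3/r)^2$, so $|\D\psi|\in L^\infty$ with a bound depending only on $r$. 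By the reverse Poincar\'e estimate $2s(\Delta\psi)^2\le \heatl_s(|\D\phi|^2)-|\D\psi|^2\le (3/r)^2$, so $\|\Delta\psi\|_{L^\infty}\le (3/r)(2s)^{-1/2}$. Moreover iterating the regularization $\psi\in\fsm\X$ (actually, $\psi\in D(\Delta)$ with $|\D\psi|\in L^\infty$ is what is needed for the next step).

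Second, I would use the Gaussian heat kernel upper bound available on $\RCD^*(0,N)$ spaces to show that $\psi$ is quantitatively close to $\phi$. Precisely, for $x\in K$ one has $1-\psi(x)=\heatl_s(1-\phi)(x)\le \int_{\X\setminus K^{(r/3)}}p_s(x,y)\,\d\mm(y)\le C_1(N)\exp(-r^2/(C_2(N)s))$, and symmetrically $\psi(x)\le C_1(N)\exp(-r^2/(C_2(N)s))$ for $x\notin\Omega$ (since such $x$ has distance at least $r/3$ from $\supp\phi\subset K^{(2r/3)}$). Choose $s=s(r,N)$ small enough that both bounds are $\le 1/4$. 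This is the place where the choice of $s$, hence the quantitative bounds on $|\D\psi|$ and $\Delta\psi$, becomes a function of $r$ only (having fixed the ambient $\RCD^*(0,N)$ space).

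Finally, pick $\eta\in C^\infty(\R;[0,1])$ with $\eta\equiv 0$ on $(-\infty,1/4]$, $\eta\equiv 1$ on $[3/4,\infty)$, and set $\nchi:=\eta\circ\psi$. Then $\nchi\equiv 1$ on $K$, $\supp\nchi\subset\Omega$ and $\nchi\in[0,1]$. The chain rules in an $\RCD^*$ space (as recorded in the preliminaries) give
\[
|\D\nchi|=|\eta'(\psi)|\,|\D\psi|,\qquad \Delta\nchi=\eta'(\psi)\,\Delta\psi+\eta''(\psi)\,|\D\psi|^2,
\]
and the right-hand sides are bounded in $L^\infty$ by a constant $C(r)$ (depending also on $N$ and on $\|\eta'\|_\infty,\|\eta''\|_\infty$, which are universal). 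A standard approximation argument based on \eqref{eq:densetest} together with truncation confirms $\nchi\in\fsm\X$.

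The main obstacle is the quantitative closeness $\|\psi-\phi\|_{L^\infty}=O(\exp(-cr^2/s))$ used in Step 2: this requires a Gaussian upper bound for the heat kernel, which is available for $\RCD^*(0,N)$ spaces but needs to be invoked carefully because the constants must not degenerate when $K$ and $\Omega$ are far from $\O$ or when they are unbounded. Everything else is essentially bookkeeping via the calculus rules already recalled.
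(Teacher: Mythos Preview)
Your overall strategy---heat-mollify a rough Lipschitz cutoff, then compose with a smooth one-variable function to restore the exact values $0$ and $1$---is the same as the paper's, and the use of the dimensional Bakry--\'Emery inequality to bound $|\D\psi|$ and $\Delta\psi$ in $L^\infty$ is exactly what the paper does (note a small slip: the inequality gives $\tfrac{2s}{N}(\Delta\psi)^2\le \heatl_s(|\D\phi|^2)$, not $2s(\Delta\psi)^2$, but this is harmless).

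The one substantive difference is in how you establish the $L^\infty$ closeness of $\psi$ to $\phi$. You invoke Gaussian heat-kernel upper bounds and then worry, rightly, about whether the constants are uniform. The paper avoids this issue entirely by using the much more elementary $W_2$ moment estimate $W_2^2({\sf H}_t\delta_x,\delta_x)\le Nt$ from Erbar--Kuwada--Sturm: since $\phi$ is $\Lip(\phi)$-Lipschitz, one immediately gets
\[
|\phi(x)-\heatl_t\phi(x)|=\Big|\int \big(\phi(x)-\phi(y)\big)\,\d{\sf H}_t\delta_x(y)\Big|\le \Lip(\phi)\,W_2(\delta_x,{\sf H}_t\delta_x)\le \Lip(\phi)\sqrt{Nt},
\]
with no hidden constants and no dependence on where $K,\Omega$ sit in the space. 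This lets one choose $t$ of order $r^2/N$ directly and removes the ``main obstacle'' you flagged. Your route would work too (the Gaussian bounds on $\RCD^*(0,N)$ spaces are indeed uniform), but it is a heavier hammer than needed here.
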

\begin{proof}
We shall use the moment estimate proved in \cite{Erbar-Kuwada-Sturm13} (see Theorem 3)
\[
W_2^2({\sf H}_t\delta_x,\delta_x)\leq Nt,\qquad\forall x\in\X,
\]
where $({\sf H_t})$ is the heat flow at the level of probability measures, and the Bakry-\'Emery estimate proved in the same reference (see Theorem 4):
\[
|\D\h_tf|^2+\frac{2t}{N}|\Delta\h_tf|^2\leq\h_t(|\D f|^2),\qquad\mm-a.e.,
\]  
valid for any $f\in L^2(\X)$.

Pick $f(x):=\max\{0,1-\frac1r\sfd(x,K)\}$  and notice that for every $x\in\X$ and $t>0$ we have
\begin{equation*}
\label{eq:linftycutoff}
\begin{split}
|f(x)-\h_t f(x)|&=\Big| \int f(x)- f(y)\,\d{\sf H}_t\delta_x(y)\Big|\leq \Lip(f)\int\sfd(x,y)\,\d{\sf H}_t\delta_x(y)\\
&\leq \Lip(f)W_2(\delta_x,{\sf H}_t\delta_x)\leq t\frac Nr.
\end{split}
\end{equation*}
Thus the function $\h_{\frac r{3N}}f$ takes values in $[0,\frac13]$ on $\Omega^c$ and in $[\frac23,1]$ on $K$. 

Now let $\psi:\R\to[0,1]$ be $C^\infty$, identically 0 on $(-\infty,\frac12]$ and identically 1 on $[\frac23,+\infty)$ and define $\nchi:=\psi\circ\h_{\frac r{3N}} f$. It is then clear that $\nchi$ is 1 on $K$ and with support in  $\Omega$. The fact that $\nchi\in\test\X$ follows from the chain rules for minimal weak upper gradients and Laplacians (see \cite{Gigli12}) and the Bakry-\'Emery estimate grants that
\[
\||\D\h_{\frac r{3N}} f|\|_{L^\infty}+\||\Delta\h_{\frac r{3N}} f|\|_{L^\infty}\leq c(r),
\]
for some constant $c(r)$. Thus using the Sobolev-to-Lipschitz property to replace $\||\D\h_{\frac r{3N}} f|\|_{L^\infty}$ by $\Lip(\h_{\frac r{3N}} f)$ and the chain rules for the Lipschitz constant and the Laplacian we conclude.
\end{proof}
The cut-offs just built allow the following estimates:
\begin{lemma}[Tail estimates]\label{le:tail2}
For every $r>0$ there is a constant $C'(r)>0$ such that the following holds. Given $K\subset\Omega$ with $K$ compact and $\Omega$ open such that $\inf_{x\in K,y\in\Omega^c}\sfd(x,y)\geq r$  and $f\in L^2(\mm)$ with $\supp(f)\subset K$, for every $t> 0$ the quantities
\begin{equation}
\label{eq:altrefuori}
\begin{split}
&\int_{\Omega^c}|\h_tf|^2\,\d\mm
,\quad\int_0^t\!\!\int_{\Omega^c}|\h_sf|^2\,\d\mm\,\d s,\quad\int_0^t\!\!\int_{\Omega^c}|\D\h_sf|^2\,\d\mm\,\d s,\\
&\int_0^t\!\!\int_{\Omega^c}|\Delta\h_sf|^2\,\d\mm\,\d s,\quad\int_0^t\!\!\int_{\Omega^c}|\D\Delta\h_sf|^2\,\d\mm\,\d s
\end{split}
\end{equation}
are all bounded above by $t^2C'(r)\|f\|_{L^2}^2$.
\end{lemma}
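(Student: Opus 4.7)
The proof proceeds by a nested cut-off bootstrap using Lemma~\ref{le:cutoff}. A single integration by parts with one cut-off produces only a linear-in-$t$ tail bound, and feeding this bound back into the same identity with an outer cut-off upgrades it to the quadratic bound stated. The higher-order quantities are then obtained by iterating this scheme one and two more levels up.

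Concretely, pick radii $0<r_0<r_1<r_2<r_3<r$, write $K_\rho:=\{x:\sfd(x,K)\le\rho\}$, and for each $i=0,1,2,3$ select via Lemma~\ref{le:cutoff} a test cut-off $\nchi_i$ which is $1$ on $K_{r_{i-1}}$ (with $K_{r_{-1}}:=K$) and supported in $K_{r_i}\subset\Omega$, so that $\Lip(\nchi_i)+\|\Delta\nchi_i\|_\infty\le C(r)$ and $\nchi_i\ge\nchi_j$ whenever $i\ge j$; in particular $(1-\nchi_i)^2\ge\nchi_{\Omega^c}$. Since $\supp f\subset K$, the function $G_i(t):=\int(1-\nchi_i)^2|\h_tf|^2\,\d\mm$ vanishes at $t=0$. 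Differentiating in $t$, integrating by parts (which is legitimate because $(1-\nchi_i)^2\h_tf\in W^{1,2}$) and applying Young's inequality $4ab\le a^2+4b^2$ yield
\begin{equation*}
G_i(t)+\int_0^t\!\!\int(1-\nchi_i)^2|\D\h_sf|^2\,\d\mm\,\d s\;\le\;4\int_0^t\!\!\int|\nabla\nchi_i|^2|\h_sf|^2\,\d\mm\,\d s.
\end{equation*}
For $i=0$, the crude estimate $\|\h_sf\|_{L^2}\le\|f\|_{L^2}$ gives a linear-in-$t$ tail $\int_{K_{r_0}^c}|\h_sf|^2\le C(r)s\|f\|_{L^2}^2$. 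For $i=1$, using that $\supp|\nabla\nchi_1|\subset K_{r_1}\setminus K_{r_0}\subset K_{r_0}^c$, we plug the previous bound into the right-hand side and obtain the quadratic estimates $G_1(t)+\int_0^t\!\!\int(1-\nchi_1)^2|\D\h_sf|^2\le C'(r)t^2\|f\|_{L^2}^2$. Together with the pointwise $L^2$-contraction (used as a trivial substitute for $t$ bounded away from $0$), these control the first three quantities in~\eqref{eq:altrefuori}.

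For the Laplacian level, differentiate $H(t):=\int(1-\nchi_2)^2|\D\h_tf|^2\,\d\mm$ using $\partial_s\h_sf=\Delta\h_sf$; integration by parts and Young give an identity of the form
\begin{equation*}
H(t)+\int_0^t\!\!\int(1-\nchi_2)^2|\Delta\h_sf|^2\,\d\mm\,\d s\;\le\;H(0^+)+C\int_0^t\!\!\int|\nabla\nchi_2|^2|\D\h_sf|^2\,\d\mm\,\d s,
\end{equation*}
whose rightmost integral is controlled by the Step~2 gradient bound (as $\supp|\nabla\nchi_2|\subset K_{r_1}^c$ and $(1-\nchi_1)^2\ge 1$ there). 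The analogous manipulation of $\int(1-\nchi_3)^2|\Delta\h_tf|^2\,\d\mm$ handles $\int_0^t\!\!\int_{\Omega^c}|\D\Delta\h_sf|^2$. The main obstacle is disposing of the potentially singular initial terms $H(0^+)$ and its higher-order analogue when $f$ is only in $L^2$: I would do this by an averaging trick, picking $\tau\in(0,t/2)$ with $H(\tau)\le C(r)t\|f\|_{L^2}^2$ (which exists thanks to the already-established gradient bound and the monotonicity $(1-\nchi_2)^2\le(1-\nchi_1)^2$) and absorbing the $[0,\tau]$-piece by one further iteration with the deeper cut-off $\nchi_3$. The delicate part throughout is the quantitative bookkeeping of the Lipschitz and Laplacian bounds of the cut-offs supplied by Lemma~\ref{le:cutoff}, which is what makes the whole bootstrap from linear to quadratic dependence on $t$ closable.
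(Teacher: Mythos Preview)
Your cut-off bootstrap is exactly the paper's scheme, and your handling of the first three quantities is correct. The problem is at the Laplacian level and above, where your averaging trick does not close.

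Concretely: from your differential inequality you get
\[
\int_\tau^t\!\!\int(1-\nchi_2)^2|\Delta\h_sf|^2\,\d\mm\,\d s\ \le\ H(\tau)+C(r)\,t^2\|f\|_{L^2}^2,
\]
and by averaging the already-proved gradient bound $\int_0^{t/2}H(s)\,\d s\le C(r)t^2\|f\|_{L^2}^2$ you can only secure $H(\tau)\le C(r)\,t\,\|f\|_{L^2}^2$ for some $\tau\in(0,t/2)$. That linear term survives and the resulting bound is $O(t)$, not $O(t^2)$. Passing to a ``deeper'' cut-off $\nchi_3$ does not help: the same manoeuvre produces $H_3(\tau')\lesssim \tau\lesssim t$ again, and iterating yields a geometric series $t+t/2+t/4+\cdots\sim t$, still linear. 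Moreover the contribution on $[0,\tau]$ is never actually controlled.

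The missing idea is a density reduction. All five quantities in \eqref{eq:altrefuori} are lower semicontinuous as functions of $f\in L^2(\X)$ (for each fixed $s>0$ the maps $f\mapsto\h_sf,\Delta\h_sf,\D\Delta\h_sf$ are continuous $L^2\to L^2$, then apply Fatou in $s$), so it suffices to prove the bound for $f\in\test\X$ with $\supp f\subset K$, which is dense by \eqref{eq:densetest}. For such $f$ both $|\D f|$ and $\Delta f$ are supported in $K$, hence $H(0)=\int(1-\nchi_2)^2|\D f|^2=0$ and $\int(1-\nchi_3)^2|\Delta f|^2=0$. With these initial terms gone your bootstrap runs cleanly:
\[
\int_0^t\!\!\int\eta_2^2|\Delta\h_sf|^2\ \le\ 4\Lip(\nchi_2)^2\int_0^t\!\!\int\eta_1^2|\D\h_sf|^2\ \le\ C(r)\,t^2\|f\|_{L^2}^2,
\]
and one more iteration with $\eta_3$ handles $\int_0^t\!\!\int_{\Omega^c}|\D\Delta\h_sf|^2$. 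This is precisely how the paper proceeds.
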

\begin{proof} In the course of the proof the value of the constant $C'(r)$ may change in its various occurrences. Let $K_\eps$ be the $\eps$-neighbourhood of $K$ and use repeatedly  Lemma \ref{le:cutoff} above to find test functions $\nchi_i:\X\to[0,1]$, $i=0,\ldots,3$, identically 1 on $\bar K_{\frac{(i+1)r}5}$ and with support in $K_{\frac{(i+2)r}5}$. Put $\eta_i:=1-\nchi_i$.  Also, notice that  the quantities we have to bound depend continuously on $f\in L^2(\X)$, thus taking into account the density property \eqref{eq:densetest} we can, and will, assume that $f\in\test\X$ with support in $K_{\frac r6}$.

Start noticing that
\[
\begin{split}
\frac{\d}{\d s}\frac12\int \eta^2_1|\h_sf|^2\,\d\mm& =\int \eta^2_1\h_sf\Delta\h_sf\,\d\mm=\int\eta^2_1\big(\Delta\frac{|\h_sf|^2}2-|\D\h_sf|^2\big)\,\d\mm\leq\int\eta^2_1\Delta\frac{|\h_sf|^2}2\,\d\mm,
\end{split}
\]
and use the fact that $\eta_1^2=1-2\nchi_1+\nchi_1^2$, that the integral of a Laplacian is 0 and that $\supp(\nchi_1)\subset \{\eta_0=1\}$ to deduce that
\begin{equation}
\label{eq:eta}
\begin{split}
\frac{\d}{\d s}\frac12\int \eta^2_1|\h_sf|^2\,\d\mm&\leq\frac12\int (-2\Delta\nchi_1+\Delta\nchi_1^2)|\h_sf|^2\,\d\mm\\
&\leq\|-2\Delta\nchi_1+\Delta\nchi_1^2\|_{L^\infty}\frac12\int \eta_0^2|\h_sf|^2\,\d\mm\leq C'(r)\frac12\int\eta_0^2|\h_sf|^2\,\d\mm,
\end{split}
\end{equation}
having used the identity $\Delta \nchi^2_1=2\nchi_1\Delta\nchi_1+2|\D\nchi_1|^2$ to get the uniform bound on $\Delta \nchi^2_1$. Analogously, we have
\[
\frac{\d}{\d s}\frac12\int \eta^2_0|\h_sf|^2\,\d\mm\leq C'(r)\frac12\int_{(K_{\frac r5})^c} |\h_sf|^2\,\d\mm\leq  C'(r) \|\h_sf\|_{L^2}^2\leq C'(r)\|f\|_{L^2}^2
\]
and since  $\int \eta^2_0|f|^2\,\d\mm=0$ (because $\eta_0$ and $f$ have disjoint supports), integrating in $s$ we obtain
\begin{equation}
\label{eq:tardi}
\int \eta^2_0|\h_sf|^2\,\d\mm\leq sC'(r)\|f\|_{L^2}^2,\qquad\forall s\geq 0,
\end{equation}
which plugged in \eqref{eq:eta}, integrating in $s$ and using the fact that $\int \eta^2_1|f|^2\,\d\mm=0$ yields the desired control
\[
\int_{\Omega^c}|\h_sf|^2\,\d\mm\leq\int \eta^2_1|\h_sf|^2\,\d\mm\leq s^2\,C'(r)\|f\|_{L^2}^2,\qquad\forall s\geq 0.
\]
The bound for the second quantity in \eqref{eq:altrefuori} follows directly.

 Now notice that integrating in $s$ the identity
\[
\begin{split}
\frac{\d}{\d s}\int\eta_1^2|\h_sf|^2\,\d\mm=2\int\eta_1^2\,\h_sf\Delta\h_sf\,\d\mm=-2\int 2\eta_1\la\nabla\eta_1,\nabla\h_s f\ra\h_sf+\eta_1^2|\D\h_sf|^2\,\d\mm
\end{split}
\]
and using the fact that $\int\eta_1^2|f|^2\,\d\mm=0$ we deduce
\[
\begin{split}
\iint_0^{t}\eta_1^2|\D\h_sf|^2\,\d s\,\d\mm&\leq \iint_0^t2\eta_1\la\nabla\eta_1,\nabla\h_s f\ra\h_sf\,\d r\,\d\mm\\
&\leq2 \sqrt{\iint_0^t \eta_1^2|\D\h_sf|^2\,\d\mm}\sqrt{\iint_0^t|\D\eta_1|^2|\h_sf|^2\,\d s\,\d\mm}.
\end{split}
\]
Taking into account  that $\supp(\eta_1)\subset \{\eta_0=1\}$ we conclude that
\[
\begin{split}
\iint_0^{t}\eta_1^2|\D\h_sf|^2\,\d s\,\d\mm\leq 4\Lip(\eta_1)^2\iint_0^t \eta^2_0|\h_sf|^2\,\d s\,\d\mm\stackrel{\eqref{eq:tardi}}\leq t^2\, C'(r)\|f\|_{L^2}^2.
\end{split}
\]

Analogous computations starting from the derivatives of $\int \eta_2^2|\D\h_sf|^2\,\d\mm$ and $\int\eta_3^2|\Delta\h_sf|^2\,\d\mm$ yield the bounds
\[
\begin{split}
\iint_0^{t}\eta_2^2|\Delta\h_sf|^2\,\d s\,\d\mm&\leq4\Lip(\eta_2)^2  \iint_0^t\eta_1^2|\D\h_sf|^2\,\d s\,\d\mm,\\
\iint_0^{t}\eta_3^2|\D\Delta\h_sf|^2\,\d s\,\d\mm&\leq4\Lip(\eta_3)^2  \iint_0^t\eta_2^2|\Delta\h_sf|^2\,\d s\,\d\mm,
\end{split}
\]
and the conclusion follows recalling that   $\Omega^c\subset \{\eta_i=1\}$ for  $i=1,2,3$.
\end{proof}
These estimates and the Euler equation for $\bar\b$ previously obtained yield the following:
\begin{corollary}\label{cor:meglio}
For every $r>0$ there is a constant  $C''(r)$  such that the following holds.   For $f\in L^2(\X)$ with $\supp(f)\subset B_{\Rbt-r}(\O)$ and every $s>0$ we have
\begin{equation}
\label{eq:daeul2}
\int \la \nabla \h_{2s}f,\nabla\bar\b \ra f\,\d\mm=\int-\frac {N}2|\h_sf|^2+2s|\D\h_sf|^2\,\d\mm+{\rm Rem}(f,s),
\end{equation}
where the reminder term ${\rm Rem}(f,s)$ can be estimated as
\begin{equation}
\label{eq:resto}
|{\rm Rem}(f,s)|\leq s^2\, C''(r)\|f\|_{L^2}^2.
\end{equation}
\end{corollary}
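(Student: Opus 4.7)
The strategy is to interpolate between the LHS at ``time'' $0$ and an easier expression at the ``midpoint'' $s$. By \eqref{eq:densetest} and the $L^2$-continuity of both sides of \eqref{eq:daeul2} in $f$ (for fixed $s>0$), I first reduce to $f\in\fsm\X$ with $\supp(f)\subset B_{\Rbt-r}(\O)$. Setting $u_\tau:=\h_\tau f$, define
\[
F(t):=\int u_t\la\nabla u_{2s-t},\nabla\bar\b\ra\,\d\mm,\qquad t\in[0,s],
\]
so $F(0)$ is the LHS of \eqref{eq:daeul2}. Fix by Lemma \ref{le:cutoff} a cutoff $\chi\in\fsm\X$ with $\chi\equiv 1$ on $B_{\Rbt-r/2}(\O)$ and $\supp(\chi)\subset B_{\Rbt}(\O)$, and set $\Omega:=B_{\Rbt-r/2}(\O)$, so that $\supp(1-\chi)$, $\supp(\nabla\chi)$, $\supp(\Delta\chi)\subset\Omega^c$ while $(1-\chi)f\equiv 0$ since $\supp(f)\subset B_{\Rbt-r}\subset\Omega$.

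For $F(s)$: write $\chi u_s\la\nabla u_s,\nabla\bar\b\ra=\chi\la\nabla(u_s^2/2),\nabla\bar\b\ra$ and integrate by parts the compactly supported Lipschitz function $\chi u_s^2/2$ against $\Delta\bar\b$; since $\Delta\bar\b=N$ on $\supp\chi\subset B_{\Rbt}$ by Corollary \ref{cor:btest}, this yields $\int\chi u_s\la\nabla u_s,\nabla\bar\b\ra=-\tfrac{N}{2}\int\chi u_s^2-\tfrac12\int u_s^2\la\nabla\chi,\nabla\bar\b\ra$. For $\int_0^s F'(t)\,\d t$: differentiate $F$ in $t$, then apply the Euler equation \eqref{eq:euler} with test function $\chi u_t\in\fsm\X$ (supported in $B_{\Rbt}$) and $g:=u_{2s-t}$, and expand $\Delta(\chi u_t)=\chi\Delta u_t+2\la\nabla\chi,\nabla u_t\ra+u_t\Delta\chi$ to get $F'(t)=2\int u_t\Delta u_{2s-t}\d\mm+E^{(1)}_t+E^{(2)}_t$, where $E^{(1)}_t$ collects the Leibniz commutators in $\chi$ and $E^{(2)}_t$ collects the $(1-\chi)$ tail terms $(1-\chi)\Delta u_t\la\nabla u_{2s-t},\nabla\bar\b\ra-(1-\chi)u_t\la\nabla\Delta u_{2s-t},\nabla\bar\b\ra-2(1-\chi)u_t\Delta u_{2s-t}$. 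The key observation is that by semigroup self-adjointness $\int u_t\Delta u_{2s-t}\d\mm=\int f\cdot\Delta u_{2s}\d\mm=-\int|\D u_s|^2\d\mm$ independently of $t\in[0,s]$, so the main term is $-2s\int|\D u_s|^2$. Moreover, a further integration by parts in $t$ on the first two pieces of $E^{(2)}_t$ (using $\Delta u_t=\partial_t u_t$ and $\la\nabla\Delta u_{2s-t},\nabla\bar\b\ra=-\partial_t\la\nabla u_{2s-t},\nabla\bar\b\ra$) collapses them to the boundary terms, with the $t=0$ contribution vanishing by $(1-\chi)f\equiv 0$ and the $t=s$ contribution exactly cancelling the tail $\int(1-\chi)u_s\la\nabla u_s,\nabla\bar\b\ra$ coming from $F(s)$.

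Assembling everything gives $F(0)=-\tfrac{N}{2}\int u_s^2+2s\int|\D u_s|^2+\mathrm{Rem}(f,s)$, where $\mathrm{Rem}$ is a finite sum of integrals each supported in $\Omega^c$ of pairs of factors of the form (``source'': $u_s$, $u_t$, $\nabla u_t$, $\Delta u_t$) $\times$ (``target'': $u_s$, $\nabla u_{2s-t}$, $\Delta u_{2s-t}$, $\nabla\bar\b$). The main obstacle is obtaining the $s^2$ rate: naive Cauchy--Schwarz estimating one factor globally in $L^2$ only yields $O(s)$. The trick is to pair Cauchy--Schwarz in both space and time so that \emph{both} factors are measured on $\Omega^c$, exploiting that all differential operators appearing on both sides are exactly those controlled by Lemma \ref{le:tail2}. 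The spatial distance from $\supp(f)$ to $\Omega^c$ is $r/2$, giving tail bounds for the source factors over $t\in[0,s]$ of order $s^2 C'(r/2)\|f\|_{L^2}^2$; for the target factors the time parameter $2s-t$ ranges over $[s,2s]\subset[0,2s]$, so the same lemma applied on $[0,2s]$ yields tail bounds of order $4s^2 C'(r/2)\|f\|_{L^2}^2$. Cauchy--Schwarz in $(x,t)$ multiplying these two $O(s^2)$ integrated tails gives a product bounded in absolute value by $s^2 C''(r)\|f\|_{L^2}^2$ on each summand of $\mathrm{Rem}$, completing the proof.
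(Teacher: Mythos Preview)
Your proof is correct and follows essentially the same route as the paper's: the same interpolation $t\mapsto\int u_t\la\nabla u_{2s-t},\nabla\bar\b\ra$ (the paper parametrizes it as $t\mapsto\int\la\nabla\h_{s+t}f,\nabla\bar\b\ra\h_{s-t}f$, i.e.\ your $t\leftrightarrow s-t$), the same application of the Euler equation with a cut-off test function, the same identification of the main term via $\int u_t\Delta u_{2s-t}=-\int|\D\h_sf|^2$, and the same tail control by space--time Cauchy--Schwarz with Lemma~\ref{le:tail2}.

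The only cosmetic difference is in the midpoint term $F(s)$: the paper integrates $\tfrac12|\h_sf|^2$ directly against $\Delta\bar\b$ (no cut-off needed, since $\bar\b$ is compactly supported) and localizes the error to $\X\setminus B_{\Rbt}(\O)$, where $\int|\h_sf|^2\leq s^2C'(r)\|f\|^2$ is immediate. You instead multiply by $\chi$ first, which produces the extra piece $\int(1-\chi)u_s\la\nabla u_s,\nabla\bar\b\ra$; since Lemma~\ref{le:tail2} does not give a pointwise-in-$s$ bound on $\int_{\Omega^c}|\D\h_sf|^2$, you need your integration-by-parts-in-$t$ observation to cancel it against the boundary term of $\int_0^sE^{(2)}_t$. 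That step is correct and neat, but the paper avoids the issue altogether by not introducing a cut-off at that stage.
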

\begin{proof} It is readily verified that all the integrals appearing in \eqref{eq:daeul2} vary continuously as $f$ varies in $L^2(\X)$, hence recalling the density property \eqref{eq:densetest} we can, and will, assume that $f\in \fsm\X$ with support in ${B_{\Rbt-r}(\O)}$.

It is also easy to see that the function $t\mapsto\int \la \nabla \h_{s+t}f,\nabla\bar\b \ra \h_{s-t}f\,\d\mm $ is  $C^1$ on $[0,s]$, thus
\begin{equation}
\label{eq:step1}
\int \la \nabla \h_{2s}f,\nabla\bar\b \ra f\,\d\mm=\int \la \nabla \h_{s}f,\nabla\bar\b \ra \h_sf\,\d\mm+\int_0^s\frac{\d}{\d t}\int \la \nabla \h_{s+t}f,\nabla\bar\b \ra \h_{s-t}f\,\d\mm\,\d t.
\end{equation}
For the first addend on the right, notice that
\begin{equation}
\label{eq:step2}
\int \la \nabla \h_{s}f,\nabla\bar\b \ra \h_sf\,\d\mm=\int \la \nabla\tfrac{ |\h_{s}f|^2}2,\nabla\bar\b \ra \,\d\mm=-\frac N2\int |\h_sf|^2\,\d\mm+\int\frac{N-\Delta\bar\b}2|\h_sf|^2\,\d\mm,
\end{equation}
and that since $\Delta\bar\b=N$ on $B_{\Rbt}(\O)$, by Lemma \ref{le:tail2} we have
\begin{equation*}
\label{eq:step3}
\bigg|\int\frac{N-\Delta\bar\b}2|\h_sf|^2\,\d\mm\bigg|\leq(N+\|\Delta\bar\b\|_{L^\infty})\int_{X\setminus B_{\Rbt}(\O)}|\h_sf|^2\,\d\mm\leq s^2C'(r)(N+\|\Delta\bar\b\|_{L^\infty})\|f\|_{L^2}^2.
\end{equation*}
For the second added in the right of \eqref{eq:step1}, it is readily verified that the derivative  can be computed passing the limit inside the integral, thus obtaining
\begin{equation}
\label{eq:step4}
\int_0^s\frac{\d}{\d t}\int \la \nabla \h_{s+t}f,\nabla\bar\b \ra \h_{s-t}f\,\d\mm\,\d t=\iint_0^s \la \nabla\Delta \h_{s+t}f,\nabla\bar\b \ra \h_{s-t}f - \la \nabla \h_{s+t}f,\nabla\bar\b \ra \Delta f_{s-t}\,\d t\,\d\mm.
\end{equation}
Now let $\nchi$ be the cut-off given by Lemma \ref{le:cutoff} relative to the compact set $\bar B_{\Rbt-\frac r2}(\O)$ and the open set $  B_{\Rbt}(\O)$ and notice that
\begin{equation}
\label{eq:step5}
\begin{split}
\iint_0^s& \la \nabla\Delta \h_{s+t}f,\nabla\bar\b \ra f_{s-t} - \la \nabla \h_{s+t}f,\nabla\bar\b \ra \Delta \h_{s-t}f\,\d t\,\d\mm\\
&=\underbrace{\iint_0^s \la \nabla\Delta \h_{s+t}f,\nabla\bar\b \ra \nchi \h_{s-t}f - \la \nabla \h_{s+t}f,\nabla\bar\b \ra \Delta( \nchi \h_{s-t}f)\,\d t\,\d\mm}_A\\
&\qquad+\underbrace{\iint_0^s \la \nabla\Delta \h_{s+t}f,\nabla\bar\b \ra (1-\nchi) \h_{s-t}f - \la \nabla \h_{s+t}f,\nabla\bar\b \ra \Delta \big((1-\nchi) \h_{s-t}f\big)\,\d t\,\d\mm}_B,
\end{split}
\end{equation}
thus using the fact that $\nchi  f_{s-t}$ is a test function with support in $B_{\Rbt}(\O)$, by Proposition \ref{prop:eul} we get
\[
A=2\iint_0^s - \nchi\h_{s-t}f\Delta \h_{s+t}f\,\d t\,\d\mm=2s\int |\D \h_sf|^2\,\d\mm+2\iint_0^s(1-\nchi) \h_{s-t}f\Delta \h_{s+t}f\,\d t\,\d\mm.
\]
Since $1-\nchi$ is identically 0 on $B_{\Rbt-r/2}(\O)$, by  Lemma \ref{le:tail2} we obtain
\begin{equation}
\label{eq:step6}
\begin{split}
\bigg|A-2s\int |\D \h_sf|^2\,\d\mm\bigg|&\leq\sqrt{\int_0^{2s}\int_{B_{\Rbt-r/2}^c(\O)} |\h_tf|^2\,\d\mm\,\d t}\sqrt{\int_0^{2s}\int_{B_{\Rbt-r/2}^c(\O)} |\Delta\h_tf|^2\,\d\mm\,\d t}\\
&\leq 4s^2 C'(r/2)\|f\|_{L^2}^2.
\end{split}
\end{equation}
For the same reason,  letting $S:=\max\{1,\Lip(\bar\b),\Lip(\nchi),\|\Delta\nchi\|_{L^\infty}\}$ we have
\begin{equation}
\label{eq:step7}
\begin{split}
|B|&\leq S^2\int_0^{s}\int_{B_{\Rbt-r/2}^c(\O)}|\h_{s-t}f||\nabla\Delta \h_{s+t}f|+|\nabla \h_{s+t}f|(|\h_{s-t}f|+2|\nabla\h_{s-t}f|+|\Delta\h_{s-t}f|)\,\d\mm\,\d t\\
&\leq 10s^2\,S^2C(r/2)\|f\|_{L^2}^2.
\end{split}
\end{equation}
The conclusion comes collecting the informations in \eqref{eq:step1}, \eqref{eq:step2},  \eqref{eq:step4}, \eqref{eq:step5}, \eqref{eq:step6}, \eqref{eq:step7}.
\end{proof}
We are now ready to prove the main result of this section.
\begin{theorem}\label{thm:key2}
Let $f\in L^2(\mm)$ and $T>0$ be such that $\supp(f)\subset B_{e^{-T}\Rbt}(\O)$. Then 
\[
\mathcal E(f\circ \bar\gl_t)=e^{(N-2)t}\mathcal E(f),\qquad\forall t\in[0,T].
\]
\end{theorem}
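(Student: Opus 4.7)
The plan is to prove the identity via a Gronwall-type ODE argument; the scaling $e^{(N-2)t}$ reflects the interplay between the $e^{Nt}$-expansion of $(\bar\gl_t)_*\mm$ on the relevant balls (Corollary \ref{cor:gl}(iii)) and the $e^{-2t}$-contraction of squared gradients under the radial geodesic flow. One could hope for a pointwise identity $|\nabla(f\circ\bar\gl_t)|=e^{-t}|\nabla f|\circ\bar\gl_t$ giving the result by change of variables, but pointwise control of weak upper gradients is delicate in the nonsmooth setting, so I will work with integrated quantities throughout.

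First, if $f\notin W^{1,2}(\X)$ both sides are $+\infty$ and the identity is trivial, so assume $f\in W^{1,2}(\X)$. By the density property \eqref{eq:densetest} and the $L^2$-continuity of $t\mapsto f\circ\bar\gl_t$ (Lemma \ref{le:compgl}), it suffices to treat $f\in\fsm\X$ supported in $B_{e^{-T}\Rbt}(\O)$. For any $g\in\fsm\X$ with $\supp g\subset B_{\Rbt}(\O)$ the preliminary time-zero identity
\[
\int \Delta g\,\la\nabla g,\nabla\bar\b\ra\,d\mm \;=\; (N-2)\,\mathcal E(g)
\]
follows from Proposition \ref{prop:eul} applied with both slots equal to $g$: using Leibniz to convert $\int g\la\nabla\Delta g,\nabla\bar\b\ra\,d\mm$ into $-\int\Delta g\la\nabla g,\nabla\bar\b\ra\,d\mm-\int g\Delta g\,\Delta\bar\b\,d\mm$, and then invoking $\Delta\bar\b=N$ on $B_{\Rbt}(\O)$ together with $\int g\Delta g\,d\mm=-2\mathcal E(g)$, the identity falls out after a short algebraic manipulation. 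Heuristically this is $\frac{d}{dh}\mathcal E(g\circ\bar\gl_h)\vert_{h=0}=(N-2)\mathcal E(g)$.

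To turn this into a dynamical statement, set $F(t):=\mathcal E(f\circ\bar\gl_t)$. Since $\bar\gl_t$ agrees with $\gl_t$ on $B_{\Rbt}(\O)$, one checks directly that $\supp(f\circ\bar\gl_t)\subset B_{e^{t-T}\Rbt}(\O)\subset B_{\Rbt}(\O)$ for every $t\in[0,T]$. The semigroup property (Proposition \ref{prop:modifiedflow}(f)) then reduces the theorem to showing the forward derivative identity
\[
\lim_{h\downarrow 0}\frac{\mathcal E(g\circ\bar\gl_h)-\mathcal E(g)}{h}\;=\;(N-2)\,\mathcal E(g)
\]
for every $g\in W^{1,2}(\X)$ with compact support strictly inside $B_{\Rbt}(\O)$: applying this with $g=f\circ\bar\gl_t$ gives the ODE $F'=(N-2)F$ on $[0,T)$, and lower-semicontinuity of $\mathcal E$ combined with Lemma \ref{le:compgl} extends the conclusion to $t=T$.

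The crux is proving this derivative formula when $g$ is not itself a test function, since Proposition \ref{prop:eul} cannot be applied directly. My plan is to mollify via $g^s:=\h_s g$, run the reasoning above with $g^s$ in place of $g$, and pass to the limit $s\downarrow 0$. The obstruction is that $g^s$ is no longer compactly supported in $B_{\Rbt}(\O)$, so $\Delta\bar\b=N$ fails on its support; this is precisely what Corollary \ref{cor:meglio} handles, providing the $\h_s$-smoothed Euler-type identity with remainder of order $s^2\|g\|_{L^2}^2$ coming from the tail of $\h_s g$ outside $B_{\Rbt}(\O)$. Reinterpreting its left-hand side via Lemma \ref{le:compgl} as a time-derivative along $\bar\gl_h$ at $h=0$, and taking $s\downarrow 0$ using $\mathcal E(\h_s g)\to\mathcal E(g)$, yields the required dynamical identity. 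The main obstacle will be controlling these error terms uniformly in $t\in[0,T]$, which is exactly the role of Lemma \ref{le:tail2}.
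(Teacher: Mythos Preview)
Your overall strategy (an ODE for $\mathcal E(f\circ\bar\gl_t)$ via the Euler equation) matches the paper's, and your time-zero identity $\int\Delta g\,\la\nabla g,\nabla\bar\b\ra\,\d\mm=(N-2)\mathcal E(g)$ for $g\in\fsm\X$ supported in $B_{\Rbt}(\O)$ is correct. However, there is a genuine gap in the dynamical step: you never establish that $F(t)=\mathcal E(f\circ\bar\gl_t)$ is \emph{finite}, let alone differentiable, for $t>0$. At this stage of the paper $\bar\gl_t$ is not known to be Lipschitz (that is only proved later, in Theorem~\ref{thm:rapprcont}, as a \emph{consequence} of the present theorem), so even for $f\in\fsm\X$ there is no a~priori reason for $f\circ\bar\gl_t$ to lie in $W^{1,2}(\X)$. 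Your plan to ``apply this with $g=f\circ\bar\gl_t$'' therefore presupposes exactly what must be shown. Relatedly, your opening claim that $f\notin W^{1,2}$ forces $\mathcal E(f\circ\bar\gl_t)=+\infty$ is not automatic; the paper obtains it only at the very end by running the whole argument for the inverse flow $\bar\gl_{-t}$. And the density reduction to $\fsm\X$ cannot be justified by $L^2$-continuity of $t\mapsto f\circ\bar\gl_t$ alone, since $\mathcal E$ is merely lower semicontinuous on $L^2$.

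The paper resolves all of this with one idea you are missing: instead of differentiating $\mathcal E(f_t)$ directly, it differentiates the \emph{regularized energy}
\[
G(t,s):=\frac{1}{4s}\int\big(|f_t|^2-|\h_s f_t|^2\big)\,\d\mm,
\]
which is a pure $L^2$-quantity, finite for every $t,s$. Lemma~\ref{le:compgl} and Corollary~\ref{cor:meglio} combine to give $\partial_t G(t,s)=NG(t,s)-2\mathcal E(\h_s f_t)+s^{-1}{\rm Rem}(f_t,s)$; since $G(t,s)\le\mathcal E(f_t)$ and the remainder is $O(s)$, Gronwall yields a uniform bound on $G(t,s)$ \emph{before} one knows $\mathcal E(f_t)<\infty$. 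Only then does $G(t,s)\uparrow\mathcal E(f_t)$ as $s\downarrow0$ deliver finiteness and the ODE $\partial_t\mathcal E(f_t)=(N-2)\mathcal E(f_t)$. Your sketch invokes Corollary~\ref{cor:meglio} but does not identify its left-hand side with a derivative of any regularized quantity; the quantity $G(t,s)$ is precisely the bridge you need.
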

\begin{proof} 
Let $f_t:=f\circ\bar\gl_t$ and notice that since $\supp(f_t)\subset B_{\Rbt}(\O)$ for every $t\in[0,T]$, from \eqref{eq:pfent} we have $\int|f_t|^2\,\d\mm=\int|f|^2\,\d(\bar\gl_t)_*\mm=e^{Nt}\int|f|^2\,\d\mm$ and thus
\begin{equation}
\label{eq:noh}
\frac{\d}{\d t}\frac12\int|f_t|^2\,\d\mm=\frac N2\int|f_t|^2\,\d\mm,\qquad\forall t\in[0,T].
\end{equation} 
Now pick $s>0$ and notice that since by Lemma \ref{le:compgl} we have that $t\mapsto f_t\in L^2(\X)$ is Lipschitz, we also have that $t\mapsto\h_sf_t\in L^2(\X)$ is Lipschitz. Then for a.e.\ $t\in[0,T]$ we have
\[
\begin{split}
\frac{\d}{\d t}\frac12\int|\h_sf_t|^2\,\d\mm&= \lim_{h\to 0}\int \h_sf_t\frac{\h_s(f_t\circ \bar\gl_h)-\h_sf_t}{h}\,\d\mm\\
&=\lim_{h\to 0}\int \h_{2s}f_t\frac{f_t\circ \bar\gl_h-f_t}{h}\,\d\mm\\
&=\lim_{h\to 0}\int \frac{e^{Nh}\h_{2s}f_t\circ \bar\gl_h^{-1}-\h_{2s}f_t}hf_t\,\d\mm\\
&=\int (N\h_{2s}f_t+\la\nabla\h_{2s}f_t,\nabla \bar\b\ra) f_t\,\d\mm,\\
&=\int N|\h_{s}f_t|^2+\la\nabla\h_{2s}f_t,\nabla \bar\b\ra f_t\,\d\mm,
\end{split}
\]
having used Lemma \ref{le:compgl} in the penultimate step. Thus from Corollary \ref{cor:meglio} we deduce
\begin{equation}
\label{eq:sih}
\frac{\d}{\d t}\frac12\int|\h_sf_t|^2\,\d\mm=\int\frac {N}2|\h_sf_t|^2+2s|\D\h_sf_t|^2\,\d\mm+{\rm Rem}(f_t,s),
\end{equation}
with ${\rm Rem}(f_t,s)$ satisfying the bound \eqref{eq:resto}. 

Consider the quantity
\[
G(t,s):=\int\frac{|f_t|^2-|\h_sf_t|^2}{4s}\,\d\mm,
\]
and notice that  \eqref{eq:noh} and \eqref{eq:sih} give that for any $s>0$ the map $t\mapsto G(t,s)$ is Lipschitz with
\begin{equation}
\label{eq:dergts}
\frac{\d}{\d t}G(t,s)=NG(t,s)-2\mathcal E(\h_sf_t)+\frac{{\rm Rem}(f_t,s)}s.
\end{equation}
Now assume for the moment that $f\in W^{1,2}(\X)$, let $c:=\sup_{s\in(0,1)}\frac{{\rm Rem}(f_t,s)}s<\infty$, use the trivial bound $G(0,s)\leq\mathcal E(f_0)$ to deduce from the last identity that
\[
G(t,s)\leq \mathcal E(f_0)+cT+N\int_0^tG(r,s)\,\d r, \qquad\forall t\in[0,T],\ s\in (0,1).
\]
By the Gronwall inequality in the integral form we get the uniform bound
\[
G(t,s)\leq \big( \mathcal E(f_0)+cT\big)e^{N t},\qquad\forall t\in[0,T], \ \forall s\in(0,1).
\]
Noticing that $G(t,s)\uparrow\mathcal E(f_t)$ as $s\downarrow 0$, this last bound ensures that $\mathcal E(f_t)$ is uniformly bounded in $t\in[0,T]$. We can therefore pass to the limit as $s\downarrow0$ in \eqref{eq:dergts} noticing that the right hand side is uniformly bounded and pointwise converges to $(N-2)\mathcal E(f_t)$ to conclude that
\[
\frac{\d}{\d t}\mathcal E(f_t)=(N-2)\mathcal E(f_t),\qquad a.e.\ t\in[0,T],
\]
and the conclusion follows.

To  remove the assumption that $f\in W^{1,2}(\X)$ it is sufficient to show that if $f_T\in W^{1,2}(\X)$ then $f\in W^{1,2}(\X)$ as well: this follows from the very same arguments just used replacing $\bar\gl_t$ with its inverse $\bar\gl_t^{-1}=\bar\gl_{-t}$.
\end{proof}

The statement of the previous theorem can be easily `localized' thanks to the chain and Leibniz rules \eqref{eq:calcrules}. Notice that in the following Corollary we will also come back to the original flow $(\gl_t)$ in place of the regularized one $(\bar\gl_t)$.
\begin{corollary}\label{cor:enid}
Let $f\in L^2(\mm)$ and $T>0$ be with  $\supp(f)\subset B_{e^{-T}\Rb}(\O)$. Then $f\in W^{1,2}(\X)$ if and only if $f\circ\gl_T\in W^{1,2}(\X)$ and in this case 
\begin{equation*}
\label{eq:key}
|\D (f\circ \gl_T)|=e^{-T}|\D f|\circ\gl_T,\qquad \mm-a.e.
\end{equation*}
\end{corollary}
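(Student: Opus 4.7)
My plan is to derive the pointwise identity from the integral energy identity of Theorem \ref{thm:key2} by a carr\'e-du-champ-style three-function polarization. As a preliminary reduction, since $\supp(f)$ is closed and contained in the open ball $B_{e^{-T}\Rb}(\O)$ and $(\X,\sfd)$ is proper (being $\RCD^*(0,N)$ with $\supp(\mm)=\X$), $\supp(f)$ is compact, so one can pick $\Rbt<\Rb$ with $\supp(f)\subset B_{e^{-T}\Rbt}(\O)$. Since ${\rm rep}_T(r)\leq T$ in \eqref{eq:defrep} gives $\sfd(\bar\gl_T(x),\O)\geq e^{-T}\sfd(x,\O)$, both $f\circ\bar\gl_T$ and $f\circ\gl_T$ have support in $\bar\gl_T^{-1}(\supp f)\subset B_\Rbt(\O)$, and on $B_\Rbt(\O)$ the two flows coincide by point $(c)$ of Proposition \ref{prop:modifiedflow}; consequently the two compositions agree $\mm$-a.e.\ and it suffices to prove the conclusion with $\bar\gl_T$ in place of $\gl_T$.

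Applying Theorem \ref{thm:key2} to $v$, $w$ and $v+w$ (each supported in $B_{e^{-T}\Rbt}(\O)$) and polarizing the resulting quadratic identity gives
\[
\int \la\nabla(v\circ\bar\gl_T),\nabla(w\circ\bar\gl_T)\ra\,\d\mm = e^{(N-2)T}\int\la\nabla v,\nabla w\ra\,\d\mm.
\]
Using \eqref{eq:pfent} together with $\bar\gl_T=\gl_T$ on $B_\Rbt(\O)$, the change of variable $y=\bar\gl_T(x)$ on the right-hand side produces the rescaled bilinear identity
\[
\int \la\nabla(v\circ\bar\gl_T),\nabla(w\circ\bar\gl_T)\ra\,\d\mm = e^{-2T}\int\la\nabla v,\nabla w\ra\circ\bar\gl_T\,\d\mm. \qquad (\star)
\]

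The central step is the carr\'e du champ manipulation. Fix bounded Lipschitz $f,g,\psi$ supported in $B_{e^{-T}\Rbt}(\O)$ (so that all binary products stay admissible) and apply $(\star)$ to the three pairs $(f,g\psi)$, $(g,f\psi)$, $(fg,\psi)$. Expanding each side via the Leibniz rule \eqref{eq:calcrules} and forming the combination $B(f,g\psi)+B(g,f\psi)-B(fg,\psi)$, with $B$ standing for either side of $(\star)$, all terms involving $\nabla\psi$ cancel after subtraction and what remains is
\[
\int(\psi\circ\bar\gl_T)\Big[\la\nabla(f\circ\bar\gl_T),\nabla(g\circ\bar\gl_T)\ra - e^{-2T}\la\nabla f,\nabla g\ra\circ\bar\gl_T\Big]\,\d\mm = 0
\]
for every such $\psi$. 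Since bounded Lipschitz functions supported in $B_{e^{-T}\Rbt}(\O)$ are dense in $C_c(B_{e^{-T}\Rbt}(\O))$, denoting by $h$ the bracketed expression, the vanishing of $\int(\psi\circ\bar\gl_T)\,h\,\d\mm$ for all such $\psi$ forces the push-forward $(\bar\gl_T)_*(h\mm)$ to be zero on $B_{e^{-T}\Rbt}(\O)$, and hence $h=0$ $\mm$-a.e.\ on $\bar\gl_T^{-1}(B_{e^{-T}\Rbt}(\O))$; outside this set both summands of $h$ vanish since they are supported in $\bar\gl_T^{-1}(\supp f)\subset B_\Rbt(\O)$. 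Specializing $g=f$ and taking square roots produces the desired pointwise formula, while the biconditional on $W^{1,2}$-membership is immediate from Theorem \ref{thm:key2} together with the $L^2$-continuity of composition from Lemma \ref{le:compgl}.

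The main obstacle I expect is legitimately extending the identity from bounded Lipschitz $f$ (required for the Leibniz manipulation to make pointwise sense) to a general $f\in L^2$ as in the Corollary statement. This requires an approximation argument based on \eqref{eq:densetest} and Lemma \ref{le:compgl}, using the continuity of $|\D\cdot|$ under $W^{1,2}$-convergence to transfer the pointwise identity established for Lipschitz approximants to the $W^{1,2}$-limit, while keeping track that supports remain inside $B_{e^{-T}\Rbt}(\O)$.
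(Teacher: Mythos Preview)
Your approach is essentially the one in the paper: reduce to $\bar\gl_T$ via compactness, polarize the energy identity of Theorem~\ref{thm:key2}, and extract the pointwise identity via a Leibniz-rule manipulation against an arbitrary test weight, then approximate. The only cosmetic difference is the specific algebraic identity used: the paper applies the two-function formula $|\D f|^2 g=\la\nabla(fg),\nabla f\ra-\la\nabla\tfrac{f^2}{2},\nabla g\ra$, whereas you use the three-function combination $B(f,g\psi)+B(g,f\psi)-B(fg,\psi)$; both are standard carr\'e-du-champ polarizations and lead to the same conclusion.

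One small gap: your justification that $f\circ\bar\gl_T=f\circ\gl_T$ via ``${\rm rep}_T(r)\leq T$'' is not supported by the hypotheses on $\varphi$, since nothing forces $\varphi'\leq 1$ on the transition interval $[\tfrac{\Rbt^2}{2},\tfrac{\Rb^2}{2})$. The clean way to see the identity (and that $\supp(f\circ\bar\gl_T)\subset B_{\Rbt}(\O)$) is to combine point $(c)$ of Proposition~\ref{prop:modifiedflow} with the essential invertibility in point $(d)$: since $\bar\gl_T$ coincides with $\gl_T$ on $B_{\Rbt}(\O)$ and $\gl_T(B_{\Rbt}(\O))=B_{e^{-T}\Rbt}(\O)$ essentially, the essential injectivity of $\bar\gl_T$ on $\X$ forces $\bar\gl_T^{-1}(B_{e^{-T}\Rbt}(\O))\subset B_{\Rbt}(\O)$ up to a null set.
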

\begin{proof} Fix $f,T$ as in the assumptions and notice that by compactness there exists $\Rbt<\Rb$ such that $\supp(f)\subset B_{e^{-T}\Rbt}(\O)$. Choosing such $\Rbt$ and building a corresponding function $\bar\b$ and its gradient flow $\bar\gl$, we see that $f\circ\gl_T=f\circ\bar\gl_t$ $\mm$-a.e.\  and thus by Theorem \ref{thm:key2} above we deduce that $f\in W^{1,2}(\X)$ if and only if $f\circ\gl_T\in W^{1,2}(\X)$. 

Now assume that $f\in W^{1,2}(\X)$ and notice that by the locality property \eqref{eq:locwug} we also have $|\D (f\circ\gl_T)|=|\D (f\circ\bar\gl_T)|$ $\mm$-a.e.\ so that our conclusion becomes
\[
|\D (f\circ\bar \gl_T)|=e^{-T}|\D f|\circ\bar \gl_T,\qquad \mm-a.e.
\]
and with an approximation argument based on the density of $L^\infty\cap W^{1,2}(\X)$ in $W^{1,2}(\X)$ (easy to establish from the definitions),  we can, and will, assume that $f\in W^{1,2}\cap L^\infty(\X)$.

Observe that for any two functions $f_1,f_2\in W^{1,2}(\X)$ with $\supp(f_i)\subset  B_{e^{-T}\Rbt}(\O)$, Theorem \ref{thm:key2} gives, by polarization, that
\[
\int\la\nabla (f_1\circ\bar\gl_T),\nabla (f_2\circ\bar\gl_T)\ra\,\d\mm=e^{(N-2)T}\int\la\nabla f_1,\nabla f_2\ra\,\d\mm,
\]
then pick  an arbitrary Lipschitz function $g$ with support in $ B_{e^{-T}\Rbt}(\O)$, notice that $f,g,f^2,fg$ are all in $W^{1,2}(\X)$ with support in $ B_{e^{-T}\Rbt}(\O)$, put for brevity $f_T:=f\circ\bar\gl_T$, $g_T:=g\circ\bar\gl_T$ and use this last identity to get
\[
\begin{split}
\int|\D f_T|^2g_T\,\d\mm&=\int\la\nabla (f_Tg_T),\nabla f_T\ra-\la \nabla \tfrac{f_T^2}2,\nabla g_T\ra\d\mm\\
&=e^{(N-2)T}\int\la\nabla (fg),\nabla f\ra-\la \nabla \tfrac{f^2}2,\nabla g\ra\d\mm=e^{(N-2)T}\int|\D f|^2g\,\d\mm.
\end{split}
\]
Since by \eqref{eq:pfent}  we have $\int|\D f_T|^2g_T\,\d\mm=e^{NT}\int|\D f_T|^2\circ \bar\gl^{-1}_Tg\,\d\mm$, we conclude that
\[
\int|\D f_T|^2\circ \bar\gl^{-1}_Tg\,\d\mm=e^{-2T}\int|\D f|^2g\,\d\mm,
\]
which by the arbitrariness of $g$ is the conclusion.
\end{proof}

\subsection{Gradient flow of $\b$: precise representative and first metric informations}
We shall now use the Sobolev-to-Lipschitz property of $\X$ to obtain information about the behaviour of the metric under the flow $(\gl_t)$:
\begin{theorem}\label{thm:rapprcont}
The map $\gl$, seen as a map from $\R^+\times  B_{\Rb}(\O)$ to $ B_{\Rb}(\O)$ admits a continuous representative w.r.t.\ the measure $(\mathcal L^1\times \mm)\restr{\R^+\times  B_{\Rb}(\O)}$. Still denoting such representative  by $\gl$, we have:
\begin{itemize}
\item[i)] for every $t,s\in\R^+$ and $x\in  B_{\Rb}(\O)$ it holds
\begin{equation}
\label{eq:glpunt}
\begin{split}
\gl_t(\gl_s(x))&=\gl_{t+s}(x),\\
\sfd(\gl_s(x),\gl_t(x))&=|e^{-s}-e^{-t}|\sfd(x,\O).
\end{split}
\end{equation}
\item[ii)] for every $t\in\R^+$, $\gl_t$ is an invertible locally Lipschitz map from $ B_{\Rb}(\O)$ to  $ B_{e^{-t}\Rb}(\O)$ whose inverse is also locally Lipschitz. Moreover, given a curve $\gamma$ with values in $ B_\Rb(\O)$, putting $\tilde\gamma:=\gl_t\circ\gamma$ we have 
\begin{equation}
\label{eq:locspeeds}
|\dot{\tilde\gamma}_s|=e^{-t}|\dot\gamma_s|\qquad\text{ for a.e.\ }s\in[0,1],
\end{equation}
meaning that one of the curves is absolutely continuous if and only if the other is and in this case their metric speeds are related by the stated identity.
\end{itemize}
\end{theorem}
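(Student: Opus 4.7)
The plan is to combine Corollary \ref{cor:enid}, the Sobolev-to-Lipschitz property of $\RCD^*(0,N)$ spaces, and the geodesic character of $(\X,\sfd)$ to promote the $\mm$-a.e.\ defined map $\gl_t$ to a locally Lipschitz one with sharp dilation constant $e^{-t}$. Once this is done at each fixed $t$, the statements in $(i)$ and the dilation in $(ii)$ will follow from the $\mm$-a.e.\ identities of Corollary \ref{cor:gl}$(iv)$ extended by continuity, while joint continuity in $(t,x)$ will be read off by combining the $e^{-t}$-Lipschitz continuity in $x$ (uniform in $t$ on compact intervals) with the pointwise $t$-continuity afforded by the identity $\sfd(\gl_s(x),\gl_t(x))=|e^{-s}-e^{-t}|\sfd(x,\O)$.

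Fix $T>0$ and an auxiliary $\Rbt\in(0,\Rb)$. For every $y\in B_{e^{-T}\Rbt}(\O)$ and every $r\in(0,e^{-T}(\Rb-\Rbt))$ the tent function $\psi_{y,r}(z):=\max\{0,r-\sfd(z,y)\}$ is $1$-Lipschitz, bounded, and supported in $B_{e^{-T}\Rb}(\O)$. Corollary \ref{cor:enid} then gives $\psi_{y,r}\circ\gl_T\in W^{1,2}(\X)$ with $|\D(\psi_{y,r}\circ\gl_T)|\le e^{-T}$ $\mm$-a.e., and the Sobolev-to-Lipschitz property of $\X$ yields an $e^{-T}$-Lipschitz representative $F_{y,r}$. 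Picking a countable dense subfamily $\{(y_n,r_n)\}$ of the admissible parameters and intersecting the corresponding $\mm$-a.e.-identification sets, we find a full-measure $A\subset B_{\Rbt}(\O)$ on which, for every $n$,
\begin{equation*}
|\psi_{y_n,r_n}(\gl_T(x))-\psi_{y_n,r_n}(\gl_T(x'))|\le e^{-T}\sfd(x,x')\qquad\forall x,x'\in A.
\end{equation*}
Since the chosen tent family is distance-determining up to the scale $e^{-T}(\Rb-\Rbt)$ on $B_{e^{-T}\Rbt}(\O)$, this gives the local estimate $\sfd(\gl_T(x),\gl_T(x'))\le e^{-T}\sfd(x,x')$ whenever $x,x'\in A$ and $\sfd(x,x')$ is sufficiently small; the geodesic character of $\X$ upgrades the local bound to a global $e^{-T}$-Lipschitz estimate on $A$, and hence to a continuous $e^{-T}$-Lipschitz representative of $\gl_T$ on $\overline{B_{\Rbt}(\O)}$. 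Letting $\Rbt\uparrow\Rb$ delivers the locally Lipschitz continuous representative on all of $B_\Rb(\O)$.

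A symmetric argument, now reading Corollary \ref{cor:enid} as $|\D(g\circ\gl_T^{-1})|=e^T|\D g|\circ\gl_T^{-1}$ and applying Sobolev-to-Lipschitz to $g\circ\gl_T^{-1}$ for $g$ in an analogous tent family supported in $B_{\Rbt}(\O)$, produces an $e^T$-Lipschitz continuous representative of $\gl_T^{-1}$. Combining the two one-sided bounds forces the exact dilation $\sfd(\gl_T(x),\gl_T(x'))=e^{-T}\sfd(x,x')$, so that $\gl_T$ is in fact a bijective $e^{-T}$-dilation between $B_\Rb(\O)$ and $B_{e^{-T}\Rb}(\O)$. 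Formula \eqref{eq:locspeeds} is then immediate from the dilation property and the definition of metric speed, while the two identities in \eqref{eq:glpunt} follow by extending the corresponding $\mm$-a.e.\ identities in Corollary \ref{cor:gl}$(iv)$ to pointwise ones via the just-constructed continuous representative; joint continuity of $\gl$ in $(t,x)$ is a routine consequence.

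I expect the main obstacle to be the cut-off juggling in the second paragraph. Since Corollary \ref{cor:enid} strictly requires $\supp(f)\subset B_{e^{-T}\Rb}(\O)$, one cannot feed globally $1$-Lipschitz test functions into it and is forced to use tent functions of bounded reach $e^{-T}(\Rb-\Rbt)$; these yield only a local Lipschitz estimate on $\gl_T$, and reconciling it with the desired global statement requires both the length-space concatenation along $\sfd$-geodesics and the limit $\Rbt\uparrow\Rb$. Everything else either follows from standard density arguments or is a direct transposition, through the continuous representative, of the $\mm$-a.e.\ identities already proved in Section \ref{se:gf}.
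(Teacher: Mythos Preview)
Your approach is the same as the paper's: feed suitably supported $1$-Lipschitz test functions into Corollary \ref{cor:enid}, invoke Sobolev-to-Lipschitz, and read off a local $e^{-T}$-Lipschitz bound for $\gl_T$ (and symmetrically $e^{T}$ for $\gl_T^{-1}$). The continuous representative, the identities \eqref{eq:glpunt}, and joint continuity are then obtained exactly as you describe.

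There is, however, a genuine gap in your third paragraph. You assert that the geodesic character of $\X$ upgrades the local $e^{-T}$-Lipschitz bound on $A\subset B_{\Rbt}(\O)$ to a global one, and then that combining with the $e^{T}$-bound for the inverse ``forces the exact dilation $\sfd(\gl_T(x),\gl_T(x'))=e^{-T}\sfd(x,x')$''. The concatenation step fails: for $x,x'\in B_{\Rbt}(\O)$ a geodesic in $\X$ joining them need not remain in $B_{\Rbt}(\O)$, nor even in $B_\Rb(\O)$, so you cannot subdivide it into short pieces lying in the set where your local estimate applies. Consequently the global $e^{-T}$-Lipschitz bound, and a fortiori the exact dilation identity, are not justified (and indeed the theorem does not claim them: it only asserts \emph{local} Lipschitz regularity).

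The fix is simply to drop the overclaim. The local $e^{-T}$-Lipschitz bound for $\gl_T$ and the local $e^{T}$-Lipschitz bound for $\gl_T^{-1}$ are exactly what the statement requires, and they already yield \eqref{eq:locspeeds} because metric speed is a purely local quantity: local $e^{-T}$-Lipschitz gives $|\dot{\tilde\gamma}_s|\le e^{-T}|\dot\gamma_s|$, and local $e^{T}$-Lipschitz of the inverse gives the reverse inequality. This is precisely how the paper concludes.
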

\begin{proof} Fix $t\in\R^+$ and notice that by construction the image of $B_{\Rb}(\O)\setminus\mathcal N$ under $\gl_t$ is contained in $B_{e^{-t}\Rb}(\O)$. Fix $x_0\in B_{e^{-t}\Rb}(\O)$, let $r>0$ be such that $B_{3r}(x_0)\subset B_{e^{-t}\Rb}(\O)$ and let $\mathcal D$  be a countable set of 1-Lipschitz functions, all with support in $B_{3r}(x_0)$ dense in the space of 1-Lipschitz functions with support in $B_{3r}(x_0)$ w.r.t.\ uniform convergence. It is then clear that
\[
\sfd(y_0,y_1)=\sup_{f\in \mathcal D} |f(y_1)-f(y_0)|,\qquad\forall y_0,y_1\in B_r(x_0).
\]
Pick $f\in \mathcal D$  and apply Corollary \ref{cor:enid} to deduce that $f\circ\gl_t\in W^{1,2}(\X)$ with
\[
|\D (f\circ\gl_t)|=e^{-t}|\D f|\circ\gl_t\leq e^{-t},\qquad\mm-a.e..
\]
Since $\X$ has the Sobolev-to-Lipschitz property, $f\circ\gl_t$ has a $e^{-t}$-Lipschitz representative and since $\mathcal D$ is countable, we deduce that there exists a $\mm$-negligible Borel set $\mathcal N'$ such that the restriction of $f\circ\gl_t$ to $\X\setminus(\mathcal N\cup\mathcal N')$ is $e^{-t}$-Lipschitz for every $f\in\mathcal D$. 

Therefore for $x_0,x_1\in \gl_t^{-1}(B_r({x_0}))\setminus(\mathcal N\cup\mathcal N')$ we have
\[
\sfd\big(\gl_t(x_0),\gl_t(x_1)\big)=\sup_{f\in\mathcal D}\big|f(\gl_t(x_0))-f(\gl_t(x_1))\big|\leq e^{-t}\sfd(x_0,x_1),
\]
showing that $\gl_t$ has a $e^{-t}$-Lipschitz representative on the preimage of $B_r(x_0)$. Then the arbitrariness of $x_0$, the Lindelof property of $B_{e^{-t}\Rb}(\O)$ and the essential surjectivity of \(\gl_t\) ensure that $\gl_t:B_\Rb(\O)\to B_{e^{-t}\Rb}(\O)$ has a representative which is locally $e^{-t}$-Lipschitz and from now on we shall identify $\gl_t$ with such representative.

It then follows from the arbitrariness of $t\in\R^+$ and the uniform continuity in $t$ granted by the second identity in \eqref{eq:distgl}  that  $\gl$, seen as a map from $\R^+\times B_{\Rb}(\O)$ to $B_{\Rb}(\O)$, admits a continuous representative w.r.t.\ the measure $(\mathcal L^1\times \mm)\restr{\R^+\times B_{\Rb}(\O)}$, as claimed. 

The identities \eqref{eq:glpunt} then follow directly from \eqref{eq:distgl} recalling that $(\gl_t)_*\mm\restr{B_\Rb(\O)}\ll\mm$.

Inequality $\leq$ in \eqref{eq:locspeeds}  is a direct consequence of the fact that $\gl_t$ is locally $e^{-t}$-Lipschitz. To conclude it is therefore sufficient to prove that $\gl_t^{-1}:B_{e^{-t}\Rb}(\O)\to B_\Rb(\O) $, a priori well defined only $\mm$-a.e.\ (recall  $(ii)$ of Corollary \ref{cor:gl}) has a representative which is locally $e^t$-Lipschitz, as then it is clear that such representative is the inverse of the continuous representative of $\gl_t$. Such property of $\gl_t^{-1}$ can be proved by the very same means used to prove the local $e^{-t}$-Lipschitzianity of $\gl_t$. Just notice that if $f$ is a 1-Lipschitz function with support in $B_\Rb(\O)$, then $f\circ\gl_t^{-1}$ has support in $B_{e^{-t}\Rb}(\O)$ and, by Corollary \ref{cor:enid}, it belongs to $W^{1,2}(\X)$ with $|\D (f\circ\gl_t^{-1})|=|\D f|\circ\gl_t^{-1}\leq e^t$ $\mm$-a.e.. The conclusion then follows arguing as above.
\end{proof}

\begin{quote}
\underline{From now on, when considering the maps $\gl_t$ on $B_\Rb(\O)$ we shall always refer}\\
\underline{to their continuous versions.}
\end{quote}
Furthermore, for $t<0$ and $x\in \X$  such that $x\in B_{e^t\Rb}(\O)$, we put
\[
\gl_t(x):=\gl_{-t}^{-1}(x).
\]
\subsection{Basic properties of the sphere $S_{\Rb/2}(\O)$}
We consider the sphere
\[
S_{\Rb/2}(\O):=\big\{x\in \X\ :\ \sfd(x,\O)=\tfrac{\Rb}2\big\}
\]
and the projection map $\pr: B_{\Rb}(\O)\setminus\{\O\}\to S_{\Rb/2}(\O)$ given by
\[
\pr(x):=\gl_{\log(\frac{2\sfd(x,\O)}\Rb)}(x).
\]
Notice that by Theorem \ref{thm:rapprcont} the map $\pr$ is well defined and  locally Lipschitz.
\begin{proposition}\label{prop:sfera}
$S_{\Rb/2}(\O)$ is either one point, or two points or a Lipschitz-path connected subset of $\X$, i.e.\ a subset such that for every $x,y\in S_{\Rb/2}(\O)$ there is a Lipschitz curve with values in $S_{\Rb/2}(\O)$ connecting $x$ to $y$.
\end{proposition}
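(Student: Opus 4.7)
The plan is as follows. If $S_{\Rb/2}(\O)$ has at most two points the conclusion is immediate, so I will assume $|S_{\Rb/2}(\O)|\ge 3$ and fix distinct $x,y\in S_{\Rb/2}(\O)$. Since by Theorem \ref{thm:rapprcont} the projection $\pr$ is locally Lipschitz and is the identity on $S_{\Rb/2}(\O)$, it suffices to build a Lipschitz path from $x$ to $y$ inside $B_{\Rb}(\O)\setminus\{\O\}$ and post-compose with $\pr$.

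The easy step is the non-antipodal case. If $\sfd(x,y)<\Rb$, any length-minimizing geodesic $\gamma$ joining $x$ and $y$ has length strictly less than $\Rb=\sfd(x,\O)+\sfd(\O,y)$ and hence cannot meet $\O$; moreover every $p\in\gamma$ satisfies
\[
\sfd(p,\O)\le\min\bigl(\sfd(p,x),\sfd(p,y)\bigr)+\tfrac{\Rb}{2}\le\tfrac{\ell(\gamma)}{2}+\tfrac{\Rb}{2}<\Rb,
\]
so $\gamma\subset B_{\Rb}(\O)\setminus\{\O\}$ and we are done.

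In the antipodal case $\sfd(x,y)=\Rb$, the idea is to produce an intermediate point $z\in S_{\Rb/2}(\O)\setminus\{x,y\}$ with both $\sfd(x,z)<\Rb$ and $\sfd(y,z)<\Rb$ and then apply the non-antipodal step twice, routing via $z$. Writing $A_w:=\{v\in S_{\Rb/2}(\O)\setminus\{w\}:\sfd(w,v)=\Rb\}$, such a $z$ exists unless $S_{\Rb/2}(\O)\setminus\{x,y\}\subset A_x\cup A_y$. Hence the task reduces to proving that each $A_w$ is a singleton: granted this, $A_x\cup A_y=\{x,y\}$ and the standing assumption $|S_{\Rb/2}(\O)|\ge 3$ produces a valid $z$.

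The main obstacle is thus the \emph{uniqueness of antipodes}. If $y_1\ne y_2\in A_x$, the identity $\sfd(x,\O)+\sfd(\O,y_i)=\Rb=\sfd(x,y_i)$ lets one concatenate the $\gl$-geodesic from $x$ into $\O$ with the time-reverse of the $\gl$-geodesic from $y_i$ into $\O$, producing a minimizing geodesic $\sigma_i:[0,\Rb]\to\X$ passing through $\O$ at its midpoint. The two curves $\sigma_i$ agree on the entire first half (the $\gl$-curve from $x$ to $\O$) and split at $\O$. I would promote this pointwise branching into branching along a test plan, using small $\mm$-neighbourhoods of $y_1,y_2$ transported back along $\bar\gl_t$, which by the bounded-compression estimate in Proposition \ref{prop:modifiedflow}(e) yields a test plan of bounded compression and finite kinetic energy charging both continuations past $\O$, and then reach a contradiction with the essentially non-branching property of $\RCD^*(K,N)$ spaces. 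The delicate part is arranging that the common initial half of the branching carries enough mass for the non-branching statement to bite and force $y_1=y_2$.
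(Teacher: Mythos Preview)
Your non-antipodal step is fine, and the overall plan (project a Lipschitz curve in $B_\Rb(\O)\setminus\{\O\}$ via $\pr$) matches the paper's. The gap is precisely where you flag it: the ``uniqueness of antipodes'' step is not established, and the sketched essential-non-branching argument does not close. A single pair of geodesics $\sigma_1,\sigma_2$ sharing their first half and splitting at $\O$ does not contradict essential non-branching, which is an almost-everywhere statement about optimal geodesic plans. To promote the branching you would need a \emph{positive-measure} family of starting points $x'$ whose optimal geodesics to $y_1$ and to $y_2$ all pass through $\O$ and coincide on the first half; but for $x'$ near $x$ there is no reason the minimizers to $y_i$ still go through $\O$ (nothing forces $\sfd(x',y_i)=\sfd(x',\O)+\tfrac{\Rb}{2}$), so the ``transport small balls back along $\bar\gl_t$'' picture does not produce a common first half, and the plan you build will simply fail to branch $\ppi$-a.e.

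The paper sidesteps antipode uniqueness entirely. Working on $S_{\Rb/4}(\O)$ (equivalent via $\gl_t$) with three points $x_1,x_2,x_3$, it assumes for contradiction that $x_1$ is path-disconnected in the sphere from \emph{both} $x_2$ and $x_3$. This hypothesis is exactly what you are missing: for any $x\in B_{\Rb/4}(x_1)$ the concatenation of a geodesic $x_1\to x$ (which avoids $\O$) with a geodesic $x\to x_2$ stays in $B_\Rb(\O)$, so if the latter avoided $\O$ the projection would give a forbidden path; hence $\sfd(x,x_2)=\sfd(x,\O)+\tfrac{\Rb}{4}$ and likewise for $x_3$, yielding $\sfd(\cdot,x_2)=\sfd(\cdot,x_3)$ on the whole ball $B_{\Rb/4}(x_1)$. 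Now the contradiction comes not from non-branching but from the existence of optimal \emph{maps} on $\RCD^*$ spaces (\cite{GigliRajalaSturm13}): with $\mu_0=\mm(B_{\Rb/4}(x_1))^{-1}\mm\restr{B_{\Rb/4}(x_1)}$ and $\mu_1=\tfrac12(\delta_{x_2}+\delta_{x_3})$ the distance equality makes every admissible plan optimal, in particular $\mu_0\times\mu_1$, which is not induced by a map. A final swap of the roles of $x_1$ and $x_3$ then connects all three points. The moral: rather than proving antipode uniqueness in the abstract, let the path-nonexistence hypothesis manufacture the positive-measure distance equality you need, and invoke optimal maps instead of non-branching.
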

\begin{proof}
It will be convenient to work with the sphere $S_{\Rb/4}(\O)$: notice that by Theorem \ref{thm:rapprcont} it has the same cardinality of $S_{\Rb/2}(\O)$ and it is Lipschitz path connected if and only if $S_{\Rb/2}(\O)$ is.

Thus we shall assume that $S_{\Rb/4}(\O)$ contains at least 3 points $x_1,x_2,x_3$ and notice that to conclude it is sufficient to show that there are Lipschitz curves contained in $S_{\Rb/4}(\O)$ connecting these. We argue by contradiction and assume for a moment that there are no Lipschitz curves whose image is contained in $S_{\Rb/4}(\O)$ joining $x_1$ to $x_2$ and similarly no such curve joining $x_1$ to $x_3$. 

Let $x\in B_{\Rb/4}(x_1)$ and notice that any given geodesic from $x_1$ to $x$ does not pass through $\O$ and that the triangle inequality ensures that any given geodesic connecting $x$ to $x_2$ must stay in $B_\Rb(\O)$. Concatenate these two geodesics and assume that the resulting curve does not pass through $\O$. Then composing it with the locally Lipschitz map $\gl_{\log 2}\circ\pr$ we would obtain a Lipschitz curve from $x_1$ to $x_2$ lying entirely on $S_{\Rb/4}(\O)$, which contradicts our assumption. Thus the concatenation passes through $\O$, which forces the geodesic from $x$ to $x_2$ to pass through $\O$. Hence $\sfd(x,x_2)=\sfd(x,\O)+\frac\Rb4$ and arguing symmetrically we deduce that
\begin{equation}
\label{eq:distuguali}
\sfd(x,x_2)=\sfd(x,x_3),\qquad\forall x\in B_{\Rb/4}(x_1).
\end{equation}
Now consider the probability measures
\[
\mu_0:=\frac{1}{\mm(B_{\Rb/4}(x_1))}\mm\restr{B_{\Rb/4}(x_1)}\qquad\qquad \mu_1:=\frac12\big(\delta_{x_2}+\delta_{x_3}\big)
\]
and notice that the identity \eqref{eq:distuguali} gives that every admissible transport plan between them is optimal. In particular, this is the case for the plan $\mu_0\times\mu_1$. But being it not induced by a map, we found a contradiction with the fact that optimal plans on $\RCD^*(0,N)$ spaces must be induced by maps (see \cite{GigliRajalaSturm13}).

Hence there is a Lipschitz curve with image contained in $S_{\Rb/4}(\O)$ connecting $x_1$ to either $x_2$ or $x_3$, say $x_2$.  Repeating the argument swapping the roles of $x_1$ and $x_3$ we conclude.
\end{proof}

\begin{corollary}[Conclusion in the easy cases]\label{cor:easy} The following holds.
\begin{itemize} 
\item[i)] Assume that   $S_{\Rb/2}(\O)$ consists of one point. Then  $(\X,\sfd)$ is isometric to $[0,{\rm diam}(\X)]$ ($[0,\infty)$ if $\X$ is unbounded) with an isometry which sends $\O$ in $0$ and the measure $\mm\restr{B_\Rb}(\O)$ to  the measure $c\, x^{N-1}\d x$ for  $c:=N \mm(B_\Rb(\O))$.
\item[ii)] Assume that $S_{\Rb/2}(\O)$ consists of two points. Then   $(\X,\sfd)$ is a 1-dimensional Riemannian manifold, possibly with boundary,  and there is a bijective local isometry (in the sense of distance-preserving maps) from $B_{\Rb}(\O)$ to $(-\Rb,\Rb)$ sending $\O$ to $0$ and the measure  $\mm\restr{B_\Rb(\O)}$ to the measure  $c\,|x|^{N-1} \d x$ for $c:=\frac12N\mm(B_\Rb(\O))$. Moreover, such local isometry is an  isometry when restricted to $\bar B_{\Rb/2}(\O)$.
\end{itemize}
\end{corollary}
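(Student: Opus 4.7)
The strategy is to transport information from $S_{\Rb/2}(\O)$ to all concentric spheres via the bijective continuous flow $\gl_t:B_\Rb(\O)\to B_{e^{-t}\Rb}(\O)$ from Theorem \ref{thm:rapprcont}, which scales distances from $\O$ by $e^{-t}$. In particular $\gl_t$ restricts to a bijection $S_r(\O)\to S_{e^{-t}r}(\O)$, so every $S_r(\O)$ with $r\in(0,\Rb)$ inherits the cardinality of $S_{\Rb/2}(\O)$: one point in case $(i)$, two in case $(ii)$.

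For case $(i)$, with $S_{\Rb/2}(\O)=\{p\}$, I set $\gamma(0):=\O$ and $\gamma(r):=\gl_{\log(\Rb/(2r))}(p)$ for $r\in(0,\Rb)$. This is a bijection from $[0,\Rb)$ onto $B_\Rb(\O)$, and \eqref{eq:glpunt} makes it an isometry. Since $(\X,\sfd)$ is a geodesic space, every point is reached by a geodesic from $\O$ whose initial portion lies along the unique direction $\gamma$; by uniqueness of geodesic extension from $\O$ (forced by the single-direction structure at $\O$), the entire $\X$ is covered by one maximal geodesic, hence isometric to $[0,{\rm diam}(\X)]$ or $[0,\infty)$. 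The measure follows from Corollary \ref{cor:contdis}: the disintegration $\mm_r$ is a probability measure supported on the singleton $S_r(\O)$, so $\mm_r=\delta_{\gamma(r)}$, and $\mm\restr{B_\Rb(\O)}$ pushes forward to $c\,r^{N-1}\d r$ under $\gamma^{-1}$.

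Case $(ii)$ is analogous. Writing $S_{\Rb/2}(\O)=\{p_1,p_2\}$, I set $\gamma_i(0):=\O$ and $\gamma_i(r):=\gl_{\log(\Rb/(2r))}(p_i)$ for $r\in(0,\Rb)$; continuity of $\gl$ together with the fact that distance from $\O$ separates points makes the arcs $\gamma_1((0,\Rb))$ and $\gamma_2((0,\Rb))$ disjoint, and the concatenation $\Phi:(-\Rb,\Rb)\to B_\Rb(\O)$ given by $\Phi(r):=\gamma_1(r)$ for $r\ge 0$ and $\Phi(r):=\gamma_2(-r)$ for $r<0$ is a bijective local isometry by \eqref{eq:glpunt}. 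To show $\Phi$ is a global isometry on $\bar B_{\Rb/2}(\O)$, for $x=\gamma_1(a)$ and $y=\gamma_2(b)$ with $a,b\in[0,\Rb/2]$, the direct path through $\O$ has length $a+b\le\Rb$, while any path leaving $B_\Rb(\O)$ has length at least $(\Rb-a)+(\Rb-b)\ge a+b$; hence a minimizing geodesic stays in $\bar B_\Rb(\O)$, and since $\bar B_\Rb(\O)\setminus\{\O\}=\gamma_1((0,\Rb))\sqcup\gamma_2((0,\Rb))$ is the disjoint union of two arcs, the geodesic must cross $\O$, yielding $\sfd(x,y)=a+b$. Extending the two branches by geodesic prolongation provides the global 1-dimensional Riemannian manifold structure of $\X$.

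The measure identification in case $(ii)$ is the main obstacle: from Corollary \ref{cor:contdis}, $\mm_r=\alpha(r)\delta_{\gamma_1(r)}+(1-\alpha(r))\delta_{\gamma_2(r)}$, and the identity $(\gl_t)_*\mm_r=\mm_{e^{-t}r}$ combined with branch preservation forces $\alpha$ to be constant on $(0,\Rb)$. Establishing the balanced value $\alpha=\tfrac12$ demands a symmetry argument exploiting the full $\RCD^*(0,N)$ structure at $\O$ --- informally, the two rays through $\O$ must carry equal mass by the interchangeability of their roles in the metric measure structure. Once $\alpha=\tfrac12$ is in hand, Corollary \ref{cor:contdis} immediately transcribes $\Phi_*(\mm\restr{B_\Rb(\O)})$ to the asserted $c|x|^{N-1}\d x$ with $c=\tfrac12 N\mm(B_\Rb(\O))$.
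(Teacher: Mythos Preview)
Your treatment of the local structure inside $B_\Rb(\O)$ matches the paper's: both use Theorem \ref{thm:rapprcont} to transfer the cardinality of $S_{\Rb/2}(\O)$ to all spheres $S_r(\O)$ for $r\in(0,\Rb)$, build the (local) isometry from \eqref{eq:glpunt}, and read the radial measure off Corollary \ref{cor:contdis}. Your path-length argument for why $\Phi\restr{\bar B_{\Rb/2}(\O)}$ is a genuine isometry in case (ii) is also the paper's.

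The substantive gap is in passing from $B_\Rb(\O)$ to the global structure of $\X$. Your appeals to ``uniqueness of geodesic extension from $\O$'' in (i) and ``geodesic prolongation'' in (ii) are not justified: $\RCD^*$ spaces need not have unique geodesic extensions, and nothing you have said rules out branching beyond $B_\Rb(\O)$. The paper handles both cases via the optimal-transport-map device from Proposition \ref{prop:sfera}. In (i), to show $\sfd(\cdot,\O)$ is injective on all of $\X$, one supposes $x_1\neq x_2$ with $\sfd(x_1,\O)=\sfd(x_2,\O)$, uses the already-established one-dimensionality of $B_\Rb(\O)$ to obtain a set of positive $\mm$-measure on which $\sfd(\cdot,x_1)=\sfd(\cdot,x_2)$, and derives a contradiction from the fact that the product plan $\mu_0\times\tfrac12(\delta_{x_1}+\delta_{x_2})$ would then be optimal but not induced by a map. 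In (ii), the paper defines
\[
L':=\big\{x\in\X\setminus\{\O\}:\gamma\cap B_\Rb(\O)\subset L\text{ for every geodesic }\gamma\text{ from }x\text{ to }\O\big\}
\]
and similarly $R'$, shows $\sfd(\cdot,\O)$ is injective on each via the same trick (for $x'\in L'\setminus L$ every geodesic to $\O$ contains all of $L$, so $\sfd(x,x')=\sfd(x',\O)-\sfd(x,\O)$ for $x\in L$), and finally shows $\X=\overline{L'}\cup\overline{R'}$ by observing that points in neither set satisfy $\sfd(\cdot,x_L)=\sfd(\cdot,x_R)$ and applying the trick once more. This repeated use of the uniqueness of optimal maps on $\RCD^*$ spaces is the missing ingredient in your argument.

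On the measure balance $\alpha=\tfrac12$ in case (ii): you are right that Corollary \ref{cor:contdis} together with \eqref{eq:glmeas} only forces $\alpha$ to be constant in $r$, not equal to $\tfrac12$, whereas the stated conclusion $c|x|^{N-1}\d x$ with a single constant $c$ requires symmetry. The paper's proof does not address this either, writing only that ``the claim about the measure follows from Corollary \ref{cor:contdis} as before.'' So this is not a gap in your proof relative to the paper's; you have in fact spotted a point the paper glosses over.
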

\begin{proof} 

\noindent{\bf (i)} By Theorem \ref{thm:rapprcont} we know that $\gl_t$ is a bijection of $S_r(\O)$ into $S_{e^{-t}r}(\O)$ for every $r<\Rb$, thus the assumption grants that for every $r<\Rb$ there is exactly one point at distance $r$ from $\O$. This shows that $B_\Rb(\O)$ is isometric to $[0,\Rb)$. The claim about the measure follows from Corollary \ref{cor:contdis}. To conclude that  the isometry can be extended to the whole $\X$ it is sufficient to show that the map $x\mapsto\sfd(x,\O)$ is injective. This follows from the very same argument by contradiction based on existence and uniqueness of optimal maps used in Proposition \ref{prop:sfera} above, using the fact that the restriction of $\sfd(\cdot,\O)$ to $B_\Rb(\O)$ is injective.

\noindent{\bf (ii)} Let $x_L,x_R$ be the two points in $S_{\Rb/2}(\O)$ and let $L:=\pr^{-1}(x_L)$, $R:=\pr^{-1}(x_R)$. Then $\{L,R\}$ is a partition of $B_\Rb(\O)\setminus \{\O\}$ and the fact that $\X$ is geodesic ensures that both $L$ and $R$ are isometric to $(0,\Rb)$. Now notice that a curve joining  $x_0\in B_{\Rb/2}(\O)\cap L$ to  $x_1\in B_{\Rb/2}(\O)\cap R$ either passes through $\O$ or through a point at distance $\Rb$ from $\O$. Given that the length of a curve of the second kind is at least $\Rb$, the metric claims all follow. The claim about the measure follow instead from Corollary \ref{cor:contdis} as before. 

{
Finally let us prove that $(\X,\sfd)$ is a 1-dimensional  manifold, possibly with boundary. For let us define 
\[
L'=\big\{x\in X\setminus\{\O\}\textrm{ such that \(\gamma \cap B_{\Rb}(\O)\subset L\) for every geodesic \(\gamma\) between \(x\) and \(\O\)}\big\}
\]
and 
\[
R'=\big\{x\in  X\setminus\{\O\}\textrm{ such that \(\gamma \cap B_{\Rb}(\O)\subset R\) for every geodesic \(\gamma\) between \(x\) and \(\O\)}\big\}.
\]
We claim that $\sfd(\cdot,O):L'\to\R^+$ is injective. To prove this start observing that for $x'\in L'\setminus L=L'\setminus B_\Rb(\O)$, any geodesic $\gamma$ from $x'$ to $O$ must satisfy $\gamma\supset L$. It follows that $\sfd(x,x')=\sfd(x',\O)-\sfd(x,\O)$ for any $x'\in L'$ and $x\in L$, thus if there where $x_1,x_2\in L'\setminus L$ with $\sfd(x_1,\O)=\sfd(x_2,\O)$ we would have, much like in Proposition \ref{prop:sfera}, that any transport plan between the measures
\[
\mu_0:=\mm(L)^{-1}\mm\restr L,\qquad\qquad \mu_1:=\frac12(\delta_{x_1}+\delta_{x_2}),
\]
would be optimal and thus induced by a map (see \cite{GigliRajalaSturm13}), contradicting the fact that $\mu_0\times\mu_1$ is not induced by a map.

Thus $\sfd(\cdot,O):L'\to\R^+$ is indeed injective and it is then clear that it is also an isometry. The same holds for $R'$, hence the conclusion follows by elementary topology if we show that  \(\X=\overline{L'}\cup \overline{R'}\).  This is the same as to say that the open set $\Omega:=\X\setminus(\overline{L'}\cup \overline{R'})$ has measure 0. By definition of $L$ and $R$, we know that for every $x\in\Omega$ there is a geodesic from $x$ to $\O$ passing through $x_L$ and another passing through $x_R$. Hence $\sfd(x,x_L)=\sfd(x,x_R)$ for any $x\in\Omega$ and if $\mm(\Omega)>0$ we can consider the measures
\[
\mu_0:=\mm(\Omega)^{-1}\mm\restr\Omega,\qquad\qquad \mu_1:=\frac12(\delta_{x_L}+\delta_{x_R}),
\]
and obtain a contradiction as before noticing that the plan $\mu_0\times\mu_1$ would be optimal and not induced by a map.
%
}
\end{proof}
\begin{quote}
\underline{From now on, we shall always assume that $S_{\Rb}(\O)$ contains at least 3 points.}
\end{quote}

\subsection{The sphere equipped with the induced distance and measure}\label{se:xp}

\begin{definition} We put $\X':=S_{\Rb/2}(\O)$. For $x',y'\in \X'$ we define $\sfd'(x',y')$ as
\[
\sfd'(x',y')^2:=\inf \int_0^1|\dot\gamma_t|^2\,\d t,
\]
where the infimum is taken among all Lipschitz curves $\gamma:[0,1]\to \X'\subset \X$ and the metric speed is computed w.r.t.\ the distance $\sfd$.

The measure $\mm'$ on $\X'$ is defined as
\[
\mm':=\mm_{\Rb/2},
\]
where $\mm_{\Rb/2}$ is obtained disintegrating $\mm$ along $\sfd(\cdot,\O)$ (recall Corollary \ref{cor:contdis}).
\end{definition}
Proposition \ref{prop:sfera} and the fact that we assumed $S_{\Rb/2}(\O)$ to contain at least 3 points grant that $\sfd'$ is finite and it is then easy to see that it is indeed a distance on $\X'$ inducing  the same topology coming from the inclusion $\X'\subset \X$. In particular,  $\mm'$ is a Borel measure on $(\X',\sfd')$. It is also clear from \eqref{eq:glmeas} that
\begin{equation}
\label{eq:relmisure}
\pr_*(\mm\restr{B_{\Rb}(\O)})=\mm(B_{\Rb}(\O))\mm'.
\end{equation}
Moreover, since for any Lipschitz curve $\gamma$ with values in $\X'$ we have - as it is easy to check - that $\sfd(\gamma_t,\gamma_s)\leq\sfd'(\gamma_t,\gamma_s)\leq \int_t^s|\dot\gamma_r|\,\d r$ for any $t<s$, $t,s\in[0,1]$, we deduce that
\begin{equation}
\label{eq:veluguali}
\begin{split}
&\text{a  curve $\gamma$ with values  in $\X'$ is absolutely continuous w.r.t.\ $\sfd'$ if and only if it is so w.r.t.\ $\sfd$}\\
&\text{and in this case the metric speeds computed w.r.t.\ the two distances are the same.}
\end{split}
\end{equation}
At this stage of the paper we begin considering Sobolev functions on different spaces. Although a priori there can be no confusion, for better  clarity we shall denote the minimal weak upper gradient of the Sobolev function $f$ defined, say, on the space $\X$ by $|\D f|_\X$ rather than by $|\D f|$.

Another notation that we introduce is ${\rm ms}_t(\gamma)$ for the metric speed $|\dot\gamma_t|$ of the absolutely continuous curve $\gamma$ at time $t$.

With that said, the following link between Sobolev functions on $\X$ and on $\X'$ is easily established: 
\begin{proposition}\label{prop:linksob}
Let $[a,b]\subset \,(0,\Rb)$, $h\in \Lip(\R)$ with support in $(0,\Rb)$ and identically 1 on $[a,b]$ and $f\in L^2(\X)$ of the form $f(x)=g(\pr(x))h(\sfd(x,\O))$ for some $g\in L^2(\mm')$. 

Assume that $f\in W^{1,2}(\X)$. Then  $g\in W^{1,2}(\X')$ and 
\begin{equation}
\label{eq:sobxp1}
|\D f|_\X(x)\geq \frac{\Rb}{2\sfd(x,\O)}|\D g|_{\X'}(\pr(x)) ,\qquad\text{for $\mm$-a.e.\ x such that }\sfd(x,\O)\in[a,b].
\end{equation}
\end{proposition}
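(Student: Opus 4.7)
The plan is to lift test plans on $(\X',\sfd',\mm')$ to test plans on $(\X,\sfd,\mm)$ by spreading them along the scaling flow $\gl$ over the annulus of radii $[a,b]$, use the Sobolev property of $f$ to obtain an averaged inequality, and then apply a Lebesgue-differentiation argument to recover the pointwise bound \eqref{eq:sobxp1}. For each $s\in(0,\Rb)$ I would set $F_s:=\gl_{\log(\Rb/(2s))}\restr{\X'}$; by Theorem \ref{thm:rapprcont} this is a locally Lipschitz bijection from $\X'$ onto $S_s(\O)$ with locally Lipschitz inverse $F_s^{-1}=\gl_{\log(2s/\Rb)}\restr{S_s(\O)}$, by \eqref{eq:glmeas} we have $(F_s)_*\mm'=\mm_s$, by \eqref{eq:locspeeds} the map $\gamma'\mapsto F_s\circ\gamma'$ multiplies metric speeds by $2s/\Rb$, and $\pr\circ F_s=\Id_{\X'}$.

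Given a test plan $\ppi'$ on $\X'$, set $\ppi_s:=(F_s\circ\cdot)_*\ppi'$ and
\[
\tilde\ppi:=\frac{1}{W}\int_a^b s^{N-1}\,\ppi_s\,\d s,\qquad W:=\int_a^b s^{N-1}\,\d s.
\]
Then $\tilde\ppi$ is a test plan on $\X$: bounded compression follows from $(\e_t)_*\ppi_s=(F_s)_*(\e_t)_*\ppi'\leq C\mm_s$ combined with $\int_a^b s^{N-1}\mm_s\,\d s\leq c^{-1}\mm$ (a consequence of the disintegration \eqref{eq:dismdo}), while finite kinetic energy is immediate from the $2s/\Rb$ scaling of speeds. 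Since $h\equiv 1$ on $[a,b]$ and $\pr\circ F_s=\Id$, for every $\gamma'$ in the support of $\ppi'$ and every $s\in[a,b]$ one has $f(F_s(\gamma'_t))=g(\gamma'_t)$. Applying the weak upper gradient property of $|\D f|_\X$ to $\tilde\ppi$ and using Fubini I would therefore obtain
\[
\int|g(\gamma'_1)-g(\gamma'_0)|\,\d\ppi'(\gamma')\leq\iint_0^1\bar G_{[a,b]}(\gamma'_t)\,|\dot\gamma'_t|\,\d t\,\d\ppi'(\gamma'),
\]
where $\bar G_{[a,b]}(x'):=\frac{1}{W}\int_a^b s^{N-1}\tfrac{2s}{\Rb}|\D f|_\X(F_s(x'))\,\d s$. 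By the arbitrariness of $\ppi'$, $\bar G_{[a,b]}$ is a weak upper gradient of $g$ on $\X'$; its membership in $L^2(\mm')$ (via Fubini from $|\D f|_\X\in L^2(\mm)$) together with $g\in L^2(\mm')$ yields $g\in W^{1,2}(\X')$ with $|\D g|_{\X'}\leq\bar G_{[a,b]}$ $\mm'$-a.e.

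To upgrade to the pointwise estimate \eqref{eq:sobxp1}, repeat the above for every subinterval $[s_0-\eps,s_0+\eps]\subset[a,b]$: by Fubini, for $\mm'$-a.e.\ $x'$ the slice function $s\mapsto s^{N-1}\tfrac{2s}{\Rb}|\D f|_\X(F_s(x'))$ is locally integrable on $(0,\Rb)$, so the Lebesgue differentiation theorem gives $\bar G_{[s_0-\eps,s_0+\eps]}(x')\to\tfrac{2s_0}{\Rb}|\D f|_\X(F_{s_0}(x'))$ as $\eps\downarrow 0$ for a.e.\ $s_0$. Combining with the previous paragraph then yields $\tfrac{2s_0}{\Rb}|\D f|_\X(F_{s_0}(x'))\geq|\D g|_{\X'}(x')$ for $\mathcal L^1\times\mm'$-a.e.\ $(s_0,x')\in[a,b]\times\X'$, which via the disintegration \eqref{eq:dismdo} and $\pr_*\mm_s=\mm'$ translates into \eqref{eq:sobxp1}. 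The main obstacle I foresee is precisely this last passage from the averaged to the pointwise inequality: Fubini and Lebesgue differentiation must be coordinated so that the exceptional sets in $s_0$ and in $x'$ can be exchanged into the single $\mm$-null set in $B_\Rb(\O)$ required by the statement.
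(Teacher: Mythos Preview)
Your proposal is correct and follows essentially the same route as the paper's proof: both lift a test plan $\ppi'$ on $\X'$ to a test plan on $\X$ by averaging the push-forwards under $F_s=\gl_{\log(\Rb/(2s))}$ over a radial interval, then deduce an averaged weak-upper-gradient inequality and pass to the pointwise bound by shrinking the interval. The only cosmetic difference is that the paper averages with the uniform weight $(b'-a')^{-1}\,\d d$ over an arbitrary $[a',b']\subset[a,b]$, whereas you weight by $s^{N-1}$; neither choice affects the argument. The ``obstacle'' you flag is exactly the step the paper dispatches with the phrase ``the arbitrariness of $a',b'$ yields \eqref{eq:sobxp1}'', and your sketch of its resolution (restrict to a countable family of intervals to get a common $\mm'$-null set, apply Lebesgue differentiation in $s$, then use the disintegration \eqref{eq:dismdo} to pass from $\mathcal L^1\times\mm'$-a.e.\ to $\mm$-a.e.) is the standard and correct way to fill it in.
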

\begin{proof}
Fix $f\in W^{1,2}(\X)$, let $\ppi'$ be a test plan on $\X'$ and pick $[a',b']\subset[a,b]$ with $a'<b'$. Consider the map $P:\X'\times[a',b']\to \X$ given by $P(x,d):=\gl_{\log(\frac\Rb{2d})}(x)$, the induced map  $\hat P:C([0,1],\X')\times [a',b']\to C([0,1],\X)$ defined as  $\hat P(\gamma,d)_t:=P(\gamma_t,d)$ and consider the plan
\[
\ppi:=\hat P_*(\ppi'\times(|b'-a'|^{-1}\mathcal L^1\restr{[a',b']}))\in \prob{C([0,1],\X)}.
\]
Since $\sfd'\geq\sfd$ on $\X'$, we see from the fact that $\gl_t$ is locally Lipschitz ($(ii)$ in Theorem \ref{thm:rapprcont}) and the compactness of $\X'\times[a',b']$ that  $P$ is Lipschitz, thus since  $\ppi'$ has  finite kinetic energy, we conclude that $\ppi$ has also finite kinetic energy. Moreover, from \eqref{eq:pfdo}, the definition of $\mm'$ and the fact that $\ppi'$ has bounded compression, we deduce that $\ppi$ has also bounded compression. In summary: $\ppi$ is a test plan on $\X$.

Notice that by construction, for $\ppi$-a.e.\ $\gamma$ we have $f(\gamma_t)=g(\pr(\gamma_t))$ and that from \eqref{eq:veluguali} and \eqref{eq:locspeeds} we see that ${\rm ms}_t(\hat P(\gamma,d))=\frac{2d}{\Rb}|\dot \gamma_t|$ for a.e.\ $t$. Then we have:
\[
\begin{split}
\int |g(\gamma_1)-g(\gamma_0)|\,\d\ppi'(\gamma)&=\int |f(\gamma_1)-f(\gamma_0)|\,\d\ppi(\gamma)\\
&\leq \iint_0^1|\D f|_\X(\gamma_t)|\dot\gamma_t|\,\d t\,\d\ppi(\gamma)\\
&=\frac1{b'-a'}\iint_0^1\int_{a'}^{b'}|\D f|_\X(P(\gamma,d)_t){\rm ms}_t(P(\gamma,d))\,\d d\,\d t\,\d\ppi'(\gamma)\\
&=\iint_0^1\left(\frac1{b'-a'}\int_{a'}^{b'}\frac{2d}{\Rb}|\D f|_\X(P(\gamma,d)_t)\,\d d\right)|\dot\gamma_t| \,\d t\,\d\ppi'(\gamma),
\end{split}
\]
which by the arbitrariness of $\ppi'$ shows that $g\in W^{1,2}(\X')$ and 
\[
|\D g|_{\X'}(x')\leq \frac1{b'-a'}\int_{a'}^{b'}\frac{2d}{\Rb}|\D f|( \gl_{\log(\frac\Rb{2d})}(x')) \,\d d,\qquad \mm'-a.e.\ x'.
\]
Then the arbitrariness of $a',b'$ yields  \eqref{eq:sobxp1}.
\end{proof}
In fact, also the inequality opposite of \eqref{eq:sobxp1} holds, the proof being based on the following proposition:
\begin{proposition}\label{prop:key}
Let $[a,b]\subset(0,\Rb)$ and  $\ppi$ be a test plan on $\X$ such that $\sfd(\gamma_t,\O)\in[a,b] $ for every $t\in[0,1]$ and $\ppi$-a.e.\ $\gamma$. Then for $\ppi$-a.e.\ $\gamma$ the curve $\tilde\gamma:=\pr\circ\gamma$ is absolutely continuous and satisfies
\[
|\dot{\tilde\gamma}_t|\leq \frac{\Rb}{2\sfd(\gamma_t,\O)}|\dot\gamma_t|,\qquad a.e.\ t.
\]
\end{proposition}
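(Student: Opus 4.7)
By \eqref{eq:veluguali} it suffices to bound the $\sfd$-metric speed of $\tilde\gamma := \pr\circ\gamma$. Write $r_u := \sfd(\gamma_u,\O)$ and note that $u\mapsto r_u$ is absolutely continuous with $|r_u'|\le|\dot\gamma_u|$ since $\sfd(\cdot,\O)$ is $1$-Lipschitz (via $|\D\sfd(\cdot,\O)|\le 1$). The structural identity $\pr\circ\gl_T=\pr$ on $B_\Rb(\O)\setminus\{\O\}$, immediate from \eqref{eq:glpunt} and the definition of $\pr$, gives the representation $\tilde\gamma_u=\gl_{\log(2r_u/\Rb)}(\gamma_u)$. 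Combining the semigroup, the local $e^{-T}$-Lipschitz property of $\gl_T$ from Theorem \ref{thm:rapprcont}, and the radial distance identity in \eqref{eq:glpunt}, a triangle inequality produces
\[
\sfd(\tilde\gamma_s,\tilde\gamma_t)\le\tfrac{\Rb}{2r_s}\bigl(\sfd(\gamma_s,\gamma_t)+|r_s-r_t|\bigr),
\]
which already yields absolute continuity of $\tilde\gamma$, but only with the non-sharp speed bound $|\dot{\tilde\gamma}_u|\le\tfrac{\Rb}{r_u}|\dot\gamma_u|$—off by a factor of $2$.

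Recovering the sharp factor $\tfrac{\Rb}{2r_u}$ is the main obstacle, and the plan is to exploit the infinitesimally Hilbertian structure by testing against $1$-Lipschitz $g\colon(\X',\sfd')\to\R$. Set $f_g:=g\circ\pr$. Since $f_g\circ\gl_T=f_g$ and $\gl$ represents the gradient of $-\b$ (Corollary \ref{cor:rappresenta}), the first-order formula \eqref{eq:firstorder} gives the orthogonality $\la\nabla f_g,\nabla\sfd(\cdot,\O)\ra=0$ $\mm$-a.e.\ on the annulus, after recalling that $\nabla\b=\sfd(\cdot,\O)\nabla\sfd(\cdot,\O)$ (by Corollary \ref{cor:db}). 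Applying Corollary \ref{cor:enid} locally to $f_g$ and using again its $\gl$-invariance shows that $\sfd(x,\O)\,|\D f_g|_\X(x)$ is constant along $\gl$-orbits, so we may write $|\D f_g|_\X(x)=\tfrac{\Rb}{2\sfd(x,\O)}\Psi_g(\pr(x))$ for some $\Psi_g\colon\X'\to[0,\infty)$. By infinitesimal Hilbertianity and orthogonality, $|\D(f_g+\lambda\sfd(\cdot,\O))|_\X^2=|\D f_g|_\X^2+\lambda^2$ for every $\lambda\in\R$. Applying Proposition \ref{prop:sobf} to this function along the test plan $\ppi$ and noting $f_g\circ\gamma=g\circ\tilde\gamma$, $\sfd(\gamma_\cdot,\O)=r_\cdot$, yields
\[
\bigl|\partial_t g(\tilde\gamma_t)+\lambda r_t'\bigr|^2\le|\dot\gamma_t|^2\bigl(|\D f_g|_\X^2(\gamma_t)+\lambda^2\bigr),
\]
for a.e.\ $t$ and $\ppi$-a.e.\ $\gamma$. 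Minimising in $\lambda$, allowed since $|r_t'|\le|\dot\gamma_t|$ makes the right side a non-degenerate quadratic, produces the Pythagorean-type bound
\[
|\partial_t g(\tilde\gamma_t)|^2\le|\D f_g|_\X^2(\gamma_t)\bigl(|\dot\gamma_t|^2-(r_t')^2\bigr)\le\bigl(\tfrac{\Rb}{2r_t}\Psi_g(\tilde\gamma_t)\bigr)^2|\dot\gamma_t|^2.
\]
Taking the supremum over a countable dense family of $1$-Lipschitz $g$'s and using the duality characterisation of the $\sfd'$-length via Lipschitz functions then gives the claim, once one knows $\Psi_g\le 1$ for such $g$.

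The subtlest point is precisely this last bound $\Psi_g\le 1$: one cannot naively use $|\D f_g|_\X\le\lip(f_g)$ and control the slope, since doing so would require bounding $\sfd'(\pr(x),x')/\sfd(x,x')$ infinitesimally for $x\to x'\in\X'$, which is essentially the statement we are proving. The fix is to identify $\Psi_g$—equivalently, the restriction of $|\D f_g|_\X$ to $\X'$—intrinsically with the $\sfd'$-slope of $g$, using the Sobolev-to-Lipschitz property of $\X$ together with the Lipschitz path-connectedness of $\X'$ guaranteed by Proposition \ref{prop:sfera}; this is the circle-breaking step, and the heart of the proof.
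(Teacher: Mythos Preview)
Your approach has a genuine gap precisely at the point you label ``the heart of the proof''. You need $\Psi_g\le 1$ whenever $g$ is $1$-Lipschitz on $(\X',\sfd')$, i.e.\ $|\D(g\circ\pr)|_\X\le \tfrac{\Rb}{2\sfd(\cdot,\O)}$ on the annulus. But this is exactly the inequality $\leq$ in \eqref{eq:sobxp} of Theorem~\ref{thm:linksob}, which in the paper is \emph{deduced from} Proposition~\ref{prop:key}. Your proposed ``fix'' does not break this circle: the Sobolev-to-Lipschitz property of $\X$ goes from a bound on $|\D f_g|_\X$ to a Lipschitz bound on $f_g$ (hence on $g$ along paths in $\X'$), which is the \emph{opposite} direction to what you need. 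Knowing that $g$ is $1$-Lipschitz on $(\X',\sfd')$ gives you metric information on $\X'$, while bounding $|\D f_g|_\X$ requires controlling the variation of $f_g$ along \emph{all} test plans in $\X$, including those transversal to $\X'$ --- and that control is precisely the local Lipschitz constant of $\pr$, the very quantity the proposition computes. Lipschitz path-connectedness of $\X'$ does not help either: it lets you test $g$ along curves in $\X'$, but says nothing about how $f_g$ behaves along curves leaving $\X'$.

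The paper's proof is of a completely different nature and does not go through first-order duality with Lipschitz functions on $\X'$. It relies on the second-order calculus of \cite{Gigli14}: from the Euler equation one computes $\H\b=\Id$ on $B_\Rb(\O)$, then constructs an auxiliary flow $\hat\gl_s$ (a reparametrisation of $\gl$) that converges uniformly to $\pr$ as $s\to\infty$. The key estimate is that for any $v\in L^2(T\X)$ the map $s\mapsto |v_s|^2\circ\hat\gl_s$, with $v_s:=\d\hat\gl_s(v)$, is $C^1$ with derivative governed by $\H{\hat\b}$, and the explicit form of $\H{\hat\b}$ on the annulus makes $s\mapsto \tfrac{|v_s|^2}{\sfd^2(\cdot,\O)}\circ\hat\gl_s$ non-increasing. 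Lifting this to the pullback modules along $\e_t$ and using the chain rule $(\hat\gl_{s*}\ppi)'_t=\d\hat\gl_s(\ppi'_t)$, one obtains monotonicity of the rescaled kinetic energy of $(\hat\gl_s)_*\ppi$ in $s$; passing to the limit $s\to\infty$ via lower semicontinuity of the energy gives the integrated bound, and a localisation argument yields the pointwise statement. The sharp factor $\tfrac{\Rb}{2\sfd(\cdot,\O)}$ thus arises from the Hessian computation, not from any Lipschitz duality.
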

The proof of this proposition is technically quite involved, as it heavily relies on the first and second order differential calculus recently developed in \cite{Gigli14}. We postpone it to the next section, see Proposition \ref{prop:keyagain}, where all the necessary ingredients will be recalled and discussed. Here we show how to use this proposition to get the equality in \eqref{eq:sobxp} and then other basic informations about the structure of $(\X',\sfd',\mm')$:
\begin{theorem}\label{thm:linksob}
Let $[a,b]\subset \,(0,\Rb)$, $h\in \Lip(\R)$ with support in $(0,\Rb)$ and identically 1 on $[a,b]$ and $f\in L^2(\X)$ of the form $f(x)=g(\pr(x))h(\sfd(x,\O))$ for some $g\in L^2(\mm')$. 

Then $f\in W^{1,2}(\X)$ if and only if   $g\in W^{1,2}(\X')$ and in this case we have
\begin{equation}
\label{eq:sobxp}
|\D f|_\X(x)= \frac{\Rb}{2\sfd(x,\O)}|\D g|_{\X'}(\pr(x)) ,\qquad\text{for $\mm$-a.e.\ x such that }\sfd(x,\O)\in[a,b].
\end{equation}
\end{theorem}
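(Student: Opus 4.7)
My plan is to observe that Proposition \ref{prop:linksob} already gives the ``only if'' implication together with the $\ge$ inequality in \eqref{eq:sobxp}, so what remains is the ``if'' direction coupled with the reverse inequality. I will reduce everything, via the Leibniz rule, to the claim
\[
g\circ\pr\in S^2_{loc}(B_\Rb(\O)\setminus\{\O\})\quad\text{with}\quad|\D(g\circ\pr)|_\X(x)\le \tfrac{\Rb}{2\sfd(x,\O)}\,|\D g|_{\X'}(\pr(x))\quad \mm\text{-a.e.}
\]
Indeed, $h\circ\sfd_{\O}$ is Lipschitz with support in a compact annulus contained in $\{\sfd_{\O}\in(0,\Rb)\}$ and satisfies $|\D(h\circ\sfd_{\O})|_\X\le |h'\circ\sfd_{\O}|$; writing $f=(g\circ\pr)(h\circ\sfd_{\O})$, the Leibniz rule then yields $f\in W^{1,2}(\X)$ with
\[
|\D f|_\X\le |g\circ\pr|\,|h'\circ\sfd_{\O}|+(h\circ\sfd_{\O})\,\tfrac{\Rb}{2\sfd_{\O}}\,|\D g|_{\X'}\circ\pr\qquad\mm\text{-a.e.}
\]
On $\{\sfd_{\O}\in[a,b]\}$ we have $h\equiv 1$, so $h'\circ\sfd_{\O}=0$ $\mm$-a.e.\ (the set $\{\sfd_{\O}\in\{a,b\}\}$ is $\mm$-null by \eqref{eq:pfdo}), and combining with the $\ge$ of Proposition \ref{prop:linksob} forces equality in \eqref{eq:sobxp}.

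For the key claim about $g\circ\pr$, I will fix an annulus $A:=\{\sfd_{\O}\in[\alpha',\beta']\}$ with $0<\alpha'<\beta'<\Rb$ and consider a test plan $\ppi$ on $\X$ whose $\ppi$-a.e.\ curve takes values in $A$. Proposition \ref{prop:key} (restated and proved in the next section as Proposition \ref{prop:keyagain}) will ensure that for $\ppi$-a.e.\ $\gamma$ the curve $\tilde\gamma:=\pr\circ\gamma$ is absolutely continuous in $\X$, hence in $\X'$ by \eqref{eq:veluguali}, with $|\dot{\tilde\gamma}_t|\le \tfrac{\Rb}{2\sfd(\gamma_t,\O)}|\dot\gamma_t|$ for a.e.\ $t$. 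Setting $T(\gamma):=\pr\circ\gamma$ and $\tilde\ppi:=T_*\ppi$, this speed bound delivers finite kinetic energy of $\tilde\ppi$ on $(\X',\sfd')$; for bounded compression I will use Corollary \ref{cor:contdis} and \eqref{eq:glmeas} to check that the identity $\pr\circ\gl_{\log(2r/\Rb)}=\Id$ on $S_r(\O)$ together with $(\gl_{\log(\Rb/(2r))})_*\mm'=\mm_r$ gives $\pr_*\mm_r=\mm'$ for every $r\in[\alpha',\beta']$. Consequently $\pr_*(\mm\restr A)$ is a finite positive multiple of $\mm'$, so $\tilde\ppi$ inherits bounded compression from $\ppi$ and is a test plan on $(\X',\sfd',\mm')$.

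Applying the weak upper gradient inequality of $g\in W^{1,2}(\X')$ to $\tilde\ppi$ and rewriting in terms of $\ppi$ via the speed bound produces
\[
\int|(g\circ\pr)(\gamma_1)-(g\circ\pr)(\gamma_0)|\,\d\ppi(\gamma)\le \iint_0^1\tfrac{\Rb}{2\sfd(\gamma_t,\O)}\,|\D g|_{\X'}(\pr(\gamma_t))\,|\dot\gamma_t|\,\d t\,\d\ppi(\gamma).
\]
Since $\ppi$ was arbitrary among test plans concentrated on $A$, this identifies $\tfrac{\Rb}{2\sfd_{\O}}|\D g|_{\X'}\circ\pr$ as a local weak upper gradient of $g\circ\pr$ on $A$; exhausting $B_\Rb(\O)\setminus\{\O\}$ by a sequence of such annuli $[\alpha'_n,\beta'_n]$ with $\alpha'_n\downarrow 0$ and $\beta'_n\uparrow \Rb$ closes the main step. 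The genuine obstacle in this plan is Proposition \ref{prop:key}: the metric-speed bound for $\pr\circ\gamma$ along $\ppi$-a.e.\ trajectory is the non-trivial input, and proving it requires the first and second order differential calculus of \cite{Gigli14}; everything else above is a formal transfer of test plans from $\X$ to $\X'$ through $\pr$.
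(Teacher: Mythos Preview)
Your proposal is correct and follows essentially the same approach as the paper: both directions use Proposition \ref{prop:linksob} for the $\ge$ inequality and Proposition \ref{prop:key} as the crucial input for the $\le$ inequality, pushing test plans from $\X$ to $\X'$ via $\pr$ and invoking the weak upper gradient property of $g$ there. The only organizational difference is that you first isolate $g\circ\pr\in S^2_{loc}$ and then apply the Leibniz rule, whereas the paper works directly with the product $f=(g\circ\pr)(h\circ\sfd_{\O})$ using the pointwise characterization of Proposition \ref{prop:sobf}; the substance is the same.
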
 
\begin{proof}
The `only if' and the inequality $\geq$ are the content of Proposition \ref{prop:linksob}, so we turn to the `if' and the inequality $\leq$.

Let $[a',b']\subset (0,\Rb)$ be such that $\supp(h)\subset (a',b')$ and notice that by construction we have  $\supp(f)\subset B_{b'}(\O)\setminus B_{a'}(\O)$, thus arguing as in the proof of Theorem 4.19 in  \cite{AmbrosioGigliSavare11-2} to conclude it is sufficient to check the weak upper gradient property for test plans $\ppi$ such that $\sfd(\gamma_t,O)\in[a',b']$ for every $t\in[0,1]$ and  $\gamma\in\supp(\ppi)$.

Fix such $\ppi$, let $G:\X\to\R$ be given by
\[
G(x):= \frac{\Rb}{2\sfd(x,\O)}|\D g|_{\X'}(\pr(x ))h(\sfd(x,\O))+g(\pr(x)) |h'|(\sfd(x,\O)),
\]
and notice that $G$ is in $L^2(\mm)$ and equal to $ \frac{\Rb}{2\sfd(x,\O)}|\D g|_{\X'}(\pr(x)) $ for $x$ such that $\sfd(x,\O)\in[a,b]$. 

Therefore, taking into account Proposition \ref{prop:sobf}, to conclude it is sufficient to prove that  for $\ppi$-a.e.\ $\gamma$ the function $t\mapsto f(\gamma_t)$ is equal a.e.\ on $[0,1]$ and in $\{0,1\}$ to an absolutely continuous map $f_\gamma$ such that 
\begin{equation}
\label{eq:perf}
|\partial_tf_\gamma|(t)\leq G(\gamma_t)|\dot\gamma_t|,\qquad a.e.\ t\in[0,1].
\end{equation}
Notice  that $\pr:B_{b'}(\O)\setminus B_{a'}(\O)\to S_{\Rb/2}(\O)$ is Lipschitz, thus recalling \eqref{eq:relmisure} we deduce that the plan $\ppi':=\pr_*\ppi$ is a test plan on $\X'$  (here we are abusing a bit the notation as we are  interpreting $\pr$ as the map from $C([0,1],\X)$ to $C([0,1],\X')$ sending $\gamma$ to $\pr\circ\gamma$).

Since $g\in W^{1,2}(\X')$, by Proposition \ref{prop:sobf} we deduce that for $\ppi$-a.e.\ $\gamma$ we have that the map $t\mapsto g(\pr(\gamma_t))$ is equal a.e.\ on $[0,1]$ and in $\{0,1\}$ to an absolutely continuous map $g_{\pr\circ\gamma}$ such that $|g'_{\pr\circ\gamma}|(t)\leq |\D g|_{\X'}(\pr(\gamma_t)){\rm ms}_t(\pr\circ\gamma)$. 

Here we use the key Proposition \ref{prop:key}  to obtain that for $\ppi$-a.e.\ $\gamma$ it holds
\[
|g'_{\pr\circ\gamma}|(t)\leq  \frac{\Rb}{2\sfd(\gamma_t,\O)} |\D g|_{\X'}(\pr(\gamma_t))|\dot\gamma_t|.
\]
Since for any absolutely continuous curve $\gamma$ the map $t\mapsto h(\sfd(\gamma_t,\O)$ is absolutely continuous with $|\partial_th(\sfd(\gamma_t,\O)|\leq |h'|(\sfd(\gamma_t,\O)|\dot\gamma_t|$, we deduce  that for $\ppi$-a.e.\ $\gamma$ the map $t\mapsto f(\gamma_t)=g(\pr(\gamma_t))h (\sfd(\gamma_t,\O))$ is equal a.e.\ on $[0,1]$ and in $\{0,1\}$ to the absolutely continuous map $t\mapsto f_\gamma(t):=g_{\pr\circ\gamma}(t)h(\sfd(\gamma_t,\O))$ and that \eqref{eq:perf} holds. By the arbitrariness of $\ppi$, this is sufficient to conclude.
\end{proof}
In \cite{GH15} the notion of `measured-length space' has been introduced as key tool, in conjunction with some doubling property, to establish the Sobolev-to-Lipschitz property of a space and its warped products with an interval. We recall the definition, which consists in a variant of the well-known length property which takes into account the reference measure:
\begin{definition}[Measured-length space]
We say that a metric measure space $(\Z,\d_\Z,\mm_\Z)$ is measured-length if there exists a Borel set $A\subset \Z$ whose complement is $\mm_\Z$-negligible with the following property. For every $x_0,x_1\in A$  there exists $ \eps>0$ such that for every  $\eps_0,\eps_1\in(0,\eps]$ there is a test plan $\pi^{\eps_0,\eps_1}$ with:
\begin{itemize}
\item[a)] the map $(0,\eps]^2\ni(\eps_0,\eps_1)\mapsto \pi^{\eps_0,\eps_1}$ is weakly Borel in the sense that for any $\varphi\in C_b(C([0,1],\Z))$ the map
\[
(0,\eps]^2\ni(\eps_0,\eps_1)\qquad\mapsto \qquad \int \varphi\,\d\pi^{\eps_0,\eps_1},
\]
is Borel.
\item[b)] We have 
\[
(e_0)_*\pi^{\eps_0,\eps_1}= \frac{1_{B_{\eps_0}(x_0)}}{\mm_\Z(B_{\eps_0}(x_0))}\,\mm_\Z,\qquad\text{ and }\qquad (e_1)_*\pi^{\eps_0,\eps_1}= \frac{1_{B_{\eps_1}(x_1)}}{\mm_\Z(B_{\eps_1}(x_1))}\,\mm_\Z
\]
 for every $\eps_0,\eps_1\in(0,\eps]$,
\item[c)] We have
\begin{equation*}
\label{eq:limitenergy}
\lims_{\eps_0,\eps_1\downarrow 0}\iint_0^1|\dot\gamma_t|^2\,\d t\,\d\pi(\gamma)\leq \d_\Z^2(x_0,x_1).
\end{equation*}
\end{itemize}
\end{definition}
We then have the following result:
\begin{proposition}\label{prop:basexp}
$(\X',\sfd',\mm')$ is infinitesimally Hilbertian, doubling and a measured-length space.
\end{proposition}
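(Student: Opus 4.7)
The three assertions are quite different in nature and should be handled separately. Infinitesimal Hilbertianity transfers cleanly from $\X$ via the isomorphism of Sobolev spaces provided by Theorem \ref{thm:linksob}; doubling of $\mm'$ follows from the disintegration of $\mm$ along $\sfd(\cdot,\O)$ combined with the Bishop--Gromov doubling of $\X$; and the measured-length property requires constructing test plans on $\X'$, which is the genuine technical obstacle.

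\textbf{Infinitesimal Hilbertianity.} Fix a Lipschitz cut-off $h:\R\to[0,1]$ supported in $(0,\Rb)$ and identically $1$ on $[\Rb/4,3\Rb/4]$. For $g\in W^{1,2}(\X')$, set $f(x):=g(\pr(x))h(\sfd(x,\O))$; by Theorem \ref{thm:linksob}, $f\in W^{1,2}(\X)$ with the identity $|\D f|_\X=\tfrac{\Rb}{2\sfd(\cdot,\O)}\,|\D g|_{\X'}\circ\pr$ holding on $\{\sfd(\cdot,\O)\in[\Rb/4,3\Rb/4]\}$. Applying this to $g_1,g_2,g_1+g_2,g_1-g_2\in W^{1,2}(\X')$ and integrating the pointwise parallelogram identity for $|\D\cdot|_\X^2$ (available by \eqref{eq:hilbloc}) against $\mm$ restricted to the annulus, I would use the disintegration \eqref{eq:dismdo} together with the identity $\pr_*\mm_s=\mm'$ for every $s$ (which follows from $\pr\circ\gl_t=\pr$ and \eqref{eq:glmeas}) to reduce the ambient parallelogram to the $\X'$-parallelogram $\int|\D(g_1+g_2)|_{\X'}^2+|\D(g_1-g_2)|_{\X'}^2\,\d\mm'=2\int|\D g_1|_{\X'}^2+2|\D g_2|_{\X'}^2\,\d\mm'$. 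This is exactly what is needed to conclude $W^{1,2}(\X')$ is Hilbert.

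\textbf{Doubling.} For $x'\in\X'$ and $r>0$ small I would consider the ``tube'' $T_r(x'):=\{y\in B_\Rb(\O):\pr(y)\in B^{\sfd'}_r(x'),\ |\sfd(y,\O)-\Rb/2|<r\}$. The disintegration \eqref{eq:dismdo} together with $\pr\circ\gl_t=\pr$ and $(\gl_t)_*\mm_{\Rb/2}=\mm_{e^{-t}\Rb/2}$ gives the exact factorization $\mm(T_r(x'))=c(r,N)\cdot\mm'(B^{\sfd'}_r(x'))$ for an explicit positive factor. Using the distance identity $\sfd(y,\pr(y))=|\sfd(y,\O)-\Rb/2|$ and $\sfd\le\sfd'$ on $\X'$, the tube $T_{2r}(x')$ is contained in the ambient ball $B^{\X}_{3r}(x')$; an analogous lower bound places a comparable piece of $B^{\X}_{r/C}(x')\cap\{|\sfd(\cdot,\O)-\Rb/2|<r/C\}$ inside $T_r(x')$. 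Matching these ambient sandwiches with the Bishop--Gromov doubling of $\mm$ on $\X$ then yields the doubling of $\mm'$ with constants depending only on $N$ and $\Rb$.

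\textbf{Measured-length and main obstacle.} Given $x_0',x_1'\in\X'$ in the full-measure set where continuous representatives are well defined, I would first pick a Lipschitz curve $\gamma:[0,1]\to\X'$ with $\gamma_0=x_0'$, $\gamma_1=x_1'$, and $\int_0^1|\dot\gamma_t|^2\,\d t\le\sfd'(x_0',x_1')^2+\eta$. The plan is to build an ambient test plan $\bar\pi^{\eps_0,\eps_1}$ on $\X$ whose marginals at $t=0,1$ are the normalized restrictions of $\mm$ to the tubes $A_i:=\pr^{-1}(B^{\sfd'}_{\eps_i}(x_i'))\cap\{|\sfd(\cdot,\O)-\Rb/2|<\delta\}$, supported on curves that approximately follow a perturbation of $\gamma$ within a thin neighborhood of $\X'$; then $\pi^{\eps_0,\eps_1}:=\pr_*\bar\pi^{\eps_0,\eps_1}$ is the desired test plan on $\X'$. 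The calculation $\pr_*(\mm|_{A_i})\propto\mm'|_{B^{\sfd'}_{\eps_i}(x_i')}$ gives the correct marginals (b); Proposition \ref{prop:key} converts the ambient $\sfd$-metric speed into the intrinsic $\sfd'$-metric speed of the projected curves, with the factor $\frac{\Rb}{2\sfd(\cdot,\O)}$ close to $1$ on the tube, so (c) follows from the near-minimality of $\gamma$ plus corrections of order $O(\eta+\eps_i+\delta)$; Borel measurability (a) comes from a measurable selection. The hard part is the construction of $\bar\pi^{\eps_0,\eps_1}$ itself: it must have bounded compression with respect to $\mm$, which forbids concentrating all curves through a single parameterization of $\gamma$. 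One must instead disperse the paths (for instance by a parallel-transport family, or via a gradient-flow construction driven by a potential whose level sets foliate the tube transversally to $\gamma$) so as to distribute mass uniformly along intermediate times, while keeping the ambient kinetic-energy gain of order $O(\eps_i+\delta)$ so that the bound (c) survives projection and the subsequent limit $\eps_i,\delta\to 0$.
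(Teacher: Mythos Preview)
Your arguments for infinitesimal Hilbertianity and doubling are essentially the same as the paper's and are fine.

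For the measured-length property there is a genuine gap. You correctly identify the obstruction: a single ambient plan whose curves all shadow one fixed $\gamma$ cannot have bounded compression, and you propose to ``disperse the paths'' by parallel transport or a transverse gradient flow. But in this nonsmooth setting you have no vector fields along $\gamma$ to parallel-transport, and no obvious potential whose flow would spread mass transversally while controlling kinetic energy; neither suggestion is made precise, and it is not clear either can be made to work. The proposal therefore stops exactly at the hard step.

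The paper bypasses this difficulty by a subdivision-and-gluing argument that exploits existing $\RCD$ machinery rather than inventing a dispersion mechanism. One takes a $\sfd'$-geodesic $\gamma$ in $\X'$, places on it roughly $n\sim 1/\sqrt{\eps_{01}}$ equally spaced points $x_{\eps_{01},i}$, and builds the tubes $A(x_{\eps_{01},i},\eps_i)$ around each of them (with interpolated radii $\eps_i$). Between \emph{consecutive} tubes one takes the unique $W_2$-optimal geodesic plan $\ppi_i^{\eps_0,\eps_1}$ from $\mm(A_i)^{-1}\mm\restr{A_i}$ to $\mm(A_{i+1})^{-1}\mm\restr{A_{i+1}}$; on an $\RCD^*(0,N)$ space such a plan is automatically a test plan (bounded compression comes for free from \cite{GigliRajalaSturm13}). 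These short plans are then glued into a single $\ppi^{\eps_0,\eps_1}$ on $[0,1]$, and finally one sets $\bar\ppi^{\eps_0,\eps_1}:=\pr_*\ppi^{\eps_0,\eps_1}$. The kinetic energy of each piece is controlled by the squared diameter of two adjacent tubes, which is $O\big((\eps_{01}+\sfd'(x^0,x^1)/n)^2\big)$; after rescaling and summing this gives total kinetic energy $\le (O(\sqrt{\eps_{01}})+\sfd'(x^0,x^1))^2$. Proposition~\ref{prop:key} then converts ambient speed into projected speed with the factor $\Rb/(2\sfd(\gamma_t,\O))$, which is uniformly close to $1$ because all curves stay in a thin annulus around $S_{\Rb/2}(\O)$; this yields the limsup bound (c). The correct marginals (b) follow because the tube disintegration projects to normalized $\mm'$-balls. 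The point is that optimal transport over \emph{short} steps supplies the bounded compression you were trying to manufacture by hand.
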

\begin{proof}

\noindent{\bf Infinitesimal Hilbertianity.} Direct consequence of Theorem \ref{thm:linksob} and the infinitesimal Hilbertianity of $\X$ (recall also property \eqref{eq:hilbloc}). 

\noindent{\bf Doubling.} We shall denote by $B^\X$, $B^{\X'}$ balls in $\X$, $\X'$ respectively. Start noticing that being $(\X',\sfd')$ compact it is sufficient to prove that for some $c>0$ we have
\begin{equation}
\label{eq:claimdoub}
\mm'(B^{\X'}_{2r}(x'))\leq c\, \mm'(B^{\X'}_{r}(x')),\qquad\forall x'\in \X',\ r< \Rb/8.
\end{equation}
Then for $x'\in \X'\subset \X$ and $r\in(0,\Rb/8)$ define $A(x',r)\subset \X$ as
\begin{equation}
\label{eq:axr}
A(x',r):=\big\{x\in \X\ :\ \sfd(x,\O)\in[\Rb/2-r,\Rb/2+r],\ \sfd'(\pr(x),x')\leq r\big\}
\end{equation}
and notice that from Corollary \ref{cor:contdis} we see that 
\[
\mm(A(x',r))=N\mm(B_\Rb(\O))\mm'(B^{\X'}_r(x'))\int_{\Rb/2-r}^{\Rb/2+r}s^{N-1}\,\d s
\]
and therefore for some constants $c_1,c_2>0$ we have
\begin{equation}
\label{eq:pallaquadrata}
c_1r\,\mm'(B^{\X'}_r(x'))\leq\mm(A(x',r))\leq c_2r \,\mm'(B^{\X'}_r(x')),\qquad\forall x'\in \X',\ r< \Rb/8,
\end{equation}
while the construction ensures that
\begin{equation}
\label{eq:pallafuori}
A(x',r)\subset B^\X_{2r}(x')\qquad\forall x'\in \X',\ r< \Rb/8.
\end{equation}
{ Recall that  $\pr:B_{3\Rb/4}(\O)\setminus B_{\Rb/4}(\O)\to S_{\Rb/2}(\O)$ is Lipschitz and let $L$ be a bound on its Lipschitz constant}. Observe also that the triangle inequality ensures that a geodesic with endpoints in $B_{2r}(x')$ for some $x'\in \X'$ and $r< \Rb/8$ never leaves $B_{3\Rb/4}(\O)\setminus B_{\Rb/4}(\O)$, which is sufficient to deduce that
\[
\sfd'(\pr(x_1),\pr(x_2))\leq L\sfd(x_1,x_2),\qquad\forall x_1,x_2\in B_{2r}(x'),\ x'\in \X',\ r<\Rb/8.
\]
This fact together with \eqref{eq:pallafuori} grants that
\begin{equation}
\label{eq:palladentro}
B_{r/L}^\X(x')\subset A(x',r),\qquad\forall x'\in \X',\ r<\Rb/8.
\end{equation}
Then the claim \eqref{eq:claimdoub} follows from \eqref{eq:pallaquadrata}, \eqref{eq:pallafuori} and \eqref{eq:palladentro} taking into account that $(\X,\sfd,\mm)$ is doubling.

\noindent{\bf  Measured-length property.} Fix $x^0,x^1\in \X'$, put $\eps:=\min\{\tfrac19,\tfrac{\Rb^2}{16(1+\sfd'(x^0,x^1))^2}\}$ and pick $\eps_0,\eps_1\in(0,\eps)$. Put $\eps_{01}:=\max\{\eps_0,\eps_1\}$, let $\gamma$ be a geodesic in $\X'$ connecting $x^0$ to $x^1$, let $n$ be the integer part of $1+\frac1{\sqrt{\eps_{01}}}$ and for $i=0,\ldots,n$ put $x_{\eps_{01},i}:=\gamma_{\frac in}$. Notice that
\[
\sum_{i=0}^{n-1}\sfd(x_{\eps_{01},i},x_{\eps_{01},i+1})\leq\sfd'(x^0,x^1),\qquad\forall n\in\N.
\]
For $x\in \X'$ and $r>0$ consider the sets $A(x,r)$ as defined in \eqref{eq:axr}, then put $\eps_{i}:=\eps_0+\tfrac{i}{n}(\eps_1-\eps_0)$ and define the measures 
\[
\mu^{\eps_0,\eps_1}_{i}:=\frac1{\mm(A(x_{\eps_{01},i},\eps_i))}\mm\restr{A(x_{\eps_{01},i},\eps_i)}\in\prob{\X}.
\]
From Corollary \ref{cor:contdis} it follows that
\begin{equation}
\label{eq:bla}
\pr_*\mu^{\eps_0,\eps_1}_{i}=\frac1{\mm'(B_{\eps_i}(x_{\eps_{01},i}))}\mm'\restr{B_{\eps_i}(x_{\eps_{01},i})},
\end{equation}
the balls considered in the right hand side being in the space $(\X',\sfd')$.

For $i=0,\ldots,n-1$ let $\ppi^{\eps_0,\eps_1}_{i}$ be  the only optimal geodesic plan from $\mu^{\eps_0,\eps_1}_{i}$ to $\mu^{\eps_0,\eps_1}_{i+1}$ (recall \cite{GigliRajalaSturm13}). Taking into account that the distance between a point in $A(x_{\eps_{01},i},\eps_i)$ and a point in $A(x_{\eps_{01},i+1},\eps_{i+1})$ is bounded above by $4\eps_{01}+\tfrac{\sfd'(x^0,x^1)}{n}$ we have
\begin{equation}
\label{eq:perml}
\iint_0^1|\dot\gamma_t|^2\,\d t\,\d\ppi^{\eps_0,\eps_1}_{i}(\gamma)=W_2^2(\mu^{\eps_0,\eps_1}_{i},\mu^{\eps_0,\eps_1}_{i+1})\leq \big(4\eps_{01}+\tfrac{\sfd'(x^0,x^1)}{n}\big)^2
\end{equation}
and from the construction and the choice of $\eps,\eps_0,\eps_1$ it is also easy to see that
\begin{equation}
\label{eq:distcentro}
\sfd(\gamma_t,\O)\in \big[\tfrac \Rb2-\sqrt{\eps_{01}}(1+\sfd'(x^0,x^1)),\tfrac \Rb2+\sqrt{\eps_{01}}(1+\sfd'(x^0,x^1))\big]\subset \big[\tfrac\Rb4,\tfrac{3\Rb}4\big],
\end{equation}
for every $t\in[0,1]$ and $\ppi^{\eps_0,\eps_1}_{i}$-a.e.\ $\gamma$.

 With a gluing argument we can then build a plan $\ppi^{\eps_0,\eps_1}\in\prob{C([0,1],\X)}$ (which is in fact unique, being the $\ppi^{\eps_0,\eps_1}_{i}$'s induced by maps) such that
\[
\Big({\rm Restr}_{\frac in}^{\frac{i+1}n}\Big)_*\ppi^{\eps_0,\eps_1}_n=\ppi^{\eps_0,\eps_1}_{n,i},\qquad\forall i=0,\ldots, n-1,
\]
and property \eqref{eq:perml}, taking into account the rescaling factor, gives
\[
\begin{split}
\iint_0^1|\dot\gamma_t|^2\,\d t\,\d\ppi^{\eps_0,\eps_1}(\gamma)&=n\sum_{i=0}^n\iint_0^1|\dot\gamma_t|^2\,\d t\,\d\ppi^{\eps_0,\eps_1}_{i}(\gamma)\\
&\leq(4n\eps_{01}+\sfd'(x^0,x^1))^2\leq(8\sqrt{\eps_{01}}+\sfd'(x^0,x^1))^2,
\end{split}
\]
while the construction ensures that \eqref{eq:distcentro} holds also $\ppi^{\eps_0,\eps_1}$-a.e.\ $\gamma$ for every $t\in[0,1]$.

We now put
\[
\bar\ppi^{\eps_0,\eps_1}:=\pr_*\ppi^{\eps_0,\eps_1}\in\prob{C([0,1],\X')}
\]
and notice that:
\begin{itemize}
\item[-] Since $\ppi^{\eps_0,\eps_1}$ is a test plan on $\X$ for which \eqref{eq:distcentro} holds, by Corollary \ref{cor:contdis}, property \eqref{eq:relmisure} and the fact that $\pr$ is Lipschitz from $B_{3\Rb/4}(\O)\setminus B_{\Rb/4}(\O)$ to $\X'$ we deduce that $\bar\ppi^{\eps_0,\eps_1}$ is a test plan on $\X'$.
\item[-] By \eqref{eq:bla} it follows that
\[
(\e_0)_*\bar\ppi^{\eps_0,\eps_1}=\frac1{\mm(B_{\eps_0}(x^0))}\mm\restr{B_{\eps_0}(x^0)},\qquad\qquad(\e_0)_*\bar\ppi^{\eps_0,\eps_1}=\frac1{\mm(B_{\eps_1}(x^1))}\mm\restr{B_{\eps_1}(x^1)}.
\]
\item[-] From Proposition \ref{prop:key} we have that 
\[
\iint_0^1|\dot\gamma_t|^2\,\d t\,\d\bar\ppi^{\eps_0,\eps_1}(\gamma)\leq\frac{\Rb^2}{4}\iint_0^1\frac{|\dot\gamma_t|^2}{\sfd^2(\gamma_t,\O)}\,\d t\,\d\ppi^{\eps_0,\eps_1}(\gamma)
\]
and therefore using \eqref{eq:perml} and \eqref{eq:distcentro} we obtain
\[
\lims_{\eps_0,\eps_1\downarrow0}\iint_0^1|\dot\gamma_t|^2\,\d t\,\d\bar\ppi^{\eps_0,\eps_1}(\gamma)\leq
\lims_{\eps_0,\eps_1\downarrow0}\frac{\Rb^2(8\sqrt{\eps_{01}}+\sfd'(x^0,x^1))^2}{4(\frac{\Rb}2+\sqrt{\eps_{01}}(1+\sfd'(x^0,x^1)))^2}=\sfd'(x^0,x^1)^2.
\]
\end{itemize}
Since the Borel dependency of $\bar\ppi^{\eps_0,\eps_1}$ from $\eps_0,\eps_1$ is clear from the construction, the proof is achieved.
\end{proof}

\subsection{Estimate on the speed of the projection}\label{se:speedproj}
This section is devoted to the proof of Proposition \ref{prop:key}, which, as already mentioned, relies on some definitions and results contained in \cite{Gigli14}. In order to keep the presentation at reasonable length, we shall assume the reader familiar with the language developed in \cite{Gigli14} and in particular with the concept of $L^2$-normed $L^\infty$-module and related objects. We shall use the next subsection to recall the basic results we shall need, also in order to fix the notation. The subsequent  one will then be devoted to the proof of Proposition \ref{prop:key}.
\subsubsection{Tools for differential calculus on metric measure spaces}\label{se:tools}
\paragraph{(co)tangent vectors and speed of test plans.}
The tangent and cotangent modules of the metric measure space $(\X,\sfd,\mm)$ are denoted as $L^2(T\X)$ and $L^2(T^*\X)$ respectively. The pointwise norm on both spaces will be denoted by $|\cdot|$.

The differential of a function $f\in W^{1,2}(\X)$ is denoted by $\d f$ and is an element of $L^2(T^*\X)$ which, among other properties, satisfies
\[
|\d f|=|\D f|,\quad\mm-a.e..
\]
Seen as unbounded operator from $L^2(\X)$ to $L^2(T^*\X)$, the differential is a linear and closed operator. See Section 2.2 of \cite{Gigli14}  for details. In case $\X$ is infinitesimally Hilbertian, the gradient $\nabla f\in L^2(T\X)$ of $f\in W^{1,2}(\X)$ is the element associated to the differential $\d f$ via the Riesz isomorphism for modules.

Given another metric measure space $(\Y,\sfd_\Y,\mm_\Y)$, a map ${\rm p}:\Y\to \X$ is said of bounded compression provided ${\rm p}_*\mm_\Y\leq C\mm$ for some $C>0$ and for such map it is possible to introduce the pullback $L^2(T\X,{\rm p},\mm_\Y)$ of the tangent module $L^2(T\X)$ and the pullback operator ${\rm p}^*:L^2(T\X)\to L^2(T\X,{\rm p},\mm_\Y)$. These are characterised, up to isomorphism, by the fact that ${\rm p}^*:L^2(T\X)\to L^2(T\X,{\rm p},\mm_\Y)$ is linear, satisfying
\[
|{\rm p}^*v|=|v|\circ{\rm p},\quad\mm_\Y-a.e.\qquad\forall v\in L^2(T\X),
\]
and with image which generates, in the sense of modules, the whole $L^2(T\X,{\rm p},\mm_\Y)$. A similar construction can be made for the cotangent module and there is a (unique) natural duality relation  $L^2(T^*\X,{\rm p},\mm_\Y)\times L^2(T\X,{\rm p},\mm_\Y)\to L^1(\Y)$ which is $L^\infty(\Y)$-bilinear, continuous and such that
\begin{equation}
\label{eq:dualpb}
{\rm p}^*\omega({\rm p}^*v)=\omega(v)\circ{\rm p},\quad\mm_\Y-a.e.\qquad \forall v\in L^2(T\X),\ \omega\in L^2(T^*\X).
\end{equation}
See Section 1.6 of \cite{Gigli14} for details.

Given a test plan $\ppi$, we shall consider such pullback construction for $\Y=C([0,1],\X)$ equipped with the sup distance and  $\ppi$ as reference measure. The maps of bounded compression of interest for us are the evaluation maps $\e_t$.

If $(\X,\sfd,\mm)$ is infinitesimally Hilbertian, as in our case, it turns out that for a.e.\ $t\in[0,1]$ there exists a unique element $\ppi'_t\in L^2(T\X,\e_t,\ppi)$, called  speed of $\ppi$ at time $t$, having the property that
\[
\lim_{h\to 0}\frac{f\circ\e_{t+h}-f\circ\e_t}h=(\e_t^*\d f)(\ppi_t'),\qquad\forall f\in W^{1,2}(\X),
\]
the limit being intended in the strong topology of $L^1(\ppi)$. See Theorem 2.3.18 in \cite{Gigli14}. The same theorem also provides a direct and tight link between the pointwise norm of the vector fields $\ppi'_t$ and the metric speed of curves,  as for a.e.\ $t\in[0,1]$ it holds
\begin{equation}
\label{eq:linkspeedmetric}
|\ppi'_t|(\gamma)=|\dot\gamma_t|,\qquad\ppi-a.e.\ \gamma.
\end{equation}

\paragraph{Maps of bounded deformation and their differential.} Maps between metric measure spaces which are both Lipschitz and of bounded compression will be called of bounded deformation. 

The right composition with a map of bounded deformation $F:\X\to \Y$ provides a linear and continuous map from $W^{1,2}(\Y)$ to $W^{1,2}(\X)$ and for any $\varphi\in W^{1,2}(\Y)$ the bound
\begin{equation}
\label{eq:normdiffdef}
|\d(\varphi\circ F)|\leq \Lip(F)|\d\varphi|\circ F,\quad\mm_\X-a.e.,
\end{equation}
holds. 

If $F$ is invertible with  inverse  also bounded deformation, then the differential $\d F$ is a well defined linear continuous map from $L^2(T\X)$ to $L^2(T\Y)$: for $v\in L^2(T\X)$ the vector field $\d F(v)\in L^2(T\Y)$ is charaterized by
\begin{equation}
\label{eq:defdiff}
\Big(\d\varphi\big(\d F(v)\big)\Big)\circ F=\d(\varphi\circ F)(v),\quad\mm_\X-a.e.\ \qquad \forall \varphi\in W^{1,2}(\Y),
\end{equation}
and the bound \eqref{eq:normdiffdef} yields
\begin{equation}
\label{eq:normdiffdef2}
|\d F(v)|\circ F\leq\Lip(F) |v|,\quad\mm_\X-a.e.,
\end{equation}
and in particular the differential $\d F$ is local in the sense that
\begin{equation}
\label{eq:locdiff}
\d F(v)=\d F(w),\quad\mm_\Y-a.e.\ on\ F(\{v=w\}).
\end{equation}
See Section 2.4 of \cite{Gigli14} for details.

The left composition with $F$ provides a Lipschitz map from $C([0,1],\X)$ to $C([0,1],\Y)$ which, abusing a bit the notation, we shall continue to denote by $F$. Hence for a given test plan $\ppi$ on $\X$ we can consider the measure $F_*\ppi$ on $C([0,1],\Y)$ and it is trivial to see that the fact that $F$ is of bounded deformation ensures that $F_*\ppi$ is a test plan on $\Y$.

To clarify the notation in the foregoing discussion, we shall put $\bar\ppi:=F_*\ppi$ and  denote by $\bar\e_t$ the evaluation maps from $C([0,1],\Y)$ to $\Y$. We shall assume that $F$ is of bounded deformation, invertible and with inverse of bounded deformation.

Notice that that for every $t\in[0,1]$, the differential $\d F:L^2(T\X)\to L^2(T\Y)$ naturally induces a map, which we shall still denote $\d F$, from $L^2(T\X,\e_t,\ppi)$ to $L^2(T\X,\bar\e_t,\bar\ppi)$: it is the unique linear and continuous map such that
\begin{equation}
\label{eq:liftdF}
\begin{split}
\d F(\e_t^*v)&=\bar \e_t^*(\d F(v)),\qquad \forall v\in L^2(T\X),\\
\d F(g V)&=g\circ F^{-1}\,\d F(V),\qquad\forall V\in L^2(T\X,\e_t,\ppi),\ g\in L^\infty(\ppi).
\end{split}
\end{equation}
Recalling that in the classical smooth setting we have the chain rule
\[
(F\circ\gamma)'_t=\d F(\gamma_t'),
\]
we are now going to show that the language just discussed allows to state and prove an analogous of this chain rule in the context of metric measure spaces. As we shall see, the  proof is being just based on keeping track of the various definitions.
\begin{proposition}[Chain rule for speeds]
Let $F:\X\to \Y$ be of bounded deformation, invertible and with inverse of bounded deformation. Then for every test plan  $\ppi$  on $\X$ we have
\begin{equation}
\label{eq:speedaftercompos}
(F_*\ppi)'_t=(\d F)(\ppi'_t),\qquad a.e.\ t\in[0,1].
\end{equation}
\end{proposition}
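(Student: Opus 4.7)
The plan is to verify that $\d F(\ppi'_t)$ satisfies the defining property of the speed of $\bar\ppi := F_*\ppi$ at time $t$, and then invoke uniqueness. The characterizing property states: $\bar V_t \in L^2(T\Y,\bar\e_t,\bar\ppi)$ is the speed of $\bar\ppi$ at time $t$ iff for every $\varphi \in W^{1,2}(\Y)$ (or equivalently, every $\varphi$ in a countable dense subset) one has $\tfrac{\varphi\circ\bar\e_{t+h}-\varphi\circ\bar\e_t}{h}\to(\bar\e_t^*\d\varphi)(\bar V_t)$ strongly in $L^1(\bar\ppi)$ as $h\to 0$.

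Fix $\varphi \in W^{1,2}(\Y)$. Since $F$ is of bounded deformation, $\varphi\circ F\in W^{1,2}(\X)$, so by the defining property of $\ppi'_t$ (applied on the source space $\X$) there is a null set $N_\varphi\subset[0,1]$ such that for every $t\notin N_\varphi$
\[
\lim_{h\to 0}\frac{(\varphi\circ F)\circ\e_{t+h}-(\varphi\circ F)\circ\e_t}{h}=\bigl(\e_t^*\d(\varphi\circ F)\bigr)(\ppi'_t)
\]
strongly in $L^1(\ppi)$. The left-hand side equals $\tfrac{\varphi\circ\bar\e_{t+h}-\varphi\circ\bar\e_t}{h}\circ F$, where on the right we view $F$ as the induced bounded deformation map $C([0,1],\X)\to C([0,1],\Y)$.

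For the right-hand side I would rewrite using the available calculus rules: the characterization \eqref{eq:defdiff} of $\d F$ gives $\d(\varphi\circ F)(v)=\d\varphi(\d F(v))\circ F$ for every $v\in L^2(T\X)$; applying the naturality of pullback \eqref{eq:dualpb} twice and the lifting identity \eqref{eq:liftdF}, one checks first on simple pullback vectors $V=\e_t^*v$ and then extends by $L^\infty(\ppi)$-linearity and the density of $\e_t^*$-images to obtain
\[
\bigl(\e_t^*\d(\varphi\circ F)\bigr)(V)=\bigl(\bar\e_t^*\d\varphi\bigr)\bigl(\d F(V)\bigr)\circ F,\qquad\forall V\in L^2(T\X,\e_t,\ppi).
\]
Applying this with $V=\ppi'_t$, the previous limit becomes
\[
\lim_{h\to 0}\frac{\varphi\circ\bar\e_{t+h}-\varphi\circ\bar\e_t}{h}\circ F=\bigl(\bar\e_t^*\d\varphi\bigr)\bigl(\d F(\ppi'_t)\bigr)\circ F\quad\text{in }L^1(\ppi).
\]

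Since $F_*\ppi=\bar\ppi$, for any Borel function $u$ on $C([0,1],\Y)$ one has $\int|u|\,\d\bar\ppi=\int|u\circ F|\,\d\ppi$; applied to the difference of the two sides above, $L^1(\ppi)$-convergence of functions that factor through $F$ is equivalent to $L^1(\bar\ppi)$-convergence of the factors. Hence for $t\notin N_\varphi$
\[
\lim_{h\to 0}\frac{\varphi\circ\bar\e_{t+h}-\varphi\circ\bar\e_t}{h}=\bigl(\bar\e_t^*\d\varphi\bigr)\bigl(\d F(\ppi'_t)\bigr)\quad\text{in }L^1(\bar\ppi).
\]
To conclude for \emph{every} $\varphi$ simultaneously (i.e.\ to pin down the a.e.\ $t$ independently of $\varphi$), I would pick a countable set $\{\varphi_n\}\subset W^{1,2}(\Y)$ whose differentials generate $L^2(T^*\Y)$ (for instance, a countable dense subset of $W^{1,2}(\Y)$); the set $N:=\bigcup_n N_{\varphi_n}$ is still null, and for $t\notin N$ the above convergence holds for every $\varphi_n$. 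The uniqueness part of Theorem 2.3.18 of \cite{Gigli14} then forces $(\bar\ppi)'_t=\d F(\ppi'_t)$ for every such $t$, which is \eqref{eq:speedaftercompos}. The only real delicacy is the algebraic step extending the identity for the pullback of $\d(\varphi\circ F)$ from simple tensors $\e_t^*v$ to general elements of the pullback module; everything else is bookkeeping and a standard countable density argument.
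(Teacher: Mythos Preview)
Your proposal is correct and follows essentially the same route as the paper: both establish the algebraic identity relating $(\e_t^*\d(\varphi\circ F))(V)$ and $(\bar\e_t^*\d\varphi)(\d F(V))$ first on pullbacks $\e_t^*v$ and then extend by $L^\infty$-linearity and density, and both conclude by matching this against the defining limit for the speed. The only cosmetic differences are that the paper writes the identity after composing with $F^{-1}$ rather than $F$, and that the paper does not need your countable-density step: in the formulation recalled in Section~\ref{se:tools} the exceptional set of times is fixed once $\ppi'_t$ exists, and the limit then holds for \emph{every} $f\in W^{1,2}$ at such $t$, so one can directly compare $(\bar\e_t^*\d f)(\d F(\ppi'_t))$ with $(\bar\e_t^*\d f)(\bar\ppi'_t)$ for all $f$ simultaneously.
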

\begin{proof}
Put $\bar\ppi:=F_*\ppi$ as before and fix $f\in W^{1,2}(\X)$. We claim that
\begin{equation}
\label{eq:pullb2}
(\bar\e_t^*\d f)\big((\d F)(V)\big)=\Big(\big(\e_t^*\d(f\circ F)\big)(V)\Big)\circ F^{-1},\qquad\forall V\in  L^2(T\X,\e_t,\ppi).
\end{equation}
For $V$ of the form $\e_t^*v$ for some $v\in L^2(T\X)$ such identity comes from the chain of equalities

\begin{align*}
(\bar\e_t^*\d f)\big((\d F)(\e_t^*v)\big)&=(\bar\e_t^*\d f)\big(\bar \e_t^*(\d F(v))\big)&&\qquad\text{by the first in }\eqref{eq:liftdF}\\
&=\big(\d f(\d F(v))\big)\circ \bar\e_t&&\qquad\text{by }\eqref{eq:dualpb}\\
&=\d(f\circ F)(v)\circ F^{-1}\circ \bar\e_t&&\qquad\text{by }\eqref{eq:defdiff}\\
&=\big(\d(f\circ F)(v)\big)\circ\e_t\circ F^{-1}&&\qquad\text{because }F^{-1}(\gamma_t)=(F^{-1}(\gamma))_t\\
&=\Big(\big(\e_t^*\d(f\circ F)\big)(\e_t^*v)\Big)\circ F^{-1}&&\qquad\text{by }\eqref{eq:dualpb}.
\end{align*}
Therefore noticing that both sides of \eqref{eq:pullb2} are, as functions of $V$, linear and continuous from $ L^2(T\X,\e_t,\ppi)$ to $L^1(\bar \ppi)$, and since the vector space generated by elements of the form $g\e_t^*v$ for $v\in L^2(T\X)$ and $g\in L^\infty(\ppi)$ is dense in $ L^2(T\X,\e_t,\ppi)$, the claim \eqref{eq:pullb2} follows if we show that 
\[
\begin{split}
(\bar\e_t^*\d f)\big((\d F)(gV)\big)&=g\circ F^{-1}\,(\bar\e_t^*\d f)\big((\d F)(V)\big),\\
\Big(\big(\e_t^*\d(f\circ F)\big)(gV)\Big)\circ F^{-1}&=g\circ F^{-1}\Big(\big(\e_t^*\d(f\circ F)\big)(V)\Big)\circ F^{-1}.
\end{split}
\]
The second of these is obvious from the $L^\infty(\ppi)$-linearity of $\e_t^*\d(f\circ F)$ as a map from $ L^2(T\X,\e_t,\ppi)$ to $L^1(\ppi)$. For the first we use the second in \eqref{eq:liftdF} and the $L^\infty(\bar\ppi)$-linearity of $\bar\e_t^*\d f$ as a map from $ L^2(T\Y,\bar \e_t,\bar \ppi)$ to $L^1(\bar\ppi)$:
\[
\begin{split}
(\bar\e_t^*\d f)\big((\d F)(gV)\big)=(\bar\e_t^*\d f)\big(g\circ F^{-1}(\d F)(V)\big)=g\circ F^{-1}\,(\bar\e_t^*\d f)\big((\d F)(V)\big).
\end{split}
\]
Thus \eqref{eq:pullb2} is proved and writing it for $\ppi'_t$ in place of $V$ we obtain
\[
(\bar\e_t^*\d f)\big((\d F)(\ppi'_t)\big)=\Big(\big(\e_t^*\d(f\circ F)\big)(\ppi'_t)\Big)\circ F^{-1},\qquad a.e.\ t\in[0,1].
\]
To conclude, recall that $\big(\e_t^*\d(f\circ F)\big)(\ppi'_t)$ is the strong limit in $L^1(\ppi)$ as $h\to 0$ of the maps 
\[
\gamma\quad\mapsto\quad \frac{f(F(\gamma_{t+h}))-f(F(\gamma_t))}h
\]
and therefore the change of variable $\gamma=F^{-1}(\bar\gamma)$ and the fact that $\ppi=F^{-1}_*\bar\ppi$ show that $\Big(\big(\e_t^*\d(f\circ F)\big)(\ppi'_t)\Big)\circ F^{-1}$ is the strong limit in $L^1(\bar\ppi)$ as $h\to 0$ of the maps 
\[
\bar\gamma\quad\mapsto\quad\frac{f(\bar\gamma_{t+h})-f(\bar\gamma_t)}h,
\]
but this latter limit is, by the very definition of $\bar\ppi'_t$, equal to $\bar\e_t^*\d f(\bar\ppi'_t)$. We therefore proved that
\[
(\bar\e_t^*\d f)\big((\d F)(\ppi'_t)\big)=(\bar \e_t^*\d f)(\bar\ppi)'_t,\qquad a.e.\ t\in[0,1],
\]
which, by the arbitrariness of $f\in W^{1,2}(\X)$,  is the thesis.
\end{proof}
\begin{remark}{\rm
In all this discussion the assumption that $F$ was invertible with inverse of bounded deformation is not really necessary, being the differential $\d F$ of a map of bounded deformation  $F:\X\to \Y$ always well defined as map from $L^2(T\X)$ to the pullback $L^2(T\Y,F,\mm_\X)$ of the tangent module $L^2(T\Y)$ of $\Y$ via $F$ (see Section 2.4 in \cite{Gigli14}). 

We made  this further assumption because it simplifies the exposition and will be present in our applications.
}\fr\end{remark}

\paragraph{Bits of second order calculus}
Here we come back to our assumption that  $(\X,\sfd,\mm)$ is a $\RCD^*(0,N)$ space.

Test functions and the language of $L^\infty$-modules allow to introduce the second order Sobolev space $W^{2,2}(\X)$ as follows.  First of all, we recall that being $L^2(T^*\X)$ an Hilbert module, it is possible to consider the Hilbert tensor product of $L^2(T^*\X)$ with itself, which we shall denote by $L^2((T^*)^{\otimes 2}\X)$ (see Section 1.5 in \cite{Gigli14} for the definition). We remark that in the smooth case the pointwise norm in $L^2((T^*)^{\otimes 2}\X)$ is the Hilbert-Schmidt one.

Then we say that a function $f\in W^{1,2}(\X)$ belongs to $W^{2,2}(\X)$ provided there is an element of $L^2((T^*)^{\otimes 2}\X)$, called the Hessian of $f$ and denoted by $\H f$, such that for any $g_1,g_2,h\in\test \X$ it holds
\begin{equation}
\label{eq:defhess}
\begin{split}
2\int h\H f(\nabla g_1,\nabla g_2)\,\d\mm=\int- \la\nabla f,\nabla g_1\ra{\rm div}(h\nabla g_2)- &\la\nabla f,\nabla g_2\ra{\rm div}(h\nabla g_1)\\
&- h\la\nabla f,\nabla(\la\nabla g_1,\nabla g_2)\ra\,\d\mm.
\end{split}
\end{equation}
The density of $\test \X$ in $W^{1,2}(\X)$ grants that the above uniquely characterises  $\H f$. It is then possible to see that $W^{2,2}(\X)$ equipped with the norm
\[
\|f\|_{W^{2,2}}^2:=\int |f|^2+|\d f|^2+|\H f|^2\,\d\mm,
\]
is a separable Hilbert space.

An important inequality is
\[
\int |\H f|^2\,\d\mm\leq \int |\Delta f|^2-K|\d f|^2\,\d\mm,
\]
which shows in particular that $D(\Delta)\subset W^{2,2}(\X)$ and thus that $W^{2,2}(\X)$ is dense in $W^{1,2}(\X)$.

The natural chain and Leibniz rules for the Hessian are in place. In particular, if $f\in W^{2,2}(\X)$ is bounded and Lipschitz and $\varphi\in C^{2}_c(\R)$ then $\varphi\circ f\in W^{2,2}(\X)$ and
\[
\H{\varphi\circ f}=\varphi''\circ f\d f\otimes\d f+\varphi'\circ f\H f,
\] 
and if $f_1,f_2\in W^{2,2}(\X)$ are bounded and Lipschitz, then $f_1f_2\in W^{2,2}(\X)$ and
\[
\H{f_1f_2}=f_2\H{f_1}+f_1\H{f_2}+\d f_1\otimes\d f_2+\d f_2\otimes\d f_1.
\]
Moreover, for $f_1,f_2\in\test \X$ we have that $\la\nabla f_1,\nabla f_2\ra\in W^{1,2}(\X)$ with
\begin{equation}
\label{eq:diff2}
\d\la\nabla f_1,\nabla f_2\ra=\H{f_1}(\nabla f_2)+\H{f_2}(\nabla f_1).
\end{equation}
See Section 3.3 of \cite{Gigli14} for more details and more general versions of these calculus rules.

Given an open set $\Omega$  we introduce the space $W^{2,2}(\Omega)$ as the subspace of $W^{1,2}_{loc}(\X)$ made of functions $f$ for which there is $\H f\in L^2((T^*)^{\otimes 2}\X)$ such that  \eqref{eq:defhess} holds for any $g_1,g_2,h\in\test \X$ with support in $\Omega$. In this case, evidently, $\H f$ is uniquely characterized only $\mm$-a.e.\ on $\Omega$. 

We conclude this preliminary section showing that 
\begin{equation}
\label{eq:hessid}
\b\in W^{2,2}(B_\Rb(\O))\qquad\text{with}\qquad\H\b=\Id\quad \mm-a.e.\ on\ B_\Rb(\O),
\end{equation} 
where $\Id\in L^\infty((T^*)^{\otimes 2}\X)$ is defined by $\Id(v,w)=\la v,w\ra$ $\mm$-a.e.\ for any $v,w\in L^2(T\X)$. { For let us fix \(\bar \Rb<\Rb\) and  recall the that  the function \(\bar \b=\varphi\circ\b\) introduced in  Section \ref{se:gf} belongs to \(\test \X\) and that  $\bd \bar \b=N\mm$ on $B_{\bar \Rb}(\O)$.  Let  $g,h\in\test \X$ with support in $B_{\bar \Rb}(\O)$, using the Euler equation \eqref{eq:euler} and few integration by parts we obtain:
}
\[
\begin{split}
\int&-{\rm div}(h\nabla g)\la\nabla\bar\b,\nabla g\ra\,\d\mm\\
&=\int-\Delta(hg)\la\nabla\bar\b,\nabla g\ra+{\rm div}(g\nabla h)\la\nabla\bar\b,\nabla g\ra\,\d\mm\\
&=\int -hg\la\nabla\Delta g,\nabla\bar\b\ra-2hg\Delta g+\la\nabla h,\nabla g\ra\la\nabla\bar \b,\nabla g\ra+g\Delta h\la\nabla\bar\b,\nabla g\ra\,\d\mm\\
&=\int h\Delta g\la\nabla g,\nabla\bar \b\ra+g\Delta g\la\nabla h,\nabla \bar\b\ra+(N-2)hg\Delta g+\la\nabla h,\nabla g\ra\la\nabla \bar\b,\nabla g\ra+\Delta h\la\nabla\bar\b,\nabla\tfrac{ g^2}2\ra\,\d\mm\\
&=\int {\rm div}(h\nabla g)\la\nabla\bar\b,\nabla g\ra+g\Delta g\la\nabla h,\nabla\bar \b\ra+(N-2)hg\Delta g+\Delta h\la\nabla\bar\b,\nabla\tfrac{ g^2}2\ra\,\d\mm.
\end{split}
\]
Similarly, using \eqref{eq:euler} in the third equality we get
\[
\begin{split}
\int&-h\la \nabla\bar \b,\nabla\tfrac{|\D g|^2}{2}\ra\,\d\mm\\
&=\int\tfrac{|\D g|^2}2\la\nabla h,\nabla\bar\b\ra+Nh\tfrac{|\D g|^2}2\,\d\mm\\
&=\int\Delta\tfrac{g^2}2\la\nabla h,\nabla\bar\b\ra-\tfrac{|\D g|^2}2\la\nabla h,\nabla\bar\b\ra-g\Delta g\la\nabla h,\nabla\bar \b\ra+Nh\tfrac{|\D g|^2}2\,\d\mm\\
&=\int\tfrac{g^2}2\la\nabla\Delta h,\nabla\bar \b\ra+g^2\Delta h-\tfrac{|\D g|^2}2\la\nabla h,\nabla\bar\b\ra-g\Delta g\la\nabla h,\nabla \b\ra+Nh\tfrac{|\D g|^2}2\,\d\mm\\
&=\int-\Delta h\la\nabla\tfrac{g^2}2,\nabla\bar\b\ra-N\tfrac{g^2}2\Delta h+g^2\Delta h-\tfrac{|\D g|^2}2\la\nabla h,\nabla\bar\b\ra-g\Delta g\la\nabla h,\nabla \bar\b\ra+Nh\tfrac{|\D g|^2}2\,\d\mm\\
&=\int-\Delta h\la\nabla\tfrac{g^2}2,\nabla\bar\b\ra+(-\tfrac N2+1)g^2\Delta h+h\la\nabla\bar\b,\nabla\tfrac{|\D g|^2}2\ra-g\Delta g\la\nabla h,\nabla \bar\b\ra+Nh|\D g|^2\,\d\mm.
\end{split}
\]
Adding up  we get
\[
\begin{split}
\int&-{\rm div}(h\nabla g)\la\nabla\bar\b,\nabla g\ra-h\la \nabla \bar\b,\nabla\tfrac{|\D g|^2}{2}\ra\,\d\mm\\
&=\int{\rm div}(h\nabla g)\la\nabla\bar\b,\nabla g\ra+h\la \nabla\bar \b,\nabla\tfrac{|\D g|^2}{2}\ra+ (N-2)hg\Delta g+(-\tfrac N2+1)g^2\Delta h+Nh|\D g|^2\,\d\mm\\
&=\int{\rm div}(h\nabla g)\la\nabla\bar\b,\nabla g\ra+h\la \nabla\bar \b,\nabla\tfrac{|\D g|^2}{2}\ra+2h|\D g|^2\,\d\mm,
\end{split}
\]
or equivalently
\[
\int-{\rm div}(h\nabla g)\la\nabla\bar\b,\nabla g\ra-h\la \nabla\bar \b,\nabla\tfrac{|\D g|^2}{2}\ra\,\d\mm=\int h|\D g|^2\,\d\mm,
\]
which by a polarization argument gives {\(\H{\bar\b}=\Id\) in \(B_{\bar \Rb}(\O)\) from which \eqref{eq:hessid} follows.
}

\subsubsection{Proof of the estimate}

Fix $\rb',\Rb'\in(0,\Rb)$ so that $\rb'<\frac\Rb2<\Rb'$. Later on $\rb',\Rb'$ will be sent to 0 and $\Rb$ respectively, but for the moment it is convenient to keep them fixed and to avoid mentioning the dependence on them of the various objects we are going to build.

Pick a function $\psi\in C^\infty(\R)$ with support in $(0,\tfrac{\Rb^2}2)$ so that 
\[
\psi(z)=\frac12\big(\sqrt{2z}-\frac\Rb2\big)^2=z-\frac{\Rb}{\sqrt 2}\sqrt z+\frac{\Rb^2}8\qquad\text{ for }z\in[\tfrac{\rb'^2}2,\tfrac{\Rb'^2}2]
\]
and define the reparametrization function ${\rm rep}:(\R^+)^2\to\R+$ by requiring that 
\[
\partial_t{\rm rep}_t(r)=\psi'\Big(\frac{r^2}{2}e^{-2{\rm rep}_t(r)}\Big),\qquad\qquad {\rm rep}_0(r)=0,
\]
for every $r\geq 0$. Then define the function $\hat \b:\X\to\R$ and the flow $\hat\gl:\R^+\times \X\to \X$ as
\[
\begin{split}
\hat\b&:=\psi\circ\b,\\
\hat\gl_s(x)&:=\gl_{{\rm rep}^x_s}(x),\qquad\forall s\in\R^+,\ x\in \X.
\end{split}
\]
The following properties of $\hat\b$ and $\hat\gl$ are direct consequence of the definitions, the analogous ones of $\b,\gl$ and minor algebraic manipulation, see also the proof of Proposition \ref{prop:modifiedflow}:
\begin{itemize}
\item[a)] From the chain rules for the gradient, Laplacian and Hessian we see that $\hat\b\in\test \X$ and 
\[
\begin{split}
\nabla\hat\b&=\psi'\circ\b\,\nabla\b,\\
\Delta\hat\b&=\psi'\circ\b\,\Delta\b+\psi''\circ\b|\nabla\b|^2,\\
\H{\hat\b}&=\psi'\circ\b\,\H\b+\psi''\circ\b\nabla\b\otimes\nabla\b.
\end{split}
\]
In particular, recalling that $\bd\b=N\mm$ and $\H\b=\Id$ on $B_\Rb(\O)$ we see that $\hat\b$ has bounded gradient, Laplacian and Hessian. Moreover, on $B_{\Rb'}(\O)\setminus B_{\rb'}(\O)$ it holds
\begin{equation}
\label{eq:formulehatb}
\begin{split}
\hat\b&=\frac12\big(\sfd(\cdot,\O)-\frac\Rb2\big)^2=\b-\frac{\Rb}{\sqrt 2}\sqrt \b+\frac{\Rb^2}8,\\
\nabla\hat\b&=\nabla \b\Big(1-\frac{\Rb}{2\sqrt {2\b}}\Big),\\
\H{\hat\b}&=\Id \Big(1-\frac{\Rb}{2\sqrt{2\b}}\Big)+\frac{\Rb}{4\sqrt 2}\frac{1}{\b\sqrt\b}\nabla\b\otimes\nabla\b,\\
\end{split}
\end{equation}
\item[b)] for any $x\in \X$ the curve $[0,1]\ni s\mapsto \hat\gl_s(x)\in \X$ is Lipschitz and satisfies
\[
\b(\gamma_0)=\b(\gamma_t)+\frac12\int_0^t|\dot\gamma_s|^2+\lip^2(\hat\b)(\gamma_s)\,\d s,\qquad\forall t\in[0,1].
\]
In particular, using the fact that $\lip^2(\hat\b)=2\hat\b$, which follows from the anlogous property of $\b$, we see that $\partial_t\hat\b(\hat\gl_t(x))=2\hat\b(\hat\gl_t(x))$. Therefore for $x\in B_{\Rb'}(\O)\setminus B_{\rb'}(\O)$  the quantity $|\sfd(\hat\gl_t(x),\O)-\tfrac\Rb2|$ decreases exponentially as $t\to\infty$ and in particular
\begin{equation}
\label{eq:expr3}
\hat\gl_s\to\pr\qquad\text{ uniformly on $B_{\Rb'}(\O)\setminus B_{\rb'}(\O)$  as $s\to\infty$.}
\end{equation}
Finally, the fact that $|\dot\gamma_t|=\lip(\hat\b)(\gamma_t)\leq \Lip(\hat \b)$ shows that $t\mapsto\hat\gl_t(x)$ is $\Lip(\hat\b)$-Lipschitz.
\item[c)] For any $s\geq 0$, the map $\hat\gl_s$ is  invertible map from $\X$ into itself, it is the identity on a neighbourhood of $\{\O\}\cup(\X\setminus B_\Rb(\O))$ and sends $B_{\Rb'}(\O)\setminus B_{\rb'}(\O)$ into itself.
\item[d)] for any $t,s\in\R^+$ we have 
\[
\hat\gl_s\circ\hat\gl_t=\hat\gl_{s+t}
\]
and in particular putting $\hat\gl_{-s}:=\hat\gl_s^{-1}$ we obtain a one parameter group of maps of $\X$ into itself.
\item[e)] The same arguments used to obtain the bound \eqref{eq:bc} grant that
\begin{equation*}
\label{eq:bcomphat}
c(s)\mm\leq (\hat\gl_s)_*\mm\leq C(s)\mm,\qquad\forall s\in\R,
\end{equation*}
for some positive continuous functions $c,C:\R\to(0,\infty)$.
\item[f)] Since by a compactness argument we have that $\gl_s$ restricted to compact subsets of $B_\Rb(\O)$ is Lipschitz, we deduce that $\hat\gl_s:\X\to \X$ is Lipschitz. Moreover the chain of inequalities
\[
\begin{split}
\sfd\big(\hat\gl_s(x),\hat\gl_s(y)\big)&=\sfd\big(\gl_{{\rm rep}^x_s}(x),\gl_{{\rm rep}^y_s}(y)\big)\\
&\leq \sfd\big(\gl_{{\rm rep}^x_s}(x),\gl_{{\rm rep}^x_s}(y)\big)+\sfd\big(\gl_{{\rm rep}^x_s}(y),\gl_{{\rm rep}^y_s}(y)\big)\\
&\leq \sfd\big(\gl_{{\rm rep}^x_s}(x),\gl_{{\rm rep}^x_s}(y)\big)+\sfd(y,\O)|e^{-{\rm rep}^x_s}-e^{-{\rm rep}^y_s}|\\
&\leq \sfd\big(\gl_{{\rm rep}^x_s}(x),\gl_{{\rm rep}^x_s}(y)\big)+\sfd(y,\O)|{\rm rep}^x_s-{\rm rep}^y_s|\\
\end{split}
\]
together with the fact that ${\rm rep}^y_s=y$ for any $s\in\R^+$ if $y\notin B_\Rb(\O)$, the bound
\[
\begin{split}
|{\rm rep}^x_s-{\rm rep}^y_s|&\leq\int_0^s|\psi'(\b(\gl_r(x)))-\psi'(\b(\gl_r(y)))|\,\d r\\
&\leq \Lip(\psi'\circ\b)\int_0^s\sfd(\gl_r(x),\gl_r(y))\,\d r,
\end{split}
\]
and the fact that $\lip(\gl_r)(x)=e^{-r}\leq 1$ for any $x\in B_\Rb(\O)$, grant that
\begin{equation}
\label{eq:sp}
\sfd\big(\hat\gl_s(x),\hat\gl_s(y)\big)\leq\sfd(x,y)\Big( 1+s\Rb\Lip(\psi'\circ\b)+o(\sfd(x,y))\Big),
\end{equation}
for every  $x,y\in \X$ and $s\in\R^+$. To get a similar control  for negative $s$, we start from
\[
\sfd\big(\hat\gl_s(x),\hat\gl_s(y)\big)\geq \sfd\big(\gl_{{\rm rep}^x_s}(x),\gl_{{\rm rep}^x_s}(y)\big)-\sfd\big(\gl_{{\rm rep}^x_s}(y),\gl_{{\rm rep}^y_s}(y)\big)
\]
and use  the bound ${\rm rep}^x_s\leq s\sup\psi'$, so that arguing as before we get 
\begin{equation}
\label{eq:sm}
\sfd\big(\hat\gl_s(x),\hat\gl_s(y)\big)\geq\sfd(x,y)\Big( 1-s\big(\sup\psi'+\Rb\Lip(\psi'\circ\b)\big)+o(\sfd(x,y))\Big),
\end{equation}
for every  $x,y\in \X$ and $s\in\R^+$. Putting $C:=2(\sup\psi'+\Rb\Lip(\psi'\circ\b))$, the bounds \eqref{eq:sp} and \eqref{eq:sm} give
\[
\lip(\hat\gl_s)(x)\leq 1+|s|C,\qquad\forall x\in \X,\ s\in[-\tfrac1{C},\tfrac1{C}].
\]
and since $\X$ is a geodesic space this further implies that $\Lip(\hat\gl_s)\leq 1+|s|C$ for every $s\in[-\tfrac1{C},\tfrac1{C}]$. Recalling \eqref{eq:normdiffdef} and \eqref{eq:normdiffdef2} we therefore deduce that
\begin{equation}
\label{eq:dhatgl2}
|\d (f\circ\hat\gl_s)|\leq (1+|s|C)|\d f|\circ\hat\gl_s,\qquad\text{and}\qquad |\d \hat\gl_s(v)|\circ\hat\gl_s\leq (1+|s|C)|v|
\end{equation}
$\mm$-a.e.\ for every $f\in W^{1,2}(\X)$, $v\in L^2(T\X)$ and $s\in[-\tfrac1{C},\tfrac1{C}]$.
\end{itemize}
The following lemma will be useful.
\begin{lemma}\label{le:utile}
Let $\varphi\in W^{1,2}(\X)$. Then the map $\R^+\ni s\mapsto\varphi\circ\hat\gl_s\in L^2(\X)$ is $C^1$ and its derivative is given by
\begin{equation}
\label{eq:derflusso}
\frac{\d}{\d s}\varphi\circ\hat\gl_s=-\la\nabla\varphi,\nabla\hat \b\ra\circ\hat\gl_s.
\end{equation}
If $\varphi$ is further assumed to be in $\test \X$, then the map $\R^+\ni s\mapsto\d(\varphi\circ\hat\gl_s)\in L^2(T\X)$ is also $C^1$ and its derivative is given by
\begin{equation}
\label{eq:derflusso2}
\frac{\d}{\d s}\big(\d(\varphi\circ\hat\gl_s)\big)=-\d\big(\la\nabla\varphi,\nabla\hat \b\ra\circ\hat\gl_s\big).
\end{equation}
\end{lemma}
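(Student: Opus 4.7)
The first claim \eqref{eq:derflusso} is proved by an essentially verbatim adaptation of Lemma \ref{le:compgl}. Properties (b) and (e) of $\hat\gl$ play the role that point $(a)$ of Proposition \ref{prop:modifiedflow} and \eqref{eq:bc} played for $\bar\gl$: the map $T : x \mapsto (s \mapsto \hat\gl_s(x))$ sends any probability measure with bounded density w.r.t.\ $\mm$ to a test plan, and the proof of Corollary \ref{cor:rappresenta} carries over to show that this plan represents the gradient of $-\hat\b$. The first-order differentiation formula \eqref{eq:firstorder} then identifies the weak $L^2$-limit of the incremental ratios $(\varphi\circ\hat\gl_{s+h} - \varphi\circ\hat\gl_s)/h$ as $-\la\nabla\varphi,\nabla\hat\b\ra\circ\hat\gl_s$. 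Lipschitz regularity of $s\mapsto\varphi\circ\hat\gl_s\in L^2(\X)$ (from \eqref{eq:dhatgl2}, bounded compression, and the uniform Lipschitz bound on trajectories) combined with the Radon--Nikodym property yields a.e.\ differentiability, and $L^2$-continuity of the proposed derivative (argued as in the first sentence of Lemma \ref{le:compgl}) upgrades the conclusion to $C^1$.

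For the second claim \eqref{eq:derflusso2}, the crucial preliminary remark is that $\psi := \la\nabla\varphi,\nabla\hat\b\ra$ belongs to $W^{1,2}(\X)$ whenever $\varphi \in \test\X$: indeed $\hat\b \in \test\X$ by property (a), and \eqref{eq:diff2} gives $\d\psi = \H\varphi(\nabla\hat\b) + \H{\hat\b}(\nabla\varphi) \in L^2(T^*\X)$. Consequently $\d(\psi\circ\hat\gl_s) \in L^2(T^*\X)$ is well-defined by \eqref{eq:normdiffdef} applied to the bounded-deformation map $\hat\gl_s$. The strategy is to establish the integral identity
\[
\d(\varphi\circ\hat\gl_{s+h}) - \d(\varphi\circ\hat\gl_s) = -\int_s^{s+h} \d(\psi\circ\hat\gl_r)\,\d r
\]
as a Bochner integral in $L^2(T^*\X)$, and to read off $C^1$ regularity with the claimed derivative from continuity of the integrand. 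The identity is obtained by applying $\d$ to the $L^2(\X)$-valued equality $\varphi\circ\hat\gl_{s+h} - \varphi\circ\hat\gl_s = -\int_s^{s+h}\psi\circ\hat\gl_r\,\d r$ coming from the first claim and commuting $\d$ with the integral; this commutation is legitimate because $\d$ is a bounded linear operator from $W^{1,2}(\X)$ to $L^2(T^*\X)$ and because \eqref{eq:dhatgl2} together with property (e) yields uniform $W^{1,2}$-bounds on the integrand $r\mapsto\psi\circ\hat\gl_r$.

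The main obstacle, and the step requiring genuinely new input, is $L^2(T^*\X)$-continuity of the map $r \mapsto \d(\psi\circ\hat\gl_r)$. I would establish this by applying \eqref{eq:dhatgl2} both to $\hat\gl_s$ and to its inverse $\hat\gl_{-s}$ (via the tautology $\psi = (\psi\circ\hat\gl_s)\circ\hat\gl_{-s}$) to obtain the two-sided pointwise bound
\[
(1+|s|C)^{-1}|\d\psi|\circ\hat\gl_s \leq |\d(\psi\circ\hat\gl_s)| \leq (1+|s|C)|\d\psi|\circ\hat\gl_s,
\]
both sides of which converge to $|\d\psi|$ in $L^2(\X)$ as $s\to 0$ by $L^2$-continuity of composition with $\hat\gl_r$. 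This yields norm convergence $\|\d(\psi\circ\hat\gl_s)\|_{L^2(T^*\X)} \to \|\d\psi\|_{L^2(T^*\X)}$; weak convergence $\d(\psi\circ\hat\gl_s) \weakto \d\psi$ follows from the uniform $L^2(T^*\X)$-bound, the strong $L^2(\X)$-convergence $\psi\circ\hat\gl_s \to \psi$ (first claim applied to $\psi \in W^{1,2}(\X)$), and closedness of $\d$. In a Hilbert space, weak plus norm convergence implies strong convergence, giving the desired continuity and hence the full $C^1$ statement.
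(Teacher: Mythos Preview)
Your proposal is correct and follows essentially the same route as the paper: the first claim is handled verbatim as in Lemma \ref{le:compgl}, and for the second you establish $L^2(T^*\X)$-continuity of $r\mapsto\d(\psi\circ\hat\gl_r)$ via the ``weak convergence plus norm convergence implies strong convergence'' argument in a Hilbert space, then pass $\d$ through the Bochner integral. The only substantive difference is in how weak convergence is obtained: the paper tests against vector fields with $L^2$ divergence (dense in $L^2(T\X)$) and integrates by parts, whereas you invoke closedness of $\d$ and weak closedness of its graph; your route is arguably more elementary since it avoids appealing to the density result for such vector fields, while the paper's one-sided $\limsup$ bound on the norm is slightly more economical than your two-sided sandwich.
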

\begin{proof} The first claim is proved exactly as Lemma \ref{le:compgl}, thus we pass to the second.

Start noticing that since $\varphi,\hat\b\in\test \X$, we have $\la\nabla\varphi,\nabla\hat \b\ra\in W^{1,2}(\X)$ and thus, since $\hat\gl_s$ is of bounded deformation, that $\la\nabla\varphi,\nabla\hat \b\ra\circ\hat\gl_s\in W^{1,2}(\X)$ as well. 

We now claim  that
\begin{equation}
\label{eq:contdiff}
\R\ni s\quad\mapsto\quad \d(\la\nabla\varphi,\nabla\hat \b\ra\circ\hat\gl_s)\in L^2(T\X)\qquad\text{ is continuous.}
\end{equation}
Putting for a moment $f:=\la\nabla\varphi,\nabla\hat\b\ra$, using the group property of $(\hat\gl_s)$ and  \eqref{eq:dhatgl2} we get
\begin{equation*}
\label{eq:normdiff}
\int|\d (f\circ\hat\gl_s)|^2\,\d\mm\leq (1+|s-s_0|C)^2\int|\d (f\circ\hat\gl_{s_0})|^2\circ\hat\gl_{s-s_0}\,\d\mm,\qquad\forall s_0\in\R.
\end{equation*}
Applying the first claim in Lemma \ref{le:compgl} with the flow $\hat\gl_s$ in place of $\bar\gl_s$ (the proof is the same) to the $L^1$ function $|\d (f\circ\hat\gl_{s_0})|^2$ we therefore deduce that
\begin{equation*}
\label{eq:normdiff2}
\lims_{s\to s_0}\int|\d (f\circ\hat\gl_s)|^2\,\d\mm\leq\int|\d (f\circ\hat\gl_{s_0})|^2\,\d\mm.
\end{equation*}
Since $L^2(T^*\X)$ is an Hilbert space, to get the claimed strong continuity of $s\mapsto\d(f\circ\hat\gl_s)$ it is now sufficient to prove the weak continuity. This follows recalling that vector fields in $L^2(T\X)$ with divergence in $L^2(\X)$ are dense in $L^2(T\X)$, that for any such vector field $v$ we have
\[
\int \d(f\circ\hat\gl_s)(v)\,\d\mm=-\int f\circ\hat\gl_s\,{\rm div}(v)\,\d\mm,
\]
and using the first claim in Lemma \ref{le:compgl} (again with the flow $\hat\gl_s$ in place of $\bar\gl_s$ ) for the $L^2$ function $f$ to get the continuity in $s$ of the right hand side. This settles property \eqref{eq:contdiff}.

To conclude, notice that the first part of the statement ensures that
\[
\varphi\circ\hat\gl_{s_1}-\varphi\circ\hat\gl_{s_0}=-\int_{s_0}^{s_1}\la\nabla\varphi,\nabla\hat\b\ra\circ\hat\gl_s\,\d s,\qquad\forall s_0<s_1,
\]
the integral being the Bochner one. Take  the differential on both sides and use  the fact that it is, as a map from $W^{1,2}(\X)$ to $L^2(T\X)$, linear and continuous to bring it inside the integral. Then divide by $s_1-s_0$, let $s_1\to s_0$ and use the continuity property \eqref{eq:contdiff} to get the thesis.
\end{proof}

\begin{proposition}\label{prop:derspeed}
Let $v\in L^2(T\X)$ and put $v_s:=\d\hat\gl_s(v)$. Then  the map $s\mapsto \frac12|v_s|^2\circ\hat\gl_s\in L^1(\X)$ is $C^1$ and its derivative is given by the formula
\begin{equation}
\label{eq:dervel}
\frac\d{\d s}\frac12|v_s|^2\circ\hat\gl_s=\H{\hat\b}(v_s,v_s)\circ\hat\gl_s,
\end{equation}
the incremental ratios being converging both in $L^1(\X)$ and $\mm$-a.e.. 

If $v$ is also bounded, then the curve $s\mapsto \frac12|v_s|^2\circ\hat\gl_s$ is $C^1$ also when seen with values in $L^2(\X)$ and in this case the incremental ratios in \eqref{eq:dervel} also converge in $L^2(\X)$ to the right hand side.
\end{proposition}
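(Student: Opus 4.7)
The plan reduces first to $v=\nabla\varphi$ with $\varphi\in\fsm\X$, by density and continuity in $L^2(T\X)$. Continuity of both sides of \eqref{eq:dervel} in $v$, locally uniformly in $s$, follows from the Lipschitz bound $|v_s|\circ\hat\gl_s\le(1+|s|C)|v|$ in \eqref{eq:dhatgl2} and the $L^\infty$-boundedness of $\H{\hat\b}$ proved in item~(a) of the preamble. Using the group property $\hat\gl_{s+h}=\hat\gl_h\circ\hat\gl_s$ and the corresponding chain rule $v_{s+h}=\d\hat\gl_h(v_s)$, it then suffices to prove the formula at $s=0$, provided one verifies that the candidate right-hand side depends continuously on the initial vector and base point, which follows again from \eqref{eq:dhatgl2} and an adaptation of Lemma~\ref{le:compgl} to the flow $\hat\gl$.

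For the computation at $s=0$, I would write the incremental ratio of $|v_s|^2\circ\hat\gl_s$ via the telescoping identity
\[
|v_{h}|^2\circ\hat\gl_{h}-|v|^2=\langle v_h-v,v_h\rangle\circ\hat\gl_h+\langle v,v_h-v\rangle\circ\hat\gl_h+\bigl(\langle v,v\rangle\circ\hat\gl_h-\langle v,v\rangle\bigr).
\]
Dividing by $h$ and sending $h\to0$ produces two contributions: a base-point term $\frac{\d}{\d h}\bigg|_{h=0}\langle v,v\rangle\circ\hat\gl_h=-\langle\nabla|v|^2,\nabla\hat\b\rangle$, computed via the first part of Lemma~\ref{le:utile} and with \eqref{eq:diff2} rewriting $\nabla|v|^2=2\H\varphi(\nabla\varphi)$; and a deformation term $2\frac{\d}{\d h}\bigg|_{h=0}\langle v,v_h\rangle\circ\hat\gl_h$, which I would compute using the single-slot identity
\[
\langle\nabla\eta,v_h\rangle\circ\hat\gl_h=\langle\nabla(\eta\circ\hat\gl_h),\nabla\varphi\rangle,\qquad\eta\in\fsm\X,
\]
(a consequence of \eqref{eq:defdiff}) applied with $\eta=\varphi$, together with Lemma~\ref{le:utile} and \eqref{eq:diff2}, yielding $-\H\varphi(\nabla\hat\b,\nabla\varphi)-\H{\hat\b}(\nabla\varphi,\nabla\varphi)$. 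Summing the two contributions and using $\langle\nabla|v|^2,\nabla\hat\b\rangle=2\H\varphi(\nabla\varphi,\nabla\hat\b)$ produces the announced right-hand side after the $\H\varphi$-terms cancel, up to the sign convention dictated by $\hat\gl$ being the flow of $-\nabla\hat\b$.

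The main obstacle is making the telescoping rigorous in the weak $L^1$-sense: the deformation piece requires the chain rule for the differential under $\hat\gl$ in the pullback-module language of Section~\ref{se:tools}, and the limit-interchange must be justified using Lemma~\ref{le:utile} and the continuity estimates \eqref{eq:dhatgl2}. Upgrading the $L^1$-convergence of the incremental ratios to $L^2$-convergence, with $\mm$-a.e.\ convergence along a subsequence, under the additional boundedness hypothesis on $v$ follows by the same computation: \eqref{eq:dhatgl2} propagates the $L^\infty$-bound on $v$ to a uniform-in-$s$ $L^\infty$-bound on $v_s\circ\hat\gl_s$, which combined with the $L^\infty$-bound on $\H{\hat\b}$ from item~(a) allows the dominated-convergence argument for the incremental ratios to be performed in $L^2$ rather than $L^1$.
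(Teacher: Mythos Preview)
Your overall strategy—reduce to $v=\nabla\varphi$ with $\varphi\in\fsm\X$, compute at $s=0$, then use the group property and density—matches the paper's. The gap is in the core computation at $s=0$.

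Your telescoping gives
\[
|v_h|^2\circ\hat\gl_h-|v|^2=|v_h-v|^2\circ\hat\gl_h+2\bigl(\langle v,v_h\rangle\circ\hat\gl_h-|v|^2\bigr)+\bigl(|v|^2-|v|^2\circ\hat\gl_h\bigr),
\]
and you correctly observe that the last two pieces are $C^1$ in $h$ (via Lemma~\ref{le:utile} and the single-slot identity). But you then treat the sum as if its limit after dividing by $h$ equals the sum of the derivatives of those two pieces—i.e.\ you implicitly assume $|v_h-v|^2\circ\hat\gl_h/h\to0$. Nothing in the toolbox gives this: the bound \eqref{eq:dhatgl2} only yields $|v_h-v|^2\circ\hat\gl_h=O(h)$, not $o(h)$, and proving $o(h)$ is essentially equivalent to the differentiability you are trying to establish. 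So as written the argument is circular.

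The paper avoids this by exploiting that $|v_h-v|^2\geq0$: the telescoping then only yields the \emph{one-sided} inequalities
\[
\limsup_{h\uparrow0}\frac{|v_h|^2\circ\hat\gl_h-|v|^2}{2h}\ \leq\ -\H{\hat\b}(v,v)\ \leq\ \liminf_{h\downarrow0}\frac{|v_h|^2\circ\hat\gl_h-|v|^2}{2h},
\]
which is exactly \eqref{eq:doppio} (the paper writes it via the equivalent Young-type lower bound $\tfrac12|v_s|^2\circ\hat\gl_s\geq\d\varphi(v_s)\circ\hat\gl_s-\tfrac12|\d\varphi|^2\circ\hat\gl_s$ with equality at $s=0$). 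Passing from these one-sided bounds to an actual derivative is a separate, non-trivial step: for bounded $v$ the curve $s\mapsto\tfrac12|v_s|^2\circ\hat\gl_s\in L^2(\X)$ is Lipschitz by \eqref{eq:dhatgl2}, hence a.e.\ differentiable by the Radon--Nikodym property of Hilbert spaces; at those points the two-sided squeeze forces equality, and continuity of the right-hand side in $s$ upgrades to $C^1$. The general $v\in L^2(T\X)$ case is then recovered by truncation and the locality of $\d\hat\gl_s$. Your sketch omits this mechanism entirely; without it you have no way to conclude that the incremental ratios converge at all.
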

\begin{proof}

\noindent{\bf Step 1: $v$ is the gradient of a test function and $s=0$.} Assume for the moment that   $v=\nabla\varphi$ for some $\varphi\in \test \X$. Notice that for any $s\in\R$ we have
\begin{equation}
\label{eq:bsotto}
\frac12|v_s|^2\circ\hat\gl_s\geq \d\varphi(v_s)\circ\hat\gl_s-\frac12|\d\varphi|^2\circ\hat\gl_s\qquad\mm-a.e.,
\end{equation}
with equality $\mm$-a.e.\ for $s=0$. Recalling that \eqref{eq:defdiff} gives $\d\varphi(v_s)\circ\hat\gl_s=\d(\varphi\circ\hat\gl_s)(v)$, that $|\d\varphi|^2\in W^{1,2}(\X)$ and using Lemma \ref{le:utile} above, we see that the right hand side of this last inequality is $C^1$ when seen as a curve depending on $s$ with values in $L^2(\X)$. Formulas \eqref{eq:derflusso}, \eqref{eq:derflusso2} grant that its derivative at $s=0$ is given by
\[
\begin{split}
\frac{\d}{\d s}\Big(\d\varphi(v_s)\circ\hat\gl_s-\frac12|\d\varphi|^2\circ\hat\gl_s\Big)\restr{s=0}&=\d\big(\la\nabla\varphi,\nabla\hat \b\ra\big)(v)-\la\nabla\tfrac{|\d\varphi^2|}2,\nabla\hat\b\ra\\
&=\H{\hat\b}(v,v)\circ\hat\gl_{s_0},
\end{split}
\]
having used the fact that $v=\nabla\varphi$ and \eqref{eq:diff2}.  From \eqref{eq:bsotto} we then have
\begin{equation}
\label{eq:doppio}
\lims_{s\uparrow 0}\frac{|v_s|^2\circ\hat\gl_s-|v|^2}{2s}\leq \H{\hat\b}(v,v)\leq \limi_{s\downarrow 0}\frac{|v_s|^2\circ\hat\gl_s-|v|^2}{2s},
\end{equation}
where the $\limi$ and $\lims$ are intended in the $\mm$-essential sense.

\noindent{\bf Step 2: $v$ is locally the gradient of a test function and $s=0$.} 
From the locality property of the differential \eqref{eq:locdiff}  we deduce that \eqref{eq:doppio} holds for $v$ of the form $\sum_i\nchi_{A_i}\nabla\varphi_i\in L^2(T\X)$ for a given Borel partition $(A_i)_{i\in\N}$ of $\X$ and functions $\varphi_i\in \test \X$.

\noindent{\bf Step 3: generic $v\in L^2(T\X)$  and $s=0$.} Let $Q_s:L^2(T\X)\to L^1(\X)$ be the quadratic form defined by
\[
Q_s(v):=\frac{|v_s|^2\circ\hat\gl_s-|v|^2}{s},
\]
and notice that inequality \eqref{eq:dhatgl2} grants that
\begin{equation}
\label{eq:bquad}
|Q_s(v)|\leq C'|v|^2,\quad\mm-a.e.\qquad\forall v\in L^2(T\X),\ s\in[-\tfrac1{C},\tfrac1{C}],
\end{equation}
for some $C'>0$. We claim that the $Q_s$'s are locally uniformly continuous in $s\in[-\tfrac1{C},\tfrac1{C}]$ and to this aim we introduce the auxiliary quadratic forms
\[
\tilde Q_s(v):=Q_s(v)+C'|v|^2,
\]
and notice that \eqref{eq:bquad} yields
\begin{equation*}
\label{eq:bquad2}
0\leq \tilde Q_s(v)\leq 2C'|v|^2,\quad\mm-a.e.\qquad\forall v\in L^2(T\X),\ s\in[-\tfrac1{C},\tfrac1{C}].
\end{equation*}
Letting $\tilde B_s$ be the bilinear form associated to $\tilde Q_s$, the positivity of the latter and the Cauchy-Schwarz inequality - which is easily seen to be valid even in this context - yield
\[
|\tilde B_s(v,w)|^2\leq \tilde Q_s(v)\tilde Q_s(w),\quad\mm-a.e.\qquad\forall v,w\in L^2(T\X),\ s\in[-\tfrac1{C},\tfrac1{C}].
\]
Therefore we have
\[
\begin{split}
|Q_s(v+w)-Q_s(v)|&\leq |\tilde Q_s(v+w)-\tilde Q_s(v)|+C'\big||v+w|^2-|v|^2\big|\\
&=|\tilde Q_s(w)+2\tilde B_s(v,w)|+C'\big||w|^2+2\la v,w\ra\big|\\
&\leq 2C|w|^2+2\sqrt{\tilde Q_s(v)\tilde Q_s(w)}+C'|w|^2+2C'|v||w|\\
&\leq 6C|v||w|+3C'|w|^2,
\end{split}
\]
$\mm$-a.e.\ for every $v,w\in L^2(T\X)$ and $s\in [-\tfrac1{C},\tfrac1{C}]$. This estimate is the claimed local uniform continuity of the $Q_s$'s. 

Since the boundedness of $\H{\hat\b}$ grants that $v\mapsto\H{\hat\b}(v,v)$ is continuous from $L^2(T\X)$ to $L^1(\X)$ and in Step 2 we established \eqref{eq:doppio} for a set of vector fields dense in $L^2(T\X)$, we conclude that \eqref{eq:doppio} holds  for every $v\in L^2(T\X)$.

\noindent{\bf Step 4: conclusion.} Since the $\hat\gl_s$'s form a group, by what we just proved we know that for any $v\in L^2(T\X)$ and any $s_0\in\R$ we have
\begin{equation}
\label{eq:doppio2}
\lims_{s\uparrow s_0}\frac{|v_s|^2\circ\hat\gl_s-|v_{s_0}|^2\circ\hat\gl_{s_0}}{2(s-s_0)}\leq \H{\hat\b}(v_{s_0},v_{s_0})\circ\hat\gl_{s_0}\leq \limi_{s\downarrow s_0}\frac{|v_s|^2\circ\hat\gl_s-|v_{s_0}|^2\circ\hat\gl_{s_0}}{2(s-s_0)},
\end{equation}
$\mm$-a.e.. Now assume for a moment that $v\in  L^2\cap L^\infty(T\X)$, so that using again the group property and keeping in mind the bound \eqref{eq:dhatgl2} we have that $\R\ni s\mapsto \frac12|v_s|^2\circ\hat\gl_s\in L^2(\X)$ is Lipschitz and hence - since Hilbert spaces have the Radon-Nikodym property - it is a.e.\ differentiable. Letting $s_0$ be a point of differentiability, from \eqref{eq:doppio2} we deduce that
\begin{equation}
\label{eq:derpunt}
\lim_{s\to s_0}\frac{|v_s|^2\circ\hat\gl_s-|v_{s_0}|^2\circ\hat\gl_{s_0}}{2(s-s_0)}= \H{\hat\b}(v_{s_0},v_{s_0})\circ\hat\gl_{s_0},
\end{equation}
the limit being intended both in $L^2(\X)$ and $\mm$-a.e.. Since the right hand side of this last expression is, as a function of $s_0$ with values in $L^2(\X)$, continuous, we deduce that $s\mapsto \frac12|v_s|^2\circ\hat\gl_s\in L^2(\X)$ is $C^1$ and that \eqref{eq:derpunt} holds for any $s_0\in\R$.

Finally, let $v\in L^2(T\X)$ be arbitrary and notice that the bound \eqref{eq:dhatgl2} grants that for any $s_0\in\R$ the incremental ratios $\frac{|v_s|^2\circ\hat\gl_s-|v_{s_0}|^2\circ\hat\gl_{s_0}}{2(s-s_0)}$ are, for $s\in[s_0-1,s_0+1]$, dominated in $L^1(\X)$. 

To conclude, put  $v_n:=\nchi_{\{|v|\leq n\}}v\in L^2\cap L^\infty(T\X)$,  $n\in\N$, and $v_{n,s}:=\d\hat\gl_s(v_n)$ and notice that for every $s\in\R$ we have
\[
\begin{split}
\frac{|v_s|^2\circ\hat\gl_s-|v_{s_0}|^2\circ\hat\gl_{s_0}}{2(s-s_0)}&=\frac{|v_{n,s}|^2\circ\hat\gl_s-|v_{n,s_0}|^2\circ\hat\gl_{s_0}}{2(s-s_0)},\qquad\mm-a.e.\ on\ \{|v|\leq n\},\\
 \H{\hat\b}(v_{s_0},v_{s_0})\circ\hat\gl_{s_0}&= \H{\hat\b}(v_{n,s_0},v_{n,s_0})\circ\hat\gl_{s_0},\qquad\mm-a.e.\ on\ \{|v|\leq n\}.
\end{split}
\]
By what we already proved, we deduce that the limit in \eqref{eq:derpunt} holds in the $\mm$-a.e.\ sense on $\{|v|\leq n\}$. Since $n\in\N$ is arbitrary, we conclude that the same limit is true $\mm$-a.e.\ and given that the incremental ratio on the left is dominated in $L^1(\X)$, we obtain that \eqref{eq:dervel} holds, the derivative being intended in $L^1(\X)$.

The stated $C^1$ regularity is then a consequence of the continuity in $L^1(\X)$ of $s\mapsto \H{\hat\b}(v_s,v_s)\circ\hat\gl_s$ which in turn is a consequence of the $L^2(T\X)$ continuity of $s\mapsto v_s$ and the boundedness of $\H{\hat\b}$. 
\end{proof}

\begin{corollary}\label{cor:decr}
Let $v\in L^2(T\X)$ be concentrated on $B_{\Rb'}(\O)\setminus B_{\rb'}(\O)$ and put $v_s:=\d\hat\gl_s(v)$. Then for every $s_1>s_0\geq 0$ we have
\begin{equation}
\label{eq:endecr}
\frac{|v_{s_1}|^2}{\sfd^2(\cdot,\O)}\circ\hat\gl_{s_1}\leq\frac{|v_{s_0}|^2}{\sfd^2(\cdot,\O)}\circ\hat\gl_{s_0},\qquad\mm-a.e..
\end{equation}
\end{corollary}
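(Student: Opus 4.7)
The strategy is to show that the function
\[
\phi(s) := \left(\frac{|v_s|^2}{\sfd^2(\cdot, \O)}\right)\circ \hat\gl_s \in L^1(\X)
\]
is non-increasing in $s \geq 0$, and then integrate from $s_0$ to $s_1$ to obtain \eqref{eq:endecr}. Since $v$ is concentrated on the annulus $A := B_{\Rb'}(\O) \setminus B_{\rb'}(\O)$, which is left invariant by $\hat\gl_s$ by property (c), the locality \eqref{eq:locdiff} of $\d\hat\gl_s$ ensures that $v_s$ is concentrated on $A$ as well. Hence $\phi(s)$ is supported where $\sfd(\cdot, \O) \in [\rb', \Rb']$, a region on which the formulae \eqref{eq:formulehatb} for $\nabla\hat\b$ and $\H{\hat\b}$ are valid and $1/\sfd(\cdot, \O)$ is bounded; this takes care of all the integrability questions arising below.

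On $A$ one has $\nabla\hat\b = (1 - \Rb/(2\sfd(\cdot, \O)))\nabla\b$ and $|\nabla\b|^2 = 2\b = \sfd^2(\cdot, \O)$, so that by the first-order calculus along the flow,
\[
\partial_s\bigl(\sfd^2(\cdot, \O) \circ \hat\gl_s\bigr) = -2\la\nabla\b, \nabla\hat\b\ra \circ \hat\gl_s = -2\,\sfd(\cdot,\O)\bigl(\sfd(\cdot,\O) - \Rb/2\bigr)\circ \hat\gl_s.
\]
Combined with Proposition \ref{prop:derspeed}, which gives the $s$-derivative of $|v_s|^2 \circ \hat\gl_s$ in terms of $\H{\hat\b}(v_s, v_s)$, the product rule in $L^1(\X)$ then expresses $\phi'(s)$ as a linear combination of $\H{\hat\b}(v_s, v_s)$ and $\bigl(\sfd(\cdot, \O) - \Rb/2\bigr)|v_s|^2$, both composed with $\hat\gl_s$ and divided by $\sfd^3(\cdot, \O) \circ \hat\gl_s$. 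The desired monotonicity thus reduces to a pointwise $\mm$-a.e.\ comparison between these two quantities on $A$.

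That comparison is provided by the algebraic identity
\[
\sfd(\cdot, \O)\,\H{\hat\b}(v, v) - \bigl(\sfd(\cdot, \O) - \Rb/2\bigr)|v|^2 = \frac{\Rb}{2\,\sfd^2(\cdot, \O)}\,\la\nabla\b, v\ra^2 \qquad \mm\text{-a.e.\ on }A,
\]
valid for every $v \in L^2(T\X)$ and obtained by rewriting the third line of \eqref{eq:formulehatb} using $2\b = \sfd^2(\cdot, \O)$. Since the right-hand side is manifestly nonnegative, this provides exactly the sign needed to conclude that $\phi'(s) \leq 0$ in $L^1(\X)$ for every $s \geq 0$, and integration in $s$ then delivers \eqref{eq:endecr}. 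The main technical issue is essentially bookkeeping: one must carefully apply the product rule inside the $L^1(\X)$-framework of Proposition \ref{prop:derspeed}, but the $L^\infty$-boundedness on $A$ of $1/\sfd(\cdot, \O)$, of $\sfd(\cdot, \O) - \Rb/2$ and of $\H{\hat\b}$ -- all guaranteed by construction and by property (a) -- makes this routine.
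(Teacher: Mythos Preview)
Your approach is essentially the same as the paper's: differentiate $\phi(s)$ via the product rule using Proposition \ref{prop:derspeed} and Lemma \ref{le:utile}, then use the explicit formula for $\H{\hat\b}$ on the annulus to see that the derivative is nonpositive; your algebraic identity is exactly the cancellation the paper carries out (just written with $\sfd(\cdot,\O)$ in place of $\sqrt{2\b}$). The one point the paper handles more carefully is the product rule itself: it first truncates to bounded $v$ so that $s\mapsto|v_s|^2\circ\hat\gl_s$ is $C^1$ in $L^2$ (second part of Proposition \ref{prop:derspeed}), and separately shows $s\mapsto(1/\b)\circ\hat\gl_s\,\nchi_A$ is $C^1$ in $L^2$ via an auxiliary $\tilde\b\in W^{1,2}(\X)$ bounded below, so the product is $C^1$ in $L^1$; your version, working directly in $L^1$ for general $v$, would instead need the $L^\infty$-$C^1$ regularity of $s\mapsto (1/\sfd^2)\circ\hat\gl_s$ on $A$, which is true (one has the pointwise formula $\sfd(\hat\gl_s(x),\O)=\Rb/2+e^{-s}(\sfd(x,\O)-\Rb/2)$ from property (b)) but not something the paper's lemmas give you directly.
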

\begin{proof} Up to replacing $v$ with $v_n:=\nchi_{\{|v|\leq n\}}v$, using the fact that $|\d\hat\gl_s(v_n)|\circ\hat\gl_s=|\d\hat\gl_s(v)|\circ\hat\gl_s$ on $\{|v|\leq n\}$ and letting $n\to\infty$ we can assume that $v$ is bounded.

Put for brevity $A:=B_{\Rb'}(\O)\setminus B_{\rb'}(\O)$ and notice that on the complement of $A$ both sides of \eqref{eq:endecr} are 0 $\mm$-a.e., so that we are reduced to prove that
\[
\frac{|v_{s_1}|^2}{2\b}\circ\hat\gl_{s_1}\nchi_A\leq\frac{|v_{s_0}|^2}{2\b}\circ\hat\gl_{s_0}\nchi_A,\qquad\mm-a.e..
\]
Let $\tilde\b\in W^{1,2}(\X)$ be a function bounded from below by a positive constant and agreeing with $\b$ on $A$. Then $\frac1{\tilde\b}\in W_{loc}^{1,2}(\X)$ and Lemma \ref{le:utile} grants that the derivative of $s\mapsto\frac1{\tilde\b}\circ\hat\gl_s$ in $L^2(\X)$ is $\frac{1}{\tilde\b^2}\la\nabla\tilde\b,\nabla\hat\b\ra\circ\hat\gl_s$. Since  $\hat\gl_s$ maps $A$ into itself for every $s\geq 0$ (point $(c)$ in the list of properties of $\hat\gl_s$) and $\b=\tilde\b$ on $A$, we deduce that $s\mapsto \frac1\b\circ\hat\gl_s\nchi_A\in L^2(\X)$ is $C^1$ with derivative equal to $\frac{1}{\b^2}\la\nabla\b,\nabla\hat\b\ra\circ\hat\gl_s\nchi_A$. Using then the second part of Proposition \ref{prop:derspeed} we conclude that $s\mapsto\frac{|v_s|^2}{2\b}\circ\hat\gl_s\nchi_A\in L^1(\X)$ is $C^1$ with derivative given by
\begin{equation}
\label{eq:caffe}
\begin{split}
\frac{\d}{\d s}\Big(\frac{|v_s|^2}{2\b}\circ\hat\gl_s\nchi_A\Big)\restr{s=0}=\frac{\nchi_A}{2\b^2}\Big(-2\b\,\H{\hat\b}(v,v)+|v|^2 \la\nabla\b,\nabla\hat\b\ra\Big).
\end{split}
\end{equation}
Using the expressions for $\nabla\hat\b$ and $\H{\hat\b}$ given in \eqref{eq:formulehatb} we have that $\mm$-a.e.\ on $A$ it holds
\[
\begin{split}
-2\b\,\H{\hat\b}(v,v)&+|v|^2 \la\nabla\b,\nabla\hat\b\ra\\
&=-2\b|v|^2\Big(1-\frac{\Rb}{2\sqrt{2\b}}\Big)-\frac{\Rb}{2\sqrt{2\b}}\la\nabla\b,v\ra^2+|v|^2|\nabla\b|^2\Big(1-\frac{\Rb}{2\sqrt{2\b}}\Big)\\
&=-\frac{\Rb}{2\sqrt{2\b}}\la\nabla\b,v\ra^2\leq 0,
\end{split}
\]
having used the fact that $|\nabla\b|^2=2\b$ (Corollary \ref{cor:db}). This computation together with \eqref{eq:caffe} gives the conclusion.
\end{proof}
We are now ready to prove our key Proposition \ref{prop:key}, which for convenience we restate:
\begin{proposition}\label{prop:keyagain}
Let $[a,b]\subset(0,\Rb)$ and $\ppi$ be a test plan such that $\sfd(\gamma_t,\O)\in[a,b]$ for every $t\in[0,1]$ and $\ppi$-a.e.\ $\gamma$. Then
\begin{equation*}
\label{eq:speedproj}
{\rm ms}_t(\pr\circ\gamma)\leq\frac{\Rb}{2\sfd(\gamma_t,\O)}{\rm ms}_t(\gamma),\qquad a.e.\ t\in[0,1],\ \ppi-a.e.\ \gamma.
\end{equation*}
\end{proposition}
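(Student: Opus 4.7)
The plan is to use the one-parameter family $(\hat\gl_s)$ of Section \ref{se:speedproj} to approximate $\pr$ via \eqref{eq:expr3}, to transfer along this family the sharp monotonicity of Corollary \ref{cor:decr}, and finally to pass to the limit $s\to\infty$.

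First, fix $\rb'<a$ and $\Rb'>b$ with $\rb'<\Rb/2<\Rb'$ and build $\hat\b$ and $(\hat\gl_s)$ for these radii. Each $\hat\gl_s$ is of bounded deformation (Lipschitz by property (f), of bounded compression by (e)), hence the measure $\ppi_s$ on $C([0,1],\X)$ obtained by push-forward along the map $\gamma\mapsto \hat\gl_s\circ\gamma$ is a test plan. Combining the chain rule for speeds \eqref{eq:speedaftercompos} with the pointwise identification \eqref{eq:linkspeedmetric}, for a.e.\ $t\in[0,1]$ and $\ppi$-a.e.\ $\gamma$ we have
\[
\bigl|\d\hat\gl_s(\ppi'_t)\bigr|(\hat\gl_s\circ\gamma)={\rm ms}_t(\hat\gl_s\circ\gamma),\qquad |\ppi'_t|(\gamma)={\rm ms}_t(\gamma).
\]

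Next, I would lift the pointwise estimate of Corollary \ref{cor:decr} from $L^2(T\X)$ to the pullback module $L^2(T\X,\e_t,\ppi)$. That corollary is proved by a purely pointwise $\mm$-a.e.\ computation exploiting the explicit formulas in \eqref{eq:formulehatb} for $\nabla\hat\b$ and $\H{\hat\b}$, together with $|\nabla\b|^2=2\b$; the $L^\infty$-bilinearity of the pointwise operations and the density of pullback-module elements of the form $\sum_i\nchi_{A_i}\e_t^*(\nabla\varphi_i)$ allow the estimate to be transferred. Since by hypothesis $\ppi'_t$ is concentrated on $\e_t^{-1}(\{a\leq\sfd(\cdot,\O)\leq b\})$, the lifted inequality reads
\[
\frac{{\rm ms}_t(\hat\gl_s\circ\gamma)^2}{\sfd^2(\hat\gl_s(\gamma_t),\O)}\;\leq\;\frac{{\rm ms}_t(\gamma)^2}{\sfd^2(\gamma_t,\O)},\qquad\text{a.e.\ }t,\ \ppi\text{-a.e.\ }\gamma.
\]

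Finally, for $\ppi$-a.e.\ $\gamma$, every $t_0<t_1$ in $[0,1]$, and $n\in\N$, taking square roots and integrating (using that $\hat\gl_n\circ\gamma$ is absolutely continuous since $\hat\gl_n$ is Lipschitz) gives
\[
\sfd\bigl(\hat\gl_n(\gamma_{t_1}),\hat\gl_n(\gamma_{t_0})\bigr)\;\leq\;\int_{t_0}^{t_1}\frac{\sfd(\hat\gl_n(\gamma_t),\O)}{\sfd(\gamma_t,\O)}\,{\rm ms}_t(\gamma)\,dt.
\]
The integrand is dominated uniformly in $n$ by $(\Rb'/a)\,{\rm ms}_t(\gamma)\in L^1([0,1])$ and converges pointwise in $t$ to $(\Rb/2)\,{\rm ms}_t(\gamma)/\sfd(\gamma_t,\O)$ by \eqref{eq:expr3}; combining dominated convergence with the convergence of the endpoints $\hat\gl_n(\gamma_{t_i})\to\pr(\gamma_{t_i})$, we obtain
\[
\sfd\bigl(\pr(\gamma_{t_1}),\pr(\gamma_{t_0})\bigr)\;\leq\;\int_{t_0}^{t_1}\frac{\Rb/2}{\sfd(\gamma_t,\O)}\,{\rm ms}_t(\gamma)\,dt.
\]
By arbitrariness of $t_0<t_1$, the curve $\pr\circ\gamma$ is absolutely continuous with respect to $\sfd$, hence with respect to $\sfd'$ by \eqref{eq:veluguali}, with metric speed bounded as stated. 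The principal obstacle is the module-level lifting of Corollary \ref{cor:decr}: even though the proof of that corollary is essentially pointwise, faithfully transferring all the algebraic operations (compositions with $\hat\gl_s$, pointwise quadratic forms, the Hessian) to elements of the pullback module $L^2(T\X,\e_t,\ppi)$ requires a careful density/approximation argument in the formalism of \cite{Gigli14}.
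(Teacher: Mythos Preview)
Your proposal is correct, and up to and including the lifting of Corollary \ref{cor:decr} to the pullback module (your ``principal obstacle'') it is exactly the paper's argument: the paper proves \eqref{eq:endecr2} first for $V=\e_t^*v$, then for simple elements $\sum_i\nchi_{A_i}\e_t^*v_i$, and then by density, and applies it to $V=\ppi'_t$ together with \eqref{eq:speedaftercompos} and \eqref{eq:linkspeedmetric} to obtain precisely your pointwise inequality.

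The genuine difference is the endgame. The paper integrates the pointwise inequality over $t$ and $\gamma$ to obtain the monotonicity \eqref{eq:mono} of the weighted energy $\int E\,\d\ppi_s$, then passes to the limit $s\to\infty$ via weak convergence $\ppi_s\weakto\pr_*\ppi$ and lower semicontinuity of $E$, obtaining the integrated estimate \eqref{eq:concint}; finally it localizes by replacing $\ppi$ with $\ppi(\Gamma)^{-1}({\rm Restr}_{t_0}^{t_1})_*(\ppi\restr\Gamma)$ and invoking the $\pi$--$\lambda$ theorem. Your route instead works curve by curve: after Fubini and a countable intersection over $n\in\N$, for $\ppi$-a.e.\ $\gamma$ you integrate the speed bound over $[t_0,t_1]$, then let $n\to\infty$ using the uniform convergence $\hat\gl_n\to\pr$ from \eqref{eq:expr3} and dominated convergence. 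This is more elementary --- it avoids the lower semicontinuity lemma for $E$ and the measure-theoretic localization --- and yields the absolute continuity of $\pr\circ\gamma$ together with the speed bound directly. The paper's approach has the minor advantage of producing the integrated energy monotonicity \eqref{eq:mono} as a byproduct (used later in Proposition \ref{prop:basexp}), but that inequality is in any case an immediate consequence of the pointwise one you establish.
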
 
\begin{proof}
Build a function $\hat\b$ and the corresponding flow $\hat\gl$ as in the beginning of the current subsection for $[\rb',\Rb']=[a,b]$.

With a slight abuse of notation we shall denote  by $\hat\gl_s$ the map from $C([0,1],\X)$ into itself sending $\gamma$ to $\hat\gl_s\circ\gamma$. Then we put $\ppi_s:=(\hat\gl_s)_*\ppi$.

Recalling that for every $t\in[0,1]$ the differential of $\hat\gl_s$ induces a map, still denoted by $\d\hat\gl_s$ from $L^2(T\X,\e_t,\ppi)$ to $L^2(T\X,\e_t,\ppi_s)$ (recall the discussion in Section \ref{se:tools}), we claim that for any $s_1\geq s_0\geq 0$ and any $V\in L^2(T\X,\e_t,\ppi)$ it holds
\begin{equation}
\label{eq:endecr2}
\frac{|\d\hat \gl_{s_1}(V)|^2}{\sfd^2_{\O}\circ\e_t}\circ\hat\gl_{s_1}\leq\frac{|\d\hat \gl_{s_0}(V)|^2}{\sfd^2_{\O}\circ\e_t}\circ\hat\gl_{s_0},\qquad\ppi-a.e..
\end{equation}
Indeed, for $V$ of the form $\e_t^*v$ for some $v\in L^2(T\X)$ the claim follows directly from Corollary \ref{cor:decr} above via the computation:
\begin{equation}
\label{eq:pbdiff}
\begin{split}
\frac{|\d\hat \gl_{s_1}(\e_t^*v)|^2}{\sfd^2_{\O}\circ\e_t}\circ\hat\gl_{s_1}&=\frac{|\e_t^*(\d\hat \gl_{s_1}(v))|^2}{\sfd^2_{\O}\circ\e_t}\circ\hat\gl_{s_1}=\frac{|\d\hat \gl_{s_1}(v)|^2}{\sfd^2_{\O}}\circ\e_t\circ\hat\gl_{s_1}=\frac{|\d\hat \gl_{s_1}(v)|^2}{\sfd^2_{\O}}\circ\hat\gl_{s_1}\circ\e_t\\
\text{(by \eqref{eq:endecr})}\qquad\qquad&\leq\frac{|\d\hat \gl_{s_0}(v)|^2}{\sfd^2_{\O}}\circ\hat\gl_{s_0}\circ\e_t=\cdots=\frac{|\d\hat \gl_{s_0}(\e_t^*v)|^2}{\sfd^2_{\O}\circ\e_t}\circ\hat\gl_{s_0},\qquad\ppi-a.e..
\end{split}
\end{equation}
Then the locality property of $\d\hat\gl_s:L^2(T\X,\e_t,\ppi)\to L^2(T\X,\e_t,\ppi_s)$ expressed by the second in \eqref{eq:liftdF} ensures that for $V$ of the form $\sum_i\nchi_{A_i}\e_t^*v_i$ for some Borel partition $(A_i)_{i\in\N}$ of $C([0,1],\X)$ and $(v_i)\subset L^2(T\X)$ it holds
\[
\begin{split}
\frac{|\d\hat \gl_{s_1}(\sum_i\nchi_{A_i}\e_t^*v_i)|^2}{\sfd^2_{\O}\circ\e_t}\circ\hat\gl_{s_1}&=\frac{|\sum_i\nchi_{A_i}\circ\hat\gl_{s_1}^{-1}\d\hat \gl_{s_1}(\e_t^*v_i)|^2}{\sfd^2_{\O}\circ\e_t}\circ\hat\gl_{s_1}=\sum_i\nchi_{A_i}\frac{|\d\hat \gl_{s_1}(\e_t^*v_i)|^2}{\sfd^2_{\O}\circ\e_t}\circ\hat\gl_{s_1}\\
\text{(by \eqref{eq:pbdiff})}\qquad\qquad&\leq\sum_i\nchi_{A_i}\frac{|\d\hat \gl_{s_0}(\e_t^*v_i)|^2}{\sfd^2_{\O}\circ\e_t}\circ\hat\gl_{s_0}=\cdots=\frac{|\d\hat \gl_{s_0}(\sum_i\nchi_{A_i}\e_t^*v_i)|^2}{\sfd^2_{\O}\circ\e_t}\circ\hat\gl_{s_0}
\end{split}
\]
$\ppi$-a.e.. Then the claim \eqref{eq:endecr2} follows by the density of the set of $V$'s of the form just considered in $L^2(T\X,\e_t,\ppi)$ and the continuity of $\d\hat\gl_s:L^2(T\X,\e_t,\ppi)\to L^2(T\X,\e_t,\ppi_s)$.

Denoting by $(\ppi_s)_t'\in L^2(T\X,\e_t,\ppi_s)$ the speed at time $t$ of the test plan $\ppi_s$, applying \eqref{eq:endecr2} to $\ppi_t'$ and recalling \eqref{eq:speedaftercompos} we obtain that for $s_1\geq s_0\geq 0$ and a.e.\ $t\in[0,1]$ it holds
\[
\frac{|(\ppi_{s_1})'_t|^2}{\sfd^2_{\O}\circ\e_t}\circ\hat\gl_{s_1}\leq\frac{|(\ppi_{s_0})'_t|^2}{\sfd^2_{\O}\circ\e_t}\circ\hat\gl_{s_0},\qquad\ppi-a.e..
\]
Integrating in $t$ and recalling the link between pointwise norm and metric speed given in \eqref{eq:linkspeedmetric} we obtain that
\begin{equation}
\label{eq:mono}
\iint_0^1\frac{|\dot\gamma_t|^2}{\sfd^2(\gamma_t,\O)}\,\d t\,\d\ppi_{s_1}(\gamma)\leq \iint_0^1\frac{|\dot\gamma_t|^2}{\sfd^2(\gamma_t,\O)}\,\d t\,\d\ppi_{s_0}(\gamma).
\end{equation}
Recall that \(A=B_{\bar \Rb}(\O)\setminus B_{r}(\O)\) and let us consider the closure $\bar A$ of $A$ and the functional $E:C([0,1],\bar A)\to[0,+\infty]$ given by
\[
E(\gamma):=\int_0^1\frac{|\dot\gamma_t|^2}{\sfd^2(\gamma_t,\O)}\,\d t\quad\text{ if }\gamma\in AC_2([0,1],\bar A),\qquad\qquad\qquad E(\gamma)=+\infty,\quad\text{otherwise}.
\] 
It is readily verified that  $E$ is lower semicontinuous. Indeed, let $(\gamma_n)$ be with $\sup_nE(\gamma_n)<\infty$ and uniformly converging to $\gamma$. Then $\sup_n\int|\dot\gamma_{n,t}|^2\,\d t<\infty$ and the uniform convergence ensures that $\limi_nE(\gamma_n)=\limi_n\int\frac{|\dot\gamma_{n,t}|^2}{\sfd(\gamma_t,\O)^2}\,\d t$. Since $\gamma$ takes values in $A$, we have that $\frac{1}{\sfd(\gamma_t,\O)}$ is bounded from above and thus the functions $|\dot\gamma_{n,t}|$ are uniformly bounded in $L^2([0,1],\frac{1}{\sfd(\gamma_t,\O)^2}\d t)$. Up to pass to a subsequence, not relabeled, we can assume that $|\dot\gamma_{n,t}|\weakto G$ in $L^2([0,1],\frac{1}{\sfd(\gamma_t,\O)^2}\d t)$ and passing to the limit in $\sfd(\gamma_{n,{t_1}},\gamma_{n,t_0})\leq \int_{t_0}^{t_1}|\dot\gamma_{n,t}|\,\d t$ deduce that $|\dot\gamma_t|\leq G(t)$ for a.e.\ $t\in[0,1]$. Then the lower semicontinuity of $E$ follows from the lower semicontinuity of the $L^2([0,1],\frac{1}{\sfd(\gamma_t,\O)^2}\d t)$-norm w.r.t.\ weak convergence.

Since $E$ is lower semicontinuous and non-negative, we deduce that the functional 
\[
\prob{C([0,1],\bar A)}\ni \ssigma\qquad\mapsto\qquad \int E(\gamma)\,\d\ssigma(\gamma)\in [0,+\infty],
\]
is lower semicontinuous w.r.t.\ weak convergence in duality with continuous and bounded functions on  $C([0,1],\bar A)$.

Now consider the functions $\hat\gl_s$ as functions from $\bar A$ into itself and recall that they uniformly converge to $\pr$ as $s\to+\infty$ (by \eqref{eq:expr3}). It follows that $(\ppi_s)$ weakly converges to $\pr_*\ppi$ as $s\to\infty$ and thus that
\[
\int E(\gamma)\,\d \pr_*\ppi\leq\limi_{s\to+\infty} \int E(\gamma)\,\d \ppi_s.
\]
Then the monotonicity property \eqref{eq:mono} and  the definition of $E$ give
\begin{equation}
\label{eq:concint}
\iint_0^1 \frac{{\rm ms}_t^2(\pr\circ\gamma)}{|\frac\Rb2|^2}\,\d t\,\d \ppi\leq\iint_0^1 \frac{{\rm ms}_t^2(\gamma)}{\sfd^2(\gamma_t,\O)}\,\d t\,\d \ppi.
\end{equation}
To pass from this integral formulation to the conclusion, we start claiming that for every $[t_0,t_1]\subset [0,1]$ and $\Gamma\subset C([0,1],\bar A)$ it holds
\begin{equation}
\label{eq:concint2}
\iint_{[t_0,t_1]\times \Gamma}\frac{{\rm ms}_t^2(\pr\circ\gamma)}{|\frac\Rb2|^2}\,\d t\,\d \ppi\leq\iint_{[t_0,t_1]\times \Gamma} \frac{{\rm ms}_t^2(\gamma)}{\sfd^2(\gamma_t,\O)}\,\d t\,\d \ppi.
\end{equation}
Indeed, if $\ppi(\Gamma)=0$ or $t_0=t_1$ there is nothing to prove, otherwise simply write \eqref{eq:concint} for the plan $\frac1{\ppi(\Gamma)}({\rm Restr}_{t_0}^{t_1})_*(\ppi\restr{\Gamma})$, which is still a test plan, in place of $\ppi$ to get the claim.

The class of sets of the form $[t_0,t_1]\times\Gamma$ just considered is a $\pi$-system which generates the Borel $\sigma$-algebra of $[0,1]\times C([0,1],\bar A)$, hence by the $\pi$-$\lambda$ theorem we deduce that we can replace $[t_0,t_1]\times\Gamma$ in \eqref{eq:concint2} by an arbitrary Borel set $E\subset [0,1]\times C([0,1],\bar A)$. Choosing as $E$ the set where the integrand on the left is bigger than the one on the right, we conclude.
\end{proof}

{
\subsection{The rescaled sphere $\Z$ and the cone  $\Y$ built over it}

Let us define \((\Z,\sfd_\Z,\mm_\Z)\) as \((\X',2\sfd'/\Rb, \mm')\) and note that \(f:\Z=\X'\to \R\) belongs to  \(W^{1,2}(\X')\) if and only if it belongs to \(W^{1,2}(\Z)\) and in this case
\begin{equation}\label{eq:xprimoxsecondo}
|\D f|_{\X'}=\frac{2}{\Rb}|\D f|_{\Z}\qquad  \textrm{\(\mm'\) a.e. (equivalently \(\mm_\Z\) a.e.).}
\end{equation}
On  the set $\Z\times [0,+\infty)$ we put the semidistance $\sfd_\Y$ defined by
\[
\sfd_\Y^2\big((z_0,s_0),(z_1,s_1)\big):=\inf \int_0^1 \big|\frac{\d}{\d r}s(r)\big|^2+s^2(r)|\dot\gamma_r|^2\,\d r,
\]
the $\inf$ being taken among all Lipschitz curves $[0,1]\ni r\mapsto  s(r)\in \R^+$ and $[0,1]\ni r\mapsto\gamma_r\in \Z$ such that $(\gamma_i,s(i))=(z_i,s_i)$ for $i=0,1$.

We denote by $\Y$ the quotient of $\Z\times [0,+\infty)$ w.r.t. the equivalence relation  given by $(z_0,s_0)\sim (z_1,s_1)$ provided  $\sfd_\Y\big((z_0,s_0),(z_1,s_1)\big)=0$. In particular, $(z_0,0)=(z_1,0)$ for any $z_0,z_1\in \Z$ and their equivalence class will be denoted by $O_\Y$.

The semidistance $\sfd_\Y$ passes to the quotient and induces a distance on $\Y$ which we shall continue to denote as $\sfd_\Y$. It is easy to check that $(\Y,\sfd_\Y)$ is a locally  compact metric space whose topology is the same as the quotient topology. The typical element of $\Y$ will be denoted by $(z,r)$ with $z\in \Z=\X'$ and $r\in[0,+\infty)$.

We also endow $\Y$ with the measure $\mm_\Y$ defined by
\[
\int_\Y f(z,s)\,\d\mm_\Y:=\frac{N\mm(B_\Rb(\O))}{\Rb^N}\int_0^{+\infty}s^{N-1}\int_{\Z}f(z,s)\,\d\mm_\Z(z)\,\d s,
\]
for every non-negative Borel function $f$.

\bigskip

We shall now recall some results proved in \cite{GH15} about the structure of Sobolev functions on $\Y$. It is convenient to introduce the following notation: for  $f:\Y\to\R$ given and $z\in \Z$ we shall denote by $f^{(z)}$ the function on $\R^+$ given by $r\mapsto f(z,r)$, similarly, for $r\in[0,+\infty)$ the function $f^{(r)}$ on $\Z$ is defined as $z\mapsto f(z,r)$.

\begin{theorem}\label{thm:BG2}
Let $f\in W^{1,2}(\Y)$. Then:
\begin{itemize}
\item[i)] for $\mm_\Z$-a.e.\ $z\in\Z$ we have $f^{(z)}\in W^{1,2}([0,+\infty),\sfd_{\rm Eucl},r^{N-1}\d r)$
\item[ii)] for $\mathcal L^1$-a.e.\ $r\in[0,+\infty)$ we have $f^{(r)}\in W^{1,2}(\Z,\sfd_\Z,\mm_\Z)$
\item[iii)] the identity
\begin{equation}
\label{eq:warpedgrad}
|\D f|_\Y^2(r,x)=|\D f^{(z)}|^2_\R(r)+\frac{1}{r^2}|\D f^{(r)}|^2_{\Z}(z)
\end{equation}
holds for $\mm_\Y$-a.e.\ $(z,r)$.
\end{itemize}
Conversely, if $f\in L^2(\Y)$ is 0 on a neighbourhood of $\O_\Y$,  $(i)$ and $(ii)$ hold and the right hand side of \eqref{eq:warpedgrad} is in $L^2(\Y)$, then $f\in W^{1,2}(\Y)$.

Furthermore, $\mathcal \Y$ is infinitesimally Hilbertian and has the Sobolev-to-Lipschitz property.
\end{theorem}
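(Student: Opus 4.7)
The plan is to treat $\Y$ as a warped product $[0,\infty)\times_r \Z$ (with warping function equal to the radial coordinate) and to transport the Sobolev calculus from the two factors to the product via the factorisation $\mm_\Y = c\, r^{N-1}\d r \otimes \mm_\Z$. All the work to set up such a result has essentially been done: $(\Z,\sfd_\Z,\mm_\Z)$ is infinitesimally Hilbertian, doubling, and measured-length (Proposition \ref{prop:basexp}), which are precisely the structural hypotheses under which warped-product Sobolev calculus has been developed. The core identities for $|\D f|_\Y$ follow from the very definition of $\sfd_\Y$, since any Lipschitz curve $r\mapsto(\gamma_r,s(r))$ has squared metric speed $|s'|^2+s^2|\dot\gamma|^2$, which pointwise decomposes the radial and spherical contributions.

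For the converse (starting from slice regularity), I would build test plans on $\Y$ out of test plans on $\Z$ and on $[0,\infty)$ (with the measure $r^{N-1}\d r$) by concatenation and product: a test plan $\ppi_\Z$ on $\Z$ lifted to a fixed radius $r$ gives a test plan on $\Y$ whose speed is $r|\dot\gamma|$, and analogously for purely radial plans. Combined with the bounded-compression bound coming from the factorisation of $\mm_\Y$, the weak upper gradient property of $f$ on $\Y$ follows from that of $f^{(z)}$ on $[0,\infty)$ and $f^{(r)}$ on $\Z$, together with the warped Pythagoras formula \eqref{eq:warpedgrad}. The hypothesis that $f$ vanishes near $\O_\Y$ ensures that all test plans involved actually stay in a region where the cone structure is non-degenerate, so that the warping factor $1/r^2$ does not create integrability problems.

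For the direct direction, I would exploit Fubini-type slicing. Given $f\in W^{1,2}(\Y)$, for $\mathcal L^1$-a.e.\ $r$ I plug in plans supported on curves $(\gamma_t,r)$ with $r$ fixed, coming from arbitrary test plans on $\Z$ (lifted in the obvious way); the resulting test plans on $\Y$ have bounded compression thanks to the factorised measure, and one reads off that $f^{(r)}\in W^{1,2}(\Z)$ with $|\D f^{(r)}|_\Z(z)\le r\,|\D f|_\Y(z,r)$. Symmetrically, for $\mm_\Z$-a.e.\ $z$ one slides along radii and obtains $f^{(z)}\in W^{1,2}([0,\infty),r^{N-1}\d r)$. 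The orthogonal decomposition \eqref{eq:warpedgrad} is then obtained by constructing mixed test plans moving simultaneously in both directions and checking that the two contributions to the squared gradient saturate $|\D f|_\Y^2$; the key algebraic point is that the warped-product metric makes radial and spherical directions orthogonal at the level of metric derivatives.

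Infinitesimal Hilbertianity of $\Y$ is then immediate from \eqref{eq:warpedgrad}, infinitesimal Hilbertianity of $\Z$, and the Hilbertian character of $L^2(r^{N-1}\d r)$, since $f\mapsto |\D f|_\Y^2$ is a sum of two quadratic forms. The main obstacle, and the reason the measured-length hypothesis for $\Z$ was carefully verified in Proposition \ref{prop:basexp}, is the Sobolev-to-Lipschitz property on $\Y$: for $f\in W^{1,2}(\Y)$ with $|\D f|_\Y\le 1$, to produce a $1$-Lipschitz representative I would take $y_0=(z_0,r_0),y_1=(z_1,r_1)\in\Y$ and use the test plans on $\Z$ provided by the measured-length property, glued with purely radial plans via the warped-product construction, to obtain families $\ppi^{\eps_0,\eps_1}$ on $\Y$ whose terminal marginals concentrate on shrinking balls around $y_0,y_1$ and whose kinetic energy converges to $\sfd_\Y^2(y_0,y_1)$; then $|f(y_1)-f(y_0)|\le\sfd_\Y(y_0,y_1)$ follows from the weak upper gradient inequality applied to these plans, together with the doubling property to ensure Lebesgue-point values of $f$ on both sides. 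The doubling of $\Y$ itself comes from the doubling of $\Z$ and the polynomial behaviour of $r^{N-1}\d r$ coupled with the fact that $\sfd_\Y$ restricted to balls away from $\O_\Y$ is bi-Lipschitz equivalent to the product distance.
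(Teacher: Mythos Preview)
Your proposal is correct and matches the paper's approach: the paper does not prove this theorem from scratch but simply records it as a consequence of the warped-product Sobolev calculus developed in \cite{GH15}, together with the structural properties of $\Z$ (infinitesimal Hilbertianity, doubling, measured-length) established in Proposition~\ref{prop:basexp}. Your sketch is essentially an outline of the arguments carried out in that reference, and you have correctly identified the role each hypothesis plays---in particular that the measured-length and doubling properties of $\Z$ are precisely what is needed for the Sobolev-to-Lipschitz property of $\Y$.
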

\begin{proof}
The relation between $W^{1,2}(\Y)$ and $(i),(ii)$ is one of the main results of \cite{GH15}. Infinitesimal Hilbertianity then follows directly from the one of $\X'$ (Proposition \ref{prop:basexp}). The Sobolev-to-Lipschitz property  follows from the fact that $(\X',\sfd',\mm')$ and thus $(\Z,\sfd_\Z,\mm_\Z)$  is doubling and measured-length, as proved in Proposition \ref{prop:basexp}, and the results in the last section of \cite{GH15}.
\end{proof}

\subsection{From annuli in $\X$ to annuli in $\Y$ and viceversa}
We shall now adapt the strategy used in \cite{Gigli13} to prove that the natural map from $B_\Rb(\O_\Y)$ to $ B_{\Rb}(\O)$ preserves the Sobolev norm of functions defined in appropriate annuli. As the proofs closely follows those in \cite{Gigli13}, we shall mostly only sketch them, highlighting the key points and providing precise references for their completion.

We introduce, for $0<r<R<\Rb$, the annulus ${\rm Ann}^\Y_{r,R}:=\{y\in\Y:\sfd_\Y(y,O_\Y)\in(r,R)\}\subset\Y$, and similarly the annulus ${\rm Ann}^\X_{r,R}:=\{x\in \X:\sfd(x,\O)\in(r,R)\}\subset \X$.

We then introduce the map $\mau:B_\Rb(O_\Y)\to  B_{\Rb}(\O)$ as 
\[
\mau (z,r):=\gl_{\log(\frac\Rb{2r})}(z),\quad\text{if }r>0,\qquad\qquad \mau(O_\Y):=\O,
\]
and the map $\mad: B_{\Rb}(\O)\to B_\Rb(O_\Y)$ as
\[
\mad (x):=(\pr(x),\sfd(x,O)),\qquad\forall x\in B_{\Rb}(\O)\setminus\{\O\},\qquad\qquad\mad (\O):=O_\Y.
\]
It is clear that $\mau,\mad$ are one the inverse of the other, while the very definition of $\mm_\Y$ and Corollary \ref{cor:contdis} grant that
\begin{equation}
\label{eq:mespres}
\mau_*\mm_\Y\restr{B_{\Rb}(\O)}=\mm\restr{B_{\Rb}(\O)}\qquad\text{ and }\qquad \mad_*\mm\restr{B_{\Rb}(\O)}=\mm_\Y\restr{B_{\Rb}(\O)}.
\end{equation}
Moreover, noticing that point $(ii)$ of Theorem \ref{thm:rapprcont} gives that 
\[
\sfd(x_1,x_2)\leq \frac{{2d}}{\Rb}\sfd'(\pr(x_1),\pr(x_2))=d\, \sfd_\Z(\pr(x_1),\pr(x_2))
\]
for any $x_1,x_2\in \X$ with $\sfd(x_1,\O)=\sfd(x_2,\O)=d\in(0,\Rb)$, we have the estimate
\[
\begin{split}
\sfd(x,y)&\leq \sfd\Big(x,\gl_{\log\big(\tfrac{\sfd(y,\O)}{\sfd(x,\O)}\big)}(y)\Big)+\sfd\Big(\gl_{\log\big(\tfrac{\sfd(y,\O)}{\sfd(x,\O)}\big)}(y),y\Big)\\
&\leq \sfd(x,\O)\sfd_\Z(\pr(x),\pr(y))+|\sfd(x,\O)-\sfd(y,\O)|.
\end{split}
\]
Thus the very definition of $\sfd_\Y$ grants that
\begin{equation}
\label{eq:lip1}
\text{for every $\eps\in(0,\Rb)$, $\mau$ is Lipschitz from ${\rm Ann}^\Y_{\eps,\Rb}$ to ${\rm Ann}^\X_{\eps,\Rb}$}.
\end{equation}
Similarly, the fact that $\sfd(\cdot,\O):\X\to \R$ is Lipschitz and that for every $\eps,\eps'\in(0,\Rb/2)$ the map $\pr:{\rm Ann}^\X_{\eps,\Rb-\eps'}\to\X'$ is also Lipschitz (by local Lipschitzianity and a compactness argument) grant, together with the definition of $\sfd_\Y$, that
\begin{equation}
\label{eq:lip2}
\text{for every $\eps,\eps'\in(0,\Rb/2)$, $\mad$ is Lipschitz from ${\rm Ann}^\X_{\eps,\Rb-\eps'}$ to ${\rm Ann}^\Y_{\eps,\Rb-\eps'}$ }.
\end{equation}

Finally, we define the following classes of functions:
\[
\begin{split}
\mathcal G&:=\Big\{g:\Y\to\R\ :\ g(x',r)=\tilde g(x')\text{ for some }\tilde g\in W^{1,2}\cap L^\infty(\Z)\Big\},\\
\mathcal H&:=\Big\{h:\Y\to\R\ :\ h(x',r)=\tilde h(r)\text{ for some }\tilde h\in \Lip([0,\Rb])\text{ with }\supp(h)\subset\ (0,\Rb)\Big\},\\
\mathcal A&:=\Big\{\sum_{i=1}^ng_ih_i\ :\ i\in\N,\ \ g_i\in\mathcal G,\ h_i\in\mathcal H\ \ \forall i=1,\ldots,n\Big\}.
\end{split}
\]
In the foregoing discussion,  given a metric measure space $(\Z,\sfd_\Z,\mm_\Z)$ and an open set $\Omega\subset \X$, we shall denote by $W^{1,2}_0(\Omega)\subset W^{1,2}(\Z)$  the $W^{1,2}(\X)$-completion of the space of functions in $W^{1,2}(\X)$ with support in $\Omega$. Using Theorem \ref{thm:BG2} we then see  that  every function in $\mathcal A$ belongs to $W^{1,2}_0({\rm Ann}^\Y_{\eps,\Rb})$ for any $\eps\in(0,\Rb)$ sufficiently small. In particular, the minimal weak upper gradient of such functions is well defined $\mm_\Y$-a.e..
\begin{proposition}\label{prop:1} For every $\eps\in(0,\Rb)$,  $\mathcal A\cap W^{1,2}_0({\rm Ann}^\Y_{\eps,\Rb})$ is a dense subset of $W^{1,2}_0({\rm Ann}^\Y_{\eps,\Rb})$. 
\end{proposition}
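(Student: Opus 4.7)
The plan is to exploit the warped-product description of the Sobolev norm provided by Theorem~\ref{thm:BG2}, reducing the problem to density of finite tensor sums in an honest product Sobolev space on $I\times\Z$, where classical arguments apply. The strategy has three stages: first reduce to $f$ supported in a ``thick'' annulus ${\rm Ann}^\Y_{a,b}$ with $\eps<a<b<\Rb$; next observe that on such a region the warped-product norm is equivalent to a genuine product Sobolev norm because the weight $r^{N-1}$ and the factor $r^{-2}$ in \eqref{eq:warpedgrad} are bounded above and below by positive constants; finally approximate by first mollifying in $r$ and then forming Riemann sums of tensor type.

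For the reduction step, I would start from $f\in W^{1,2}_0({\rm Ann}^\Y_{\eps,\Rb})$ and a sequence $f_n\in W^{1,2}(\Y)$ with $\supp(f_n)\subset{\rm Ann}^\Y_{\eps,\Rb}$ converging to $f$ in $W^{1,2}$. Since the sphere $S_{\Rb/2}(\O)$ is closed inside the compact ball $\bar B_\Rb(\O)$ of the proper space $\X$, it is compact, and Proposition~\ref{prop:basexp} ensures that its topology is unchanged when passing to $(\X',\sfd')$, so $\Z$ is compact as well. Each $\supp(f_n)$ is therefore a closed subset of the compact set $\Z\times[\eps,\Rb]$ contained in the open annulus, hence lies in some ${\rm Ann}^\Y_{a_n,b_n}$ with $\eps<a_n<b_n<\Rb$. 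This reduces matters to approximating a fixed $f$ with $\supp(f)\subset{\rm Ann}^\Y_{a,b}$, $\eps<a<b<\Rb$.

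Picking $a',b'$ with $\eps<a'<a<b<b'<\Rb$, I would mollify $f$ in $r$ by convolution with a smooth compactly supported kernel $\rho_\delta$, obtaining $f_\delta$ supported in ${\rm Ann}^\Y_{a-\delta,b+\delta}$. Standard mollification estimates, combined with the fact recorded in Theorem~\ref{thm:BG2} that $r\mapsto f^{(r)}$ lies in $L^2((a',b');W^{1,2}(\Z))$, yield both $f_\delta\to f$ in $W^{1,2}(\Y)$ and that $r\mapsto f_\delta^{(r)}$ is $C^\infty$ with values in $W^{1,2}(\Z)$. Given then a partition $a'=r_0<\cdots<r_M=b'$ of mesh $\eta$ with Lipschitz hat functions $\eta_i$ forming a partition of unity, the Riemann sum
\[
f_{\delta,M}:=\sum_i f_\delta^{(r_i)}\,\eta_i
\]
is a finite sum of tensor products and, after an $L^\infty$-truncation of each $f_\delta^{(r_i)}\in W^{1,2}(\Z)$, lies in $\mathcal A\cap W^{1,2}_0({\rm Ann}^\Y_{\eps,\Rb})$. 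Uniform $W^{1,2}(\Z)$-continuity of $r\mapsto f_\delta^{(r)}$ will then force $f_{\delta,M}\to f_\delta$ in all three components of the equivalent product norm as $\eta\to 0$, and a diagonal argument will complete the proof.

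The subtle point is the convergence of the angular derivatives $\D_\Z f_{\delta,M}\to\D_\Z f_\delta$ in $L^2$: the identity $\D_\Z f_{\delta,M}(z,r)=\sum_i\bigl(\D_\Z f_\delta^{(r_i)}(z)\bigr)\,\eta_i(r)$ converges to $\D_\Z f_\delta(z,r)$ only under $W^{1,2}(\Z)$-continuity of $r\mapsto f_\delta^{(r)}$, which is precisely what the mollification in $r$ produces; without it, for a general $f\in W^{1,2}(\Y)$ the fibers $f^{(r)}$ are only defined for almost every $r$, and one could not even make sense of $f^{(r_i)}$ for a fixed $r_i$. Upgrading $L^2$-integrability in $r$ of the angular norm into genuine continuity, via the Bochner-valued mollification, is the technical heart of the argument.
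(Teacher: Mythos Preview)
Your approach is correct and is precisely the strategy the paper defers to in \cite{Gigli13} and \cite{GH15}: reduce to a thick annulus, pass to an equivalent product Sobolev norm there, mollify radially to gain $W^{1,2}(\Z)$-continuity of $r\mapsto f^{(r)}$, then take Riemann sums of tensor type. One small point of attribution: Theorem~\ref{thm:BG2} only asserts $f^{(r)}\in W^{1,2}(\Z)$ for a.e.\ $r$ with integrable norm, not directly the Bochner-space membership $r\mapsto f^{(r)}\in L^2((a',b');W^{1,2}(\Z))$ you invoke; the required strong measurability does hold (separability of $W^{1,2}(\Z)$ together with, say, approximation of $f^{(r)}$ by its heat regularisation on $\Z$ gives it), so your mollification step is justified, just via a slightly longer chain than the theorem you cite.
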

\begin{proof}
It follows from the very same arguments used to prove the analogous statement Proposition 6.6 in \cite{Gigli13}  keeping in mind Theorem \ref{thm:BG2}  (see also Proposition 4.16 in \cite{Gigli13over} and \cite{GH15}).
\end{proof}

\begin{proposition}\label{prop:2} For every $\eps,\eps'\in(0,\Rb/2)$, the map $f\mapsto f\circ \mad$ is a homeomorphism from $W^{1,2}_0({\rm Ann}^\X_{\eps,\Rb-\eps'})$ to $W^{1,2}_0({\rm Ann}^\Y_{\eps,\Rb-\eps'})$.
\end{proposition}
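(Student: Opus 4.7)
The plan is to exhibit two bounded linear operators in opposite directions---corresponding to right composition with $\mad$ and $\mau$---and observe that they are inverses of each other. The main input will be that, by \eqref{eq:mespres}, \eqref{eq:lip1}, and \eqref{eq:lip2}, the restrictions of $\mad$ and $\mau$ to the respective annuli ${\rm Ann}^\X_{\eps,\Rb-\eps'}$ and ${\rm Ann}^\Y_{\eps,\Rb-\eps'}$ are Lipschitz and measure-preserving, i.e.\ maps of bounded deformation. Combined with the chain-rule bound \eqref{eq:normdiffdef}, this will give that right composition with either of them induces a bounded linear map on Sobolev spaces.

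In detail, I would first fix $f \in W^{1,2}(\Y)$ with compact support in ${\rm Ann}^\Y_{\eps,\Rb-\eps'}$ and show that $f \circ \mad$, extended by zero to $\X$, belongs to $W^{1,2}_0({\rm Ann}^\X_{\eps,\Rb-\eps'})$ with
\[
\|f\circ\mad\|_{W^{1,2}(\X)}^2 \leq \|f\|_{L^2(\Y)}^2 + L^2\,\|\,|\D f|_\Y\,\|_{L^2(\Y)}^2,
\]
where $L=L(\eps,\eps')$ is a Lipschitz constant for $\mad$ on ${\rm Ann}^\X_{\eps,\Rb-\eps'}$; the support claim follows because $\mau\circ\mad=\Id$ gives $\supp(f\circ\mad)=\mau(\supp f)$, compact in the target annulus. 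Since such compactly supported $f$'s are dense in $W^{1,2}_0({\rm Ann}^\Y_{\eps,\Rb-\eps'})$ by definition, the map extends uniquely to a bounded linear operator $T_\mad$ between the two $W^{1,2}_0$ spaces. The same argument, run with \eqref{eq:lip1} in place of \eqref{eq:lip2}, yields a bounded linear $T_\mau$ in the reverse direction.

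To conclude the homeomorphism, I would note that on the dense subspaces of compactly supported functions the identities $\mau\circ\mad=\Id$ $\mm$-a.e.\ on $B_\Rb(\O)$ and $\mad\circ\mau=\Id$ $\mm_\Y$-a.e.\ on $B_\Rb(\O_\Y)$ translate into $T_\mau\circ T_\mad=\Id$ and $T_\mad\circ T_\mau=\Id$, which then persist on the whole $W^{1,2}_0$ spaces by continuity. There is no substantive obstacle here: the only delicate point is keeping the supports of the approximating functions within the closed subannulus on which both $\mau$ and $\mad$ are Lipschitz, which is exactly what the separation of $\eps$ from $0$ and of $\Rb-\eps'$ from $\Rb$ is there to ensure.
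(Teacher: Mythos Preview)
Your proposal is correct and follows essentially the same route as the paper's proof, which is itself just a one-line sketch citing the measure preservation \eqref{eq:mespres}, the Lipschitz properties \eqref{eq:lip1}, \eqref{eq:lip2}, and the general fact that Sobolev norms transform in a bi-Lipschitz way under bi-Lipschitz measure-preserving maps. Your expansion---building the two bounded operators $T_\mad$, $T_\mau$ on compactly supported functions via \eqref{eq:normdiffdef} and then extending by density---is exactly how one would flesh out that sketch.
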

\begin{proof}
Direct consequence of the measure preservation property \eqref{eq:mespres}, the Lipschitz properties \eqref{eq:lip1}, \eqref{eq:lip2} and the fact that the Sobolev norm changes in a bi-Lipschitz way under a bi-Lipschitz change of the metric (see also Proposition 6.7 in \cite{Gigli13}, Proposition 4.16 in \cite{Gigli13over} and the arguments used in \cite{GH15}).
\end{proof}

\begin{proposition}\label{prop:3}
For every $f\in \mathcal A$ we have  $f\circ\mad\in W^{1,2}_0({\rm Ann}^\X_{\eps,\Rb-\eps})$ for every $\eps>0$ sufficiently small. Moreover
\[
|\D f|_\Y\circ\mad=|\D(f\circ\mad)|_\X,\qquad\mm-a.e.\ on\ B_{\Rb}(\O).
\]
\end{proposition}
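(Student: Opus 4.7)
The plan is to reduce the identity to a pairwise statement by polarization, then treat single products by Leibniz after establishing a key orthogonality between ``angular'' and ``radial'' gradients. Writing a generic $f\in\mathcal A$ as $\sum_i g_i h_i$ with $g_i\in W^{1,2}\cap L^\infty(\Z)$ and $h_i\in\Lip([0,\Rb])$ supported in $(0,\Rb)$, and exploiting that both $\X$ and $\Y$ are infinitesimally Hilbertian (the latter by Theorem \ref{thm:BG2}), it will suffice to prove
\[
\la\nabla(f_1\circ\mad),\nabla(f_2\circ\mad)\ra_\X \;=\; \la\nabla f_1,\nabla f_2\ra_\Y\circ\mad, \qquad \mm\text{-a.e.\ on } B_\Rb(\O),
\]
for every pair of simple products $f_i=g_ih_i$, and then to sum over $i,j$. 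Setting $\bar g_i:=g_i\circ\pr$ and $\bar h_i:=h_i\circ\sfd_\O$, the membership part is immediate: choose $[a,b]\subset(0,\Rb)$ containing $\supp(h_i)$ and a cutoff $\tilde h\in\Lip(\R)$ with $\supp(\tilde h)\subset(0,\Rb)$ and $\tilde h\equiv 1$ on $[a,b]$, so that Theorem \ref{thm:linksob} gives $\tilde f_i:=\bar g_i(\tilde h\circ\sfd_\O)\in W^{1,2}(\X)$ and hence $f_i\circ\mad=\tilde f_i\bar h_i\in W^{1,2}(\X)$ (product with a bounded Lipschitz factor), with support in ${\rm Ann}^\X_{\eps,\Rb-\eps}$ for $\eps$ small enough.

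The central point will be the orthogonality
\[
\la\nabla\bar g_i,\nabla\sfd_\O\ra_\X \;=\; 0 \qquad \mm\text{-a.e.\ on } B_\Rb(\O)\setminus\{\O\}.
\]
To prove it I would fix $\Rbt<\Rb$ and work with the regularized flow $\bar\gl$ of Section \ref{se:gf}, for which $\bar\b=\b$ and $\bar\gl_t=\gl_t$ on $B_\Rbt(\O)$. Since $\pr\circ\gl_t=\pr$ and $\sfd_\O\circ\gl_t=e^{-t}\sfd_\O$, direct differentiation of $\tilde f_i\circ\bar\gl_t(x)=\bar g_i(x)\tilde h(e^{-t}\sfd_\O(x))$ yields
\[
\lim_{t\downarrow 0}\frac{\tilde f_i\circ\bar\gl_t-\tilde f_i}{t} \;=\; -\bar g_i\,\sfd_\O\,\tilde h'(\sfd_\O) \quad \text{in } L^2(B_\Rbt(\O)),
\]
while Lemma \ref{le:compgl} identifies the same limit with $-\la\nabla\tilde f_i,\nabla\bar\b\ra$. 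On the subset $\{\sfd_\O\in[a,b]\}$ the right-hand side above vanishes a.e., and there $\tilde f_i=\bar g_i$ and $\bar\b=\b$, so the locality of the inner product (obtained by polarizing the locality \eqref{eq:locwug} of the minimal weak upper gradient) yields $\la\nabla\bar g_i,\nabla\b\ra=0$. Varying $[a,b]$ within $(0,\Rbt)$, letting $\Rbt\uparrow\Rb$, and using $\nabla\b=\sfd_\O\nabla\sfd_\O$ (chain rule plus $|\D\sfd_\O|=1$ a.e., a consequence of Corollary \ref{cor:db}) produces the claimed orthogonality.

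With that in hand, the identity is assembled by direct algebra. Theorem \ref{thm:linksob} applied to $\tilde f_i$, combined with \eqref{eq:xprimoxsecondo} and locality (since $\tilde f_i=\bar g_i$ on $\{\sfd_\O\in[a,b]\}$), gives $|\D\bar g_i|_\X=\sfd_\O^{-1}|\D g_i|_\Z\circ\pr$ $\mm$-a.e.\ on $B_\Rb(\O)\setminus\{\O\}$, whence by polarization $\la\nabla\bar g_1,\nabla\bar g_2\ra_\X=\sfd_\O^{-2}\la\nabla g_1,\nabla g_2\ra_\Z\circ\pr$. The Leibniz expansion of $\la\nabla(\bar g_1\bar h_1),\nabla(\bar g_2\bar h_2)\ra_\X$ produces four terms, the two mixed ones $\la\nabla\bar g_i,\nabla\bar h_j\ra=h_j'(\sfd_\O)\la\nabla\bar g_i,\nabla\sfd_\O\ra$ being zero by the orthogonality; the surviving two are
\[
\frac{h_1(\sfd_\O)h_2(\sfd_\O)}{\sfd_\O^2}\la\nabla g_1,\nabla g_2\ra_\Z\circ\pr \qquad\text{and}\qquad \bar g_1\bar g_2\,h_1'(\sfd_\O)h_2'(\sfd_\O),
\]
the latter using $|\D\sfd_\O|=1$. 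Polarizing Theorem \ref{thm:BG2} on the cone side gives
\[
\la\nabla f_1,\nabla f_2\ra_\Y(z,r)=g_1(z)g_2(z)h_1'(r)h_2'(r)+\frac{h_1(r)h_2(r)}{r^2}\la\nabla g_1,\nabla g_2\ra_\Z(z),
\]
which upon precomposition with $\mad(x)=(\pr(x),\sfd_\O(x))$ agrees term by term with the $\X$-side expression, and summation over $i,j$ will complete the proof. The main obstacle is the orthogonality step: it is the synthetic counterpart of $\pr$ being a Riemannian submersion onto $\X'$, and its derivation genuinely uses the flow structure of $\gl$ together with Lemma \ref{le:compgl}; once it is in hand, the remainder reduces to careful bookkeeping with Leibniz and chain rules.
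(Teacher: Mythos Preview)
Your proposal is correct and follows essentially the same route as the paper: both handle the angular factor via Theorem \ref{thm:linksob} and \eqref{eq:xprimoxsecondo}, the radial factor via the chain rule and $|\D\sfd_\O|=1$, and then combine products by Leibniz after establishing the radial--angular orthogonality through the first-order differentiation formula \eqref{eq:calcoloder}. The only difference is presentational: the paper outsources the orthogonality and product steps to the analogous arguments in \cite{Gigli13} (Propositions 6.3 and 6.5 there), whereas you spell them out explicitly.
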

\begin{proof} The first claim follows from the fact, already noticed, that and $f\in\mathcal A$ belongs to $ W^{1,2}_0({\rm Ann}^\Y_{\eps,\Rb-\eps})$ for $\eps>0$ sufficiently small and Proposition \ref{prop:2} above.
 
Now  let  $g\in\mathcal G$, $\eps\in(0,\Rb/2)$ be arbitrary and pick $h\in\mathcal H$ identically 1 on ${\rm Ann}^\Y_{\eps,\Rb-\eps}$. Theorem \ref{thm:linksob}   grant that $(gh)\circ\mad \in W^{1,2}({\rm Ann}^\X_{\eps,\Rb})$ and using also  Theorem \ref{thm:BG2} and \eqref{eq:xprimoxsecondo} we see that  $|\D g|_\Y\circ\mad=|\D(g\circ\mad)|_\X$ $\mm$-a.e.\ on ${\rm Ann}^\X_{\eps,\Rb-\eps}$. As $\eps>0$ was chosen arbitrarily, we deduce that  $|\D g|_\Y\circ\mad=|\D(g\circ\mad)|_\X$ $\mm$-a.e.\ on $B_\Rb(\O)$.

Similarly, given that a function $h\in\mathcal H$ is Lipschitz with support in ${\rm Ann}^\Y_{\eps,\Rb-\eps'}$ for some $\eps,\eps'\in(0,\Rb/2)$, it is clear that $h\in W^{1,2}(\Y)$ and, recalling \eqref{eq:lip2}, that $h\circ\mad \in W^{1,2}( B_{\Rb}(\O))$. The fact that $|\D h|_\Y\circ\mad=|\D(h\circ\mad)|_\X$ $\mm$-a.e.\ on $B_{\Rb}(\O)$ then follows from the very same arguments used in Proposition 6.3 in \cite{Gigli13} (see also Proposition 4.14 in \cite{Gigli13over} and \cite{GH15}).

The conclusion for general $f\in \mathcal A$ then comes using the very same arguments of the proof of Proposition 6.5 in \cite{Gigli13} (see also Proposition 4.15 in \cite{Gigli13over}), keeping in mind the infinitesimal Hilbertianity of $\X,\Y$, the characterisation of Sobolev functions on $\Y$ given by Theorem \ref{thm:BG2} and the first order differentiation formula \eqref{eq:calcoloder}.
\end{proof} 

\begin{theorem}\label{thm:sobug} For any $\eps,\eps'\in(0,\Rb/2)$ we have   $f\in W_0^{1,2}({\rm Ann}^\Y_{\eps,\Rb-\eps'})$ if and only if $f\circ\mad \in W_0^{1,2}({\rm Ann}^\X_{\eps,\Rb-\eps'})$ and in this case
\[
|\D f|_\Y\circ\mad=|\D(f\circ\mad)|_\X,\qquad\mm-a.e.\ on\ {\rm Ann}^\X_{\eps,\Rb-\eps'}.
\]
\end{theorem}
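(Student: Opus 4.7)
The plan is to combine the three preceding propositions via an approximation argument: membership in the Sobolev space is already handled by Proposition \ref{prop:2}, so only the pointwise identity of the minimal weak upper gradients needs to be proved. The idea is to establish the identity first on the algebra $\mathcal A$ (already done in Proposition \ref{prop:3}) and then pass to the limit, using the measure-preservation property \eqref{eq:mespres} to transport $L^2$ convergence between $\Y$ and $\X$.

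Concretely, fix $f \in W^{1,2}_0({\rm Ann}^\Y_{\eps,\Rb-\eps'})$. First I would use Proposition \ref{prop:1}, or rather an immediate variant of it for the annulus ${\rm Ann}^\Y_{\eps,\Rb-\eps'}$ (obtained by cutting off in the radial variable against a function in $\mathcal H$ which equals $1$ on $[\eps,\Rb-\eps']$ and has support in $(\eps/2,\Rb-\eps'/2)$), to produce a sequence $f_n \in \mathcal A \cap W^{1,2}_0({\rm Ann}^\Y_{\eps,\Rb-\eps'})$ with $f_n \to f$ in $W^{1,2}(\Y)$. By Proposition \ref{prop:3} applied to each $f_n$,
\begin{equation*}
|\D(f_n\circ\mad)|_\X = |\D f_n|_\Y\circ\mad \qquad \mm\text{-a.e. on } B_\Rb(\O).
\end{equation*}

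Next I would transfer the convergence. Because $\mau_*\mm_\Y\restr{B_\Rb(\O_\Y)}=\mm\restr{B_\Rb(\O)}$ by \eqref{eq:mespres}, the map $g \mapsto g\circ\mad$ is an $L^2$-isometry between $L^2(B_\Rb(\O_\Y),\mm_\Y)$ and $L^2(B_\Rb(\O),\mm)$. Hence $|\D f_n|_\Y\circ\mad \to |\D f|_\Y\circ\mad$ in $L^2(\X)$. On the other hand, by Proposition \ref{prop:2} the map $g \mapsto g\circ\mad$ is a homeomorphism from $W^{1,2}_0({\rm Ann}^\Y_{\eps,\Rb-\eps'})$ to $W^{1,2}_0({\rm Ann}^\X_{\eps,\Rb-\eps'})$, so $f_n\circ\mad \to f\circ\mad$ in $W^{1,2}(\X)$ and in particular $|\D(f_n\circ\mad)|_\X \to |\D(f\circ\mad)|_\X$ in $L^2(\X)$. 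Passing to the limit in the identity above yields $|\D(f\circ\mad)|_\X = |\D f|_\Y\circ\mad$ $\mm$-a.e. on ${\rm Ann}^\X_{\eps,\Rb-\eps'}$, as required; the converse implication about membership is already contained in Proposition \ref{prop:2}.

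The main delicate point is ensuring that the approximating sequence $f_n$ really lies in $W^{1,2}_0({\rm Ann}^\Y_{\eps,\Rb-\eps'})$ (i.e.\ has supports with the correct two-sided control in the radial variable), so that Proposition \ref{prop:2} is applicable to $f_n - f$. This is handled by first approximating $f$ in $W^{1,2}_0({\rm Ann}^\Y_{\eps/2,\Rb-\eps'/2})$ via Proposition \ref{prop:1} and then multiplying each approximant by a fixed radial cut-off $h \in \mathcal H$ which is identically $1$ on $[\eps,\Rb-\eps']$ and supported in $(\eps/2,\Rb-\eps'/2)$: since $\mathcal A$ is stable under multiplication by elements of $\mathcal H$, this modification keeps the approximants in $\mathcal A$, and standard Leibniz-rule estimates together with Theorem \ref{thm:BG2} show that $W^{1,2}$ convergence is preserved. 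Everything else is routine bookkeeping against the tools already established.
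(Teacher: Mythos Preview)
Your proposal is correct and follows essentially the same approximation scheme as the paper: use Proposition~\ref{prop:2} for the membership equivalence, approximate $f$ by elements of $\mathcal A$ via Proposition~\ref{prop:1}, apply Proposition~\ref{prop:3} to each approximant, and pass to the limit using \eqref{eq:mespres} on the $\Y$-side and Proposition~\ref{prop:2} on the $\X$-side. In fact you are more careful than the paper on one point: Proposition~\ref{prop:1} is stated only for the one-sided annuli ${\rm Ann}^\Y_{\eps,\Rb}$, and the paper simply invokes it for ${\rm Ann}^\Y_{\eps,\Rb-\eps'}$ without comment, whereas you supply the radial cut-off argument that makes this legitimate.
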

\begin{proof}
The fact that   $f\in W_0^{1,2}({\rm Ann}^\Y_{\eps,\Rb-\eps'})$ if and only if $f\circ\mad \in W_0^{1,2}({\rm Ann}^\X_{\eps,\Rb-\eps'})$ has already been proved in Proposition \ref{prop:2}. Now let $f\in W^{1,2}_0({\rm Ann}^\Y_{\eps,\Rb-\eps'})$ and use Proposition \ref{prop:1} to find a sequence $(f_n)\subset \mathcal A$ converging to it in $W^{1,2}_0({\rm Ann}^\Y_{\eps,\Rb-\eps'})$. By Proposition \ref{prop:2} again we deduce that $f_n\circ\mad\to f\circ\mad$ in $W^{1,2}({\rm Ann}^\X_{\eps,\Rb-\eps'})$ and since by Proposition \ref{prop:3} we know that
\[
|\D f_n|_\Y\circ\mad=|\D(f_n\circ\mad)|_\X,\qquad\mm-a.e.\ on\ {\rm Ann}^\X_{\eps,\Rb-\eps'}
\]
for every $n\in\N$, passing to the limit (recall \eqref{eq:mespres} for the left hand side) we conclude.
\end{proof}
}

\subsection{Back to the metric properties and conclusion}\label{metprop}
We can now state and prove our main result.

\begin{theorem}[Main result]\label{thm:main}
Let $N\in [1,\infty)$, $(\X,\sfd,\mm)$ a $\RCD^*(0,N)$ space with $\supp(\mm)=\X$, $\O\in \X$ and $\Rb>\rb>0$ such that
\[
\mm(B_{\Rb}(\O))=\Big(\frac{\Rb}{\rb}\Big)^N\mm(B_{\rb}(\O)).
\]
Then exactly one of the following holds:
\begin{itemize}
\item[1)] $S_{\Rb/2}(\O)$ contains only one point. In this case $(\X,\sfd)$ is isometric to $[0,{\rm diam}(\X)]$ ($[0,\infty)$ if $\X$ is unbounded) with an isometry which sends $\O$ in $0$ and the measure $\mm\restr{B_\Rb}(\O)$ to  the measure $c\, x^{N-1}\d x$ for  $c:=N \mm(B_\Rb(\O))$.
\item[2)] $S_{\Rb/2}(\O)$ contains  two points. In this case $(\X,\sfd)$ is a 1-dimensional Riemannian manifold, possibly with boundary, and there is a bijective local isometry (in the sense of distance-preserving maps) from $B_{\Rb}(\O)$ to $(-\Rb,\Rb)$ sending $\O$ to $0$ and the measure  $\mm\restr{B_\Rb(\O)}$ to the measure  $c\,|x|^{N-1} \d x$ for $c:=\frac12N\mm(B_\Rb(\O))$. Moreover, such local isometry is an  isometry when restricted to $\bar B_{\Rb/2}(\O)$.
\item[3)] $S_{\Rb/2}(\O)$ contains more than two points. In this case:
\begin{itemize}
\item[-] $N\geq 2$ and the metric measure space $(\Z,\sfd_\Z,\mm_\Z)$ is a $\RCD^*(N-2,N-1)$ space.
\item[-] The map $\mad:B_\Rb(\O)\to\Y$ is a measure preserving local isometry which, when restricted to $\bar B_{\Rb/2}(\O)$, is an isometry.
\end{itemize}
\end{itemize} 
\end{theorem}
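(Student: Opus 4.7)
Cases (1) and (2) are already handled by Corollary \ref{cor:easy}. I therefore focus on case (3), when $S_{\Rb/2}(\O)$ contains at least three points. The tools available are: the measure preservation \eqref{eq:mespres}, the Sobolev norm preservation of $\mad$ on annuli (Theorem \ref{thm:sobug}), and the Sobolev-to-Lipschitz property on both $\X$ and $\Y$ (the latter from Theorem \ref{thm:BG2}). The first step is to show that $\mad$ is a local isometry on $B_\Rb(\O)\setminus\{\O\}$. Fix $p$ with $d:=\sfd(p,\O)\in(0,\Rb)$ and pick $0<\eps<d<\Rb-\eps'$. For small $r>0$ consider $g(y):=\chi(y)\min(\sfd_\Y(y,\mad(p)),r)$ with $\chi$ a Lipschitz cut-off supported in ${\rm Ann}^\Y_{\eps,\Rb-\eps'}$ and identically $1$ near $\mad(p)$. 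Then $g\in W^{1,2}_0({\rm Ann}^\Y_{\eps,\Rb-\eps'})$ with $|\D g|_\Y\leq 1$ near $\mad(p)$. By Theorem \ref{thm:sobug}, $g\circ\mad\in W^{1,2}_0({\rm Ann}^\X_{\eps,\Rb-\eps'})$ with the same pointwise minimal weak upper gradient; gluing with a cut-off provided by Lemma \ref{le:cutoff} produces an extension in $W^{1,2}(\X)$ whose weak upper gradient is $\leq 1$ on a neighborhood of $p$, so the Sobolev-to-Lipschitz property of $\X$ gives $\sfd_\Y(\mad(x),\mad(p))\leq \sfd(x,p)$ for $x$ close to $p$. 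The reverse inequality follows symmetrically using Sobolev-to-Lipschitz on $\Y$.

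Next, I upgrade to an isometry on $\bar B_{\Rb/2}(\O)$. For $x_0,x_1\in\bar B_{\Rb/2}(\O)$, the triangle inequality forces any geodesic in $\X$ from $x_0$ to $x_1$ to remain inside $\bar B_\Rb(\O)$: the maximal distance from $\O$ along such a geodesic is at most $(\sfd(x_0,\O)+\sfd(x_1,\O)+\sfd(x_0,x_1))/2\leq \Rb$. Using the local isometry away from $\O$, together with the continuity of $\mad$ and a limiting argument to handle segments passing close to $\O$, the image of each such geodesic under $\mad$ is a curve in $\bar B_\Rb(\O_\Y)$ of the same length, so $\sfd_\Y(\mad(x_0),\mad(x_1))\leq \sfd(x_0,x_1)$. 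The reverse inequality follows symmetrically: geodesics in the cone $\Y$ between $\mad(x_0),\mad(x_1)$ remain in $\bar B_\Rb(\O_\Y)$ by the same triangle estimate and pull back via $\mau$. Hence $\mad$ is isometric on $\bar B_{\Rb/2}(\O)$, which also gives the local isometry at $\O$ required by the statement.

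Finally I identify the structure of $\Z$. The previous step shows that $\bar B_{\Rb/2}(\O_\Y)\subset\Y$ is locally isometric as a metric measure space to an open subset of the $\RCD^*(0,N)$ space $\X$, hence is itself locally $\RCD^*(0,N)$. The cone $\Y$ carries the self-similar scaling $T_\lambda(z,r):=(z,\lambda r)$, which is a $\lambda$-homothety sending $\mm_\Y$ to $\lambda^N\mm_\Y$; because the bound $K=0$ is invariant under metric rescaling, the local $\RCD^*(0,N)$ structure near $\O_\Y$ can be transported by $T_\lambda$ to a neighborhood of any point of $\Y$. Combining the local-to-global property for $\CD^*$ spaces \cite{BacherSturm10} with the infinitesimal Hilbertianity of $\Y$ from Theorem \ref{thm:BG2}, the whole cone $\Y$ is $\RCD^*(0,N)$, and Ketterer's characterisation of lower Ricci bounds for cones \cite{Ketterer13} then forces $N\geq 2$, ${\rm diam}(\Z)\leq \pi$, and $(\Z,\sfd_\Z,\mm_\Z)\in\RCD^*(N-2,N-1)$.

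The main obstacle is in the first step: passing from the pointwise bound $|\D(g\circ\mad)|_\X\leq 1$ on an annular region to a function in $W^{1,2}(\X)$ with a genuinely global upper gradient bound, so that the Sobolev-to-Lipschitz property applies and produces a $1$-Lipschitz representative. A careful combination of the annular cut-off $\chi$ with a radial cut-off obtained from Lemma \ref{le:cutoff} composed with $\sfd(\cdot,\O)$ should suffice, but has to be done in a way that does not destroy the pointwise identity $|\D(g\circ\mad)|_\X=|\D g|_\Y\circ\mad$ on the region of interest.
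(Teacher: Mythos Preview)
Your overall architecture matches the paper's: reduce to case (3), derive local isometry of $\mad$ from Theorem \ref{thm:sobug} plus Sobolev-to-Lipschitz, upgrade to a global isometry on $\bar B_{\Rb/2}(\O)$, then feed the cone to Ketterer. Two steps, however, contain genuine gaps.

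\textbf{Local isometry.} You correctly flag the obstacle, but your proposed fix cannot work: multiplying by any nontrivial cut-off $\chi$ introduces the term $|\min(\sfd_\Y(\cdot,\mad(p)),r)|\cdot|\D\chi|$, so $|\D g|_\Y\leq 1$ holds only near $\mad(p)$, while Sobolev-to-Lipschitz requires the bound $\mm$-a.e.\ on all of $\X$. More cut-offs only make this worse. The paper's remedy is to avoid cut-offs entirely and use the \emph{tent function} $f:=\min\{\sfd_\Y(y_1,\cdot),\,4r-\sfd_\Y(y_1,\cdot)\}$ for $y_1\in B_r(y)$ with $B_{5r}(y)\subset {\rm Ann}^\Y_{\eps,\Rb-\eps'}$. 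This $f$ is $1$-Lipschitz everywhere on $\Y$ and compactly supported in the annulus, so $f\circ\mad$, extended by zero, lies in $W^{1,2}(\X)$ with $|\D(f\circ\mad)|_\X\leq 1$ $\mm$-a.e.; Sobolev-to-Lipschitz then gives directly $\sfd_\Y(y_1,y_2)=|f(y_1)-f(y_2)|\leq\sfd(\mau(y_1),\mau(y_2))$ for $y_2\in B_r(y)$.

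\textbf{The cone is $\RCD^*(0,N)$.} The chain ``locally isometric to an open subset of an $\RCD^*$ space $\Rightarrow$ locally $\RCD^*$ $\Rightarrow$ globally $\RCD^*$ via Bacher--Sturm'' does not work as stated: open subsets of $\RCD^*$ spaces need not satisfy any curvature condition (the relevant $W_2$-geodesics may leave the subset), and the local-to-global theorem of \cite{BacherSturm10} requires geodesically convex neighborhoods together with non-branching, neither of which you have established on $\Y$. Note also that $T_\lambda$ fixes the vertex, so it only produces larger balls around $\O_\Y$, not neighborhoods of arbitrary points. The paper supplies the missing geometric input: in any cone, balls $\bar B_\rho(\O_\Y)$ around the vertex are \emph{totally geodesic} (radially truncate any competing curve). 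Pulling back via the isometry $\mad$ on $\bar B_{\Rb/2}(\O)$, the ball $\bar B_{\Rb/4}(\O)$ is therefore totally geodesic in $\X$; the Global-to-Local property for $\RCD^*$ \cite{AmbrosioGigliSavare11-2,AmbrosioMondinoSavare13-2} then makes $\bar B_{\Rb/4}(\O)$, hence $\bar B_{\Rb/4}(\O_\Y)$, an $\RCD^*(0,N)$ space in its own right. Now your scaling idea applies: rescaling $\Y$ by $1/r$ turns this into the ball of radius $\Rb/(4r)$, and the paper concludes by \emph{stability of $\RCD^*$ under pointed-measured-Gromov-Hausdorff convergence} as $r\downarrow 0$, not by local-to-global. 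The bound $N\geq 2$ is obtained separately via a Hausdorff-dimension argument (a geodesic $\Z$ forces $B_\Rb(\O_\Y)$ to contain a bi-Lipschitz copy of $[0,1]^2$).
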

\begin{proof}
Cases (1) and (2) have already been handled in Corollary \ref{cor:easy}, thus we assume that  $S_{\Rb/2}(\O)$ contains more than two points and notice that we already proved that $\mau,\mad$ are measure preserving.

We now claim that for any $\eps,\eps'\in(0,\Rb)$,  the maps $\mau,\mad$ are locally isometries from ${\rm Ann}^{\Y}_{\eps,\Rb-\eps'}$ to ${\rm Ann}^{\X}_{\eps,\Rb-\eps'}$ and viceversa.

To this aim, pick $y\in {\rm Ann}^{\Y}_{\eps,\Rb-\eps'}$ and let $r>0$ be such that $B_{5r}(y)\subset  {\rm Ann}^{\Y}_{\eps,\bar R-\eps'}$. Pick $y_1,y_2\in B_r(y)$ and consider the function $f:=\min\{\sfd_\Y(y_1,\cdot),4r-\sfd_\Y(y_1,\cdot)\}$, which is supported in $B_{5r}(y)$.

By Theorem \ref{thm:sobug} we deduce that $\tilde f:=f\circ\mad$ is in $W^{1,2}({\rm Ann}^{\X}_{\eps,\Rb-\eps'})$ with $|\D \tilde f |_\X=|\D f|_\Y\circ\mad\leq 1$ $\mm$-a.e.\ on ${\rm Ann}^{\X}_{\eps,\Rb-\eps'}$, the inequality being a consequence of the fact that $f$ is 1-Lipschitz. Notice that being the support of $\tilde f$ contained in ${\rm Ann}^\X_{\eps,\Rb-\eps'}$, we can extend it to the whole $\X$ setting it to be 0 outside ${\rm Ann}^\X_{\eps,\Rb-\eps'}$ and the new function, which we will continue to denote $\tilde f$, will still be in $W^{1,2}(\X)$ with $|\D \tilde f|\leq 1$ $\mm$-a.e..  By the Sobolev-to-Lipschitz property of $\X$ - recall the discussion in the  Section \ref{se:prel} -, we then deduce that $\tilde f$ has a 1-Lipschitz representative, but being $\tilde f$ continuous such representative must be equal to $\tilde f$ itself. In particular we have
\[
\sfd_\Y(y_1,y_2)=|f(y_1)-f(y_2)|=|\tilde f(\mau(y_1))-\tilde f(\mau(y_2))|\leq \sfd(\mau(y_1),\mau(y_2)).
\]
Recalling that $y_1,y_2\in B_r(y)$ were chosen arbitrarily and reversing the roles of ${\rm Ann}^{\Y}_{\eps,\Rb-\eps'}$, ${\rm Ann}^{\X}_{\eps,\Rb-\eps'}$ in the argument (the Sobolev-to-Lipschitz property of $\Y$ being ensured by Theorem \ref{thm:BG2}), we conclude.

Now let $x_0,x_1\in \bar B_{\Rb/2}(\O)\setminus \{\O\}$ and, recalling that $(\Z,\sfd_\Z)$ is a geodesic space, notice that their distance can be realized as limit of the lengths of a sequence of curves with range in $B_\Rb(\O)\setminus\{\O\}$. Any such curve has range in ${\rm Ann}^\X_{\eps,\Rb-\eps}$ for some $\eps>0$ and therefore its length  is equal to the length in $\Y$ of its composition with $\mad$. This shows that the restriction of $\mad$ to $\bar B_{\Rb/2}(\O)\setminus \{\O\}$ is 1-Lipschitz. In particular, it can be extended to a 1-Lipschitz map sending $\bar B_{\Rb/2}(\O)$ to $\bar B_{\Rb/2}(\O_\Y)$ and it is obvious that such extension sends $\O$ in $\O_\Y$ and thus agrees with $\mad (\O)$ as previously defined. Reversing the roles of $\X,\Y$ we see that $\mad:\bar B_{\Rb/2}(\O)\to \bar B_{\Rb/2}(\O_\Y)$ is an isometry.

The fact that $N\geq 2$ now follows by considerations about the Hausdorff dimension. Indeed, the fact that $\Z$ is geodesic forces $B_\Rb(\O_\Y)$ to contain a bi-Lipschitz copy of the square $[0,1]^2$. Thus the Hausdorff dimension of $B_\Rb(\O_\Y)$ is at least 2 and since it is locally isometric to $B_\Rb(\O)$, the same holds for $\X$. The claim then follows recalling that $N$ bounds the Hausdorff dimension of $\X$ from above (see Corollary 2.5 in  \cite{Sturm06II}).

It remains to prove that $(\Z,\sfd_\Z,\mm_\Z)$ is a $\RCD^*(N-2,N-1)$ space. {Thanks to the result of Ketterer \cite[Theorem 5.28]{Ketterer13}, to this aim it is sufficient to prove that \(\Y\), which by construction is a cone, is also a \(\RCD^*(0,N)\) space. 

This can  be seen as follows. First of all, we recall that  \(\bar B_{\Rb/2}(\O)\) is isometric \(\bar B_{\Rb/2}(\O_Y)\), then we observe that since $\bar B_{\Rb/4}(\O_\Y)$ is a totally geodesic subset of $\Y$, we must have that $\bar B_{\Rb/4}(\O)$  is a totally geodesic subset of $\X$ (since geodesics with endpoints in $\bar B_{\Rb/4}(\O)$ cannot leave $\bar B_{\Rb/2}(\O)$). 

Now we use the Global-to-Local result established in \cite{AmbrosioGigliSavare11-2} and \cite{AmbrosioMondinoSavare13-2} to deduce that \(\bar B_{\Rb/4}(\O)\) is \(\RCD^*(0,N)\) and again the isomorphism of metric measure structures to see that  \(\bar B_{\Rb/4}(\O_\Y)\) is $\RCD^*(0,N)$ as well. 

Finally define the spaces $(\Y_r,\sfd_{\Y_r},\mm_{\Y_r})$ as $(\Y,\sfd_\Y/r,\mm_\Y/r^N)$ and note that $\Y_r$ is isomorphic to $\Y$, the isomorphism being given by $(z,s)\mapsto (z,rs)$, so that trivially $\Y$ is the pointed-measured-Gromov-Hausdorff limit of the net $\{(\Y_r,\sfd_{\Y_r},\mm_{\Y_r},\O_{\Y_r})\}_{r>0}$ as $r\downarrow0$. It is then clear that also the balls $B_{\Rb/(4r)}(\O_{\Y_r})\subset \Y_r$ converge to $\Y$ and since the former are, by what said previously, $\RCD^*(0,N)$ spaces, the same is true for $\Y$, as desired.
}
\end{proof}
\section{Variants}
There is nothing special about the choice $K=0$ in the discussion we did. Here we, very briefly, discuss general lower bounds on the Ricci and the case of `volume annulus implies metric annulus'.

For $K\in\R$, $N>1$ define $s_{K,N}:\R^+\to\R$ as
\[
s_{K,N}(r):=\left\{
\begin{array}{ll}
{\frac{1}{\sqrt{K}}}\sin(r\sqrt{\frac{K}{N-1}}),&\qquad\text{ if }K>0,\\
r,&\qquad\text{ if }K=0,\\
{\frac{1}{\sqrt{|K|}}}\sinh(r\sqrt{\frac{|K|}{N-1}}),&\qquad\text{ if }K<0,\\
\end{array}
\right.
\]
and $v_{K,N}:\R^+\to\R$ as
\[
v_{K,N}(r):=\int_0^r \big|s_{K,N}(t)\big|^{N-1}\,\d t.
\]
The following result is the analogous of Theorem \ref{thm:mainthm} in the case \(\X\) as a $\RCD^*(K,N)$ space, see \cite{Ketterer13} for the definition of \((K,N)\)-cone.
\begin{theorem}\label{thm:mainvar}
Let $K\in\R$, $N\in [0,\infty)$, $(\X,\sfd,\mm)$ a $\RCD^*(K,N)$ space, $\O\in \X$ and $\Rb>\rb>0$ such that
\[
\frac{\mm(B_{\Rb}(\O))}{v_{K,N}(\Rb)}=\frac{\mm(B_{\rb}(\O))}{v_{K,N}(\rb)}.
\]
Then exactly one of the following holds:
\begin{itemize}
\item[1)] $S_{\Rb/2}(\O)$ contains only one point. In this case $\X$ is isometric to $[0,{\rm Diam}(\X)]$ ($[0,\infty)$ if $\X$ is unbounded) with an isometry sending $\O$ to 0 and the measure $\mm\restr{B_\Rb(\O)}$ to the measure $c\, s_{K,N}(x)\d x$ for  $c:=\frac{\mm(B_\Rb(\O))}{v_{K,N}(\Rb)}$.
\item[2)] $S_{\Rb/2}(\O)$ contains  two points. In this case $(\X,\sfd)$ is a 1-dimensional Riemannian manifold, possibly with boundary, and there is a bijective local isometry (in the sense of distance-preserving maps) from $B_\Rb(\O)$ to $(-\Rb,\Rb)$ sending $\O$ to $0$ and the measure $\mm\restr{B_\Rb(\O)}$ to the measure  $c\, s_{K,N}(x) \d x$ for $c:=\frac{\mm(B_\Rb(\O))}{2v_{K,N}(\Rb)}$. Moreover, such local isometry  is an isometry when restricted to $\bar B_{\Rb/2}(\O)$.
\item[3)] $S_{\Rb/2}(\O)$ contains more than two points. In this case there exists a $\RCD^*(N-2,N-1)$ space \(\Z\) such that if \(\Y\) is the \((K,N)\) cone built over \(\Z\) with ``origin'' \(\O_Y\), then  $B_\Rb(\O)$ is locally isometric to \(B_\Rb(\O_\Y)\), the corresponding metric ball in \(\Y\). Moreover  the local isometry is a measure-preserving bijection, which, when restricted to $\bar B_{\Rb/2}(\O)$, is an isometry.
\end{itemize} 
\end{theorem}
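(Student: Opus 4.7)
The plan is to follow the same pipeline developed in Sections 3.1--3.7 for the case $K=0$, replacing the function $\b(x)=\sfd^2(x,\O)/2$ by a curvature-adapted radial function $\b_K$ and the homogeneous rescaling flow $\gl$ by the corresponding gradient flow. At the end, Ketterer's characterization (\cite{Ketterer13}) of $\RCD^*(K,N)$ cones will let us translate the cone-structure conclusion into the regularity of the cross-section $\Z$ and match the statement of Theorem \ref{thm:mainvar}. Cases (1) and (2) when $S_{\Rb/2}(\O)$ has at most two points are handled by the very same one-dimensional rigidity argument as in Corollary \ref{cor:easy}, using the existence and uniqueness of optimal maps on $\RCD^*(K,N)$ spaces; so I focus on case (3).

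First, define $\b_K(x):=F(\sfd(x,\O))$ where $F:\R^+\to\R$ is smooth, increasing, and chosen so that on the model $(K,N)$-cone $F(\sfd(\cdot,\O_\Y))$ has the same role $\b$ plays on Euclidean cones; concretely, pick $F$ with $F'(r)=s_{K,N}(r)$, so that in the model case $|\D \b_K|^2=s_{K,N}^2(\sfd(\cdot,\O))$ and $\Delta\b_K=(N-1)s'_{K,N}/s_{K,N}\cdot s_{K,N}+s_{K,N}'\cdot s_{K,N}'=Ns_{K,N}'$ along the radial direction. Using the assumed equality $\mm(B_{\Rb}(\O))/v_{K,N}(\Rb)=\mm(B_{\rb}(\O))/v_{K,N}(\rb)$ and the $(K,N)$-Bishop--Gromov inequality with its rigidity case, one obtains, exactly as in Proposition \ref{prop:mpf}, the equality
\[
(\fl_t)_*(\mm\restr{B_\Rb(\O)})=\rho_t\,\mm\restr{B_{\sfd_t(\Rb)}(\O)}
\]
with $\rho_t$ the explicit model density and the disintegration
\[
\d(\sfd_\O)_*(\mm\restr{B_\Rb(\O)})(r)=c\,\nchi_{[0,\Rb]}(r)\,|s_{K,N}(r)|^{N-1}\,\d r,\qquad c=\mm(B_\Rb(\O))/v_{K,N}(\Rb),
\]
together with a continuous family $\{\mm_r\}$ as in Corollary \ref{cor:contdis}. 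The analogue of Proposition \ref{prop:laplB} becomes the identity $\bd\b_K\restr{B_\Rb(\O)}=Ns'_{K,N}(\sfd_\O)\,\mm\restr{B_\Rb(\O)}$, obtained by the same differentiation-and-rigidity argument, and the analogue of $|\D\b|^2=2\b$ becomes $|\D\b_K|^2=s_{K,N}^2(\sfd_\O)$ $\mm$-a.e., from Corollary \ref{cor:db} applied to $\sfd_\O$.

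With these ingredients in hand, construct a cutoff $\bar\b_K$ as in Section \ref{se:gf} and its associated flow $\bar\gl_t$. Writing the Bochner inequality \eqref{eq:Bochner} with the $(K,N)$-term $+\,K|\D f|^2$ on the right yields an Euler equation for $\bar\b_K$ extending Proposition \ref{prop:eul}; arguing as in Sections \ref{se:gfdir} and \ref{se:speedproj}, one obtains the corresponding conservation law for the Dirichlet energy along $\bar\gl_t$ (Theorem \ref{thm:key2}) and the speed-of-projection estimate (Proposition \ref{prop:key}), where now the projection is onto the sphere $S_{\Rb/2}(\O)$ via radial reparametrization. The Hessian computation of Subsection \ref{se:tools} also goes through, giving $\H{\b_K}=s'_{K,N}(\sfd_\O)\,\Id$ $\mm$-a.e.\ on $B_\Rb(\O)$, which is exactly the model-cone Hessian. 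This will drive the monotonicity argument underlying Corollary \ref{cor:decr} and its consequence on the speed of the projection.

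The cross-section $\X':=S_{\Rb/2}(\O)$ is then endowed with the induced intrinsic distance $\sfd'$ and measure $\mm'$ as in Section \ref{se:xp}; the rescaled space $\Z$ is as in Section 3.7 with the radial scaling factor dictated by $s_{K,N}$. The $(K,N)$-warped product $\Y$ over $\Z$ is built as in \cite{Ketterer13}; the warped-product Sobolev calculus of \cite{GH15} applies verbatim, so Theorem \ref{thm:BG2} and the Propositions \ref{prop:1}--\ref{prop:3} carry over once the gradient identity \eqref{eq:sobxp} is replaced by its $s_{K,N}$-weighted analogue. One concludes, as in Theorem \ref{thm:sobug} and Section \ref{metprop}, that the natural map $\mad:B_\Rb(\O)\to B_\Rb(\O_\Y)$ is a measure-preserving local isometry, which is a genuine isometry on $\bar B_{\Rb/2}(\O)$. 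The final step is to show that $\Z$ is $\RCD^*(N-2,N-1)$: since $\bar B_{\Rb/4}(\O)\simeq \bar B_{\Rb/4}(\O_\Y)$ is totally geodesic in $\X$, the Global-to-Local principle of \cite{AmbrosioGigliSavare11-2,AmbrosioMondinoSavare13-2} endows this ball with the $\RCD^*(K,N)$ condition; by Ketterer's theorem \cite[Thm.~5.28]{Ketterer13}, applied to the $(K,N)$-cone $\Y$ (obtained as a pmGH-limit of rescalings of the local cone structure, exactly as in the $K=0$ proof), the cross-section $\Z$ must be $\RCD^*(N-2,N-1)$. The main obstacle is the correct choice of the radial function $F$ ensuring that the Hessian identity $\H{\b_K}\propto\Id$ survives rigidity; once this is in place, every subsequent step is a mechanical adaptation of the $K=0$ proof, since the cone-versus-warped-product dichotomy in $\Y$ is already resolved in \cite{Ketterer13,GH15}.
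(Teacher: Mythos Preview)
Your proposal is correct and follows essentially the same route as the paper's own (very brief) sketch: replace $\b$ by a curvature-adapted radial function and rerun Sections 3.1--3.7. Your choice $F'=s_{K,N}$ is an affine reparametrization of the paper's $\b=\cos(\sfd_\O\sqrt{K/(N-1)})$ (resp.\ $\cosh$ for $K<0$) and yields the same Hessian-proportional-to-$\Id$ identity that drives everything. One small wording fix in your last step: for $K\neq 0$ the pmGH limit of rescalings of $\Y$ is the \emph{Euclidean} cone over $\Z$, not $\Y$ itself; it is to this limit (an $\RCD^*(0,N)$ space by stability) that Ketterer's theorem is applied to conclude $\Z$ is $\RCD^*(N-2,N-1)$.
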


Exactly as in the case of Theorem \ref{thm:main} the space \(\Z\) is given by a suitable rescaling of the sphere \(S_{\Rb/2}(\O)\) seen with its induced distance. The \emph{proof} of this result follows along the very same lines used to obtain Theorem \ref{thm:main}, the major difference being that here the `Busemann' function is 
\begin{align*}
\b(x)&:=\cos\Big(\sfd(x,\O)\sqrt{\frac{K}{N-1}}\Big)&&\text{if }K>0,\\
\b(x)&:=\cosh\Big(\sfd(x,\O)\sqrt{\frac{|K|}{N-1}}\Big)&&\text{if }K<0.
\end{align*}
Then the computations can be carried over with the same modifications one would do in the smooth case. {Eventually in order to apply Ketterer result in \cite{Ketterer13} to show that \(\Z\) is a $\RCD^*(N-2,N-1)$ one has to perform a blow-up analysis as the one done at the end of the proof of Theorem \ref{thm:main}}
\bigskip

In fact, the very same arguments can also be used to obtain the non-smooth version of the ``volume annulus implies metric annulus'' theorem. The main assumption in this case is that for our $\RCD^*(K,N)$ pointed space there are $0<\rb_1<\rb_2<\rb_3$  such that
\[
\frac{\mm(B_{\rb_3}(\O))-\mm(B_{\rb_2}(\O))}{\int_{\rb_2}^{\rb_3}s^{N-1}_{K,N}(r)\,\d r}=\frac{\mm(B_{\rb_2}(\O))-\mm(B_{\rb_1}(\O))}{\int_{\rb_1}^{\rb_2}s^{N-1}_{K,N}(r)\,\d r}.
\]
Then the same conclusions of the above theorem hold, with point $(3)$ being replaced by:
\begin{itemize}
\item[3')]$S_{\Rb/2}(\O)$ contains more than two points.  In this case, $N\geq 1$. and the annulus ${\rm Ann}^\X_{\rb_1,\rb_3}$ is locally isometric to the corresponding one in the  $(K,N)$-cone built over a suitable rescaling of  $S_{\Rb/2}(\O)$ and the local isometry is a measure-preserving bijection. 
\end{itemize}

The proof is the same. Note however that  there is no ready-to-use result that would grant that the  ``sphere'' of the cone  is, with the induced distance and measure, a $\RCD^*(N-1,N)$ space. The problem is that we only have informations about the annulus ${\rm Ann}^\X_{\rb_1,\rb_3}$, which, being not convex, is certainly not a $\RCD^*$ space. A look to the proof of the aforementioned Ketterer's result, which is needed in Theorems \ref{thm:main} and \ref{thm:mainvar}, seems to suggest that the statement can be modified to be usable in our context, thus showing that even in this case the sphere is a $\RCD^*(N-2,N-1)$ space, but this is outside the scope of this paper.

\bibliographystyle{siam}
\bibliography{biblio}

\end{document}